\title[Contragredient Lie algebras in symmetric categories]{Contragredient Lie algebras in symmetric categories}
\author{Iv\'an Angiono}
\address{FaMAF-CIEM (CONICET), Universidad Nacional de C\'ordoba, Medina Allende s/n, Ciudad Universitaria, 5000 C\'ordoba, Rep\'ublica Argentina}
\email{ivan.angiono@unc.edu.ar}
\urladdr{}
\author{Julia Plavnik}
\address{Department of Mathematics, Indiana University, Bloomington, IN 47405, USA.}
\email{jplavnik@iu.edu}
\urladdr{}
\author{Guillermo Sanmarco}
\address{Department of Mathematics, University of Washington, 
	Seattle, WA 98195, USA.}
\email{sanmarco@uw.edu}
\urladdr{}
\newcommand{\ad}{\operatorname{ad}}
\newcommand{\coev}{\mathsf{coev}}
\newcommand{\ev}{\mathsf{ev}}
\newcommand{\GL}{\operatorname{GL}}
\newcommand{\gr}{\operatorname{gr}}
\newcommand{\he}{\mathrm{ht}}
\newcommand{\Hom}{\mathsf{Hom}}
\newcommand{\isomorph}{\stackrel{\sim}{\to}}
\newcommand{\one}{\mathds{1}}
\newcommand{\rk}{\operatorname{rank}}
\newcommand{\id}{\operatorname{id}}
\newcommand{\ima}{\operatorname{im}}
\newcommand{\opD}{\operatorname{D}}
\newcommand{\opRD}{\operatorname{RD}}
\newcommand{\opSD}{\operatorname{SD}}
\newcommand{\balp}{\boldsymbol\alpha}
\newcommand{\sfS}{\mathsf{S}}
\newcommand{\FLie}{\operatorname{FLie}}
\newcommand{\FOLie}{\operatorname{FOLie}}
\newcommand{\Fr}{\operatorname{Fr}}
\newcommand\restr[2]{{\left.\kern-\nulldelimiterspace #1 \vphantom{\big|} \right|_{#2}}} 
\newcommand{\Mod}{\mathsf{Mod}}
\newcommand{\Rep}{\mathsf{Rep}}
\renewcommand{\Vec}{\mathsf{Vec}_{\Bbbk}}
\newcommand{\sVec}{\mathsf{sVec}_\Bbbk}
\newcommand{\Ver}{\mathsf{Ver}}
\newcommand{\Verp}{\mathsf{Ver}_p}
\newcommand{\ot}{\otimes}
\newcommand{\Lie}{\mathsf{Lie}}
\newcommand{\OLie}{\mathsf{OLie}}
\newcommand{\gMod}[2]{\mathsf{Mod}_{#1}(#2)}
\newcommand{\indcat}[1]{#1^{\op{ind}}}
\providecommand{\op}[1]{\operatorname{#1}}
\newcommand{\ov}[1]{\overline{#1}}
\newcommand{\bI}{\mathbb{I}}
\newcommand{\bZ}{\mathbb{Z}}
\newcommand{\bN}{\mathbb{N}}
\newcommand{\bk}{\Bbbk}
\newcommand{\cC}{\mathcal{C}}
\newcommand{\cD}{\mathcal{D}}
\newcommand{\cI}{\mathcal{I}}
\newcommand{\fg}{\mathfrak{g}}
\newcommand{\fh}{\mathfrak{h}}
\newcommand{\fsl}{\mathfrak{sl}}
\newcommand{\fgl}{\mathfrak{gl}}
\newcommand{\fn}{\mathfrak{n}}
\newcommand{\fm}{\mathfrak{m}}
\newcommand{\fH}{\mathfrak{H}}
\newcommand{\ff}{\mathfrak{f}}
\newcommand{\tfg}{\widetilde{\mathfrak{g}}}
\newcommand{\ati}{\widetilde{a}}
\newcommand{\bti}{\widetilde{b}}
\newcommand{\ttb}{\mathtt{b}}
\newcommand{\ttd}{\mathtt{d}}
\newcommand{\tte}{\mathtt{e}}
\newcommand{\ttf}{\mathtt{f}}
\newcommand{\ttg}{\mathtt{g}}
\newcommand{\ttq}{\mathtt{q}}
\newcommand{\ttn}{\mathtt{n}}
\newcommand{\ttm}{\mathtt{m}}
\newcommand{\ttI}{\mathtt{I}}
\newcommand{\ttK}{\mathtt{K}}
\newcommand{\ttL}{\mathtt{L}}
\newcommand{\ttV}{\mathtt{V}}
\newcommand{\ttX}{\mathtt{X}}
\newcommand{\ttB}{\mathtt{B}}
\newcommand{\ttW}{\mathtt{W}}
\newcommand{\tttg}{\widetilde{\ttg}}
\newcommand{\tttn}{\widetilde{\ttn}}
\newcommand{\tttB}{\widetilde{\ttB}}
\newcommand{\bp}{\mathbf{p}}
\newcommand{\osp}{\mathfrak{osp}}
\newcommand{\supersl}[2]{{\mathfrak{sl} }(#1|#2)}
\numberwithin{equation}{section}
\newtheorem*{rep@theorem}{\rep@title}
\newcommand{\newreptheorem}[2]{%
\newenvironment{rep#1}[1]{%
 \def\rep@title{#2 \ref{##1}}%
 \begin{rep@theorem}}%
 {\end{rep@theorem}}}
\newtheorem{theorem}{Theorem}[section]
\newtheorem{proposition}[theorem]{Proposition}
\newtheorem{corollary}[theorem]{Corollary}
\newtheorem{lemma}[theorem]{Lemma}
\newtheorem{theorem*}{Theorem}
\newtheorem{claim}{Claim}
\theoremstyle{definition}
\newtheorem{definition}[theorem]{Definition}
\newtheorem{notation}[theorem]{Notation}
\newtheorem{example}[theorem]{Example}
\newtheorem{remark}[theorem]{Remark}
\let\c@equation\c@theorem  
\numberwithin{equation}{section}
\subjclass[2020]{}
\keywords{}
\begin{document}

\begin{abstract}
We define contragredient Lie algebras in symmetric categories, generalizing the construction of Lie algebras of the form $\mathfrak{g}(A)$ for a Cartan matrix $A$ from the category of vector spaces to an arbitrary symmetric tensor category. The main complication resides in the fact that, in contrast to the classical case, a general symmetric tensor category can admit tori (playing the role of Cartan subalgebras) which are non-abelian and have a sophisticated representation theory.
Using this construction, we obtain and describe new examples of Lie algebras in the universal Verlinde category in characteristic $p\geq5$. We also show that some previously known examples  can be obtained with our construction.

\end{abstract}

\maketitle

\tableofcontents


\section{Introduction}

\subsection{Analogs of Deligne's theorem in positive characteristic}
By classical results of Deligne \cites{Del90,Del02}, pre-Tannakian categories (symmetric tensor categories wherein all objects have finite length) of moderate growth over a field $\bk$ of characteristic zero (always assumed algebraically closed in this work) are well understood. It turns out that any such category $\cC$ admits a (unique) symmetric tensor functor $\cC \to \sVec$ to the category of finite-dimensional super vector spaces, and can thus be reconstructed from group scheme theory in $\sVec$. If we regard pre-Tannakian categories of moderate growth as potential habitats to develop  commutative algebra or algebraic geometry, Deligne's results essentially say that, in characteristic zero, we should content ourselves with the categories of ordinary and super vector spaces, plus some (super) group equivariant structure.

It is also classically known \cites{GelKaz,GeoMat} that if we take $\bk$ of characteristic $p>0$, the theory of pre-Tannakian categories of moderate growth over $\bk$ becomes richer. However, the subject only started to enjoy a more systematic development after Ostrik's recent outbreak \cite{Ost2}, which sparked the deployment of intensive research efforts such as \cites{BEO,Cou-tan,Cou-comm,CEO,CEO-incomp,CEOP,Czenky,Etingof,Etingof-Gelaki-symmetric,EO-frob,Ven1,Ven2,Ven-GLVerp}. 

\medbreak
Assume from now on that $\bk$ has positive characteristic $p$. Even though still out of reach, a path towards an understanding of pre-Tannakian categories of moderate growth over $\bk$ emerges from some of the developments cited above. The main organizational tool behind said path is the notion of incompressible symmetric category. This is a pre-Tannakian category $\cD$ for which any symmetric tensor functor $\cD\to \cC$ is an embedding; informally, this means that $\cD$ can not be constructed from group scheme theory in a smaller symmetric tensor category. Thanks to \cite{CEO-incomp}, every pre-Tannakian category of moderate growth admits a symmetric functor to some incompressible category of moderate growth. Thus, to understand all pre-Tannakian categories of moderate growth over $\bk$, a possible route goes as follows:

\begin{enumerate}[leftmargin=*]
\item\label{item:incompres} Find all incompressible tensor categories $\cD$ of moderate growth.
\item \label{item:fiber} For each $\cD$ as above, characterize intrinsically which pre-Tannakian categories $\cC$ admit a symmetric tensor functor $\cC\to\cD$. 
\end{enumerate}

Conjecturally, incompressible tensor categories (of moderate growth) are classified \cites{BEO,CEO-incomp}. However, for the smallest incompressible categories known (even for $\Vec$ and $\sVec$), a characterization of all $\cC$ as in part \eqref{item:fiber} is a hard problem.
To illustrate our point, consider  the universal Verlinde category $\Verp$, defined as the semisimplification of $\Rep_{\bk} (\bZ/p)$. This is the first term in a conjecturally exhaustive nested sequence of incompressible categories of moderate growth 
\[\Verp \subset \Ver_{p^2}\subset \cdots  \subset \Ver_{p^n}\subset \cdots \]
Now, given a pre-Tannakian category $\cC$, the problem of deciphering intrinsically whether it admits a symmetric functor $\cC\to\Verp$ was the motivating problem of the foundational work \cite{Ost2}. It was finally solved in \cite{CEO} using \cites{Ost2,Cou-tan,EO-frob}, among other novel tools and ideas; the problem turns out to be governed by the exactness of the \emph{Frobenius functor}, see Section \ref{subsec:Frob-exact} for some details. 

\subsection{Affine group schemes in pre-Tannakian categories}
Since a complete solution for Problem \eqref{item:fiber} above seems out of reach, in this work we contribute to it through examples. By Tannaka-Krein duality \cites{Del90,CEO-incomp}, for each pre-Tannakian target $\cD$, any symmetric tensor category $\cC$ endowed with a symmetric functor $\cC\to\cD$ comes from group scheme theory in $\cD$. More precisely, any such $\cC$ can be uniquely obtained as $\Rep (G,\phi)$ where $G$ is an affine group scheme in $\cD$ equipped with a group map $\phi\colon\pi(\cD)\to G$ that induces the canonical action of the fundamental group $\pi(\cD)$ on $G$, and  $\Rep (G,\phi)$ denotes the representations of $G$ where $\pi(\cD)$ acts canonically after restriction along $\phi$.

A theory of affine group schemes (and their representations) in pre-Tannakian categories is starting to emerge. Precise results for group schemes in $\Verp$ have been established by \cites{Ven1,Ven2}, with a wider (yet detailed) study of the theory over more general pre-Tannakian categories by \cite{Cou-comm}. 
However, group schemes in general symmetric categories are still not well understood through examples, and in this work we establish a first step towards a steady supply.  

To the best of our knowledge, the only concrete examples constructed thus far are the general and special linear groups $\GL(X)$ and $\op{SL}(X)$ over any object $X$ of a symmetric category\footnote{It has recently been brought to our attention by Arun Kannan that orthogonal, symplectic and type-Q groups can also be defined in $\Verp$.}. Furthermore, when working on the category $\Verp$, the irreducible representations of these groups are understood; all this progress was achieved in \cites{Ven2,Ven-GLVerp}, relying on a Harish-Chandra equivalence. This is where Lie algebras enter the picture. 

The main result of \cite{Ven2} establishes an equivalence between affine group schemes of finite type in $\Verp$ and Harish-Chandra pairs in $\Verp$.
Here a group $G$ corresponds to the pair $(G_0, \fg)$, where $G_0\in \Vec\subseteq\Verp$ is an ordinary affine group underlying $G$ and $\fg\in \Verp$ is the Lie algebra of $G$. A possible interpretation of this result is that the main breach from group schemes in $\Vec$ to group schemes in $\Verp$ is governed by Lie algebras in $\Verp$. 

\subsection{Contragredient Lie superalgebras}
On a more classical note, the predominant tool and source of examples towards a (still incomplete) classification of simple Lie algebras in $\Vec$ or $\sVec$ comes from what is known as the contragredient construction \cite{Kac-book}. This process takes a square matrix $A$ and a parity vector $\bp$ and defines, by generators and relations, the \emph{contragredient} Lie superalgebra $\fg(A,\bp)$, which possess many desired properties, such as a lattice grading and a generalized root system.\footnote{In the literature, contragredient algebras are also called algebras with a Cartan matrix, or Kac-Moody algebras. The latter is sometimes reserved for matrices that produce locally nilpotent generators.}

The main contribution of this work is the construction of contragredient Lie algebras over arbitrary symmetric tensor categories. The first obvious obstacle when generalizing from $\Vec$ or $\sVec$ to a general symmetric category is the lack of elements. A second obstacle, that is not observed when upgrading the construction from $\Vec$ to $\sVec$, is the potential existence of new algebraic tori. To illustrate, recall that the construction of the contragredient superalgebra $\fg(A,\bp)$ starts with (and depends on) a \emph{realization} of the matrix $A$. Essentially, this is an abelian Lie algebra $\fh$ in $\Vec$ playing the role of a Cartan subalgebra. The adjoint action of $\fh$ on $\fg(A,\bp)$ is then completely described by the matrix $A$. In contrast, already in $\Verp$, one encounters algebraic tori with a more sophisticated representation theory, see \cite{Ven-GLVerp}. 

\subsection{A summary of our construction and main results}
Fix a symmetric tensor category $\cC$. Our construction takes as input a \emph{contragredient datum} in $\cC$ as introduced in \Cref{def:contragredient-data}. This is a triple $(\ttX, \rho, \ttd)$ where $\ttX\in \cC$ is a Lie algebra, $\rho \colon \ttX \ot \ttV \to \ttV$ is an $\ttX$-module in $\cC$, and $\ttd \colon \ttV \ot \ttV^* \to \ttX$ is a map of $\ttX$-modules
\footnote{In reminiscence to the classical case, $\ttX$ should be thought as a placeholder for the Cartan subalgebra, $\ttV$ is a substitute for the positive generators $e_i$'s and their parity, while $\rho$ (the action of $\ttX$ on $\ttV$) was formerly encoded by the matrix $A$; finally $\ttd$ describes the bracket between $\ttV$ and $\ttV^*$.}.
From this data we define first an auxiliary Lie algebra $\tttg(\rho,\ttd)$ as the minimal quotient of the semidirect product $\FLie(\ttV\oplus \ttV^* )\rtimes_\rho \ttX$ where the bracket between $\ttV$ and $\ttV^*$ coincides with $\ttd$, see \Cref{def:gtilde}. Our first main result is concerned with the structure of this algebra.

\begin{reptheorem}{thm:gtilde-verp}
Let $(\ttX, \rho, \ttd)$ as in \Cref{def:contragredient-data}. Assume that $\ttX$ has enough modules in the sense of \Cref{def:enough-modules}. Then, for the operadic Lie algebra $\tttg\coloneq\tttg(\rho, \ttd)$, the following hold:
\begin{enumerate}[leftmargin=*,label=\rm{(\alph*)}]
\item There is a $\bZ$-grading
\begin{align*}
\tttg&=\bigoplus_{k\in\bZ} \tttg_k, & \text{such that } \deg \ttV^*&=-1, & \deg \ttX&=0, & \deg \ttV&=1.
\end{align*}
\item If $\tttn_+$ and $\tttn_-$ denote the subalgebras generated by $\ttV$ and $\ttV^*$, respectively, then the natural maps $\ttV, \ttV^* \to \tttg$ induce Lie algebras isomorphisms $\FLie(\ttV)\cong \ttn_+$ and $\FLie(\ttV^*)\cong\ttn_-$.
\item  We have $\tttg=\tttn_-\oplus \ttX \oplus\tttn_+$ as objects of $\indcat{\cC}$. Moreover,
\begin{align*}
\tttn_- &= \bigoplus_{k<0} \tttg_k, & \ttX &= \tttg_0, & \tttn_+ &= \bigoplus_{k>0} \tttg_k. 
\end{align*} 
\item Among all $\bZ$-homogeneous ideals of $\tttg$ trivially intersecting $\ttX$, there exists a unique maximal one $\ttm$. Moreover, $\ttm=\ttm_+\oplus \ttm_-$, where $\ttm_{\pm}=\ttm\cap\tttn_{\pm}$.
\end{enumerate}
\end{reptheorem}

Now we arrive at our main definition: the contragredient Lie algebra $\ttg(\rho,\ttd)$ associated to $(\ttX,\rho,\ttd)$ is the quotient $\tttg(\rho,\ttd)/\ttm$, see \Cref{def:contragredient}. Under a mild assumption on the pair $(\rho,\ttd)$, we obtain in \Cref{thm:g-contragredient-verp} the first structural properties for $\ttg(\rho,\ttd)$. In particular, it inherits from $\tttg(\rho,\ttd)$ a $\bZ$-grading and a triangular decomposition $\ttg(\rho,\ttd)=\ttn_-\oplus \ttX \oplus\ttn_+$, where $\ttn_+$ and $\ttn_-$ are the subalgebras generated by $\ttV$ and $\ttV^*$. 

To obtain further properties, we restrict to a special source of contragredient data, that we call symmetrizable and introduce in \Cref{def:symmetrizable-data}. The main difference here is the assumption that $\ttX$ comes with an invariant non-degenerate form $\ttK\colon\ttX\ot\ttX\to \one$. Then, for any module $\rho \colon \ttX\ot\ttV\to \ttV$, one can produce a map $\ttd_{\rho,\ttK} \colon \ttV \ot \ttV^* \to \ttX$ of $\ttX$-modules. 

Our first main result show that, from any $\ttX$-module decomposition $\ttV=\oplus_{1\leq i \leq r} \ttV_i$, one obtains for $\ttg(\rho,\ttd_{\rho,\ttK})$ a grading by the lattice $Q\coloneq \oplus _{1\leq i \leq r} \alpha_i \bZ$.

\begin{reptheorem}{thm:contragredient-symmetrizable-grading}
The Lie algebra $\ttg\coloneq\ttg(\rho,\ttd_{\rho,\ttK})$ admits a $Q$-grading such that
\begin{align*}
\ttg&=\bigoplus_{\alpha \in Q} \ttg_\alpha, &\deg \ttX&=0, &\deg \ttV_i&=\alpha_i, &\deg \ttV_i^*&=-\alpha_i, & 1&\leq i\leq r.
\end{align*}
Moreover, under this grading we have
\begin{align*}
\ttg_0&=\ttX, &\ttn_+&=\bigoplus_{\alpha \in Q^+, \alpha \neq 0} \ttg_\alpha,  &\ttn_-&=\bigoplus_{\alpha \in Q^-, \alpha \neq 0} \ttg_\alpha.
\end{align*}
\end{reptheorem}

In our second main theorem we show that any decomposition for $\ttV$ as above yields a non-degenerate invariant form on $\ttg(\rho,\ttd_{\rho,\ttK})$.
\begin{reptheorem}{thm:contragredient-bilinear}
The algebra $\ttg$ admits  an invariant symmetric form $\ttB\colon\ttg\ot\ttg\to\one$ such that
\begin{enumerate}[leftmargin=*,label=\rm{(\alph*)}]
\item $\restr{\ttB}{\ttX\ot\ttX}=\ttK$,
\item $\restr{\ttB}{\ttg_\alpha\ot\ttg_\beta}=0$ if $\alpha+\beta\ne0$, and
\item $\restr{\ttB}{\ttg_\alpha\ot\ttg_{-\alpha}}$ is non-degenerate for all $\alpha \in Q$.
\end{enumerate}
\end{reptheorem}

We end the paper with examples of contragredient Lie algebras, mostly in $\Verp$, coming from three sources. First, we recover general and special Lie algebras $\fgl(V)$, $\fsl(V)$ over an object $V\in\Ver_p$ as contragredient Lie algebras. Then we go back to our previous work \cite{APS-ssLieAlg}, where we studied Lie algebras in $\Ver_p$ obtained by semisimplification (following \cite{Kan}); we show in \Cref{subsection:semisimplification-examples} that these examples are also contragredient. 

Finally, in \Cref{subsec:examples-over-gl(L2)} we describe all contragredient data over the torus $\fgl(\ttL_2)\in\Lie(\Verp)$ and its subalgebras $\one$ and $\fsl(\ttL_2)$, assuming that the positive part is a simple object of  $\Ver_p$. For the torus $\ttX=\one$ we are able to compute the corresponding contragredient Lie algebras, obtaining in \Cref{prop:contragredient-Li-one} previously unknown examples of Lie algebras in $\Verp$. These examples can be thought of as generalizations of  the rank-one Lie superalgebras with non-trivial action of $\one$, namely $\fsl_2$ and $\osp(2|1)$, see \Cref{rem:sl2-osp21-generalization}. 

\bigbreak 
\noindent\textbf{Acknowledgments.} 
We are deeply grateful to Pavel Etingof and Arun Kannan for many helpful discussions. 

I. A. was partially supported by Conicet, SeCyT (UNC) and MinCyT. 
The research of J.P. was partially supported by NSF grant DMS-2146392 and by Simons Foundation Award 889000 as part of the Simons Collaboration on Global Categorical Symmetries. J.P. would like to thank the hospitality
and excellent working conditions at the Department of Mathematics at the University of Hamburg, where J. P. has carried out part of this research as an Experienced Fellow of the Alexander von Humboldt Foundation.
The work of G.S was partially supported by an NSF Simons travel grant. 

\section{Preliminaries}\label{sec:preliminaries}
\noindent\textbf{Conventions.}
We denote by $\bN$ the set of positive integers and $\bN_0=\bN\cup \{0\}$. 

Fix an algebraically closed field $\bk$ of characteristic $p\geq 0$. 
We denote by $\Vec$ and $\sVec$ the symmetric fusion categories of finite dimensional vector spaces and super vector spaces over $\bk$, respectively.

In this work, all categories are assumed essentially small. When working on an additive category $\cC$, we denote by $\oplus \colon \cC \times \cC \to \cC$ the biproduct bifunctor. Consider arrows $f_1\colon X_1 \to Y_1$ and $f_2\colon X_2 \to Y_2$ in $\cC$, then 
\begin{itemize}[leftmargin=*]
\item we reserve $f_1\oplus f_2\colon X_1\oplus X_2 \to Y_1 \oplus Y_2$ to denote the biproduct arrow;
\item if $X_1=X_2=X$ we denote the product arrow by $(f_1, f_2)\colon X \to Y_1 \oplus Y_2$;
\item if $Y_1=Y_2=Y$, we use $f_1 \sqcup f_2 \colon X_1\oplus X_2 \to Y$ to denote the coproduct arrow;
\item if $X_1=X_2=X$ and  $Y_1=Y_2=Y$, we use $f_1 \pm f_2$ to denote the arrows obtained from the group structure of $\Hom_\cC (X,Y)$.
\end{itemize}

\subsection{Symmetric tensor categories and Deligne's Theorem}
A \emph{(symmetric) tensor category} is an abelian $\bk$-linear category  which admits a rigid (symmetric) monoidal structure in a compatible way: the endomorphism space of the unit object is one-dimensional and the tensor product functor is $\bk$-bilinear on morphisms (thus exact by rigidity \cite{EGNO}*{Proposition 4.2.1}). Notice that we adopt the conventions of \cite{CEO} which differ from those of \cites{Del90, Del02, EGNO}.

Given an object $X$ in an abelian category $\cC$, the \emph{length} $\ell(X)$ of $X$ is the supremum of the set of integers $m$ for which $X$ admits an strictly increasing family of subobjects $0\subsetneq X_1 \subsetneq\dots\subsetneq X_m=X$ in $\cC$. 

A \emph{pre-Tannakian} category is a symmetric tensor category wherein all objects have finite length. Under this assumption all Hom spaces turn out to be finite dimensional. 

\smallbreak
Throughout this work, tensor functors are assumed (or required) to be exact. More precisely, a \emph{(symmetric) tensor functor}  is an exact $\bk$-linear (symmetric) monoidal functor between (symmetric) tensor categories. Notice that, for a monoidal $\bk$-linear functor between tensor categories, exactness is equivalent to faithfulness thanks to \cite{Del90}*{Corollaire 2.10 (i)} and \cite{CEOP}*{Theorem 2.4.1}.

We sometimes say that a pre-Tannakian category $\cC$ \emph{fibers over} some other pre-Tannakian category $\cD$ if there is a symmetric tensor functor $\cC\to\cD$. This terminology fits well with the classical one (e.g. that from \cite{Saa}) thanks to \cite{CEO-incomp}*{\S 4}, where Tannaka-Krein duality over a pre-Tannakian target is established. Roughly speaking, this means that both $\cC$ and the fiber functor can be uniquely recovered from group-scheme theory in $\cD$.

\subsubsection{Semisimplification of symmetric tensor categories}
Consider a $\bk$-linear monoidal category $\cC$. A tensor ideal $\cI$ in $\cC$ is a collection of subspaces $\cI=\{\cI(X,Y)\subseteq \Hom_\cC(X,Y)\}$, where $X, Y$ are objects in $\cC$, which is compatible with compositions and tensor products. This enables us to form the \emph{quotient category} of $\cC$ by $\cI$, which has the same objects as $\cC$, but the new spaces of morphisms are the quotients $ \Hom_\cC(X,Y)/ \cI(X,Y)$. This quotient procedure is compatible with the $\bk$-linear monoidal structure (among many other properties and structures) of the original category.

An special instance of this constructions starts with an spherical category $\cC$ \cite{EGNO}*{\S 4.7} and considers the tensor ideal of \emph{negligible} objects. In this case the quotient, known as the \emph{semisimplification} of $\cC$, turns out to be semisimple, where the simples are the indecomposables in $\cC$ with non-zero categorical dimension. See \cite{EO-ss} and references therein.

\subsubsection{Ind-completions of symmetric tensor categories}
For a symmetric tensor category $\cC$, we denote by $\indcat{\cC}$ its \emph{ind-completion}, which is defined to be the closure of $\cC$ under filtered colimits. 
Since the tensor product of $\cC$ is exact, it preserves colimits and induces a tensor product on $\indcat{\cC}$. By naturality, $\indcat{\cC}$ inherits the braiding from $\cC$. Hence $\indcat{\cC}$ is a $\bk$-linear abelian category with an exact and symmetric tensor product, and there is an exact symmetric embedding $\cC \hookrightarrow \indcat{\cC}$ that is universal in some sense, see \cite{Kash-Shap}.
We shall refer to the objects of $\indcat{\cC}$ as ind-objects of $\cC$. The genuine objects of $\cC$ are then recovered as ind-objects of finite length. If its clear from the context that $X$ and $Y$ are ind-objects, we write $\Hom_{\cC} (X,Y)$ instead of $\Hom_{\indcat{\cC}} (X,Y)$. 
As an example, if $\cC$ is moreover fusion, that is, finite and semisimple, then the objects of $\indcat{\cC}$ are (possibly infinite) direct sums of simple objects in $\cC$.

\subsubsection{Vanishing of alternating powers} Given a positive integer $n$ and an object $X$ in a symmetric tensor category, the $n$-th alternating power $\op{A}^n(X)$ is defined as the image of $\sum_{\sigma\in S_n}(-1)^{\op{sign}(\sigma)} \sigma \colon X^{\ot n} \to X^{\ot n}$. Here we follow the terminology of \cite{Del90}, and point out that there are other possible definitions of alternating powers of $X$ in $\cC$, which are equivalent in $\Vec$ but differ already in $\sVec$ if $p>2$, see \cite{EHO}*{\S 2.1}. 
A first version of Deligne's Theorem states that, in characteristic $0$, vanishing of alternating powers detects Tannakian categories (those of the form $\Rep(G)$ for an affine group scheme $G$) among all pre-Tannakian categories.

\begin{theorem}{\cites{Del90, Del02}} \label{thm:Deligne-Tannakian} Assume $\op{char} \bk=0$ and let $\cC$ be a pre-Tannakian category over $\bk$. Then $\cC$ admits a symmetric tensor functor $\cC\to\Vec$ if and only if for each $X$ in $\cC$ there is some $n\ge 1$ such that $\op{A}^n(X)=0$.
\end{theorem}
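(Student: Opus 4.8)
The statement to prove is Deligne's Theorem (\Cref{thm:Deligne-Tannakian}): in characteristic zero, a pre-Tannakian category admits a fiber functor to $\Vec$ if and only if every object is annihilated by some alternating power.

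\medbreak

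\textbf{Plan of proof.} The forward direction is immediate: if $F\colon\cC\to\Vec$ is a symmetric tensor functor, then $F$ is exact and faithful, and $F(\op{A}^n(X))=\op{A}^n(F(X))$; since $F(X)$ is a finite-dimensional vector space of some dimension $d$, we have $\op{A}^{d+1}(F(X))=0$, and faithfulness of $F$ forces $\op{A}^{d+1}(X)=0$. So the content is entirely in the converse, which is Deligne's theorem proper; I will only sketch its architecture since a self-contained proof is long.

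\medbreak

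The strategy I would follow is Deligne's: reconstruct the group scheme. First, pass to the ind-completion $\indcat{\cC}$ and consider the algebra of functions. Concretely, one wants to produce a commutative algebra object (in fact a Hopf algebra) in $\indcat{\cC}$ whose category of comodules, suitably interpreted, recovers $\cC$; equivalently, one wants a fiber functor. The key first step is to show that the hypothesis (vanishing of some alternating power of each object) implies that $\cC$ has \emph{moderate growth}, or at least that one can make sense of the categorical/intrinsic dimension of objects so that the symmetric powers $\op{S}^n(X)$ grow at most polynomially and the alternating algebra $\op{A}^\bullet(X)$ is finite-dimensional. This finiteness is what makes the Koszul-type resolutions available. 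The second step is the crucial technical heart: for each object $X$ one builds, using the finite-dimensional exterior algebra $\op{A}^\bullet(X)$ and the symmetric algebra $\op{S}^\bullet(X)$ in $\indcat{\cC}$, a Koszul complex, and uses its exactness (valid in characteristic zero, where the Koszul complex of a ``vector space'' is exact) to compute $\mathrm{Ext}$ or Tor-type invariants. Deligne's insight is that the super-dimension $\sum (-1)^i \dim \op{A}^i(X)$ (a nonnegative integer by the vanishing hypothesis) behaves like an honest dimension and governs a $\mathbb{Z}/2$-graded refinement; in characteristic zero one then shows the odd part can be absorbed, i.e. one can choose the grading so that $\cC$ fibers over $\Vec$ rather than merely $\sVec$. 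The passage from ``fibers over $\sVec$'' (which is Deligne's general 1990/2002 theorem under moderate growth, itself proved by the ultraproduct/interpolation category techniques together with the reconstruction theorem for super-Tannakian categories) to ``fibers over $\Vec$'' is exactly where the alternating-power hypothesis is used decisively, as opposed to the symmetric-power or general finiteness hypotheses.

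\medbreak

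So the skeleton is: (i) hypothesis $\Rightarrow$ finiteness of exterior algebras $\Rightarrow$ moderate growth; (ii) Deligne's super-Tannakian reconstruction: any pre-Tannakian category of moderate growth admits a symmetric tensor functor to $\sVec$ — this is the deep input, proved via the theory of the interpolation categories $\underline{\mathrm{Rep}}(S_t)$, $\underline{\mathrm{Rep}}(\mathrm{GL}_t)$, etc., and ultrafilter arguments, plus Tannaka–Krein duality in $\sVec$; (iii) refine the fiber functor $\cC\to\sVec$ to a fiber functor $\cC\to\Vec$ using that $\op{A}^n(X)=0$ kills the odd contributions — concretely, the composite $\cC\to\sVec$ lands in the even part because otherwise some object would have an odd summand, whose exterior powers in Deligne's sense never vanish, contradicting the hypothesis. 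Then $\cC\to\sVec_{\mathrm{even}}=\Vec$ is the desired functor, and by \cite{Del90}*{Corollaire 2.10 (i)} it is automatically faithful, hence a genuine symmetric tensor functor, so $\cC$ is Tannakian.

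\medbreak

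\textbf{Main obstacle.} The genuinely hard part is step (ii), the reconstruction of a fiber functor to $\sVec$ for an arbitrary pre-Tannakian category of moderate growth — this is the substance of \cites{Del90,Del02} and not something one proves from scratch in a few lines; here it is legitimately invoked as a cited black box. Within the scope of what is \emph{new} in this argument, the delicate point is step (iii): verifying that the vanishing of (Deligne's) alternating powers is precisely the obstruction to the target being $\Vec$ rather than $\sVec$, which requires carefully matching the chosen convention for $\op{A}^n$ (the image of the antisymmetrizer, as fixed in the excerpt, which in $\sVec$ with $p>2$ differs from other candidates) with the $\mathbb{Z}/2$-grading on the target, and checking that an object with nonzero odd part indeed has all such alternating powers nonzero.
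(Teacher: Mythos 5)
This statement is one the paper does not prove: it is recalled verbatim from Deligne's work and carries only the citations \cite{Del90} and \cite{Del02}, so there is no internal argument to compare yours against. Your forward direction is correct and complete: a symmetric tensor functor $F\colon\cC\to\Vec$ is exact, hence commutes with taking the image of the antisymmetrizer, so $F(\op{A}^{d+1}(X))=\op{A}^{d+1}(F(X))=0$ with $d=\dim F(X)$, and faithfulness kills $\op{A}^{d+1}(X)$. For the converse you appropriately treat Deligne's reconstruction as a black box, which is exactly what the paper does; your step (iii) is also sound with the paper's convention for $\op{A}^n$ (the image of $\sum_\sigma(-1)^{\operatorname{sign}\sigma}\sigma$), since an object of $\sVec$ with nonzero odd part has $\op{A}^n\ne0$ for all $n$ (on a purely odd summand the antisymmetrizer restricts to the honest symmetrizer of the underlying space), so a super fiber functor is forced to land in the even part. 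Two caveats on attribution rather than correctness: the ultraproduct/interpolation-category route you mention is a later alternative proof, not Deligne's argument in \cite{Del02}, whose own proof proceeds via Schur-functor annihilation and a direct construction of a super fiber functor; and \cite{Del90}*{\S 7} proves the Tannakian case directly (integrality of dimensions, invertibility of the top exterior power, and construction of a $\GL_d$-torsor) without passing through $\sVec$ at all, so your route (i)--(iii) is a genuinely different, though valid, assembly of the two cited theorems. Since the hard input is legitimately cited in both cases, there is no gap.
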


\subsubsection{Moderate growth} 
Consider a tensor category $\cC$ where all objects have finite length. Then $\cC$ has \emph{moderate growth} if for each object $X$ there is some real number $a_X$ so that $\ell(X^{\ot n})\leq a_X^n$ for all $n\geq 1$. 
Famously, if $\op{char} \bk=0$, this growth condition detects categories of finite-dimensional representations of affine proalgebraic supergroups among all pre-Tannakian categories. More precisely:

\begin{theorem}{\cite{Del02}}  \label{thm:Deligne-super-Tannakian} Assume $\op{char} \bk=0$ and let $\cC$ be a pre-Tannakian category over $\bk$. Then $\cC$ admits a symmetric tensor functor $\cC\to\sVec$ if and only if it has moderate growth.
\end{theorem}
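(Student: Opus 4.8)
The plan is to recall that this is Deligne's theorem \cite{Del02} and to indicate the strategy rather than reproduce the argument. The ``only if'' implication is immediate: given a symmetric tensor functor $F\colon\cC\to\sVec$, faithfulness and exactness send a composition series of an object $X$ to a filtration of $F(X)$ with nonzero subquotients, so $\ell(X)\le\dim F(X)$; since $F$ is monoidal, $\ell(X^{\ot n})\le\dim F(X^{\ot n})=(\dim F(X))^{n}$, which is exactly moderate growth with $a_X=\dim F(X)$.

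For the converse I would first reduce to the case that $\cC=\langle X\rangle$ is generated, as a tensor category, by a single object. Indeed, a fiber functor on $\cC$ amounts to a compatible family of fiber functors on the filtered system of finitely (equivalently, after taking finite direct sums, singly) generated tensor subcategories; such a family can be assembled because on each subcategory the fiber functors to $\sVec$ form a torsor under an affine super-group scheme, so the transition maps in the system are surjective on points and the inverse limit is nonempty. Next, in characteristic zero Schur--Weyl duality gives a decomposition $X^{\ot n}=\bigoplus_{|\lambda|=n}S_\lambda(X)\ot V_\lambda$ into Schur functors $S_\lambda(X)$ and Specht modules $V_\lambda$, whence $a_X^{\,n}\ge\ell(X^{\ot n})=\sum_{|\lambda|=n}(\dim V_\lambda)\,\ell(S_\lambda(X))$. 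By Pieri's rule the set $\{\lambda:S_\lambda(X)=0\}$ is upward closed under inclusion of Young diagrams; were it empty, we would get $\ell(X^{\ot n})\ge\sum_{|\lambda|=n}\dim V_\lambda$, the number of involutions in $S_n$, which grows faster than any exponential and contradicts moderate growth. Hence some Schur functor annihilates $X$, and a further structural argument of Deligne upgrades this to the \emph{hook condition}: there exist $m,n\ge 0$ with $S_\lambda(X)=0$ whenever $\lambda_{m+1}>n$.

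The hard part --- the technical core of \cite{Del02} --- is to pass from the hook condition, holding for all objects, to an honest symmetric tensor functor $\cC\to\sVec$. Here the hook shape $(m|n)$ of an object is to be read as its super-dimension, and the goal is to realize $\cC$ as $\Rep(G,\epsilon)$ for an affine pro-super-group scheme $G$ equipped with a central element $\epsilon$ of order dividing $2$ implementing the parity, the fiber functor being the one that forgets the $G$-action. I expect the genuine obstacle to be the construction of $G$ (equivalently, of the functor): one must produce $\ot$-compatible, functorial splittings of every object into even and odd parts and assemble the coend-type Hopf algebra corepresenting $G$, and verifying that all of this is coherent uniformly over $\cC$ is precisely what makes Deligne's proof long. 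For the complete argument I refer to \cite{Del02}.
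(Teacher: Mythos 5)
The paper states this result as a quoted theorem of Deligne \cite{Del02} and supplies no proof of its own, so there is nothing internal to compare against; your ``only if'' direction (a faithful exact monoidal functor bounds $\ell(X^{\ot n})$ by $(\dim F(X))^n$) is correct and complete. Your outline of the converse --- Schur--Weyl decomposition, the super-exponential growth of $\sum_{|\lambda|=n}\dim V_\lambda$ forcing some Schur functor to vanish, the upgrade to the hook condition, and the construction of the super fiber functor as the technical core --- faithfully reflects the architecture of Deligne's argument, and you correctly identify and defer the genuinely hard step to \cite{Del02}, exactly as the paper does.
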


It is well known (see e.g.~\cites{GelKaz, GeoMat, Ven1}) that this characterization fails to hold over fields of positive characteristic, where the problem of obtaining analogue statements remained somewhat unexplored until Ostrik's recent breakthrough \cite{Ost2}. From the moment of its appearance, \cite{Ost2} motivated and inspired many other remarkable works concerning symmetric tensor categories of moderate growth over fields of positive characteristic. In the next Sections, we discuss this still unveiling story.

\subsection{The Verlinde category and Ostrik's Theorem}\label{subsec:verp}

The first Frobenius kernel $\balp_p$ of the additive group scheme over a field of characteristic $p>0$ is represented by the Hopf algebra $\bk[t]/(t^p)$ where $t$ is primitive. By self-duality, indecomposables in $\Rep(\balp_p)$ are parametrized, up to isomorphism, by Jordan blocks $L_i$ of size $i$ with $1\leq i \leq p$. 

By definition, the Verlinde category $\Verp$ is the semisimplification of the symmetric tensor category $\Rep(\balp_p)$, see \cites{GelKaz, GeoMat, Ost2}. Hence, $\Ver_p$ is a symmetric fusion category equipped with a $\bk$-linear symmetric monoidal functor $\sfS \colon \Rep(\balp_p) \to \Verp$. Simple objects of $\Verp$ are given, up to isomorphism, by the images $\ttL_i$ of the indecomposables $L_i$ with $1\leq i \leq p-1$ of $\Rep (\balp_p)$, and the monoidal unit is $\one=\ttL_1$. Non-faithfulness of the semisimplification functor becomes evident by looking at the indecomposable $L_p$, which is zero-quantum-dimensional and thus annihilated. In $\Verp$ all simple objects are self-dual, and the fusion rules are given by the truncated Clebsch-Gordan formula
\begin{align}\label{eq:verp-fusionrules}
\ttL_i \ot \ttL_j &= \bigoplus_{k=1}^{\min\{i, j, p-i, p-j\}} \ttL_{\vert i-j \vert + 2k-1},& 1&\leq i, j \leq p-1.
\end{align}
Notice that $\Ver_2=\Vec$ and $\Ver_3=\sVec$. Furthermore, for any $p\geq 3$, the abelian subcategory of $\Verp$ generated by $\ttL_1$ and $\ttL_{p-1}$ is a fusion category that turns out to be equivalent, as a symmetric tensor category, to $\sVec$, see \cites{Ost2,Kan}. 

On the other hand, when $p\geq 5$, the fusion rules \eqref{eq:verp-fusionrules} imply that the abelian subcategory of $\Verp$ spanned by simples $\ttL_i$ with $i$ odd is again a fusion subcategory, denoted by $\Verp^+$. By \cite{Ost2}, the subcategory $\Verp^+$ is tensor generated  by $\ttL_3$, and we have $\Verp \cong \sVec\boxtimes \Verp^+$ as symmetric fusion categories. 

By a dimension argument, as long as $p\geq 5$, neither $\Verp$ nor $\Verp^+$ fiber over $\sVec$. For instance, when $p=5$, the object $\ttL_3$ satisfies $\ttL_3^{\ot 2} = \one \oplus \ttL_3$. The next result is in some sense the starting point of the modern theory of symmetric tensor categories of moderate growth.

\begin{theorem}{\cite{Ost2}} Assume $\op{char} \bk=p>0$. Then any symmetric fusion category $\cC$ over $\bk$ admits a symmetric tensor functor $\cC\to\Verp$.
\end{theorem}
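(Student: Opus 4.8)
The plan is to follow the strategy of the cited work: analyze the symmetric tensor subcategory generated by a single object, observe that categorical dimensions are forced into the prime field, and then patch the resulting functors together. Since $\cC$ is fusion it has finitely many isomorphism classes of simple objects; the plan is to build a symmetric tensor functor on each subcategory $\langle Z\rangle$ tensor generated by a single simple object $Z$, and then assemble these into a functor on all of $\cC$. Recall also that a $\bk$-linear monoidal functor between tensor categories is a tensor functor exactly when it is faithful, so every functor we construct below will automatically be faithful.

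\textbf{Step 1: categorical dimensions lie in $\bF_p$.} For $Y\in\cC$ let $d=\dim Y\in\bk$ be its categorical dimension. The key point, a purely positive-characteristic phenomenon with no analogue when $\bk$ has characteristic zero, is that $d$ satisfies $d^p=d$, hence $d\in\bF_p$. The argument is a Frobenius-twist computation: one forms $Y^{\ot p}$ together with its $\bZ/p$-action by cyclic permutation of tensor factors and compares $\dim(Y^{\ot p})=d^p$ with the separate contributions of the fixed ``diagonal'' subobject and of the induced ``free'' summands, which is the categorical shadow of the congruence $(\dim V)^p\equiv\dim V\pmod p$ for an ordinary vector space $V$.

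\textbf{Step 2: subcategories generated by one object embed in $\Verp$.} Fix a simple object $Z$ with $\dim Z=\bar m\in\bF_p$. The subcategory $\langle Z\rangle$ is governed by the images of the group algebras $\bk[S_k]$ acting on $Z^{\ot k}$ through the symmetry, together with the fusion pattern they impose. One matches this data with the corresponding endomorphism algebras and fusion inside $\Verp$ — using the fusion rules \eqref{eq:verp-fusionrules} and the decomposition $\Verp\cong\sVec\boxtimes\Verp^+$ — thereby producing a symmetric tensor functor $\langle Z\rangle\to\Verp$. The values $\bar m=\pm1$ land in the copy of $\sVec$ inside $\Verp$, whereas $\bar m=0$ (possible for a simple object only in positive characteristic) forces $Z$ to be sent to a non-simple object of $\Verp$ of vanishing categorical dimension and requires a finer analysis.

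\textbf{Step 3: globalizing, and the main obstacle.} It remains to glue the functors defined on the subcategories generated by the individual simple objects into a single symmetric tensor functor $\cC\to\Verp$; this is the crux of the proof, and the step where a Deligne-type shortcut is unavailable in characteristic $p$. The organizing tool should be the Frobenius functor of $\cC$: its exactness and multiplicativity, together with the faithfulness that semisimplicity forces, should suffice to run a Tannaka--Krein reconstruction and exhibit $\cC$ as $\Rep(G,\phi)$ for an affine group scheme $G$ in $\Verp$ together with a map $\phi\colon\pi(\Verp)\to G$ inducing the canonical action. The hard part is establishing the coherence this requires — that the locally defined functors agree on overlaps and genuinely patch — which is exactly where the interaction between the semisimple categorical structure of $\cC$ and its symmetry becomes indispensable.
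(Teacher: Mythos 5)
The paper does not actually prove this statement; it is quoted as a black box from Ostrik's work \cite{Ost2}, so there is no in-paper argument to compare yours against, and your proposal must be judged on its own terms. It has two genuine gaps. First, Step 2 --- the construction of a symmetric tensor functor $\langle Z\rangle\to\Verp$ for a single simple generator $Z$ --- is where essentially the full strength of the theorem lies, and ``one matches this data with the corresponding endomorphism algebras and fusion inside $\Verp$'' is an assertion, not an argument: nothing is said about why the images of $\bk[S_k]$ in $\End(Z^{\ot k})$, together with the fusion pattern and the symmetry, must be realizable inside $\Verp$, and the case $\bar m=0$ is explicitly deferred (``requires a finer analysis''). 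Step 1 (that categorical dimensions lie in $\bF_p$) is correct and standard, but it comes nowhere near pinning down $\langle Z\rangle$.

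Second, the globalization in Step 3 does not work as described. The subcategories $\langle Z\rangle$ for distinct simples $Z$ do not cover $\cC$ in any sense that permits patching: a symmetric tensor functor on $\cC$ must be defined coherently on all mixed products $Z_1\ot Z_2\ot\cdots$ of simples from different generators, and there is no overlap or cocycle condition among the $\langle Z_i\rangle$ that would determine these values from the local functors. Moreover, the appeal to Tannaka--Krein reconstruction over $\Verp$ is circular: exhibiting $\cC$ as $\Rep(G,\phi)$ for an affine group scheme $G$ in $\Verp$ presupposes a symmetric tensor functor $\cC\to\Verp$, which is precisely what is to be constructed. You correctly identify the Frobenius functor as the organizing tool of \cite{Ost2}, but the argument there is not a local-to-global patching; it studies $X\mapsto X^{\ot p}$ with its cyclic $\bZ/p$-action to produce a globally defined symmetric monoidal functor $\cC\to\cC\boxtimes\Verp$ from the outset, and then extracts the fiber functor from its structure. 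As written, your proposal records plausible ingredients but leaves the two decisive steps unproved.
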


We compute for later use the self braiding of the simple objects $\ttL_i$ in $\Verp$. 
\begin{lemma}\label{lem:symmetry-Li-ot-Li}
Let $1\le i<p$. With respect to the fixed decomposition \eqref{eq:verp-fusionrules}, the symmetry $c:=c_{\ttL_i\ot\ttL_i}$ is given by the diagonal matrix $\mathrm{diag}\left((-1)^{i-k}\right)_{1\le k\le \min\{i, p-i\}}$. In other words,
\begin{align*}
\restr{c}{\ttL_{2k-1}}&=(-1)^{i-k}\id_{\ttL_{2k-1}}, & &1\le k\le \min\{i, p-i\}.
\end{align*}
\end{lemma}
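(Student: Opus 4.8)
The plan is to reduce the computation to the category $\Rep(\balp_p)$, where we can work with honest linear maps on explicit Jordan blocks, and then transport the answer through the semisimplification functor $\sfS$. More precisely, the indecomposable $L_i$ of $\Rep(\balp_p)$ is the module $\bk[t]/(t^i)$ over $\bk[t]/(t^p)$, with $t$ acting as a single nilpotent Jordan block; its image under $\sfS$ is $\ttL_i$, and $\sfS$ is a symmetric monoidal functor, so $\sfS(c_{L_i\ot L_i}) = c_{\ttL_i\ot\ttL_i}$ after identifying $\sfS(L_i\ot L_i)$ with the semisimplification of $L_i\otimes L_i$. The braiding on $\Rep(\balp_p)$ is the one coming from the triangular (or quasi-triangular) structure on $\bk[t]/(t^p)$ dual to the additive group scheme Frobenius kernel: explicitly $c_{M\ot N}(m\ot n) = \sum_{a\ge 0} \frac{(-1)^a}{a!}\, (t^a n)\ot (t^a m)$ — a finite sum since $t^p=0$ — which on $L_i\otimes L_i$ is $\tau\circ R$ where $\tau$ is the flip and $R$ acts by the truncated exponential-type element.

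First I would fix the standard basis $e_0,\dots,e_{i-1}$ of $L_i$ with $t\cdot e_\ell = e_{\ell+1}$ (and $e_i=0$), so that $L_i\otimes L_i$ has basis $e_a\otimes e_b$. The indecomposable summands of $L_i\otimes L_i$ over $\bk[t]/(t^p)$ are known (a truncated Clebsch–Gordan decomposition at the level of $\Rep(\balp_p)$, with possible $L_p$-summands which $\sfS$ kills), and each surviving summand $L_{2k-1}$, $1\le k\le \min\{i,p-i\}$, is generated by a highest-weight vector $v_k$ annihilated by a suitable power of $t$; the nonzero-dimensional summands are exactly the ones with $1\le k\le\min\{i,p-i\}$, matching \eqref{eq:verp-fusionrules}. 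The key computational step is: apply the braiding $c=\tau\circ R$ to each highest-weight generator $v_k$ and check that $c(v_k) = (-1)^{i-k} v_k$. Because $R = \sum_a \frac{(-1)^a}{a!} t^a\otimes t^a$ and $v_k$ is a specific (antisymmetric/symmetric-type) combination of the $e_a\otimes e_b$, this is a clean finite check; the sign $(-1)^{i-k}$ should pop out essentially from the parity of the ``distance'' between the block $L_{2k-1}$ and the top of the tensor product, i.e. from how many times one applies the flip-with-sign relative to the length. One can either do this directly on a chosen $v_k$, or more slickly: the top summand $k=\min$ corresponds to $\op{A}^{?}$- or $\op{Sym}$-type behavior whose self-braiding sign is forced, and then descending one summand at a time flips the sign, by an induction comparing $v_k$ with $v_{k+1}$ via the action of $t$ (which commutes with $R$ and anticommutes in sign with $\tau$ in the appropriate graded sense).

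An alternative, perhaps cleaner, route avoids explicit bases entirely: the braiding $c_{\ttL_i\ot\ttL_i}$ is an involution (as $\cC$ is symmetric) that is $\ttL_i$-bilinear-natural, hence acts by a scalar $\varepsilon_k\in\{\pm1\}$ on each simple summand $\ttL_{2k-1}$; it then suffices to pin down these signs. Two constraints nearly do it: (i) the categorical dimension identity $\dim\ttL_i = \op{tr}(c_{\ttL_i\ot\ttL_i})$ is not quite enough alone, but combined with (ii) naturality under the inclusion of the top summand and the known dimensions $\dim\ttL_j$ (which in $\Verp^+\boxtimes\sVec$ are $\pm$ quantum integers), the system $\sum_k \varepsilon_k \dim\ttL_{2k-1} = (\dim\ttL_i)^2$ together with its analogues for $\ttL_i\ot\ttL_{i'}$ and associativity/hexagon constraints force the alternating pattern $\varepsilon_k=(-1)^{i-k}$. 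I would likely present the basis computation as the main argument for transparency and mention this dimension-theoretic cross-check.

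The main obstacle I anticipate is bookkeeping around the truncation: when $p-i < i$, the tensor square $L_i\otimes L_i$ acquires summands isomorphic to $L_p$ (the negligible block) in addition to the $L_{2k-1}$ with $k\le p-i$, and one must be careful that the braiding respects this decomposition and that passing to $\sfS$ genuinely discards the $L_p$-part without contaminating the signs on the surviving $\ttL_{2k-1}$ — this is where the hypothesis $1\le i<p$ and the bound $\min\{i,p-i\}$ are doing real work. A secondary nuisance is matching the sign conventions of \cite{CEO} for the symmetry and for the identification $\Verp\cong\sVec\boxtimes\Verp^+$, since an off-by-$(-1)^i$ global normalization would survive into the statement; I would resolve this by checking the formula against the known small cases $\ttL_{p-1}$ (which must self-brade by $-1$ on its ``odd'' part, consistent with it generating a copy of $\sVec$) and against $p=2,3$ where $\Verp=\Vec,\sVec$.
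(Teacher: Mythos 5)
Your overall strategy (lift to $\Rep(\balp_p)$, decompose $L_i\ot L_i$ into indecomposables, evaluate the braiding on each summand, push through $\sfS$) is the same as the paper's, but your central computational input is wrong: the symmetric structure on $\Rep(\balp_p)$ used to define $\Verp$ is the \emph{plain flip} $\tau$, not $\tau\circ R$ with $R=\sum_a \tfrac{(-1)^a}{a!}\,t^a\ot t^a$. The Hopf algebra $\bk[t]/(t^p)$ with $t$ primitive is cocommutative, so no nontrivial $R$-matrix enters; worse, your $R$ is symmetric but not its own inverse for $p>2$, so $\tau\circ R$ would not even be a symmetric braiding. If you run your ``apply $c$ to highest-weight vectors'' step with that $R$ you will not get the scalars $(-1)^{i-k}$, so this is a genuine gap, not a matter of convention. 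With the correct braiding the problem becomes purely one of bookkeeping: decide, for each Clebsch--Gordan summand $L_{2k-1}\subseteq L_i\ot L_i$, whether it lies in the symmetric or the antisymmetric part of the flip. The paper does exactly this by transporting $L_i\ot L_i$ to $M=\Bbbk[x,y]/(x^i,y^i)$ with $t$ acting as multiplication by $x+y$, where the flip becomes $x\leftrightarrow y$; a filtration-by-polynomial-degree argument then shows the indecomposable summands alternate between $M_S$ and $M_A$ starting with $L_{2i-1}\subseteq M_S$, which is precisely the sign pattern $(-1)^{i-k}$.

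Two smaller points. First, for $i>\tfrac{p-1}{2}$ the paper avoids the negligible $L_p$-summands you worry about by instead using $\ttL_i\simeq\ttL_{p-1}\ot\ttL_{p-i}$, naturality of the symmetry, and $c_{\ttL_{p-1},\ttL_{p-1}}=-\id$, reducing to the small-$i$ case; your plan to track the $L_p$ blocks directly is feasible but you do not carry it out. Second, in your dimension cross-check the trace identity should read $\sum_k\varepsilon_k\dim\ttL_{2k-1}=\op{tr}(c_{\ttL_i,\ttL_i})=\dim\ttL_i$, not $(\dim\ttL_i)^2$; as stated it would ``verify'' the wrong signs.
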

\begin{proof}
Take first $i\le \frac{p-1}{2}$. Fix a cyclic basis $(v_j)_{1\leq j \leq i}$ of the indecomposable $\balp_p$-module $L_i$, which means that $t\cdot v_j=v_{j+1}$ if $j<i$ and $t\cdot v_i=0$. On the other hand, consider $M\coloneq \Bbbk[x,y]/(x^i,y^i)$; since $ \overline{(x+y)^p}=\overline{x^p+y^p}=0$, the rule $t\cdot f=(x+y)f$ defines an $\balp_p$-module structure on $M$. Furthermore, the map $L_i\ot L_i\to M$, $v_j\ot v_k\mapsto x^{j-1}y^{k-1}$, is an isomorphism of $\balp_p$-modules. We decompose $M$ to compute the self braiding of $L_i$.

Let $M_S$ and $M_A$ denote the subspaces of symmetric and antisymmetric polynomials modulo $(x^i,y^i)$, respectively. Thus $M=M_S\oplus M_A$ as $\balp_p$-modules. We decompose further
\begin{align*}
M_S&=L_{j_1}\oplus\cdots\oplus L_{j_r}, & M_A&=L_{j_{r+1}}\oplus\cdots\oplus L_{j_i}, &j_1>\cdots >j_r, & \, j_{r+1}>\cdots>j_i,
\end{align*}
as sums of indecomposable submodules, with each $L_{j}$ generated by $v_{j}$ as $\balp_p$-module, so $\{j_1,\dots,j_i \}=\{2k-1\colon 1\leq k\leq i\}$. Polynomial degree in $\Bbbk[x,y]$ induces an $\bN_0$-graded algebra structure on $M$, say
$M=\oplus_{0\le k\le 2i-2} M_k$, which satisfies  $t\cdot M_k\subseteq M_{k+1}$. Let $\pi_k\colon M\twoheadrightarrow M_k$ be the canonical projection. As $M_A\subseteq \oplus_{1\le k\le 2i-2} M_k$, we have that $t^{2i-2}\cdot M_A=0$. Thus $j_1=2i-1$, with $\pi_0(v_{j_1})\ne 0$. As $M_S\cap M_1$ and $M_S\cap M_{2i-2}$ are one-dimensional, since $0\ne t\cdot v_{j_1}\in M_{2t-2}$, we have that $L_{j_2}\oplus\cdots\oplus L_{j_k} \subseteq \oplus_{2\le k\le 2i-3} M_k$, so $t^{2i-4}$ annihilates these summands, which implies that $j_{r+1}=2i-3$, and $\pi_1(v_{j_{r+1}})\ne 0$. Similarly, $L_{j_{r+2}}\oplus\cdots\oplus L_{j_i} \subseteq \oplus_{3\le k\le 2i-4} M_k$, so $j_2=2i-5$. A similar argument shows recursively that $r=\tfrac{i}{2}$ if $i$ is even and $r=\tfrac{i+1}{2}$ if $i$ is odd, with 
\begin{align*}
j_1& =2i-1, j_2=2i-5, \dots, j_r=\begin{cases} 3 & i\text{ even}, \\ 1 & i\text{ odd},\end{cases}  & 
j_{r+1}&=2i-3, \dots, j_i =\begin{cases} 1 & i\text{ even}, \\ 3 & i\text{ odd},\end{cases}
\end{align*}
and the statement follows by applying the semisimplification functor $\sfS$.

\smallbreak
Consider now $i\ge \frac{p-1}{2}$. The isomorphism $\ttL_i\simeq \ttL_{p-1}\otimes \ttL_{p-i}$ induces an isomorphism
\begin{align*}
\ttL_i \ot \ttL_i\simeq \ttL_{p-1}\otimes \ttL_{p-i} \ot \ttL_{p-1}\otimes \ttL_{p-i}
\end{align*}
Using the naturality of the symmetry, $\restr{c}{\ttL_{2k-1}}$ is given by 
the restriction of $c_{\ttL_{p-1}\otimes \ttL_{p-i},\ttL_{p-1}\otimes \ttL_{p-i}}$ composed with $c_{\ttL_{p-1},\ttL_{p-1}}=-\id_{L_1}$, so the statement follows from the first case.
\end{proof}

\subsection{Frobenius exact categories and Coulembier-Etingof-Ostrik's Theorem}\label{subsec:Frob-exact} 
One of the main contributions of \cite{Ost2} is the construction of the \emph{Frobenius functor} $\cC\to\cC^{(1)}\boxtimes \Verp$, which comes as a generalization of the pullback $\Rep(G)\to \Rep(G)^{(1)}$ along the Frobenius map $\lambda\mapsto\lambda^p$ for an affine group scheme $G$. A priori, the Frobenius functor in its original form \cite{Ost2} was only available for \emph{semisimple} pre-Tannakian categories. The task of making it available in a more general setting immediately garnered relevance, and was independently achieved in \cites{Cou-tan, EO-frob}, producing slightly different versions of a $\bk$-linear functor $\Fr \colon \cC\to\cC^{(1)}\boxtimes \Verp$ for any pre-Tannakian category $\cC$. These approaches were unified in \cite{CEO}. It turns out that \emph{Frobenius exactness} (i.e. exactness of $\Fr$) is the unique novel obstruction for a pre-Tannakian category to fiber over $\Verp$. 

\begin{theorem}{\cite{CEO}}
Assume $\op{char} \bk=p>0$. Then, for a pre-Tannakian category $\cC$, the following are equivalent:
\begin{enumerate}[leftmargin=*, label=(\arabic*)]
\item $\cC$ admits a symmetric tensor functor $\cC\to\Verp$;
\item $\cC$ is Frobenius exact and of moderate growth;
\item $\cC$ is Frobenius exact and for each object $X$ in $\cC$ there is some $n\geq1$ so that $\op{A}^n(X)=0$.
\end{enumerate}
\end{theorem}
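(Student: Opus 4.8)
The plan is to dispatch the two transparent implications $(1)\Rightarrow(2)$ and $(1)\Rightarrow(3)$ by transporting the relevant properties of $\Verp$ backwards along a fiber functor, and then to attack the essential implication $(2)\Rightarrow(1)$ via the Frobenius functor; the remaining links among $(2)$ and $(3)$ close up once the cycle is complete, with the extra (known) input that bounded alternating powers force moderate growth.

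For $(1)\Rightarrow(2)$ and $(1)\Rightarrow(3)$, let $F\colon\cC\to\Verp$ be a symmetric tensor functor, hence faithful and exact. Since exact symmetric monoidal functors commute with the formation of alternating powers, $F\bigl(\op{A}^n(X)\bigr)=\op{A}^n\bigl(F(X)\bigr)$; as $\Verp$ has vanishing alternating powers in large degree (a property verified on the simples $\ttL_i$ using \Cref{lem:symmetry-Li-ot-Li}) and $F$ is faithful, we get $\op{A}^n(X)=0$ for $n\gg0$. Likewise $\ell(X)\le\ell\bigl(F(X)\bigr)$ and $F(X^{\ot n})=F(X)^{\ot n}$ show that moderate growth passes from the fusion category $\Verp$ to $\cC$. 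For Frobenius exactness one uses naturality of the Frobenius functor: there is an isomorphism $\Fr_{\Verp}\circ F\cong\bigl(F^{(1)}\boxtimes\id_{\Verp}\bigr)\circ\Fr_{\cC}$, and since $\Verp$ is Frobenius exact while $F^{(1)}\boxtimes\id_{\Verp}$ is faithful and exact (so reflects exactness), $\Fr_{\cC}$ is exact. Finally $(3)\Rightarrow(2)$ amounts to the statement that $\op{A}^n(X)=0$ for some $n$ forces $\ell(X^{\ot m})$ to grow at most exponentially in $m$, a counting argument on $X^{\ot m}$; alternatively it follows a posteriori from $(2)\Leftrightarrow(1)\Leftrightarrow(3)$.

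The core of the theorem is $(2)\Rightarrow(1)$. The key input is that Frobenius exactness promotes the Frobenius functor to a symmetric tensor functor $\Fr_{\cC}\colon\cC\to\cC^{(1)}\boxtimes\Verp$ (the content of \cites{Ost2,Cou-tan,EO-frob} unified in \cite{CEO}), together with the stability of ``pre-Tannakian, Frobenius exact, of moderate growth'' under the Frobenius twist $\cC\mapsto\cC^{(1)}$. From here two strategies present themselves. The first is iterative: exploiting the special structure of $\Verp$, the Frobenius functors assemble into a tower $\cC\to\cC^{(1)}\boxtimes\Verp\to\cC^{(2)}\boxtimes\Verp\to\cdots$ of symmetric tensor functors, and one argues that after finitely many twists $\cC^{(n)}$ becomes Tannakian; composing with a fiber functor $\cC^{(n)}\to\Vec$ then yields $\cC\to\Vec\boxtimes\Verp=\Verp$. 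The second strategy passes to the semisimplification $\overline{\cC}$: moderate growth makes $\overline{\cC}$ a semisimple symmetric tensor category, Frobenius exactness forces it to have only finitely many non-negligible simple objects, so $\overline{\cC}$ is a symmetric fusion category; Ostrik's theorem then gives $\overline{\cC}\to\Verp$, and one lifts this fiber functor back to $\cC$ by playing it against $\Fr_{\cC}$ and invoking Tannaka--Krein duality over the pre-Tannakian target $\Verp$ as in \cite{CEO-incomp}*{\S 4}.

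The decisive obstacle, shared by both strategies, is to control how the Frobenius twist---or semisimplification---interacts with the growth hypothesis. In the iterative approach one must isolate a genuine complexity invariant of a Frobenius exact category of moderate growth that strictly decreases under $\cC\mapsto\cC^{(1)}$ unless $\cC$ is already Tannakian, and show the descent terminates; this is exactly where moderate growth is used non-formally. In the semisimplification approach the subtle points are the finiteness of the set of non-negligible simples of $\overline{\cC}$ (which genuinely fails without Frobenius exactness) and the descent of a fiber functor along the non-exact functor $\cC\to\overline{\cC}$, with $\Fr_{\cC}$ serving as the bridge. I expect this termination/finiteness step to require the bulk of the new ideas; once it is available, Ostrik's theorem for fusion categories supplies the rest.
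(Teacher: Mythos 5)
This statement is not proved in the paper at all: it is quoted verbatim from \cite{CEO} as an external input, so there is no internal argument to compare yours against. Judged on its own terms, your proposal is a plausible road map but not a proof. The easy directions are essentially fine: transporting vanishing of alternating powers, moderate growth, and (via naturality of $\Fr$) Frobenius exactness backwards along a faithful exact symmetric tensor functor $\cC\to\Verp$ is a legitimate argument for $(1)\Rightarrow(2)$ and $(1)\Rightarrow(3)$, modulo checking that $\Verp$ itself satisfies these properties (which it does, being fusion). But your treatment of $(3)\Rightarrow(2)$ is shaky: you cannot close the cycle ``a posteriori from $(2)\Leftrightarrow(1)\Leftrightarrow(3)$'' because at that point you only have $(1)\Rightarrow(3)$, not the converse, and the direct ``counting argument'' that $\op{A}^n(X)=0$ forces moderate growth is asserted rather than given; this equivalence is precisely one of the nontrivial outputs of \cite{CEO} (the paper you are reviewing even remarks that the equivalence of the Tannakian- and super-Tannakian-type conditions for Frobenius exact categories is a ``curious byproduct'' of the theorem, not an input).

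The decisive gap is $(2)\Rightarrow(1)$, and you say so yourself: both strategies you outline (iterating the Frobenius functor until the twist becomes Tannakian, or semisimplifying and lifting a fiber functor from $\overline{\cC}$ through Ostrik's theorem) terminate in an explicitly unresolved step --- the descent/termination or finiteness-of-non-negligible-simples argument that is the actual mathematical content of \cite{CEO}. Identifying where the difficulty lives is not the same as resolving it; as written, the central implication is a conjecture with a plan attached. If your goal were to reprove this theorem you would need to supply that argument in full (in \cite{CEO} it occupies the bulk of the paper and rests on a delicate analysis of negligible objects and splittings in Frobenius exact categories of moderate growth); for the purposes of the present paper, the correct move is simply to cite the result, as the authors do.
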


A curious byproduct of this result is that Deligne's conditions for Tannakian and super-Tannakian categories in characteristic $0$  (see  \Cref{thm:Deligne-Tannakian,thm:Deligne-super-Tannakian}) are actually equivalent for Frobenius exact categories in positive characteristic.


\subsection{Operadic, PBW, and genuine Lie algebras}\label{subsec:operadic} 
Following \cite{Etingof} an \emph{(ind-)operadic Lie algebra} over a (strict) symmetric tensor category $\cC$ is an (ind-)object $\fg$ in $\cC$ equipped with a morphism $b\colon \fg\otimes\fg \to \fg$ that satisfies the usual anti-symmetric and Jacobi conditions: 
\begin{align}\label{eq:operadic}
b\circ(\id_{\fg\ot \fg} + c_{\fg, \fg}) = 0,  \qquad b\circ (b\ot \id_{\fg})\circ(\id_{\fg^{\ot 3}} + (123)_{\fg^{\ot 3}} +(132)_{\fg^{\ot 3}}) = 0, 
\end{align}
where $(123)_{\fg^{\ot 3}}, (132)_{\fg^{\ot 3}}\colon\fg^{\ot 3}\to \fg^{\ot 3}$ are obtained from the action of the symmetric group. We use $\OLie(\cC)$ to denote the category of operadic Lie algebras in $\cC$. When no confusion is possible, we use the notation $\OLie(\cC)$ also for $\OLie(\indcat{\cC})$.  Note that in previous bibliography, such as \cite{Rumynin}, operadic Lie algebras are just called Lie algebras; however, a distinction needs to be made,  especially when working in positive characteristic.

Recall from \cite{Etingof} that the \emph{universal enveloping algebra} $\op{U}(\fg)$ is the quotient of the tensor algebra $\op{T}(\fg)$ by the two-sided ideal generated by the image of the map
\begin{align}\label{eq:defn-enveloping-algebra}
a\coloneq(-b,\id_{\fg\ot\fg}- c_{\fg,\fg}): \fg\ot\fg\to \fg\oplus\fg\ot\fg\subseteq \op{T}(\fg).
\end{align}
Similarly, the symmetric algebra $\op{S}(\fg)$ is the quotient of $\op{T}(\fg)$ by the image of $\id_{\fg\ot\fg}- c_{\fg,\fg}$.
As usual, there is a natural map $\eta \colon \op{S}(\fg) \to \gr \op{U}(\fg)$ of ind-algebras in $\cC$. However, it can fail to be an isomorphism, which is the main reason behind the adjective \emph{operadic} in the definition above. 
This somewhat undesired behavior was already exhibited by $\cC=\Vec$ when $p=2$: for $\eta$ to be an isomorphism one needs to impose the additional condition $b(x\ot x)=0$ for all $x$. Similarly, in $\cC=\sVec$ over $p=3$, one needs to assume $b(b(x\ot x)\ot x)=0$ for all odd $x$. 

\begin{definition}{\cite{Etingof}}\label{def:PBW}
We say that an operadic Lie algebra $\fg$ is \emph{PBW} if the natural map $\eta \colon \op{S}(\fg) \to \gr \op{U}(\fg)$ is an isomorphism.
\end{definition}

\subsubsection{Free operadic Lie algebras} 
By \cite{Etingof}*{Section 4.2}, one can build a free operadic Lie algebra over any object in a symmetric tensor category $\cC$. More precisely, there is a functor $\FOLie$ from $\cC$ to $\OLie(\indcat{\cC}$), the category of ind-operadic Lie algebras in $\cC$, which is left adjoint to the forgetful functor. That is, we have a natural isomorphism 
\begin{align*}
\Hom_{\cC}(V, L)&\simeq\Hom_{\OLie(\indcat{\cC})}(\FOLie(V), L),& V&\in \cC, \ L\in \OLie(\cC). 
\end{align*}

This notion plays a fundamental role in the theory of Lie algebras in positive characteristic. For instance, it is used in \cite{Etingof}*{Proposition 6.10} to construct in $\cC=\Ver_p$ (for any characteristic $p\geq5$), an operadic Lie algebra (of finite length) that is not PBW.
More importantly, the notion of genuine Lie algebra in a symmetric tensor category $\cC$ introduced in \cite{Etingof} relies heavily on this construction, as we briefly summarize next.

Given any object $V$ in $\cC$, the inclusion of $V$ into $\op{T}(V)$ --which becomes an operadic Lie algebra via the commutator-- gives rise to a map $\phi^V \in \Hom_{\OLie(\cC)}( \FOLie(V), \op{T}(V))$.
As in \cite{Etingof}*{\S 4.2} we write $\phi^V=\oplus_{n\ge 1}\phi^V_n$, where $\phi^V_n:\FOLie_n(V)\to V^{\otimes n}$, and denote 
\begin{align}\label{eq:defn-E(V)}
E_n(V) &\coloneqq \ker \phi^V_n, \quad n\ge 1, &
E(V) & \coloneqq \ker \phi^V= \oplus_{n\ge 1} E_n(V).
\end{align}
On the other hand, for an operadic Lie algebra $L$ in $\cC$ let \[\beta^L\in \Hom_{\OLie(\cC)}( \FOLie(L), L)\] denote the map corresponding to the identity map $\id_L$ in $\cC$ under the adjunction. 

\begin{definition}{\cite{Etingof}}\label{def:Lie-algebra}
An operadic Lie algebra $L$ in a symmetric tensor category $\cC$ is a \emph{Lie algebra} if $\beta^L$ annihilates $E(L)$.
We denote by $\Lie(\cC)$  the category of Lie algebras in $\cC$.
\end{definition}

Verifying if a given operadic Lie algebra in a tensor category is either PBW or a genuine Lie algebra is rather involved. Fortunately, in \cite{Etingof} it is shown that the former implies the latter, and more generally, that any associative algebra and its subobjects closed under the commutator are Lie algebras. Preservation of these properties under quotients and subobjects is also studied.

\smallbreak
The situation is better understood for $\cC=\Ver_p$, and hence for any Frobenius exact pre-Tannakian category of moderate growth. Indeed, \cite{Etingof} introduces for any $p\geq5$ the \emph{$p$-Jacobi identity}, a rather explicit relation of degree $p$ that generalizes the conditions $b(x\ot x)=0$ and $b(b(x\ot x)\ot x)=0$ needed for characteristics $2$ and $3$, respectively. Namely,

\begin{definition}{\cite{Etingof}}\label{def:p-Jacobi} We say
an operadic Lie algebra $L$ in a symmetric tensor category satisfies the $p$-Jacobi identity if $\restr{\beta^L}{E_p(L)}=0$.
\end{definition}

\begin{theorem}{\cite{Etingof}*{Theorem 6.6, Corollary 6.7}, \cite{CEO}*{Theorem 1.1}} \label{thm:Lie-alg-equivalences}
Let $\cC$ be a Frobenius exact pre-Tannakian category of moderate growth. An operadic Lie algebra in $\cC$ is PBW if and only if it is a Lie algebra, if and only if it satisfies the $p$-Jacobi identity. 
\end{theorem}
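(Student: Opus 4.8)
The plan is to prove the cycle of implications PBW $\Rightarrow$ (being a Lie algebra) $\Rightarrow$ ($p$-Jacobi identity) $\Rightarrow$ PBW; the first two are formal and hold in any symmetric tensor category, while the third is where the hypotheses enter. If $\cha\bk=0$, then by \Cref{thm:Deligne-super-Tannakian} the category $\cC$ fibers over $\sVec$ (moderate growth being assumed), where the free operadic Lie algebra embeds into the tensor algebra; hence $E(V)=0$ for every object $V$, so every operadic Lie algebra in $\cC$ is a Lie algebra and, by classical PBW, is PBW. Thus all three conditions hold and there is nothing to prove; so assume $p\coloneq\cha\bk>0$.

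For PBW $\Rightarrow$ (Lie): if $\eta\colon\op{S}(\fg)\to\gr\op{U}(\fg)$ is an isomorphism, its degree-one part identifies $\fg=\op{S}^1(\fg)$ with $\gr_1\op{U}(\fg)$, so the canonical map $\fg\to\op{U}(\fg)$ is a monomorphism of ind-objects exhibiting $\fg$ as a subobject of an associative algebra closed under the commutator (which on $\fg$ recovers the bracket $b$); by \cite{Etingof} such subobjects are Lie algebras. For (Lie) $\Rightarrow$ ($p$-Jacobi): if $\beta^{\fg}$ annihilates $E(\fg)=\bigoplus_{n\ge1}E_n(\fg)$, then it annihilates the direct summand $E_p(\fg)$, which is exactly the $p$-Jacobi identity.

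The content is the remaining implication ($p$-Jacobi) $\Rightarrow$ PBW, and my first move is to reduce to $\cC=\Verp$. By \cite{CEO} a Frobenius-exact pre-Tannakian category of moderate growth admits a symmetric tensor functor $F\colon\cC\to\Verp$, which with the present conventions is exact and faithful; so is the induced functor $\indcat{F}\colon\indcat{\cC}\to\indcat{\Verp}$. Being exact and symmetric monoidal, $F$ commutes with all the constructions of \cite{Etingof}*{\S 4.2} --- the functors $\op{T}$, $\op{S}$, $\op{U}$, $\FOLie$, the formation of associated graded, and the structure morphisms $\phi^{V}_n$, $\beta^{L}$, $\eta$ --- and, being faithful and exact, it reflects vanishing of morphisms, monomorphisms, and isomorphisms. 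Hence for an operadic Lie algebra $\fg$ in $\cC$ one has $F(E_p(\fg))=E_p(F\fg)$, $F(\beta^{\fg})=\beta^{F\fg}$, and $F(\eta_{\fg})=\eta_{F\fg}$, so $\fg$ satisfies the $p$-Jacobi identity whenever $F\fg$ does and $\fg$ is PBW whenever $F\fg$ is. Together with the two formal implications, it thus suffices to prove ($p$-Jacobi) $\Rightarrow$ PBW for $\cC=\Verp$.

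For $\Ver_2=\Vec$ and $\Ver_3=\sVec$ this is the classical fact that an operadic Lie algebra is PBW as soon as its single degree-$p$ relation holds ($b(x\ot x)=0$ for $p=2$; $b(b(x\ot x)\ot x)=0$ on odd elements for $p=3$). For $p\ge5$ it is \cite{Etingof}*{Theorem 6.6, Corollary 6.7}, and I expect the decisive point to be a degree bound: the PBW defect of $\fg$ is the graded ideal $\ker\!\bigl(\op{S}(\fg)\twoheadrightarrow\gr\op{U}(\fg)\bigr)$, and one must show it is generated in degree $p$, so that its vanishing reduces to the vanishing of a single homogeneous component, which is then identified with $\restr{\beta^{\fg}}{E_p(\fg)}$. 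To prove this I would follow \cite{Etingof}: present the free operadic Lie algebra on an object of $\Verp$, together with its enveloping and symmetric algebras, in terms of restricted Lie algebra data lifted to $\Rep(\balp_p)$ --- where ordinary PBW for restricted enveloping algebras is available --- and then exploit that the semisimplification functor $\sfS$ annihilates the indecomposable $L_p$ of categorical dimension zero; the degree-$p$ truncation built into $\Verp$ is precisely what confines the PBW defect to that one degree and matches it with the $p$-Jacobi map. The delicate step is controlling the defect in degrees $>p$ --- ruling out further independent relations there --- and explicit braiding information such as that of \Cref{lem:symmetry-Li-ot-Li} is the sort of input this computation relies on. (In this route the Frobenius-exactness hypothesis is used only in the reduction above, to produce $F$ via \cite{CEO}; $\Verp$ itself is of course Frobenius exact.)
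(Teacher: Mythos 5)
The paper offers no proof of this statement at all: it is quoted verbatim from \cite{Etingof}*{Theorem 6.6, Corollary 6.7} and \cite{CEO}*{Theorem 1.1}, so your write-up is being compared against a bare citation. Relative to that, your outline is correct and fills in exactly the glue the references supply: the two formal implications are right (PBW forces $\fg\hookrightarrow\op{U}(\fg)$ to be monic in degree one of $\gr$, and subobjects of associative algebras closed under the commutator are Lie algebras by \cite{Etingof}; Lie $\Rightarrow$ $p$-Jacobi is restriction of $\beta^{\fg}$ to the summand $E_p$), and the reduction to $\Verp$ via an exact faithful symmetric tensor functor $F$ from \cite{CEO} is the actual mechanism by which \cite{CEO}*{Theorem 1.1} upgrades Etingof's $\Verp$ statement. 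Two small points. First, for the reduction you need the easy direction ($\fg$ satisfies $p$-Jacobi $\Rightarrow$ $F\fg$ does, because $F$ sends the zero morphism to zero), whereas you emphasize the faithfulness direction; both hold, so nothing breaks. Second, the characteristic-zero digression is vacuous here, since Frobenius exactness is only defined for $p>0$. The one substantive caveat is that the core implication --- $p$-Jacobi $\Rightarrow$ PBW in $\Verp$ for $p\ge5$ --- is not proved in your text: the passage beginning ``I expect the decisive point to be a degree bound\dots'' is a plausible but speculative description of \cite{Etingof}'s argument, and you yourself flag the delicate step (controlling the PBW defect in degrees $>p$) without resolving it. Since the paper likewise treats this as an external black box, that is acceptable, but your proposal should be read as a correct reduction to the cited theorems rather than as a self-contained proof.
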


\subsubsection{Free Lie algebras} 
Using free operadic Lie algebras, one can construct the functor
\begin{align*}
\FLie &\colon \cC \to \Lie (\cC),& \FLie(V)&\coloneq\FOLie(V)/\ker\left(\phi^V\right), 
\end{align*}
which is left adjoint to the forgetful functor by \cite{Etingof}*{Corollary 4.9}. Thus for any object $V$ we have the free Lie algebra $\FLie(V)$, which is an ind-Lie subalgebra of $\op{T}(V)$.

\begin{proposition}\label{prop:FLie-subalg-TV}
The free Lie algebra $\FLie(V)$ is naturally isomorphic to the Lie subalgebra $\ff$ of $\op{T}(V)$ generated by $V$. The inclusion $\ff\hookrightarrow  \op{T}(V)$ extends to an isomorphism of algebras $\op{U}(\ff) \isomorph \op{T}(V)$. In particular, $\FLie(V)$ is PBW.
\end{proposition}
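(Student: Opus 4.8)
The plan is to unwind the definition of $\FLie(V)$ so as to realize it explicitly inside $\op{T}(V)$, then to transport the universal property of the tensor algebra, and finally to deduce the PBW property by comparing associated graded objects.

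\emph{First claim.} By definition $\FLie(V)=\FOLie(V)/\ker\phi^V$, so the first isomorphism theorem in $\OLie(\indcat{\cC})$ identifies $\FLie(V)$ with $\ima\phi^V\subseteq\op{T}(V)$. Since $\FOLie(V)$ is generated by $V$ as an operadic Lie algebra and $\phi^V_1=\id_V$, this image is the smallest sub-operadic-Lie-algebra of $\op{T}(V)$ containing $V$; because $\op{T}(V)$ under the commutator is a genuine Lie algebra and subobjects closed under the bracket remain genuine Lie algebras (by \cite{Etingof}), $\ima\phi^V$ is exactly the Lie subalgebra $\ff$ of $\op{T}(V)$ generated by $V$, and it is genuine. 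Naturality of the isomorphism $\FLie(V)\cong\ff$ is inherited from that of $\phi^V$.

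\emph{Second claim.} The inclusion $\ff\hookrightarrow\op{T}(V)$, viewed as a morphism of Lie algebras into $\op{T}(V)$ with the commutator bracket, extends uniquely to an algebra morphism $\psi\colon\op{U}(\ff)\to\op{T}(V)$ by the universal property of $\op{U}$; it is surjective since its image is a subalgebra containing the generator $V$. For injectivity I would exhibit an inverse: the composite $V\hookrightarrow\ff\to\op{U}(\ff)$ (last arrow the canonical one) induces, by freeness of $\op{T}$, an algebra morphism $\chi\colon\op{T}(V)\to\op{U}(\ff)$. Then $\psi\chi$ is an algebra endomorphism of $\op{T}(V)$ restricting to $\id_V$, hence the identity, while $\chi\psi$ is an algebra endomorphism of $\op{U}(\ff)$ whose restriction to $\ff$ is a Lie morphism into $\op{U}(\ff)$ that agrees with the canonical map on $V$, hence on all of $\ff$ since $\ff\cong\FLie(V)$ is free; as $\ff$ generates $\op{U}(\ff)$ we conclude $\chi\psi=\id$, so $\psi$ is an isomorphism. (Equivalently, $\op{U}\circ\FLie$ and $\op{T}$ are both left adjoint to the forgetful functor $\Alg(\cC)\to\cC$, which factors through $\Lie(\cC)$, hence naturally isomorphic, compatibly with the adjunction units; this yields the same conclusion.)

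\emph{PBW.} It remains to show the canonical surjection $\eta\colon\op{S}(\FLie(V))\to\gr\op{U}(\FLie(V))=\gr\op{T}(V)$ is an isomorphism, where $\op{T}(V)$ carries the filtration by the number of $\ff$-factors. The grading $\ff=\bigoplus_{n\geq1}\FLie_n(V)$ with $\FLie_n(V)\subseteq V^{\ot n}$ makes this filtration compatible with the tensor-degree grading $\op{T}(V)=\bigoplus_N V^{\ot N}$, and in each degree $N$ the induced filtration on $V^{\ot N}$ is finite; its top graded piece is $V^{\ot N}$ modulo the subobject generated by the morphisms $\id-c$ applied to adjacent tensor factors, i.e.\ $\op{S}^N(V)=\op{S}^N(\FLie_1(V))$, since such a relation is precisely the leading term of a single bracket. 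Using that $\op{S}$ carries biproducts to tensor products, $\op{S}(\FLie(V))\cong\bigotimes_{n\geq1}\op{S}(\FLie_n(V))$ as bigraded objects. The step I expect to be the main obstacle is promoting this into an isomorphism of associated graded objects degree by degree at the intermediate filtration levels: this is the positive-characteristic, categorical analog of Witt's dimension formula for free Lie algebras, and it cannot be settled by a dimension count, because the symmetric powers $\op{S}^k$ — realized here as quotients of tensor powers — differ from symmetric invariants in small characteristic. I expect to handle it by induction using the surjectivity of $\eta$ together with additivity of lengths (or of classes in the Grothendieck ring) in each fixed bidegree, which forces the relevant surjections to be isomorphisms; alternatively one invokes \cite{Etingof}, where the PBW property of free Lie algebras is established.
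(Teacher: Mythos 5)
Your proof is correct, and it reaches the stated conclusions by routes that differ from the paper's in two places. For the identification $\FLie(V)\cong\ff$ you apply the first isomorphism theorem to $\phi^V$ and identify $\ima\phi^V$ with the Lie subalgebra generated by $V$; the paper instead verifies that $\ff$ satisfies the defining adjunction of $\FLie(V)$, by restricting the algebra map $\op{T}(V)\to\op{U}(L)$ induced by a given $\psi\colon V\to L$. Your route is more economical given the definition $\FLie(V)=\FOLie(V)/\ker\phi^V$; the paper's makes the universal property of $\ff$ explicit and then reuses the same device verbatim to prove $\op{U}(\ff)\isomorph\op{T}(V)$ (your mutually-inverse-maps argument for that step is equivalent and equally valid, as is your remark that both $\op{U}\circ\FLie$ and $\op{T}$ are left adjoints to the same forgetful functor). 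The substantive divergence is the PBW claim: the paper does not prove it but simply quotes \cite{Etingof}*{Proposition 4.7}, whereas you sketch a direct associated-graded computation and correctly flag it as incomplete. Be aware that the ``surjectivity of $\eta$ plus additivity of lengths'' device cannot close that gap by itself: it requires knowing in advance that $\op{S}(\ff)$ and $\gr\op{T}(V)$ have equal finite length in each bidegree, which is precisely a categorical Witt formula and hence the content of the assertion being proved. Since you also offer the citation to \cite{Etingof} as an alternative --- which is exactly what the paper does --- the proposal stands, but the direct argument should either be carried out in full or replaced by that citation.
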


\begin{proof}
Note first that $\ff$ is PBW by \cite{Etingof}*{Proposition 4.7}. Let $\iota_{\ff} \colon V \hookrightarrow \ff$ denote the inclusion. We show that for any $L\in\Lie(\cC)$ and $\psi \in \Hom_{\cC}(V,L)$, there is a unique $\ov{\psi}  \in \Hom_{\Lie(\cC)}(\ff, L)$ such that $\ov{\psi}\iota_{\ff}=\psi$. 
By the universal property of $\op{T}(V)$ there is a unique algebra map $\ov{\psi}\colon \op{T}(V) \to \op{U}(L)$ such that the following diagram commutes
\[\begin{tikzcd}
V\arrow[hook]{rr}{\iota_V} \arrow{d}{\psi}  && \op{T}(V)  \arrow{d}{\overline{\psi}}\\
L \arrow[hook]{rr}{\iota_L}&& \op{U}(L).
\end{tikzcd}\]
Thus the image of $\restr{\ov \psi}{V}$ is contained in $L$. Since $\ov{\psi}$ is an algebra map and $\ff$ is generated by $V$, this implies that the image of $\restr{\ov \psi}{\ff}$ is also contained in $L$. Hence we have a Lie algebra map $\restr{\ov \psi}{\ff} \colon \ff \to L$ such that $\ov{\psi}\iota_{\ff}=\psi$, as desired. Uniqueness is clear, so the first statement follows. 

For the second statement, given an associative algebra $A$ and $\phi\in\Hom_{\Lie(\cC)}(\ff, A)$, the restriction $\restr{\phi}{V}$ can be extended uniquely to a map of associative algebras $\ov{\phi}\colon\op{T}(V) \to A$. Then one shows as before that $\restr{\ov{\phi}}{\ff} = \phi$, and the claim follows.
\end{proof}

\begin{remark}\label{rem:subalg-generated}
Let $\fg$ be a Lie algebra in $\cC$, and $V$ a subobject of $\fg$. The subalgebra generated by $V$ is $\langle V \rangle=\sum_{n\ge 1} \ima \varphi_n$, where $\varphi_1:V\hookrightarrow \fg$ is just the inclusion and $\varphi_{n+1}\coloneq \restr{b}{\ima\varphi_n\otimes V}$.

This description applies to the pair $(V, \fg)=(V, \op{T}(V))$, and shows that $\ff\simeq\FLie(V)$ is a graded Lie algebra $\ff=\oplus_{n\ge 1} \ff_n$, as $\ima \varphi_n\subseteq V^{\ot n}$. Here, $\ff_1=V$, $\ff_2=\ima(\id_{V\ot V}-c_{V,V})$.
\end{remark}

\begin{remark}\label{rem:FOLie-Li}
Consider $\cC=\Ver_p$ and a simple object $V=\ttL_i$. We have a decomposition $\ttL_i\ot \ttL_i=\bigoplus_{k=1}^{\min\{i, p-i\}} \ttL_{2k-1}$, and by \Cref{lem:symmetry-Li-ot-Li}, we get
\begin{align*}
\FLie(\ttL_i)_2 &= \bigoplus_{k \ \text{antisymmetric}} \ttL_{2k-1}^{(2)},
 \end{align*}
where the sum runs over indexes $1\leq k\leq \min\{i, p-i\}$ such that the self-braiding $c_{\ttL_i, \ttL_i}$ acts as $-1$ on  $\ttL_{2k-1}$.
In particular, $\FLie(\ttL_i)_2$ contains a copy of $\one=\ttL_{1}$ if and only if $i$ is even.
\end{remark}

\subsection{Contragredient Lie (super)algebras}\label{subsec:semisimplification}
Here we recall the basics of contragredient Lie superalgebras; we refer to  \cites{BGL, Kac-book} for details.
The starting input data is:
\begin{enumerate}[leftmargin=*,label=\rm{(D\arabic*)}]
\item\label{item:contragredient-1} a square matrix $A=(a_{ij})_{1\leq i,j\leq r}$ with entries in $\bk$;
\item \label{item:contragredient-2} a \emph{parity vector} $\bp=(p_i)\in (\bZ/2\bZ)^r$.
\end{enumerate}
Up to isomorphism, there is a unique \emph{realization} of $A$, which consists on
\begin{enumerate}[leftmargin=*, resume, label=\rm{(D\arabic*)}]
\item \label{item:contragredient-3} a $\Bbbk$-vector space $\fh$ of dimension $2r-\rk A$;
\item \label{item:contragredient-4} linearly independent subsets $(\xi_i)_{1\leq i \leq r} \subset \fh^*$ and $(h_i)_{1\leq i \leq r} \subset \fh$,
such that $\xi_j(h_i)=a_{ij}$ for all $1\leq i,j\leq r$.
\end{enumerate}

From this data, we introduce first the auxiliary Lie superalgebra $\tfg\coloneq\tfg(A,\bp)$  generated by elements $e_i$, $f_i$ for $1\leq i, j \leq r$, and $\fh$,
with parity given by
\begin{align*}
|e_i|&=|f_i|= p_i,\quad  1\leq i\leq r, & |h|&=0, \mbox{ for all }h\in\fh,
\end{align*}
modulo the relations 
\begin{align}
\label{eq:relaciones gtilde}
[h,h']&=0, &[h,e_i] &= \xi_i(h)e_i, & [h,f_i] &= -\xi_i(h)f_i, & [e_i,f_j]&=\delta_{ij}h_i
\end{align}
for all $1\leq i, j \leq r$, and $h, h'\in \fh$.

The Lie superalgebra $\tfg$ admits a $\bZ$-grading with $\deg f_i=-1$, $\deg \fh=0$ and $\deg e_i =1$. Moreover, $\tfg$ contains a unique maximal graded ideal $\fm$ among those that meet $\fh$ trivially. 
\smallbreak
The \emph{contragredient Lie superalgebra} associated to the pair $(A, \bp)$ is the Lie superalgebra quotient $\fg(A,\bp)\coloneq\tfg(A,\bp) / \fm$. 
This superalgebra canonically contains the $e_i$'s, $f_i$'s, and $\fh$. It also inherits a $\bZ$-grading $\fg = \mathop{\oplus}\limits_{k\in \bZ}\fg_k$ and a triangular decomposition
$\fg=\fn_-\oplus\fh\oplus\fn_+$ where $\fn_-$ and $\fn_+$ are the Lie subsuperalgebras generated by the $e_i$'s and the $f_i$'s, respectively.

The $\bZ$-grading above can be refined to a $\bZ^r$-grading. Indeed, $\tfg$ is $\bZ^r$-graded with
\begin{align*}
\deg f_i&=-\alpha_i, & \deg h&=0, & \deg e_i&=\alpha_i,& \mbox{ for }1\leq i\leq r& \mbox{ and }  h\in \fh,
\end{align*}
where $(\alpha_i)$ denotes the canonical basis of $\bZ^r$.  The defining ideal $\fm$ happens to be $\bZ^r$-homogeneous, so $\fg=  \fh \oplus \mathop{\oplus}\limits_{\alpha\in\bZ^r, \alpha\neq 0} \fg_{\alpha}$.
We define the set of roots of $(A, \bp)$ by
$$ \nabla^{(A,\bp)}:=\{\alpha\in\bZ^r - 0\colon \fg_\alpha\neq0\},$$
 and consider as usual the subsets of positive and negative roots: $\nabla_{\hspace{2pt} \pm}^{(A,\bp)}=\nabla^{(A,\bp)}\cap(\pm\bN^{\bI}_0)$.

\section{Constructions for operadic Lie algebras in symmetric tensor categories}
Fix a strict symmetric tensor category $\cC$. Essentially all constructions that are known for (operadic) Lie algebras in $\Vec$ or $\sVec$ should have a counterpart in our setting. Here we include some of these for completeness.

\subsection{Modules over operadic Lie algebras}
With the goal of laying the foundations needed for our construction in \Cref{sec:contragredient-verp}, we develop some abstract nonsense representation theory for operadic Lie algebras in symmetric categories. A more in-depth analysis for the representation theory of affine group schemes in pre-Tannakian categories, towards the development of commutative algebra over incompressible categories, is available in \cites{Ven2, Cou-comm}.

Following e.g. \cite{Rumynin}*{\S 2.2}, a module over $\fg$ is an object $V$ in $\cC$ equipped with a map $\eta_V \colon \fg \ot V \to V$ for which the following diagram commutes
\begin{equation}\label{eq:module-Lie-alg-defn}
\begin{tikzcd}[row sep=tiny]
\fg \ot \fg \ot V 
\arrow{rr}{b\ot \id_V} 
\arrow{dd}[swap]{(\id_{\fg\ot \fg} - c_{\fg, \fg})\ot \id_V} 
&&
\fg \ot V \arrow{rrd}{\eta_V} &&
\\
&& && 
V.
\\
\fg \ot \fg \ot V 
\arrow{rr}{\id_\fg \ot \eta_V} 
&&
\fg \ot V \arrow{rru}{\eta_V} &&
\end{tikzcd}
\end{equation}

\begin{remark}\label{rem:module-representation}
Following \cite{Ven-GLVerp}*{\S 3}, the \emph{general Lie algebra} over an object $V\in\cC$ is the Lie algebra $\fgl(V)$ attached to the associative algebra $V\ot V^*$ with multiplication $\id_V\ot \ev_{V}\ot\id_{V^*}$. By \cite{Ven-GLVerp}*{Proposition 3.12}, Lie algebra maps $\fg\to\fgl(V)$ are in correspondence with $\fg$-module structures on $V$.
\end{remark}

Let $W$ be another $\fg$-module. A morphism of $\fg$-modules from $V$ to $W$ is a map $f\in\Hom_{\cC}(V,W)$ that commutes with the actions of $\fg$. The set of such morphisms is denoted by $\Hom_{\fg}(V,W)$ as usual. The tensor product $V\ot W\in\cC$ is also a $\fg$-module with action
\begin{align}\label{eq:action-tensor-prod}
\eta_{V\ot W} \coloneqq \eta_V\ot \id_W+(\id_V\ot \eta_W)(c_{\fg,V}\ot\id_W).
\end{align}
The dual $V^*$ is also a $\fg$-module with action $\eta_{V^*}:\fg\ot V^*\to V^*$,
\begin{align}\label{eq:action-dual}
\eta_{V^*} &= -(\ev\ot \id_{V^*})(\id_{V^*}\ot \eta \ot\id_{V^*})(\id_{\fg\ot V^*}\ot \coev)c_{\fg, V^*}.
\end{align}
Notice that $\fg$ and $\one$ are $\fg$-modules with $\eta_{\fg}=b$, $\eta_{\one}=0$; and combining this with \eqref{eq:action-dual}, $\fg^*$ is a $\fg$-module. The maps $\ev$ and $\coev$ are morphisms. Moreover, by \cite{Rumynin}*{Proposition 2} the family of all $\fg$-modules is a tensor category with unit $(\one,\eta_{\one})$. We denote it by $\gMod{\cC}{\fg}$. 

\begin{remark}\label{rem:g-mod-Ug-mod} As usually, $\Mod_{\cC}(\fg)$ is canonically equivalent, as an abelian category, to $\Mod_{\cC}(\op{U}(\fg))$, the category of modules in $\cC$ for the associative algebra $\op{U}(\fg) \in \indcat{\cC}$.

Indeed, any $\fg$-action $\eta \colon V\ot\fg\to V$ as above, induces a map $\eta\colon \op{T}(\fg)\otimes V\to V$ endowing $V$ with the structure of a module over the associative algebra $\op{T}(\fg)$. Here $\eta_n\coloneq \restr{\eta}{\fg^{\ot n}\ot V}$ is defined recursively by taking $\eta_0\colon\one\ot V\to V$ as the unit isomorphism, and $\eta_n=\eta_{n-1}(\id_{\fg^{\ot n-1}}\ot\eta)$  for $n>0$. 
Since $V$ is a $\fg$-module, this map descends to the quotient $\op{U}(\fg)\ot V\to V$, and endows $V$ with a canonical $\op{U}(\fg)$-module structure.

Conversely, a $\op{U}(\fg)$-module becomes a $\fg$-module via pullback along the natural map $\fg\to\op{U}(\fg)$.
\end{remark}

By \cite{Maj92}*{Proposition 2.5} together with Remark \ref{rem:g-mod-Ug-mod}, we can conclude that $\gMod{\cC}{\fg}$ is a tensor category by proving that $\op{U}(\fg)$ is a Hopf algebra.
Let $\iota\colon \one\to \op{T}(\fg)$ be the unit map. As in \cite{Ven2}*{Definition 5.27}, $\op{T}(\fg)$ is a Hopf algebra, where the comultiplication $\Delta\colon \op{T}(\fg)\to\op{T}(\fg)\ot\op{T}(\fg)$ is the unique algebra map such that 
$$\restr{\Delta}{\fg}=\id_{\fg}\ot\iota+\iota\ot\id_{\fg}. $$
Clearly $\op{T}(\fg)$ is a cocommutative Hopf algebra.

\begin{lemma}\label{lem:Ug-Hopf-algebra}
The map $\Delta$ descends to an algebra map $\Delta \colon \op{U}(\fg)\to\op{U}(\fg)\ot\op{U}(\fg)$, and this makes $\op{U}(\fg)$ a cocommutative Hopf algebra.
\end{lemma}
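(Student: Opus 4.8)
The plan is to exhibit $J\coloneq\ker\bigl(\pi\colon\op{T}(\fg)\twoheadrightarrow\op{U}(\fg)\bigr)$ --- which by \eqref{eq:defn-enveloping-algebra} is the two-sided ideal generated by $\ima(a)$, where $a=(-b,\,\id_{\fg\ot\fg}-c_{\fg,\fg})\colon\fg\ot\fg\to\fg\oplus\fg\ot\fg\subseteq\op{T}(\fg)$ --- as a Hopf ideal of the cocommutative Hopf algebra $\op{T}(\fg)$. The quotient $\op{U}(\fg)=\op{T}(\fg)/J$ then inherits a cocommutative Hopf structure, and the resulting $\Delta$ is the one in the statement.

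First I would descend the comultiplication. The composite $(\pi\ot\pi)\circ\Delta\colon\op{T}(\fg)\to\op{U}(\fg)\ot\op{U}(\fg)$ is an algebra map, since $\Delta$ is one and the tensor product of algebra maps in a symmetric category is again one. As $J$ is generated as a two-sided ideal by $\ima(a)$, it suffices to check that this composite vanishes on $\ima(a)$; and for that it is enough to prove that $\ima(a)$ consists of primitive elements, i.e.\ $\Delta\circ a=a\ot u+u\ot a$ after the evident unit identifications ($u\colon\one\to\op{T}(\fg)$ the unit), since then $(\pi\ot\pi)\Delta\circ a=(\pi\circ a)\ot(\pi\circ u)+(\pi\circ u)\ot(\pi\circ a)=0$. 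The identity $\Delta\circ a=a\ot u+u\ot a$ is a direct computation: using $\restr{\Delta}{\fg}=\id_\fg\ot\iota+\iota\ot\id_\fg$, the multiplicativity of $\Delta$, and the multiplication $(\mu\ot\mu)(\id\ot c\ot\id)$ on $\op{T}(\fg)\ot\op{T}(\fg)$, one finds that $\Delta$ sends the degree-two element $x\ot y$ of $\op{T}(\fg)$ to $(x\ot y)\ot 1+1\ot(x\ot y)+x\ot y+c_{\fg,\fg}(x\ot y)$, where the last two summands lie in $\fg\ot\fg\subseteq\op{T}(\fg)\ot\op{T}(\fg)$; expanding $\Delta$ on the ``flipped product'' $c_{\fg,\fg}(x\ot y)$ likewise and using $c_{\fg,\fg}^2=\id$, the non-primitive contributions cancel when one forms $\Delta\circ a$.

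The counit $\epsilon\colon\op{T}(\fg)\to\one$ (projection onto the degree-zero part) vanishes on $\ima(a)\subseteq\fg\oplus\fg^{\ot2}$ for degree reasons, hence on $J$, and so descends to $\op{U}(\fg)$. For the antipode, recall that $\op{T}(\fg)$ has one \cite{Ven2}*{Definition 5.27}, namely the unique algebra map $S\colon\op{T}(\fg)\to\op{T}(\fg)^{\mathrm{op}}$ with $\restr{S}{\fg}=-\id_\fg$; an analogous computation (again using $c_{\fg,\fg}^2=\id$) gives $S\circ a=-a$, so $S(\ima a)\subseteq\ima a$ and therefore $S(J)\subseteq J$ --- since $J$ is a two-sided ideal and $S$ an anti-automorphism of algebras --- whence $\pi\circ S$ kills $J$ and $S$ descends. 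Finally, coassociativity, counitality, cocommutativity (the latter because $\op{T}(\fg)$ is cocommutative), and the two antipode identities all hold in $\op{T}(\fg)$ and descend to $\op{U}(\fg)$ by precomposing with the epimorphisms $\pi$, $\pi^{\ot2}$, $\pi^{\ot3}$ and cancelling them.

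The only genuine obstacle is the degree-two computation $\Delta\circ a=a\ot u+u\ot a$ (and its antipode counterpart): it requires carefully transporting the symmetry $c_{\fg,\fg}$ through the multiplication of $\op{T}(\fg)\ot\op{T}(\fg)$, and the cancellation of the mixed terms rests squarely on $\cC$ being symmetric, i.e.\ on $c_{\fg,\fg}\circ c_{\fg,\fg}=\id_{\fg\ot\fg}$. Everything else is formal: quotients by two-sided ideals and descent of equations along epimorphisms of algebras.
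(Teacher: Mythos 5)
Your proof is correct and follows essentially the same route as the paper: both arguments reduce to the degree-two computation that $\Delta\circ a=a\ot\iota+\iota\ot a$ (the mixed terms cancelling because $(\id+c_{\fg,\fg})(\id-c_{\fg,\fg})=0$ in a symmetric category), so that the ideal generated by $\ima(a)$ is a bi-ideal, with the counit and antipode descending by the same routine checks. The paper merely states this more tersely; your write-up supplies the details it leaves to the reader.
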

\begin{proof}
As for usual Lie algebras,
\[\restr{\Delta}{\fg^{\ot 2}}=\id_{\fg^{\ot 2}}\ot\iota+(\id_{\fg^{\ot 2}}+c)\iota\ot\id_{\fg^{\ot 2}}:\fg^{\ot 2}\to \fg^{\ot 2}\ot\one\oplus \fg\ot \fg \oplus \one \otimes \fg^{\ot 2},\]
and by direct computation, $\restr{\Delta}{\fg^{\ot 2}}\circ a =a\ot \iota +\iota \otimes a$. Thus the ideal generated by the image of $a$ is a bi-ideal. The antipode on $T(V)$ descends to an antipode on $\op{U}(\fg)$.
\end{proof}

\begin{remark}\label{rem:modules-over-free-algebras}
Let $V,W\in\cC$ and $\overline{\eta}:V\ot W\to W$ a map.
\begin{enumerate}[leftmargin=*,label=\rm{(\roman*)}]
\item $W$ is a $\op{T}(V)$-module, with action $\eta=\oplus_{n\ge 0}\eta_n\colon \op{T}(V)\ot W\to W$ given by
\begin{align*}
\eta_n &:V^{\ot n}\ot W\to W, & \eta_1&=\overline{\eta}, & \eta_{n+1} &=\eta_n (\id_{V^{\ot n}}\ot \overline{\eta})
\end{align*}
\item By \Cref{prop:FLie-subalg-TV}, $\FLie(V)$ can be viewed as a Lie subalgebra of $\op{T}(V)$. The restriction of $\eta$ to $\FLie(V)$ makes $W$ a module over $\FLie(V)$.
\end{enumerate}
\end{remark}

\subsubsection{Actions by derivations}
Let $(\ff,m)\in\cC$ be an operadic Lie (respectively, an associative) algebra with bracket (respectively, multiplication) $m\colon \ff\ot\ff\to\ff$ and let $\rho \colon \fg \otimes \ff \to \ff$ denote a $\fg$-module structure on the underlying object.
We say that $\fg$ \emph{acts by derivations} on $(\ff,m)$ if $m$ is a $\fg$-module morphism:
\[\begin{tikzcd}
\fg \ot \ff \ot \ff 
\arrow{rr}{\id_{\fg} \ot m} 
\arrow{d}[swap]{\rho_{\ff\ot\ff}} 
&& 
\fg \ot \ff 
\arrow{d}{\rho}
\\
\ff \ot \ff  
\arrow{rr}{m}
&&\ff.
\end{tikzcd}\] 
Notice the presence of the braiding $c_{\fg,\ff}$ in the definition of $\rho_{\ff\ot\ff}$. 

This generalizes the notion of an  action by derivations for (operadic) Lie algebras in $\sVec$, where an \emph{even} action $\rho \colon \fg \otimes \ff \to \ff$ is by derivations if 
\begin{align*}
\rho(x)[s, t]= [\rho (x) s, t] + (-1)^{\vert x \vert \vert s \vert} [s, \rho(x)t]. 
\end{align*}

\begin{lemma}\label{lem:action-deriv-assoc-to Lie}
Let $\fg$ an operadic Lie algebra with an action by derivations $\rho_A$ on an associative algebra $(A,m)$ in $\cC$. Then $\fg$ acts by derivations on the Lie algebra $(A,[\cdot,\cdot])$.
\end{lemma}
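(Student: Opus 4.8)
The plan is to show that the bracket $[\cdot,\cdot]=m-mc_{A,A}$ of the associated Lie algebra is a $\fg$-module morphism, knowing that $m$ itself is. First I would recall that $\fg$ acts on $A\ot A$ via $\rho_{A\ot A}=\rho\ot\id_A+(\id_A\ot\rho)(c_{\fg,A}\ot\id_A)$, and that the hypothesis says $\rho\circ(\id_\fg\ot m)=m\circ\rho_{A\ot A}$; the goal is the analogous identity with $m$ replaced by $[\cdot,\cdot]$. Since everything in sight is $\bk$-linear and the Lie bracket is $m-m\circ c_{A,A}$, it suffices to check that the composite $m\circ c_{A,A}$ is also a morphism of $\fg$-modules, i.e. $\rho\circ(\id_\fg\ot (m c_{A,A}))=(m c_{A,A})\circ\rho_{A\ot A}$.

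The key step is a naturality computation: the symmetry $c_{A,A}\colon A\ot A\to A\ot A$ is itself a morphism of $\fg$-modules, where both source and target carry the tensor-product action \eqref{eq:action-tensor-prod}. This is essentially the statement that the braiding is a morphism in $\gMod{\cC}{\fg}$, which holds because $\gMod{\cC}{\fg}$ is a (braided, in fact symmetric) tensor category by the discussion following \Cref{lem:Ug-Hopf-algebra}; concretely one unwinds $\rho_{A\ot A}\circ(\id_\fg\ot c_{A,A})$ using the hexagon/naturality axioms of the symmetry $c$ in $\cC$ together with $c_{A,A}c_{A,A}=\id$, and matches it with $c_{A,A}\circ\rho_{A\ot A}$. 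Granting this, we get
\begin{align*}
\rho\circ(\id_\fg\ot (m c_{A,A}))&=\rho\circ(\id_\fg\ot m)\circ(\id_\fg\ot c_{A,A})\\
&=m\circ\rho_{A\ot A}\circ(\id_\fg\ot c_{A,A})\\
&=m\circ c_{A,A}\circ\rho_{A\ot A},
\end{align*}
using the hypothesis in the second equality and the fact that $c_{A,A}$ is a module map in the third. Subtracting this from the hypothesis $\rho\circ(\id_\fg\ot m)=m\circ\rho_{A\ot A}$ yields exactly that $[\cdot,\cdot]=m-m c_{A,A}\colon A\ot A\to A$ is a morphism of $\fg$-modules, which is the assertion that $\fg$ acts by derivations on $(A,[\cdot,\cdot])$.

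I would then remark that one must also confirm $(A,[\cdot,\cdot])$ is an operadic Lie algebra in the first place, but this is exactly the content that any associative algebra gives rise to a Lie algebra via the commutator (already invoked in the excerpt, e.g. in the construction of $\op{T}(V)$ as an operadic Lie algebra and in \Cref{rem:module-representation}), so no new argument is needed there.

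The main obstacle is the braiding naturality step: being careful with strictness conventions and the precise placement of the symmetries $c_{\fg,A}$ and $c_{A,A}$ so that the diagram-chase genuinely closes. Since we are in a strict symmetric category this is a routine (if slightly fiddly) application of naturality of $c$ and the hexagon axiom; alternatively, one can bypass the explicit chase entirely by observing that $\gMod{\cC}{\fg}$ is a symmetric tensor category, so $c_{A,A}$ is automatically a morphism of $\fg$-modules, and then the displayed computation finishes the proof immediately.
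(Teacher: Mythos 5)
Your proposal is correct and follows essentially the same route as the paper: reduce to showing that $c_{A,A}$ is a morphism of $\fg$-modules, i.e. $\rho_{A\ot A}(\id_{\fg}\ot c_{A,A})=c_{A,A}\rho_{A\ot A}$, and then subtract. The only difference is that the paper writes out the naturality/braid-equation computation explicitly where you declare it routine (or defer to the symmetric monoidal structure of $\gMod{\cC}{\fg}$), but the underlying argument is the same.
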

\begin{proof}
By hypothesis, $\rho_A(\id_{\fg}\ot m)=m\rho_{A\ot A}$. Using this identity,
\begin{align*}
\rho_A(\id_{\fg}\ot [\cdot,\cdot]) &= \rho_A\left(\id_{\fg}\ot (m-mc_{A,A})\right)
= \rho_A\left(\id_{\fg}\ot m\right)- \rho_A\left(\id_{\fg}\ot m\right)\left(\id_{\fg}\ot c_{A,A}\right)
\\
& = m\rho_{A\ot A}- m\rho_{A\ot A} \left(\id_{\fg}\ot c_{A,A}\right),
\\
[\cdot,\cdot] \rho_{A\ot A} &= m\rho_{A\ot A}- m c_{A,A}\rho_{A\ot A},
\end{align*}
so it is enough to prove that $\rho_{A\ot A} \left(\id_{\fg}\ot c_{A,A}\right)=c_{A,A}\rho_{A\ot A}$, or equivalently,
\begin{align*} 
&(\rho_A\ot \id_A)\left(\id_{\fg}\ot c_{A,A}\right)+(\id_A\ot \rho_A)(c_{\fg,A}\ot\id_A)\left(\id_{\fg}\ot c_{A,A}\right)
\\ & \qquad =c_{A,A}(\rho_A\ot \id_A)+c_{A,A}(\id_A\ot \rho_A)(c_{\fg,A}\ot\id_A).
\end{align*}
The naturality of the braiding implies that 
\begin{align*}
c_{A,A}(\rho_A\ot\id_A)&=(\id_A\ot\rho_A)c_{\fg\ot A,A}, &
c_{A,A}(\id_A\ot\rho_A)&=(\rho_A\ot\id_A)c_{A,\fg\ot A}.
\end{align*}
From these two identities and using that $c$ satisfies the braid equation, we get
\begin{align*}
&c_{A,A}(\rho_A\ot \id_A)+c_{A,A}(\id_A\ot \rho_A)(c_{\fg,A}\ot\id_A) 
\\
& \quad = (\id_A\ot\rho_A)c_{\fg\ot A,A}+(\rho_A\ot\id_A)c_{A,\fg\ot A}(c_{\fg,A}\ot\id_A)
\\
& \quad = (\id_A\ot\rho_A)(c_{\fg,A}\ot\id_A)(\id_{\fg}\ot c_{A,A})+(\rho_A\ot\id_A)
(\id_{\fg}\ot c_{A,A})(c_{A,\fg}\ot\id_A)(c_{\fg,A}\ot\id_A),
\end{align*}
and the claim follows.
\end{proof}

\begin{remark}\label{rem:action-deriv-tensor-algebra}
Given $V\in\gMod{\cC}{\fg}$, the $\fg$-action on $V$ extends uniquely to an $\fg$-action by derivations on the tensor algebra $\op{T}(V)\in\indcat{\cC}$, both as an a associative and as a Lie algebra. Indeed the associative case is straightforward while the Lie case follows from Lemma \ref{lem:action-deriv-assoc-to Lie}.
\end{remark}

\begin{lemma}\label{lem:extended-action}
Any action of $\fg$ on an object $V$ extends uniquely to an action of $\fg$ on $\FLie(V)$ by derivations.
\end{lemma}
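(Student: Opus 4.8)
The plan is to use the universal property of $\FLie(V)$ together with the characterization of the free Lie algebra as the Lie subalgebra of $\op{T}(V)$ generated by $V$ (\Cref{prop:FLie-subalg-TV}), and to leverage the extension already available on the tensor algebra. First I would invoke \Cref{rem:action-deriv-tensor-algebra}: the given action $\eta\colon\fg\ot V\to V$ extends uniquely to an action of $\fg$ by derivations on $\op{T}(V)$, viewed both as an associative and (via \Cref{lem:action-deriv-assoc-to Lie}) as a Lie algebra. Since $\FLie(V)$ is identified with the Lie subalgebra of $\op{T}(V)$ generated by $V$, and since a derivation action by $\fg$ carries the generating subobject $V$ into itself, it should preserve the subalgebra generated by $V$; this gives an action of $\fg$ on $\FLie(V)$ by restriction, which is automatically by derivations because the bracket on $\FLie(V)$ is the restriction of the bracket on $\op{T}(V)$.

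The one point requiring care is why a derivation action preserves $\langle V\rangle=\FLie(V)$. Here I would use the explicit description in \Cref{rem:subalg-generated}: $\langle V\rangle=\sum_{n\ge1}\ima\varphi_n$ with $\varphi_1\colon V\hookrightarrow\op{T}(V)$ and $\varphi_{n+1}=\restr{b}{\ima\varphi_n\ot V}$. One proves by induction on $n$ that $\rho(\id_\fg\ot\varphi_n)$ factors through $\ima\varphi_n$; the base case is that $\fg$ already acts on $V$, and the inductive step uses precisely the derivation property, which expresses $\rho\circ(\id_\fg\ot b)$ on $\ima\varphi_n\ot V$ in terms of $b$ applied to $(\rho\text{-image})\ot V$ and $\ima\varphi_n\ot(\rho\text{-image})$, both landing in $\ima\varphi_{n+1}\subseteq\langle V\rangle$. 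Care must be taken with the braiding $c_{\fg,-}$ appearing in $\rho_{\ff\ot\ff}$, but naturality of $c$ handles the bookkeeping exactly as in the proof of \Cref{lem:action-deriv-assoc-to Lie}. Since $\FLie(V)$ is a graded ind-object whose graded pieces lie in $V^{\ot n}$, the action restricts to each $\FLie(V)_n$ and the sum converges in $\indcat{\cC}$.

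For uniqueness, suppose $\rho'$ is another $\fg$-action on $\FLie(V)$ by derivations extending $\eta$. The universal property of $\FLie$ as left adjoint to the forgetful functor (equivalently, that $\op{U}(\FLie(V))\cong\op{T}(V)$ by \Cref{prop:FLie-subalg-TV}) lets one extend $\rho'$ to a derivation action on $\op{T}(V)\cong\op{U}(\FLie(V))$ extending $\eta$ on $V$; by the uniqueness clause of \Cref{rem:action-deriv-tensor-algebra} this must coincide with the extension constructed above, and restricting back to $\FLie(V)$ gives $\rho'=\rho$. Alternatively, and perhaps more cleanly, one observes that any two derivation actions agreeing on the generating subobject $V$ must agree on $\ima\varphi_n$ for all $n$ by the same induction as above, hence on all of $\FLie(V)$.

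The main obstacle is not conceptual but notational: carefully writing the inductive step for ``a derivation action preserves the subobject generated by $V$'' as a commuting diagram of morphisms in $\cC$ (rather than an elementwise computation), keeping track of the braidings $c_{\fg,V^{\ot k}}$ that appear when the derivation acts across a tensor factor. I expect this to be a short argument that essentially repeats the braiding manipulations already carried out in \Cref{lem:action-deriv-assoc-to Lie}, so the proof can reasonably cite that lemma and \Cref{rem:action-deriv-tensor-algebra} and only spell out the restriction-to-$\langle V\rangle$ step.
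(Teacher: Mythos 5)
Your proposal is correct and follows essentially the same route as the paper's proof: identify $\FLie(V)$ with the Lie subalgebra $\ff$ of $\op{T}(V)$ generated by $V$ via \Cref{prop:FLie-subalg-TV}, extend the action uniquely to $\op{T}(V)$ by derivations, and then show by induction on $n$ that each $\ima\varphi_n$ from \Cref{rem:subalg-generated} is a $\fg$-submodule using the derivation property. The paper states this induction without spelling out the braiding bookkeeping, exactly as you anticipate.
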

\begin{proof}
By Proposition \ref{prop:FLie-subalg-TV} we may identify $\FLie(V)$ with the Lie subalgebra $\ff$ of $\op{T}(V)$ generated by $V$. The $\fg$-action on $V$ extends uniquely to an action on $\op{T}(V)$ by derivations, so we have to prove that $\ff$ is a $\fg$-submodule.
By Remark \ref{rem:subalg-generated}, $\ff=\oplus_{n\ge 1} \ima \varphi_n$, so it suffices to check that each $\ima \varphi_n$ is a $\fg$-submodule. This last statement follows by induction on $n$, using that the action on $\op{T}(V)$ is by derivations.
\end{proof}

\subsection{Semidirect product of operadic Lie algebras}
Now we are ready to introduce the notion of semidirect product.

\begin{definition}
Let $\fg$ and $\ff$ be operadic Lie algebras in $\cC$ with an action $\rho \colon \fg \ot \ff \to \ff$ by derivations. The \emph{Lie algebra semidirect product} $\ff \rtimes_\rho \fg$ is the operadic Lie algebra with underlying space $\ff\oplus\fg$ and Lie bracket $b$ defined by the composition 
\[\begin{tikzcd}
(\ff \oplus \fg) \otimes (\ff \oplus \fg) 
\arrow[equal]{d}
\arrow{rrrr}{b}
&&&&
\ff \oplus \fg
\\ 
\ff \otimes \ff \, \oplus \, \fg \otimes \ff  \, \oplus \, \ff \otimes \fg  \, \oplus \, \fg \otimes \fg  
\arrow{rrrr}{b_{\ff} \oplus \rho \oplus( -\rho c_{\ff,\fg} )\oplus b_{\fg}}
&&&&
\ff \oplus \ff \oplus \ff \oplus \fg.
\arrow{u}[swap]{(\id_\ff \sqcup \id_\ff \sqcup \id_\ff) \oplus \id_\fg}
\end{tikzcd}\] 
\end{definition}

\begin{remark}
Let $\iota_1:\ff\to\ff\oplus\fg$, $\pi_1:\ff\oplus\fg\to\ff$, $\iota_2:\fg\to\ff\oplus\fg$, $\pi_2:\ff\oplus\fg\to\fg$ be the canonical maps. 
Identifying $\ff \otimes \ff \, \oplus \, \fg \otimes \ff  \, \oplus \, \ff \otimes \fg  \, \oplus \, \fg \otimes \fg $ with $(\ff \oplus \fg) \otimes (\ff \oplus \fg)$ via the maps $\iota_i\otimes\iota_j$, $i,j\in\{1,2\}$, the bracket of $\ff \rtimes_\rho \fg$ is the map $b$
such that
\begin{align*}
\restr{b}{\ff\ot\ff} &= \iota_1 \circ b_{\ff}, &
\restr{b}{\ff\ot\fg} &= -\iota_1 \circ \rho\circ c_{\ff,\fg}, \\
\restr{b}{\fg\ot\ff} &= \iota_1 \circ \rho, &
\restr{b}{\fg\ot\fg} &= \iota_2 \circ b_{\fg}.
\end{align*}
\end{remark}

In the case of operadic Lie algebras in $\sVec$ with an action by derivations $\rho \colon \fg \ot \ff \to \ff$ in $\sVec$ (i.e., an even map), we recover the usual definition of the semidirect product $\ff \rtimes_\rho \fg$:
\begin{align*}
\big[(t_1, x_1), (t_2, x_2)\big] = \left[[t_1,t_2] + \rho(x_1)t_2 - (-1)^{\vert t_1 \vert \vert x_2 \vert} \rho(x_2)t_1 , [x_1,x_2] \right].
\end{align*}

The next crucial example gives an element-free description of contragradient Lie superalgebras, and will serve as a guide throughout \Cref{sec:contragredient-verp}, where we construct contragredient Lie algebras in more abstract symmetric tensor categories. In particular, it is the main reason why we are interested in semidirect products of Lie algebras.

\begin{example}\label{exa:classical-semidirect}
Consider data as in \ref{item:contragredient-1}--\ref{item:contragredient-4}, and view $\fh$ as a purely even abelian Lie algebra in $\sVec$. 
From these, we obtain an $\fh$-module $\rho\colon \fh\ot V \to V$ in $\sVec$ and a compatible map $d\colon V\ot V^*\to \fh$ as follows.

As an object, $V \in \sVec$ has a basis $(e_i)_{1\leq i \leq r}$ according to the given parity vector; we obtain an action $\rho$ on $V$ by letting $\fh$ act each on each $e_i$ by the given character $\xi_i$. On the dual module $V^*$, with dual basis $(f_i)_{1\leq i \leq r}$, the action of $\fh$ is given by the characters $(-\xi_i)$. Extend these to an action by derivations $\rho$ of $\fh$ on the free Lie superalgebra $\FLie(V\oplus V^*)$. 

In this case $\FLie(V\oplus V^*) \rtimes_\rho \fh$ is the Lie superalgebra defined by the relations \eqref{eq:relaciones gtilde} except for $[e_i, f_j]=\delta_{ij} h_i$. To realize the missing relation, let $d$ be the map
\[\begin{tikzcd}[row sep=0.1 em]
d\colon V\otimes V^* \arrow{r} & \fh, && e_i\ot f_j \arrow[r, mapsto] & \delta_{ij}h_i.
\end{tikzcd}\]
Then the Lie superalgebra $\tfg(A,\bp)$ associated to \ref{item:contragredient-1}--\ref{item:contragredient-4} is just the quotient of the semidirect product $\FLie(V\oplus V^*) \rtimes_\rho \fh$ by the ideal generated by the image of
\[
([\cdot,\cdot],-d) \colon V\otimes V^* \longrightarrow \FLie(V\oplus V^*)\rtimes_\rho \fh,
\]
where $[\cdot,\cdot]$ denotes the restriction of the Lie bracket of $ \FLie(V\oplus V^*)$ to $V\otimes V^*$. Notice that $d$ is a map of $\fh$-modules, i.e.
\begin{align*}
[h, d(e_i\otimes f_j)] &=0 = d\left( h\cdot e_i \otimes f_j\right)+ d\left(e_i \otimes h\cdot f_j\right), &h&\in\fh, &1&\leq i,j\leq r.
\end{align*}
Moreover, this compatibility between the bracket in $\fh$ and the map $d$ is a necessary consequence of the Jacobi identity in $\tfg(A,\bp)$.
\end{example}

The semidirect product in $\OLie(\cC)$ satisfies the following expected property.

\begin{lemma}\label{lem:semidirect-split}
Let $\fg$ and $\ff$ be operadic Lie algebras in $\cC$ with an action $\rho \colon \fg \ot \ff \to \ff$ by derivations. 
Then $\iota_1\colon\ff\to\ff\rtimes_\rho\fg$, $\iota_2\colon\fg\to\ff\rtimes_\rho\fg$, $\pi_2\colon\ff\rtimes_\rho\fg\to\fg$ are morphisms of operadic Lie algebras.
Thus the semidirect product $\ff \rtimes_\rho \fg$ fits in a short exact sequence 
\begin{align*}
0\to \ff \to \ff \rtimes_\rho \fg \to \fg \to 0
\end{align*}
of operadic Lie algebras in $\cC$, which splits under $\iota_2$.\qed
\end{lemma}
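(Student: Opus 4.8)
The plan is to unwind the definitions and verify each asserted map is a morphism of operadic Lie algebras by a direct check against the bracket formulas recorded in the Remark following the definition of the semidirect product, and then to assemble the short exact sequence from these facts plus the splitting identity $\pi_2\iota_2 = \id_\fg$.

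\textbf{Step 1: $\iota_1$ and $\iota_2$ are Lie algebra maps.} A map $f$ of operadic Lie algebras must satisfy $f\circ m = m'\circ(f\ot f)$. For $\iota_1\colon\ff\to\ff\rtimes_\rho\fg$, I would compose with the inclusion $\iota_1\ot\iota_1$ of $\ff\ot\ff$ into $(\ff\oplus\fg)\ot(\ff\oplus\fg)$: by the formula $\restr{b}{\ff\ot\ff}=\iota_1\circ b_\ff$, we get $b\circ(\iota_1\ot\iota_1) = \iota_1\circ b_\ff$, which is exactly the morphism condition. Likewise for $\iota_2\colon\fg\to\ff\rtimes_\rho\fg$, the identity $\restr{b}{\fg\ot\fg}=\iota_2\circ b_\fg$ gives $b\circ(\iota_2\ot\iota_2)=\iota_2\circ b_\fg$, so $\iota_2$ is a morphism. (One should note these are genuine morphisms of \emph{operadic} Lie algebras because $\iota_1,\iota_2$ are split monomorphisms in $\cC$ and the antisymmetry/Jacobi conditions for $\ff\rtimes_\rho\fg$ restrict correctly; but the morphism property itself is just the displayed compatibility.)

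\textbf{Step 2: $\pi_2$ is a Lie algebra map.} Here I would check $\pi_2\circ b = b_\fg\circ(\pi_2\ot\pi_2)$ by precomposing with each of the four inclusions $\iota_i\ot\iota_j$ of $\ff\ot\ff$, $\fg\ot\ff$, $\ff\ot\fg$, $\fg\ot\fg$ into $(\ff\oplus\fg)^{\ot2}$. On the $\fg\ot\fg$ summand: $\pi_2\circ b\circ(\iota_2\ot\iota_2) = \pi_2\circ\iota_2\circ b_\fg = b_\fg$, while $b_\fg\circ(\pi_2\ot\pi_2)\circ(\iota_2\ot\iota_2) = b_\fg\circ(\id_\fg\ot\id_\fg) = b_\fg$; these agree. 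On each of the other three summands, $\restr{b}{\bullet}$ lands in the $\ff$-summand (it equals $\iota_1\circ(\text{something})$), so $\pi_2\circ b$ vanishes there, while $b_\fg\circ(\pi_2\ot\pi_2)$ also vanishes since $\pi_2\iota_1=0$ (so $\pi_2\ot\pi_2$ kills any summand containing an $\ff$-tensor-factor). Hence the two maps agree on all four summands, and since $(\ff\oplus\fg)^{\ot2}$ is their biproduct, $\pi_2\circ b = b_\fg\circ(\pi_2\ot\pi_2)$.

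\textbf{Step 3: assemble the short exact sequence.} By Steps 1--2 we have operadic-Lie-algebra morphisms $\ff\xrightarrow{\iota_1}\ff\rtimes_\rho\fg\xrightarrow{\pi_2}\fg$. Since $\iota_1,\pi_2$ are (split) mono and epi already at the level of the underlying objects in $\cC$, with $\ima\iota_1 = \ker\pi_2$ (both equal the $\ff$-summand), this is a short exact sequence $0\to\ff\to\ff\rtimes_\rho\fg\to\fg\to0$ in $\OLie(\cC)$ (equivalently $\OLie(\indcat\cC)$). The relation $\pi_2\circ\iota_2 = \id_\fg$ holds at the level of $\cC$ by definition of the biproduct projections/inclusions, and by Step 1 $\iota_2$ is a morphism, so it furnishes a splitting.

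I do not expect a serious obstacle here: every step is a formal diagram-chase using only the explicit bracket formulas and the universal properties of the biproduct $\ff\oplus\fg$ in the additive category $\cC$. The one point requiring a little care is Step 2, namely confirming that $b_\fg\circ(\pi_2\ot\pi_2)$ genuinely vanishes on the three mixed summands --- this uses that $\pi_2\ot\pi_2$, as the tensor of the two projections, annihilates $\ff\ot\ff$, $\fg\ot\ff$, and $\ff\ot\fg$ inside the biproduct decomposition of $(\ff\oplus\fg)^{\ot 2}$ --- but this is immediate from $\pi_2\iota_1=0$ and bilinearity of $\ot$ on morphisms. Accordingly I would present the proof compactly, citing the Remark's bracket formulas and leaving the four-summand bookkeeping in Step 2 as the only explicit computation.
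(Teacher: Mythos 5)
Your verification is correct and is precisely the routine diagram-chase the paper omits (the lemma is stated with an immediate \qed), so you are taking the intended approach. The only cosmetic remark: your parenthetical in Step 1 about antisymmetry/Jacobi "restricting correctly" is unnecessary, since a morphism of operadic Lie algebras is by definition just a morphism in $\cC$ intertwining the brackets, and the source and target are already assumed (or defined) to be operadic Lie algebras.
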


\begin{lemma}\label{lem:semidirect-p-Jac}
Let $\fg$ and $\ff$ be operadic Lie algebras with an action by derivations $\rho \colon \fg \ot \ff \to \ff$ in a symmetric tensor category. If $\fg$ and $\ff$ satisfy the $p$-Jacobi identity, then so does $\ff \rtimes_\rho \fg$.
\end{lemma}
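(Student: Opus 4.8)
The plan is to exploit the description of the $p$-Jacobi identity via $\restr{\beta^L}{E_p(L)} = 0$ and the functoriality of the free operadic Lie algebra construction $\FOLie$, together with the splitting from \Cref{lem:semidirect-split}. First I would set $L \coloneq \ff \rtimes_\rho \fg$ and recall that, as an object, $L = \ff \oplus \fg$. The key structural fact I want to use is that $E_p$ depends only on the underlying object, so $E_p(L) = E_p(\ff \oplus \fg)$, and this decomposes according to the $\bZ$-grading on $\FOLie(\ff\oplus\fg)$ coming from the grading where $\deg \ff = \deg\fg = 1$ is refined to a bigrading counting separately how many tensor factors lie in $\ff$ and how many in $\fg$. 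Thus $E_p(L)$ splits as a direct sum of pieces $E_p^{(a,b)}$ with $a+b=p$, $a$ the number of $\ff$-inputs and $b$ the number of $\fg$-inputs. It suffices to check that $\beta^L$ annihilates each homogeneous piece.

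The two extreme pieces, $E_p^{(p,0)}$ and $E_p^{(0,p)}$, are handled directly: on these, $\beta^L$ factors through $\beta^{\ff}$ (resp. $\beta^{\fg}$) composed with the inclusion $\iota_1$ (resp. $\iota_2$), which is a Lie algebra map by \Cref{lem:semidirect-split}; hence these vanish because $\ff$ and $\fg$ satisfy the $p$-Jacobi identity. For the mixed pieces $E_p^{(a,b)}$ with $0 < a < p$, the idea is that any bracket monomial involving at least one $\fg$-input and at least one $\ff$-input, after using antisymmetry and Jacobi, produces an element of $\ff$ (since $\ff$ is an ideal), and the action of $\fg$ on $\ff$ through $\rho$ is by derivations. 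More precisely, I would argue that the composite $\FOLie_p(L) \xrightarrow{\beta^L} L$ restricted to the $(a,b)$-component with $b\ge 1$ lands in $\ff$ and can be computed via the $\op{U}(\fg)$-module structure on $\FLie(\ff)$ from \Cref{lem:extended-action} together with $\beta^{\ff}$; an element of $E_p^{(a,b)}$ maps to zero under $\phi$, and since the derivation action is compatible with the free Lie algebra structure on $\ff$ (\Cref{lem:extended-action}, \Cref{rem:action-deriv-tensor-algebra}), it is also killed by $\beta^L$.

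The cleanest implementation may instead be to use \Cref{thm:Lie-alg-equivalences}-style reasoning indirectly: realize $L$ as a Lie subalgebra (closed under commutator) of an associative algebra built from $\op{U}(\ff)$ and $\op{U}(\fg)$ — namely a smash/crossed product $\op{U}(\ff) \# \op{U}(\fg)$, which exists as an associative algebra in $\indcat{\cC}$ because $\fg$ acts by derivations on $\ff$ hence on $\op{U}(\ff)$ (\Cref{rem:action-deriv-tensor-algebra} plus \Cref{lem:action-deriv-assoc-to Lie}), and $\op{U}(L)$ maps to it. Since any associative algebra is a Lie algebra satisfying the $p$-Jacobi identity, and the $p$-Jacobi identity passes to subobjects closed under the bracket (this is among the stability properties recalled from \cite{Etingof} in the excerpt), it would follow that $L$ satisfies the $p$-Jacobi identity. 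I would need to verify that $\op{U}(\ff)\#\op{U}(\fg)$ is well-defined and that the natural map $L \to \op{U}(\ff)\#\op{U}(\fg)$ is a closed immersion of operadic Lie algebras, i.e. identifies $L = \ff \oplus \fg$ with a subobject stable under the commutator whose induced bracket is the semidirect product bracket.

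The main obstacle will be making the mixed-degree bookkeeping on $E_p(\ff\oplus\fg)$ precise, or — in the alternative approach — carefully constructing the smash product $\op{U}(\ff)\#\op{U}(\fg)$ in $\indcat{\cC}$ and checking associativity using only that $\rho$ is an action by derivations (which requires the braiding identities already used in the proof of \Cref{lem:action-deriv-assoc-to Lie}). I expect the smash-product route to be the more robust one, since it reduces everything to the single black-box fact that subobjects of associative algebras closed under the commutator satisfy the $p$-Jacobi identity, avoiding any direct manipulation of $E_p$.
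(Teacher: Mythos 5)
Your first route is the right skeleton, but it stalls exactly where the paper's proof has its one substantive input. After splitting $E_p(\ff\rtimes_\rho\fg)=E_p(\ff\oplus\fg)$ into multidegree pieces $E_p^{(a,b)}$, you handle $(p,0)$ and $(0,p)$ correctly via \Cref{lem:semidirect-split} and naturality of $\beta$, but your treatment of the mixed pieces is not an argument: ``an element of $E_p^{(a,b)}$ maps to zero under $\phi$'' is just the definition of $E_p$, and nothing about the derivation property of $\rho$ lets you conclude that $\beta^L$ kills it. The fact that rescues the proof is that there are no mixed pieces at all: by \cite{Etingof}*{Proposition 4.5} the functor $E_p$ is additive, $E_p(\ff\oplus\fg)=E_p(\ff)\oplus E_p(\fg)$, so the whole of $E_p(L)$ sits inside $\FOLie(\ff)\oplus\FOLie(\fg)$ and the two extreme cases you already did finish the proof. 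This is precisely the paper's argument; without citing (or reproving) that additivity, your mixed-degree step is a genuine gap.

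Your preferred smash-product route has a different, and in general fatal, gap. To transport the $p$-Jacobi identity from a commutator-closed subobject of the associative algebra $\op{U}(\ff)\#\op{U}(\fg)$ back to $L=\ff\oplus\fg$, you need the canonical map $L\to\op{U}(\ff)\#\op{U}(\fg)$ to be injective, i.e.\ you need $\ff\hookrightarrow\op{U}(\ff)$ and $\fg\hookrightarrow\op{U}(\fg)$. That is a PBW-type property which the hypothesis ``$\ff$ and $\fg$ satisfy the $p$-Jacobi identity'' does not supply in an arbitrary symmetric tensor category; the equivalence $p$-Jacobi $\Leftrightarrow$ PBW of \Cref{thm:Lie-alg-equivalences} is only available under the Frobenius-exactness and moderate-growth assumptions, whereas the lemma is stated in full generality (and is used, via \Cref{cor:Fr-exact-semidirect-closed}, precisely to establish results in that restricted setting, so assuming them here would be circular in spirit). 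If the map fails to be injective, knowing that its image satisfies the $p$-Jacobi identity says nothing about $L$. So the smash-product route is not the ``more robust'' one; the direct route plus the additivity of $E_p$ is both shorter and correct.
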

\begin{proof}
We have to prove that $\beta^{\ff \rtimes_\rho \fg}$  annihilates $E_p(\ff \rtimes_\rho \fg)$. By \cite{Etingof}*{Proposition 4.5},
\begin{align*}
E_p(\ff \rtimes_\rho \fg) &= \ker \phi^{\ff \rtimes_\rho \fg} _p = \ker \phi^{\ff \oplus \fg}_p =
E_p(\ff \oplus \fg) = E_p(\ff) \oplus E_p(\fg).
\end{align*}
On the other hand, by naturality of $\beta$, and since the canonical inclusions $\ff, \fg\hookrightarrow \ff \rtimes_\rho \fg$ are Lie algebra maps, we have $\restr{\beta^{\ff \rtimes_\rho \fg}}{\ff}=\beta^{\ff}$ and $\restr{\beta^{\ff \rtimes_\rho \fg}}{\fg}=\beta^{\fg}$. By hypothesis, $\restr{\beta^{\ff}}{E_p(\ff)}=\restr{\beta^{\fg}}{E_p(\fg)}=0$, so $\beta^{\ff \rtimes_\rho \fg}$ annihilates $E_p(\ff) \oplus E_p(\fg)=E_p(\ff \rtimes_\rho \fg)$.
\end{proof}

\begin{corollary}\label{cor:Fr-exact-semidirect-closed}
If $\cC$ is a Frobenius exact pre-Tannakian category of moderate growth, then the subcategory $\Lie(\indcat{\cC})$ of $\OLie(\indcat{\cC})$ is closed under semidirect products.
\end{corollary}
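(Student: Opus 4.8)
The plan is to deduce the corollary immediately from \Cref{thm:Lie-alg-equivalences} together with \Cref{lem:semidirect-p-Jac}. Since the notion of Frobenius exactness presupposes positive characteristic, we take $\cha\bk=p>0$ throughout. Let $\fg,\ff\in\Lie(\indcat{\cC})$ and let $\rho\colon\fg\ot\ff\to\ff$ be an action by derivations. Because $\cC$ is a Frobenius exact pre-Tannakian category of moderate growth, \Cref{thm:Lie-alg-equivalences} applies and tells us that the genuine Lie algebras $\fg$ and $\ff$ both satisfy the $p$-Jacobi identity of \Cref{def:p-Jacobi}. By \Cref{lem:semidirect-p-Jac}, the semidirect product $\ff\rtimes_\rho\fg$ then also satisfies the $p$-Jacobi identity, and invoking \Cref{thm:Lie-alg-equivalences} in the converse direction we conclude that $\ff\rtimes_\rho\fg$ is a Lie algebra. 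Hence $\Lie(\indcat{\cC})$ is stable under the semidirect product construction inside $\OLie(\indcat{\cC})$.

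The only step that deserves a word of care — and the one I would treat most carefully — is that $\fg$, $\ff$, and therefore $\ff\rtimes_\rho\fg$, are a priori ind-objects, whereas \Cref{thm:Lie-alg-equivalences} is phrased for genuine (finite-length) objects of $\cC$. This is not a real obstacle: the assignments $V\mapsto E_p(V)$ and $L\mapsto\beta^L$ are constructed from $\FOLie$ and the maps $\phi^V$, and these commute with filtered colimits, while the Frobenius functor extends to $\indcat{\cC}$; consequently the equivalence between satisfying the $p$-Jacobi identity and being a Lie algebra transfers verbatim from $\OLie(\cC)$ to $\OLie(\indcat{\cC})$. Alternatively, one can present an ind-Lie algebra as a filtered colimit of operadic Lie subalgebras whose underlying objects lie in $\cC$ and reduce to the genuine case directly; either route keeps the argument to the two applications of \Cref{thm:Lie-alg-equivalences} sandwiching \Cref{lem:semidirect-p-Jac}.
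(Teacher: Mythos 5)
Your proof is correct and follows exactly the paper's argument: the corollary is deduced by combining \Cref{thm:Lie-alg-equivalences} with \Cref{lem:semidirect-p-Jac}, which is precisely what the paper does. Your additional remark about transferring the equivalence from $\cC$ to $\indcat{\cC}$ is a reasonable point of care that the paper leaves implicit, but it does not change the route.
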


\begin{proof}
Follows immediately from \Cref{thm:Lie-alg-equivalences} and \Cref{lem:semidirect-p-Jac}
\end{proof}

\begin{remark}
The semidirect product is universal. Indeed, we can imitate the element-free universal property for semidirect products of finite groups to describe the assignment $(\fg, \ff, \rho) \mapsto (\fg \hookrightarrow \ff \rtimes_\rho \fg)$ as a left adjoint. 

Consider on one hand $\mathsf{OLieActions}(\cC)$, the category with objects $(\fg, \ff, \rho)$, where $\rho \colon \fg \ot \ff \to \ff$ is an action of operadic Lie algebras by derivations, and arrows $(\fg, \ff, \rho)\to (\fg', \ff', \rho')$ given by pairs $(f_1\colon \fg \to \fg', f_2 \colon \ff \to \ff')$ satisfying the obvious compatibility.

On the other hand, let $\mathsf{Arr}\OLie(\cC)$ denote the category whose objects are the arrows in $\OLie(\cC)$, and where morphisms are commutative diagrams.

We introduce the \emph{pullback forgetful functor} $\mathsf{Arr}\OLie(\cC) \to \mathsf{OLieActions}(\cC)$, which sends an arrow $\phi \colon \fg \to \ff$ in $\OLie(\cC)$ to the pullback of the adjoint action of $\ff$ along $\phi$, that is,
\[\begin{tikzcd}
\fg \otimes \ff \arrow{r}{\phi \ot \id} &\ff \ot \ff \arrow{r}{b_\ff} & \ff.
\end{tikzcd}\]
Then the assignment $(\fg, \ff, \rho) \mapsto (\fg \hookrightarrow \ff \rtimes_\rho \fg)$ serves as a left adjoint for the pullback.
\end{remark}

\begin{remark}
There is a close relation between the semidirect product introduced above and the smash product from \cite{Ven2}*{Definition 5.30}. 
Namely, we can see $\ff$ as a $\op{U}(\fg)$-module due to Remark \ref{rem:g-mod-Ug-mod}, and then construct the smash product
$\op{T}(\ff)\rtimes\op{U}(\fg)$. As the $\fg$-action is by derivations, the defining ideal of $\op{U}(\ff)$ is $\fg$-stable (hence also $\op{U}(\fg)$-stable), so we can quotient and obtain a smash product Hopf algebra $H=\op{U}(\ff)\rtimes\op{U}(\fg)$. 

Assume further that $\ff$ and $\fg$ are PBW, so we can view $\ff$ and $\fg$ as subobjects of $\op{U}(\ff)$ and $\op{U}(\fg)$, respectively. 
Since the commutator of $H$ restricted to $\ff\oplus \fg$ coincides with that of $\ff \rtimes_\rho \fg$, we may identify $\ff\oplus \fg\hookrightarrow \ff\ot\one\oplus \one \otimes\fg \subset H$. From here, $H\simeq \op{U}(\ff \rtimes_\rho \fg)$.
\end{remark}

Next we describe the obstruction governing whether a pair morphisms from the factors can be extended to a morphisms from their semidirect product.

\begin{proposition}\label{prop:morphism-from-semidirect-product}
Let $\ff, \fg, \fh\in\cC$ be operadic Lie algebras. Assume $\fg$ acts by derivations on $\ff$ via $\rho \colon \fg \ot \ff \to \ff$, and let $\phi_{\ff}\colon \ff\to\fh$, $\phi_{\fg}\colon\fg\to\fh$ be Lie algebra morphisms. Set $\phi\coloneqq\phi_{\ff}\sqcup\phi_{\fg}:\ff \rtimes_\rho \fg\to\fh$. 
Then $\phi$ is a Lie algebra morphism if and only if
\begin{align}\label{eq:morphism-from-semidirect-product-compatibility}
\phi_{\ff} \circ \rho&= b_{\fh}\circ (\phi_{\fg}\ot\phi_{\ff})\colon \fg\ot\ff\to\fh.
\end{align}
\end{proposition}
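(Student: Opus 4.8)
The plan is to reduce everything to checking that $\phi$ respects the bracket $b$ of $\ff\rtimes_\rho\fg$ on each of the four summands of $(\ff\oplus\fg)\otimes(\ff\oplus\fg)$, using the explicit description of $b$ recorded in the Remark after the definition of the semidirect product. Concretely, $\phi=\phi_\ff\sqcup\phi_\fg$ means $\phi\iota_1=\phi_\ff$ and $\phi\iota_2=\phi_\fg$, so the condition ``$\phi$ is a morphism of operadic Lie algebras'' unfolds as the single identity $b_\fh\circ(\phi\otimes\phi)=\phi\circ b$, which we test by precomposing with $\iota_i\otimes\iota_j$ for $i,j\in\{1,2\}$.

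First I would dispatch the two ``diagonal'' pieces. On $\ff\otimes\ff$ we have $\restr{b}{\ff\otimes\ff}=\iota_1\circ b_\ff$, so the required identity becomes $b_\fh\circ(\phi_\ff\otimes\phi_\ff)=\phi_\ff\circ b_\ff$, which holds because $\phi_\ff$ is a Lie algebra map by hypothesis; the $\fg\otimes\fg$ piece is identical using that $\phi_\fg$ is a Lie algebra map. So those two summands impose no condition. Then I would turn to the ``cross'' pieces. On $\fg\otimes\ff$ we have $\restr{b}{\fg\otimes\ff}=\iota_1\circ\rho$, so the needed identity is exactly $b_\fh\circ(\phi_\fg\otimes\phi_\ff)=\phi_\ff\circ\rho$, which is precisely \eqref{eq:morphism-from-semidirect-product-compatibility}. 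This already gives the ``only if'' direction: if $\phi$ is a morphism, restricting the compatibility to the $\fg\otimes\ff$ component yields \eqref{eq:morphism-from-semidirect-product-compatibility}.

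For the ``if'' direction it remains to check the $\ff\otimes\fg$ summand, where $\restr{b}{\ff\otimes\fg}=-\iota_1\circ\rho\circ c_{\ff,\fg}$. The needed identity is $b_\fh\circ(\phi_\ff\otimes\phi_\fg)=-\phi_\ff\circ\rho\circ c_{\ff,\fg}$. Here I would use antisymmetry of the bracket $b_\fh$, namely $b_\fh\circ c_{\fh,\fh}=-b_\fh$, together with the naturality of the braiding, which gives $c_{\fh,\fh}\circ(\phi_\ff\otimes\phi_\fg)=(\phi_\fg\otimes\phi_\ff)\circ c_{\ff,\fg}$. Combining,
\begin{align*}
b_\fh\circ(\phi_\ff\otimes\phi_\fg) &= -b_\fh\circ c_{\fh,\fh}\circ(\phi_\ff\otimes\phi_\fg) = -b_\fh\circ(\phi_\fg\otimes\phi_\ff)\circ c_{\ff,\fg} = -\phi_\ff\circ\rho\circ c_{\ff,\fg},
\end{align*}
where the last step invokes \eqref{eq:morphism-from-semidirect-product-compatibility}. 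Thus assuming \eqref{eq:morphism-from-semidirect-product-compatibility}, all four components agree, and since $b=b_\fh\circ(\phi\otimes\phi)$ and $\phi\circ b$ are determined by their precompositions with the $\iota_i\otimes\iota_j$, we conclude $\phi$ is a morphism of operadic Lie algebras.

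**Anticipated obstacle.** None of the steps is deep; the only mild subtlety is bookkeeping the identification of $(\ff\oplus\fg)^{\otimes 2}$ with the fourfold biproduct $\ff\otimes\ff\oplus\fg\otimes\ff\oplus\ff\otimes\fg\oplus\fg\otimes\fg$ compatibly with the $\iota_i\otimes\iota_j$, and making sure the sign and the braiding $c_{\ff,\fg}$ in $\restr{b}{\ff\otimes\fg}$ are handled correctly — this is exactly where the antisymmetry-plus-naturality argument is needed, and it is the one place a careless computation could go wrong.
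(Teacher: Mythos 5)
Your proposal is correct and follows essentially the same route as the paper: restrict $b_\fh\circ(\phi\ot\phi)=\phi\circ b$ to the four summands $\iota_i\ot\iota_j$, observe that the diagonal cases are automatic from $\phi_\ff,\phi_\fg$ being Lie algebra maps, that the $\fg\ot\ff$ case is exactly \eqref{eq:morphism-from-semidirect-product-compatibility}, and that the $\ff\ot\fg$ case follows from it by antisymmetry and naturality of the braiding. No gaps.
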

\begin{proof}
Assume first that $\phi$ is a Lie algebra map, i.e. $\phi\circ  b=b_{\fh}\circ (\phi\ot\phi)$. Then
\begin{align*}
\phi_{\ff}\circ \rho &= \phi \circ \iota_1 \circ \rho =\phi \circ b \circ (\iota_1\ot \iota_2)
= b_{\fh}\circ (\phi\ot\phi) \circ (\iota_1\ot \iota_2) = b_{\fh}\circ (\phi_{\fg}\ot\phi_{\ff}),
\end{align*}
so \eqref{eq:morphism-from-semidirect-product-compatibility} holds. 

Reciprocally, assume that \eqref{eq:morphism-from-semidirect-product-compatibility} holds. The equality $\phi_{\fh}\circ  b=b\circ (\phi\ot\phi)$
holds if and only if $\phi\circ  b \circ (\iota_i\ot \iota_j)=b_{\fh}\circ (\phi\ot\phi)\circ (\iota_i\ot \iota_j)$ holds for all $i,j\in\{1,2\}$. The case
$i=1$, $j=2$ is exactly \eqref{eq:morphism-from-semidirect-product-compatibility} while $i=2$, $j=1$ follows by composing with $c_{\fg,\ff}$, and the cases
$i=j=1$ and $i=j=2$ follow since $\phi_{\ff}$, $\phi_{\fg}$ are Lie algebra morphisms, respectively. Thus $\phi$ is a Lie algebra morphism.
\end{proof}

\subsubsection{Modules over a semidirect product} Finally we construct modules over the semidirect product starting from an object which is simultaneously a module over both Lie algebras.

\begin{proposition}\label{prop:module-over-semidirect-product}
Assume we have an action by derivations $\rho \colon \fg \ot \ff \to \ff$ of operadic Lie algebras. Let $V\in\cC$ be simultaneously a $\fg$-module and an $\ff$-module, with actions $\eta_{\ff}\colon \ff\ot V\to V$, $\eta_{\fg}\colon\fg\ot V\to V$. Set $\eta\coloneqq\eta_{\ff}\sqcup\eta_{\fg}\colon \ff \rtimes_\rho \fg\ot V\to V$
\footnote{Here we identify $\ff \rtimes_\rho \fg\ot V = \ff \ot V\oplus \fg\ot V$ as objects in $\cC$.}.

Then $\eta$ defines an  $\ff \rtimes_\rho \fg$-module structure on $V$ if and only if
\begin{align}\label{eq:module-over-semidirect-product-compatibility}
\eta_{\ff}(\rho\ot \id_{V}) &= \eta_{\fg}(\id_{\fg}\ot \eta_{\ff})-\eta_{\ff}(\id_{\ff}\ot \eta_{\fg})(c_{\fg,\ff}\ot \id_{V}).
\end{align}
\end{proposition}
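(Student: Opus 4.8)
The plan is to verify the module axiom \eqref{eq:module-Lie-alg-defn} for the candidate action $\eta=\eta_{\ff}\sqcup\eta_{\fg}$ by decomposing the relevant maps along the biproduct decomposition $(\ff\rtimes_\rho\fg)^{\ot 2}\ot V = \ff\ot\ff\ot V \oplus \fg\ot\ff\ot V\oplus \ff\ot\fg\ot V\oplus \fg\ot\fg\ot V$, exactly as we did for the bracket of the semidirect product. Writing $b$ for the bracket of $\ff\rtimes_\rho\fg$, the module axiom says $\eta(b\ot\id_V)=\eta(\id\ot\eta)(\id\ot\id - c_{\fg,\fg}\ot\id)$, i.e. that $\eta$ turns $V$ into a module; using \Cref{rem:module-representation} one may equivalently phrase this as: the pair $(\eta_\ff,\eta_\fg)$ glues to a Lie algebra map $\ff\rtimes_\rho\fg\to\fgl(V)$. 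First I would precompose the module axiom with each of the four inclusions $\iota_i\ot\iota_j\ot\id_V$, $i,j\in\{1,2\}$, and check the resulting four identities separately.

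The two ``diagonal'' cases $i=j=1$ and $i=j=2$ reduce immediately to the hypotheses that $\eta_\ff$ and $\eta_\fg$ are $\ff$- resp. $\fg$-module structures, using the formulas $\restr{b}{\ff\ot\ff}=\iota_1 b_\ff$ and $\restr{b}{\fg\ot\fg}=\iota_2 b_\fg$ from the remark after the definition of the semidirect product, together with the fact that $\eta\iota_1=\eta_\ff$ and $\eta\iota_2=\eta_\fg$. The mixed case $i=2$, $j=1$ uses $\restr{b}{\fg\ot\ff}=\iota_1\rho$: the left-hand side becomes $\eta_\ff(\rho\ot\id_V)$, while the right-hand side, after expanding $\eta(\id\ot\eta)$ on $\fg\ot\ff\ot V$ and noting that $c_{\fg,\fg}$ does not appear since the two tensor factors of the Lie algebra are in different summands, becomes precisely $\eta_\fg(\id_\fg\ot\eta_\ff)-\eta_\ff(\id_\ff\ot\eta_\fg)(c_{\fg,\ff}\ot\id_V)$; this is exactly \eqref{eq:module-over-semidirect-product-compatibility}. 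Finally the case $i=1$, $j=2$ uses $\restr{b}{\ff\ot\fg}=-\iota_1\rho c_{\ff,\fg}$ and should follow from the $i=2$, $j=1$ case by precomposing with the braiding $c_{\ff,\fg}\ot\id_V$ and invoking naturality of $c$ together with the antisymmetry already built into \eqref{eq:module-over-semidirect-product-compatibility}; alternatively it can be checked directly by the same bookkeeping.

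For the converse, one observes that the four precomposed identities are jointly equivalent to the module axiom (since the $\iota_i\ot\iota_j\ot\id_V$ are jointly epic, being the components of an isomorphism), and that three of them hold automatically; hence the module axiom holds if and only if the remaining one, the $i=2,j=1$ identity, holds, and that is \eqref{eq:module-over-semidirect-product-compatibility}.

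The only real bookkeeping obstacle is keeping the braidings straight in the mixed terms: the action on a tensor product is not symmetric in its two Lie-algebra inputs, so the term $\eta(\id\ot\eta)(c_{\fg,\fg}\ot\id_V)$ must be expanded carefully when one input lies in $\ff$ and the other in $\fg$, and one must use naturality of $c$ to move $\rho$ past braidings in the $i=1,j=2$ case. I expect this to be routine but error-prone, and the cleanest write-up is to do the $i=2,j=1$ computation in full, then deduce $i=1,j=2$ from it via the antisymmetry of the bracket of $\ff\rtimes_\rho\fg$ (equivalently, via the relation $\restr{b}{\ff\ot\fg}=-\restr{b}{\fg\ot\ff}\,c_{\ff,\fg}$ after applying the braiding).
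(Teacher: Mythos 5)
Your proposal is correct and follows essentially the same route as the paper: both proofs restrict the module axiom to the four summands $\ff\ot\ff\ot V$, $\fg\ot\ff\ot V$, $\ff\ot\fg\ot V$, $\fg\ot\fg\ot V$, observe that the diagonal cases are the hypotheses on $\eta_\ff$ and $\eta_\fg$, identify the $\fg\ot\ff\ot V$ case with \eqref{eq:module-over-semidirect-product-compatibility}, and obtain the $\ff\ot\fg\ot V$ case by precomposing with $-c_{\ff,\fg}\ot\id_V$. The only difference is cosmetic (your aside about $\fgl(V)$ is unused, and your remark that ``$c_{\fg,\fg}$ does not appear'' is better phrased as: the braiding restricted to $\fg\ot\ff$ is $c_{\fg,\ff}$, which is exactly what lands in the compatibility condition).
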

\begin{proof}
If $\eta$ gives an action, then we get \eqref{eq:module-over-semidirect-product-compatibility} by restricting \eqref{eq:module-Lie-alg-defn} to $\fg\ot\ff \ot V \subset (\ff \rtimes_\rho \fg)^{\ot2}\ot V$. 

On the other hand, assume that $\eta_{\ff}$ and $\eta_{\fg}$ satisfy \eqref{eq:module-over-semidirect-product-compatibility}. We have to check that $\eta$ makes the diagram \eqref{eq:module-Lie-alg-defn} commutative, i.e, the two obvious maps $(\ff \rtimes_\rho\fg)^{\ot 2} \ot V \to V$ are actually equal. It suffices to show so when restricted to $\ff\ot\ff \ot V$, $\fg\ot\ff \ot V$, $\ff\ot\fg \ot V$, and $\fg\ot\fg \ot V$. The equality of these restrictions to the first and the fourth summands follow since $\eta_{\ff}$ and $\eta_{\fg}$ are actions. The second one is exactly \eqref{eq:module-over-semidirect-product-compatibility}, and the third one is just \eqref{eq:module-over-semidirect-product-compatibility} pre-composed with $-c_{\ff,\fg}\ot\id_V$.
\end{proof}

A particularly relevant example of semidirect product can be obtained by taking $\ff=\FLie(W)$ for a $\fg$-module $W$.  We show next that it suffices to check the compatibilities in Propositions \ref{prop:morphism-from-semidirect-product} and \ref{prop:module-over-semidirect-product} just over $W$.

\begin{lemma}\label{lem:semidirect-compatibility-free-algebra}
Let $W\in\cC$ be a $\fg$-module, and denote by $\rho$ the corresponding action of $\fg$ on $\ff=\FLie(W)$ by derivations as in \Cref{lem:extended-action}.
\begin{enumerate}[leftmargin=*,label=\rm{(\roman*)}]
\item\label{item:semidirect-compatibility-free-algebra-morphism} Let $\fh\in\cC$, $\phi_{\ff}\colon \ff\to\fh$, $\phi_{\fg}\colon\fg\to\fh$ be as in \Cref{prop:morphism-from-semidirect-product}. 
Then $\phi\coloneqq\phi_{\ff}\sqcup\phi_{\fg}:\ff \rtimes_\rho \fg\to\fh$.  is a Lie algebra map if and only if \eqref{eq:morphism-from-semidirect-product-compatibility} holds on $\fg\ot W$.

\item\label{item:semidirect-compatibility-free-algebra-module} 
Let $V\in\cC$, $\eta_{\ff}\colon \ff\ot V\to V$, $\eta_{\fg}\colon\fg\ot V\to V$ be as in \Cref{prop:module-over-semidirect-product}. 
Then $V$ is an $\ff \rtimes_\rho \fg$-module via $\eta\coloneqq\eta_{\ff}\sqcup\eta_{\fg}\colon \ff \rtimes_\rho \fg\ot V\to V$ if and only if \eqref{eq:module-over-semidirect-product-compatibility} holds on $\fg\ot W\ot V$.
\end{enumerate}
\end{lemma}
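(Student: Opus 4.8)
The plan is to deduce both statements from the description of $\ff=\FLie(W)$, via \Cref{rem:subalg-generated}, as the sum $\sum_{n\ge 1}\ima\varphi_n$ of its homogeneous components, where $\varphi_1\colon W\hookrightarrow\ff$ is the inclusion and $\varphi_{n+1}=\restr{b_\ff}{\ima\varphi_n\ot W}$. It helps to keep in mind the conceptual content. For \ref{item:semidirect-compatibility-free-algebra-morphism}, make $\fh$ into a $\fg$-module by pulling back the adjoint action along $\phi_\fg$; then \eqref{eq:morphism-from-semidirect-product-compatibility} says precisely that $\phi_\ff\colon\ff\to\fh$ is $\fg$-equivariant, and one must check that $\fg$-equivariance on the generating subobject $W$ propagates to all of $\ff$. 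For \ref{item:semidirect-compatibility-free-algebra-module}, give $\ff\ot V$ the tensor product $\fg$-module structure \eqref{eq:action-tensor-prod} built from $\rho$ and $\eta_\fg$; then \eqref{eq:module-over-semidirect-product-compatibility} is the statement that $\eta_\ff\colon\ff\ot V\to V$ is a morphism of $\fg$-modules, and again it suffices to check this on $W\ot V$.

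I would prove \ref{item:semidirect-compatibility-free-algebra-morphism} by induction on $n$, showing that \eqref{eq:morphism-from-semidirect-product-compatibility} holds on $\fg\ot\ima\varphi_n$. The case $n=1$ is the hypothesis. For the step, precompose both sides of \eqref{eq:morphism-from-semidirect-product-compatibility} with $\id_\fg\ot\varphi_{n+1}$; since $\ot$ is exact and $\varphi_{n+1}$ is epi onto $\ima\varphi_{n+1}$, it is enough to check the resulting identity of maps $\fg\ot\ima\varphi_n\ot W\to\fh$. On the left one has $\phi_\ff\circ\rho\circ(\id_\fg\ot b_\ff)=\phi_\ff\circ b_\ff\circ\rho_{\ff\ot\ff}$ because $\rho$ acts by derivations (\Cref{lem:extended-action}), and then $\phi_\ff\circ b_\ff=b_\fh\circ(\phi_\ff\ot\phi_\ff)$ because $\phi_\ff$ is a Lie map; expanding $\rho_{\ff\ot\ff}=\rho\ot\id_\ff+(\id_\ff\ot\rho)(c_{\fg,\ff}\ot\id_\ff)$ and applying the inductive hypothesis to the leg $\fg\ot\ima\varphi_n$ and the case $n=1$ to the leg $\fg\ot W$ converts the left side into $b_\fh\circ(b_\fh\ot\id_\fh)\circ(\phi_\fg\ot\phi_\ff\ot\phi_\ff)+b_\fh\circ(\id_\fh\ot b_\fh)\circ(c_{\fh,\fh}\ot\id_\fh)\circ(\phi_\fg\ot\phi_\ff\ot\phi_\ff)$, where naturality of the braiding was used to move the $\phi$'s past $c_{\fg,\ff}$. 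On the right, $b_\fh\circ(\phi_\fg\ot\phi_\ff)\circ(\id_\fg\ot b_\ff)=b_\fh\circ(\id_\fh\ot b_\fh)\circ(\phi_\fg\ot\phi_\ff\ot\phi_\ff)$, again since $\phi_\ff$ is a Lie map. The two sides now agree by the Leibniz form of the Jacobi identity \eqref{eq:operadic} in $\fh$, precomposed with $\phi_\fg\ot\phi_\ff\ot\phi_\ff$. As the $\ima\varphi_n$ span $\ff$, this gives \eqref{eq:morphism-from-semidirect-product-compatibility} on all of $\fg\ot\ff$, and \Cref{prop:morphism-from-semidirect-product} finishes the proof.

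Part \ref{item:semidirect-compatibility-free-algebra-module} is handled in the same way, with the Jacobi identity in $\fh$ replaced by the module axiom \eqref{eq:module-Lie-alg-defn} for $\eta_\ff$. Assuming \eqref{eq:module-over-semidirect-product-compatibility} on $\fg\ot W\ot V$, I would show by induction on $n$ that it holds on $\fg\ot\ima\varphi_n\ot V$; the case $n=1$ is the hypothesis. In the step one precomposes with $\id_\fg\ot\varphi_{n+1}\ot\id_V$, uses that $\rho$ acts on $\ff$ by derivations to rewrite $\rho\circ(\id_\fg\ot b_\ff)$, and uses the module axiom in the form $\eta_\ff\circ(b_\ff\ot\id_V)=\eta_\ff\circ(\id_\ff\ot\eta_\ff)-\eta_\ff\circ(\id_\ff\ot\eta_\ff)\circ(c_{\ff,\ff}\ot\id_V)$ to handle the action of $\ima\varphi_{n+1}$ on $V$; then one invokes the inductive hypothesis on $\fg\ot\ima\varphi_n\ot V$ and the case $n=1$ on $\fg\ot W\ot V$. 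After repeated use of naturality of the braiding the surviving terms reassemble into the module axiom \eqref{eq:module-Lie-alg-defn} for $V$, which gives the claim; \Cref{prop:module-over-semidirect-product} then concludes.

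The only real difficulty is bookkeeping: each inductive step is an extended diagram chase in which a symmetry $c_{\fg,\ff}$ (and, for \ref{item:semidirect-compatibility-free-algebra-module}, also symmetries of the form $c_{\fg,\ff\ot\ff}$) must be threaded through the tensor factors every time $\fg$ is moved past a piece of $\ff$, and one has to verify that after all substitutions the terms regroup exactly into the Jacobi identity \eqref{eq:operadic} (respectively the module identity \eqref{eq:module-Lie-alg-defn}) in the target. I would also note that using the one-sided recursion $\varphi_{n+1}=\restr{b_\ff}{\ima\varphi_n\ot W}$ rather than bracketing with $W$ on both sides is harmless, since the antisymmetry half of \eqref{eq:operadic} relates the two up to a braiding.
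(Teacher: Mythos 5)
Your proposal is correct and follows essentially the same route as the paper: the paper also reduces to the generating filtration $\ff=\sum_{n\ge1}\ima\varphi_n$ from Remark \ref{rem:subalg-generated} and runs the same induction on $n$, using the derivation property of $\rho$, the fact that $\phi_\ff$ is a Lie algebra map, and the Jacobi identity in $\fh$ to close the inductive step (with part \ref{item:semidirect-compatibility-free-algebra-module} handled analogously via the module axiom). The conceptual reformulations you mention ($\fg$-equivariance of $\phi_\ff$, resp.\ of $\eta_\ff$) are a nice gloss but the underlying computation is the one in the paper.
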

\begin{proof}
As in Remark \ref{rem:subalg-generated}, let $\varphi_1:W\hookrightarrow \ff$ be the inclusion and $\varphi_{n+1}\coloneq \restr{b_{\ff}}{\ima\varphi_n\otimes W}$. 

\smallbreak
The forward implication in \ref{item:semidirect-compatibility-free-algebra-morphism} follows directly from \Cref{prop:morphism-from-semidirect-product}. Conversely, assume that \eqref{eq:morphism-from-semidirect-product-compatibility} holds on $\fg\ot W$. As $\ff=\oplus_{n\ge 1} \ima\varphi_n$, we have to check that
\begin{align*}
 \phi_{\ff} \circ \rho \circ(\id_{\fg}\ot \varphi_n) &= b_{\fh}\circ (\phi_{\fg}\ot\phi_{\ff})\circ(\id_{\fg}\ot \varphi_n) & & \text{for all }n\in\bN.
\end{align*}
The proof is by induction on $n$; the case $n=1$ is exactly our assumption. Now assume that the equality above holds for $n$. Using that $\rho$ acts by derivations, $\phi_{\ff}$ is a Lie algebra map, inductive hypothesis and the Jacobi identity
\begin{align*}
\phi_{\ff} \circ & \rho \circ(\id_{\fg}\ot \varphi_{n+1}) = \phi_{\ff} \circ \rho \circ \restr{b_{\ff}}{\fg\ot \ima\varphi_n\otimes W}
\\
&= \phi_{\ff} \circ b_{\ff} \circ \restr{\big(\rho \ot \id_{W} + (\id\ot \rho)(c_{\fg,\ima\varphi_n}\ot\id_{W})\big)}{\fg\ot \ima\varphi_n\otimes W}
\\ 
&= b_{\fh} \circ \restr{\big(\phi_{\ff}\circ \rho \ot \phi_{\ff} 
+ (\phi_{\ff}\ot \phi_{\ff} \circ \rho)(c_{\fg,\ima\varphi_n}\ot\id_{W})\big)}{\fg\ot \ima\varphi_n\otimes W}
\\ 
&= b_{\fh} \circ \restr{\big(b_{\fh}\circ (\phi_{\fg}\ot\phi_{\ff}) \ot \phi_{\ff} 
+ (\phi_{\ff}\ot b_{\fh}\circ (\phi_{\fg}\ot\phi_{\ff}))(c_{\fg,\ima\varphi_n}\ot\id_{W})\big)}{\fg\ot \ima\varphi_n\otimes W}
\\ 
&= b_{\fh} \circ (\id_{\fh}\ot b_{\fh}) \circ \restr{\big(\phi_{\fg}\ot\phi_{\ff}\ot \phi_{\ff}\big)}{\fg\ot \ima\varphi_n\otimes W}
\\ 
&= b_{\fh} \circ \restr{\big(\phi_{\fg}\ot\phi_{\ff}\circ b_{\fh}\big)}{\fg\ot \ima\varphi_n\otimes W}
= b_{\fh}\circ (\phi_{\fg}\ot\phi_{\ff})\circ(\id_{\fg}\ot \varphi_{n+1}),
\end{align*}
thus the inductive step follows.

\smallbreak
The proof of \ref{item:semidirect-compatibility-free-algebra-module} follows similarly.
\end{proof}

\section{Contragredient Lie algebras}\label{sec:contragredient-verp}

This Section contains the main constructions and results of this work. The goal  is to construct a notion of contragredient Lie algebras in a symmetric tensor category $\cC$. The input necessary for the construction will be called contragredient data. Actually, we will introduce three hierarchically ordered notions of such data, starting with a rather relaxed one that will progressively get richer, thus allowing proofs fore deeper structural results. 

\subsection{Contragredient data}\label{subsec:contragredient-data} We begin with the following data:
\begin{definition}\label{def:contragredient-data}
Consider a Lie algebra $(\ttX,\ttb_{\ttX})$ in a strict symmetric tensor category $\cC$. A \emph{contragredient datum} over $\ttX$ is a pair $(\rho, \ttd)$ where:
\begin{enumerate}[leftmargin=*,label=\rm{(D\arabic*)}]
\item\label{item:Ver-contragredient-1} $\rho\colon \ttX \ot \ttV \to \ttV$ is an $\ttX$-module in $\cC$, and  
\item \label{item:Ver-contragredient-2} $\ttd \colon \ttV \ot \ttV^*\to \ttX$ is a morphism in $\cC$, 
\end{enumerate}
which are compatible in the following sense:
\begin{align}\label{eq:contragredient-compatibility}
\ttb_{\ttX}(\id_{\ttX} \ot \ttd) = \ttd(\rho \ot \id_{\ttV^*})+\ttd(\id_{\ttV} \ot \rho^{\vee})(c_{\ttX, \ttV} \ot \id_{\ttV^*}).
\end{align}
Here, as in \eqref{eq:action-dual}, $\rho^{\vee}\colon \ttX \ot \ttV^* \to \ttV^*$ is the dual module defined by the composition 
\begin{equation}\label{eq:rho-vee}
\begin{tikzcd}
\rho^{\vee} \colon \ttX\ot\ttV^* \arrow{rr}{ c_{\ttX, \ttV^*}\ot \coev_{\ttV}}  && \ttV^*\ot\ttX \ot \ttV \ot \ttV^* \arrow{rr} {\ttV^* \ot (-\rho)\ot \ttV^*}&&\ttV^*\ot \ttV \ot \ttV^* \arrow{rr} {\ev_{\ttV} \ot \ttV^*}&&\ttV^*,
\end{tikzcd}
\end{equation}
and \eqref{eq:contragredient-compatibility} means that $\ttd$ is a map of $\ttX$-modules.

We denote by $\opD(\ttX)$ the set of pairs $(\rho, \ttd)$ as above. If we want to emphasize $\ttV$, we say that $(\rho,\ttd)$ is an $\ttX$-contragredient structure on $\ttV$.
\end{definition}

\begin{remark}\label{rem:contragredient-dual-pair}
Let $(\rho, \ttd)\in \opD(\ttX)$. As $\cC$ is symmetric, $\ttV$ has a natural structure of dual object to $\ttV^*$ and $(\rho^{\vee})^{\vee}$ identifies with $\rho$. That is, there exists a natural isomorphism $\vartheta_{\ttV}\colon\ttV\overset{\sim}{\to} \ttV^{**}$ which is a map of $\ttX$-modules. Define $\ttd^{\vee} \colon \ttV^* \ot \ttV^{**}\to \ttX$ by
\begin{equation}
\begin{tikzcd}
\ttd^{\vee} \colon\ttV^* \ot \ttV^{**} \arrow{rr}{\ttV^*\ot \vartheta_{\ttV}^{-1}}  && \ttV^*\ot \ttV \arrow{rr} {c_{\ttV^*,\ttV}}&& \ttV \ot \ttV^* \arrow{rr} {-\ttd}&&\ttX.
\end{tikzcd}
\end{equation}
Then one can see that the pair $(\rho^{\vee},\ttd^{\vee})$ satisfies the compatibility \eqref{eq:contragredient-compatibility}, i.e. $(\rho^{\vee},\ttd^{\vee})\in \opD(\ttX)$.
\end{remark}

Given a contragredient datum, and employing the usual diagrammatic calculus in strict symmetric tensor categories\footnote{We read diagrams from top to bottom.}, we introduce the following:

\begin{notation} \label{notation:contragredient-setup}
\begin{enumerate}[leftmargin=*]
\item \label{item:contragredient-diagrams} The morphisms $\ttb_{\ttX}$, $\id_{\ttV}$, $\id_{\ttV^*}$, $\rho$,  and $\ttd$ are denoted by the diagrams 
\begin{equation*}
\ttb_{\ttX}=\xy
(0,0)*{
\begin{tikzpicture}[scale=.3]
\draw [very thick, blue, bend right=40] (-2,0) to (0,-2);
\draw [very thick, blue, bend left=40] (2,0) to (0,-2);
\draw [very thick, blue] (0,-2) to (0,-4);
\end{tikzpicture} 
};
\endxy, 
\qquad
\id_{\ttV}=\xy
(0,0)*{
\begin{tikzpicture}[scale=.3]
\draw [very thick, ->] (0,0) to (0,-4);
\end{tikzpicture} 
};
\endxy, 
\qquad
\id_{\ttV^*}=\xy
(0,0)*{
\begin{tikzpicture}[scale=.3]
\draw [very thick, ->] (0,-4) to (0,0);
\end{tikzpicture} 
};
\endxy, 
\qquad
\rho=\xy
(0,0)*{
\begin{tikzpicture}[scale=.3]
\draw [very thick, blue, bend right=40] (-2,0) to (0,-2);
\draw [very thick, ->] (0,0) to (0,-4);
\draw [blue, fill=blue] (0,-2) circle (0.3cm and 0.3cm);
\end{tikzpicture} 
};
\endxy, 
\qquad
\ttd=\xy
(0,0)*{
\begin{tikzpicture}[scale=.3]
\draw [very thick, bend right=40, ->] (-2,0) to (0,-2);
\draw [very thick, bend left=40, <-] (2,0) to (0,-2);
\draw [very thick, blue] (0,-2) to (0,-4);
\end{tikzpicture} 
};
\endxy;
\end{equation*}
we introduce the following shortcut notation for $\rho^{\vee}$:
\begin{equation*}
\rho^\vee=\xy
(0,0)*{
\begin{tikzpicture}[scale=.3]
\draw [very thick, blue, bend right=40] (-2,0) to (0,-2);
\draw [very thick, <-] (0,0) to (0,-4);
\draw [blue, fill=blue] (0,-2) circle (0.3cm and 0.3cm);
\end{tikzpicture} 
};
\endxy
\coloneqq
\xy
(0,0)*{
\begin{tikzpicture}[scale=.3]
\draw [very thick, blue, bend right=40] (-2,0) to (2,-3);
\draw [very thick, <-] (0,0) to (0,-4);
\draw [very thick] (0,-4) to [out=-90, looseness=2, in=-90] (2,-4);
\draw [very thick, ->] (2,-2) to (2, -4);
\draw [very thick] (2,-2) to [out=90, looseness=2, in=90] (4,-2);
\draw [very thick, ->]  (4,-5.5) to (4,-2);
\draw [blue, fill=blue] (2,-3) circle (0.3cm and 0.3cm);
\end{tikzpicture} 
};
\endxy
\end{equation*}
Notice that, since we are working over a symmetric category, we can just use crossing lines to encode braidings.

\item With the notation above, the compatibility \eqref{eq:contragredient-compatibility} becomes 
\begin{equation*}
\xy
(0,0)*{
\begin{tikzpicture}[scale=.3]
\draw [very thick, blue] (-2,-4) to [out=180, looseness=1, in=-90] (-4,0);
\draw [very thick, bend right=40, ->] (-2,0) to (0,-2);
\draw [very thick, bend left=40, <-] (2,0) to (0,-2);
\draw [very thick, blue] (-2,-4) to [out=0, looseness=1, in=-90] (0,-2);
\draw [very thick, blue] (-2,-4) to (-2,-7);
\end{tikzpicture} 
};
\endxy
=
\xy
(0,0)*{
\begin{tikzpicture}[scale=.3]
\draw [very thick, blue] (-4,2) to [out=-90, looseness=1, in=180] (-2,0);
\draw [very thick] (-2,2) to (-2,0);
\draw [very thick, bend right=40, ->] (-2,0) to (0,-2);
\draw [very thick, bend left=40] (2,0) to (0,-2);
\draw [very thick, ->] (2,0) to (2,2);
\draw [very thick, blue] (0,-2) to (0,-5);
\draw [blue, fill=blue] (-2,0) circle (0.3cm and 0.3cm);
\end{tikzpicture} 
}
\endxy
+
\xy
(0,0)*{
\begin{tikzpicture}[scale=.3]
\draw [very thick, blue] (-4,2) to [out=-90, looseness=1, in=180] (2,0);
\draw [very thick] (-2,2) to (-2,0);
\draw [very thick, bend right=40, ->] (-2,0) to (0,-2);
\draw [very thick, bend left=40] (2,0) to (0,-2);
\draw [very thick, ->] (2,0) to (2,2);
\draw [very thick, blue] (0,-2) to (0,-5);
\draw [blue, fill=blue] (2,0) circle (0.3cm and 0.3cm);
\end{tikzpicture} 
}
\endxy.
\end{equation*}
This implies that for any $\ttX$-module $\lambda\colon \ttX \ot \ttW \to \ttW$, denoted by $\xy
(0,0)*{
\begin{tikzpicture}[scale=.3]
\draw [very thick, blue, bend right=40] (-2.5,0) to (0,-2);
\draw [very thick,red] (0,0) to (0,-4);
\draw [fill=white] (.75,-1.5) rectangle (-.75,-2.5);
\node at (0,-2) {\tiny $\lambda$};
\end{tikzpicture} 
};
\endxy$, we have
\begin{equation}\label{eq:contragradient-compatibility-module}
\xy
(0,0)*{
\begin{tikzpicture}[scale=.3]
\draw [very thick, blue] (-4,0) to [out=-90, looseness=1, in=180]  (4,-6);
\draw [very thick, bend right=40, ->] (-2,0) to (0,-2);
\draw [very thick, bend left=40, <-] (2,0) to (0,-2);
\draw [very thick, blue] (0,-2) to [out=-90, looseness=1, in=180] (4,-4);
\draw [very thick, red] (4,0) to (4,-7);
\draw [fill=white] (3.25,-3.5) rectangle (4.75,-4.5);
\node at (4,-4) {\tiny $\lambda$};
\draw [fill=white] (3.25,-5.5) rectangle (4.75,-6.5);
\node at (4,-6) {\tiny $\lambda$};
\end{tikzpicture} 
};
\endxy
\, \,- \, \, 
\xy
(0,0)*{
\begin{tikzpicture}[scale=.3]
\draw [very thick, blue] (-4,0) to [out=-90, looseness=.5, in=180]  (4,-2);
\draw [very thick] (-2,0) to (-2,-2);
\draw [very thick, <-] (2,0) to (2,-2);
\draw [very thick, bend right=40, ->] (-2,-2) to (0,-4);
\draw [very thick, bend left=40] (2,-2) to (0,-4);
\draw [very thick, blue] (0,-4) to [out=-90, looseness=1, in=180] (4,-6);
\draw [very thick, red] (4,0) to (4,-7);
\draw [fill=white] (3.25,-5.5) rectangle (4.75,-6.5);
\node at (4,-6) {\tiny $\lambda$};
\draw [fill=white] (3.25,-1.5) rectangle (4.75,-2.5);
\node at (4,-2) {\tiny $\lambda$};
\end{tikzpicture} 
};
\endxy
\, \, =\, \, 
\xy
(0,0)*{
\begin{tikzpicture}[scale=.3]
\draw [very thick, blue] (-4,2) to [out=-90, looseness=1, in=180] (-2,0);
\draw [very thick] (-2,2) to (-2,0);
\draw [very thick, bend right=40, ->] (-2,0) to (0,-2);
\draw [very thick, bend left=40] (2,0) to (0,-2);
\draw [very thick, ->] (2,0) to (2,2);
\draw [very thick, blue] (0,-2) to [out=-90, looseness=1, in=180] (4,-4);
\draw [blue, fill=blue] (-2,0) circle (0.3cm and 0.3cm);
\draw [very thick, red] (4,2) to (4,-5);
\draw [fill=white] (3.25,-3.5) rectangle (4.75,-4.5);
\node at (4,-4) {\tiny $\lambda$};
\end{tikzpicture} 
}
\endxy
\, \, + \, \, 
\xy
(0,0)*{
\begin{tikzpicture}[scale=.3]
\draw [very thick, blue] (-4,2) to [out=-90, looseness=1, in=180] (2,0);
\draw [very thick] (-2,2) to (-2,0);
\draw [very thick, bend right=40, ->] (-2,0) to (0,-2);
\draw [very thick, bend left=40] (2,0) to (0,-2);
\draw [very thick, ->] (2,0) to (2,2);
\draw [very thick, blue] (0,-2) to [out=-90, looseness=1, in=180] (4,-4);
\draw [blue, fill=blue] (2,0) circle (0.3cm and 0.3cm);
\draw [very thick, red] (4,2) to (4,-5);
\draw [fill=white] (3.25,-3.5) rectangle (4.75,-4.5);
\node at (4,-4) {\tiny $\lambda$};
\end{tikzpicture} 
}
\endxy.
\end{equation}

\item By \Cref{lem:extended-action}, $\rho$ and $\rho^{\vee}$ induce an action by derivations 
\[\rho\colon \ttX \ot \FLie(\ttV\oplus \ttV^* ) \longrightarrow  \FLie(\ttV\oplus \ttV^* ).\]
\end{enumerate}
\end{notation}

\subsection{The auxiliary operadic Lie algebra \texorpdfstring{$\tttg(\rho, \ttd)$}{g}}\label{subsec:contragredient-gtilde}
\begin{definition}\label{def:gtilde}
Let $(\ttX, \rho, \ttd)$ as in \Cref{def:contragredient-data}. We define the operadic Lie algebra $\tttg(\rho, \ttd)$ as the quotient of $\FLie(\ttV\oplus \ttV^* )\rtimes_\rho \ttX$ by the ideal generated by the image of 
\[(\ttb_{\ttV \ot \ttV^*},-\ttd) \colon \ttV\ot \ttV^* \longrightarrow \FLie(\ttV\oplus \ttV^* )\rtimes_\rho \ttX,
\]
where $\ttb_{\ttV \ot \ttV^*}$ denotes the restriction of the Lie bracket $\ttb$ of $\FLie(\ttV\oplus \ttV^* )$ to $\ttV\ot   \ttV^*$.
\end{definition}

\begin{remark}\label{rem:contragredient-gtilde}
\begin{itemize}[leftmargin=*]
\item Assume that $\cC$ is such that $\FLie(\indcat{\cC})$ is closed under semidirect products. This holds for example if $\cC$ is a Frobenius exact pre-Tannakian category of moderate growth, thanks to  \Cref{cor:Fr-exact-semidirect-closed}. Since $\tttg(\rho, \ttd)$ is a quotient of $\FLie(\ttV\oplus \ttV^* )\rtimes_\rho \ttX$, it follows from \cite{Etingof}*{Lemma 7.6}  that $\tttg(\rho, \ttd)$ is a Lie algebra in $\indcat{\cC}$, rather than an operadic one.

\item There are natural maps $\ttX, \ttV, \ttV^*\to \tttg(\rho,\ttd)$; we will prove these are injective in \Cref{thm:gtilde-verp}.
\item The map $\ttV \to \tttg(\rho, \ttd)$ extends to a Lie algebra map $\iota_+\colon\FLie(\ttV)\to\tttg(\rho, \ttd)$.\footnote{This is not a direct consequence of the definition of $\FLie(\ttV)$ because $\tttg(\rho,\ttd)$ is an operadic Lie algebra.} Indeed, one constructs $\iota_+$ as the composition
\[\iota_{+}\colon\FLie(\ttV)\to \FLie(\ttV \oplus \ttV^*)\hookrightarrow \FLie(\ttV \oplus \ttV^*)\rtimes_\rho \ttX \twoheadrightarrow \tttg(\rho, \ttd).\]
Similarly, $\ttV^* \to \tttg(\rho, \ttd)$ extends to a Lie algebra map $\iota_-\colon\FLie(\ttV^*)\to\tttg(\rho, \ttd)$.
\item We denote by $\tttn_+$ and $\tttn_-$ the subalgebras of $\tttg(\rho, \ttd)$ generated by $\iota_+(\ttV)$ and $\iota_-(\ttV^*)$, resp.

\item There is a $\bZ$-grading on the ind-Lie algebra $\FLie(\ttV\oplus \ttV^* )\rtimes_\rho \ttX$ determined by
\begin{align*}
\deg \ttV^*&=-1, & \deg \ttX&=0, & \deg \ttV&=1.
\end{align*}
Since the defining ideal of $\tttg(\rho, \ttd)$ is homogeneous, we get a grading
\begin{align*}
\tttg(\rho, \ttd)&=\bigoplus_{k\in\bZ} \tttg_k(\rho, \ttd).
\end{align*}
\end{itemize}
\end{remark}

We obtain first a sort of Cartan involution for the algebra $\tttg(\rho, \ttd)$, which relies on the natural built-in duality of $\opD(\ttX)$ described in \Cref{rem:contragredient-dual-pair}.

\begin{lemma}\label{lem:Cartan-involution}
Let $(\rho, \ttd)\in \opD(\ttX)$. There exists a unique operadic Lie algebra isomorphism 
\begin{align*}
\widehat{\omega}\colon\FLie(\ttV\oplus \ttV^* )\rtimes_\rho \ttX\to \FLie(\ttV^*\oplus \ttV^{**} )\rtimes_{\rho^{\vee}} \ttX & &\text{with}  &&\restr{\widehat{\omega}}{\ttX}=\id_{\ttX}, 
&&\restr{\widehat{\omega}}{\ttV}=\vartheta_{\ttV}, &&\restr{\widehat{\omega}}{\ttV^*}=\id_{\ttV^*}.
\end{align*}
Moreover, $\widehat{\omega}$ descends to the quotient and induces an operadic Lie algebras isomorphism
\begin{align*}
\widetilde{\omega}\colon\tttg(\rho,\ttd)\to \tttg(\rho^\vee, \ttd^\vee).
\end{align*}
\end{lemma}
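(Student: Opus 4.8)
The existence and uniqueness of $\widehat{\omega}$ is the crux; once that is in place, descent to the quotient is a matter of checking that the defining ideals correspond. For the first part, I would exploit the universal properties at hand. Recall that $\FLie(\ttV\oplus \ttV^*)\rtimes_\rho \ttX$ is a semidirect product of a free Lie algebra by $\ttX$, so by \Cref{prop:morphism-from-semidirect-product} (or more precisely \Cref{lem:semidirect-compatibility-free-algebra}\ref{item:semidirect-compatibility-free-algebra-morphism}) a Lie algebra map out of it is determined by a pair of Lie algebra maps, one from $\FLie(\ttV\oplus\ttV^*)$ and one from $\ttX$, subject to a single compatibility equation checked only on generators. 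For the $\ttX$-component take $\id_\ttX$ composed with the inclusion into the target, and for the free part use the universal property of $\FLie$ to extend the map $\vartheta_\ttV\sqcup\id_{\ttV^*}\colon \ttV\oplus\ttV^*\to \ttV^*\oplus\ttV^{**}\hookrightarrow \FLie(\ttV^*\oplus\ttV^{**})\rtimes_{\rho^\vee}\ttX$. The compatibility equation \eqref{eq:morphism-from-semidirect-product-compatibility} restricted to $\ttX\ot(\ttV\oplus\ttV^*)$ then says precisely that $\vartheta_\ttV$ intertwines $\rho$ with $(\rho^\vee)^\vee$ on the $\ttV$-summand and that $\id_{\ttV^*}$ intertwines $\rho^\vee$ with itself on the $\ttV^*$-summand; the former is exactly the content of \Cref{rem:contragredient-dual-pair}, namely that $(\rho^\vee)^\vee$ identifies with $\rho$ via $\vartheta_\ttV$, and the latter is tautological. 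This produces $\widehat\omega$.

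\textbf{Invertibility and uniqueness.} Uniqueness is immediate: $\FLie(\ttV\oplus\ttV^*)\rtimes_\rho\ttX$ is generated as a Lie algebra by $\ttX$, $\ttV$ and $\ttV^*$, so a Lie algebra map is determined by its restriction to these three objects. For invertibility, by symmetry of the construction one builds a map $\widehat\omega'$ in the other direction using the datum $(\rho^\vee,\ttd^\vee)\in\opD(\ttX)$ and the isomorphism $\vartheta_{\ttV^*}\colon\ttV^*\overset\sim\to\ttV^{**}$; then $\widehat\omega'\widehat\omega$ and $\widehat\omega\widehat\omega$ are Lie algebra endomorphisms of the respective semidirect products which agree with the identity on generators (here one uses the standard compatibility $\vartheta_{\ttV^{*}}\circ\vartheta_{\ttV}^{*}\, {}^{-1}$-type identity, i.e.\ that the canonical identification of $\ttV$ with $\ttV^{**}$ and of $\ttV^*$ with $\ttV^{***}$ are mutually inverse in the appropriate sense), hence equal the identity. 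Thus $\widehat\omega$ is an isomorphism of operadic (in fact ind-) Lie algebras.

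\textbf{Descent to the quotient.} It remains to check that $\widehat\omega$ carries the ideal $\ttJ\coloneq\langle\ima(\ttb_{\ttV\ot\ttV^*},-\ttd)\rangle$ defining $\tttg(\rho,\ttd)$ onto the ideal $\ttJ'\coloneq\langle\ima(\ttb_{\ttV^*\ot\ttV^{**}},-\ttd^\vee)\rangle$ defining $\tttg(\rho^\vee,\ttd^\vee)$. Since $\widehat\omega$ is an isomorphism of Lie algebras, it suffices to show $\widehat\omega$ sends the generating morphism $(\ttb_{\ttV\ot\ttV^*},-\ttd)\colon\ttV\ot\ttV^*\to\FLie(\ttV\oplus\ttV^*)\rtimes_\rho\ttX$ into $\ttJ'$, and symmetrically for $\widehat\omega'$. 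Precompose with the braiding $c_{\ttV,\ttV^*}\colon \ttV^*\ot\ttV\to\ttV\ot\ttV^*$ and use $\restr{\widehat\omega}{\ttV}=\vartheta_\ttV$, $\restr{\widehat\omega}{\ttV^*}=\id_{\ttV^*}$ together with the fact that $\widehat\omega$ is a Lie map (so it intertwines the brackets $\ttb$, and on the $\ttV\ot\ttV^*$-piece one reads off $\widehat\omega\circ\ttb_{\ttV\ot\ttV^*}=\ttb_{\ttV^*\ot\ttV^{**}}\circ c\circ(\vartheta_\ttV\ot\id)$ up to a sign bookkeeping dictated by the definition of $\ttd^\vee$ in \Cref{rem:contragredient-dual-pair}); comparing with the definition $\ttd^\vee=-\ttd\circ c_{\ttV^*,\ttV}\circ(\id_{\ttV^*}\ot\vartheta_\ttV^{-1})$ shows $\widehat\omega\circ(\ttb_{\ttV\ot\ttV^*},-\ttd)$ factors through $\ima(\ttb_{\ttV^*\ot\ttV^{**}},-\ttd^\vee)$. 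Hence $\widehat\omega(\ttJ)\subseteq\ttJ'$, and by the symmetric argument $\widehat\omega'(\ttJ')\subseteq\ttJ$, so $\widehat\omega(\ttJ)=\ttJ'$ and $\widehat\omega$ descends to the claimed isomorphism $\widetilde\omega\colon\tttg(\rho,\ttd)\to\tttg(\rho^\vee,\ttd^\vee)$.

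\textbf{Main obstacle.} The genuinely delicate point is the sign and duality bookkeeping: verifying that $\vartheta_\ttV$ really does intertwine $\rho$ with $(\rho^\vee)^\vee$ and, above all, that the generating morphism $(\ttb_{\ttV\ot\ttV^*},-\ttd)$ transforms correctly under $\widehat\omega$ into (a scalar multiple of, hence generating the same ideal as) $(\ttb_{\ttV^*\ot\ttV^{**}},-\ttd^\vee)$. This requires a careful diagrammatic computation with $\ev$, $\coev$, the canonical isomorphism $\vartheta_\ttV\colon\ttV\overset\sim\to\ttV^{**}$, and the antisymmetry $\ttb_{\ttV^*\ot\ttV^{**}}=-\ttb_{\ttV^{**}\ot\ttV^*}\circ c$ in $\FLie(\ttV^*\oplus\ttV^{**})$, precisely the kind of bookkeeping where the factor of $-1$ in the definition of $\ttd^\vee$ is forced. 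Everything else is a formal consequence of the universal properties already established in \Cref{prop:morphism-from-semidirect-product,lem:semidirect-compatibility-free-algebra} and of \Cref{rem:contragredient-dual-pair}.
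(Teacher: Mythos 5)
Your proposal is correct and follows essentially the same route as the paper: construct $\widehat\omega$ on generators via \Cref{lem:semidirect-compatibility-free-algebra}, reduce the compatibility \eqref{eq:morphism-from-semidirect-product-compatibility} to the fact that $\vartheta_{\ttV}$ is a map of $\ttX$-modules (\Cref{rem:contragredient-dual-pair}), and obtain descent by showing that the generator $(\ttb_{\ttV\ot\ttV^*},-\ttd)$ is carried into the ideal generated by $(\ttb_{\ttV^*\ot\ttV^{**}},-\ttd^\vee)$, using antisymmetry of the bracket, the braiding, and the sign built into $\ttd^\vee$. The only cosmetic difference is that the paper defines the inverse $\varpi$ directly on generators (as $\vartheta_{\ttV}^{-1}$ on $\ttV^{**}$ and the identity on $\ttV^*$ and $\ttX$), which avoids the detour through $\FLie(\ttV^{**}\oplus\ttV^{***})$ and the extra canonical identifications that your ``symmetry of the construction'' argument for $\widehat\omega'$ would require.
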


\begin{proof}
First, we prove the existence of $\widehat{\omega}$.
We apply \Cref{lem:semidirect-compatibility-free-algebra} for $W=\ttV\oplus\ttV^*$, $\fg=\ttX$, $\fh=\FLie(\ttV^*\oplus \ttV^{**} )\rtimes_{\rho^{\vee}} \ttX$. Here $\ff=\FLie(\ttV\oplus \ttV^* )$, we take $\phi_{\ff}\colon \ff\to \fh$ as the unique Lie algebra map whose restriction to $\ttV\oplus \ttV^*$ is $\vartheta_{\ttV}\sqcup\id_{\ttV^*}$, and $\phi_{\ttX}:\ttX\to\fh$ is the inclusion. Now the compatibility  \eqref{eq:morphism-from-semidirect-product-compatibility} holds trivially on $\ttX\ot\ttV^*$, and holds on $\ttX\ot\ttV$ since $\vartheta_{\ttV}$ is a map of $\ttX$-modules. Thus there exists a Lie algebra map $\widehat{\omega}$ as stated.

By the same argument, there is a map $\varpi\colon  \FLie(\ttV^*\oplus \ttV^{**} )\rtimes_{\rho^{\vee}} \ttX \to  \FLie(\ttV\oplus \ttV^* )\rtimes_\rho \ttX$ such that 
$\restr{\varpi}{\ttV^{**}}=\vartheta_{\ttV}^{-1}$, $\restr{\varpi}{\ttV^*}=\id_{\ttV^*}$, $\restr{\varpi}{\ttX}=\id_{\ttX}$. Since $\ttV^*$, $\ttX$ and $\ttV$ generate $\FLie(\ttV\oplus \ttV^* )\rtimes_\rho \ttX$, it is clear that $\varpi$ is the inverse of $\widehat{\omega}$, which is thus an isomorphism.
\smallbreak

The next step is to prove that $\widehat{\omega}$ factors through the quotient and hence yields the desired Lie algebra map $\widetilde{\omega}$. Let $\pi\colon\FLie(\ttV^*\oplus \ttV^{**} )\rtimes_{\rho^{\vee}} \ttX\twoheadrightarrow \tttg(\rho^{\vee}, \ttd^{\vee})$ be the projection. We show that $\pi\widehat{\omega}$ annihilates the image of $(\ttb_{\ttV \ot \ttV^*},-\ttd)$.
By definition of $\widehat{\omega}$ and since the bracket is antisymmetric, we have
$$ \widehat{\omega}(\ttb_{\ttV \ot \ttV^*},0)
= (\ttb_{\ttV^{**} \ot \ttV^*},0)(\vartheta_{\ttV}\ot\id_{\ttV^*}) 
= (\ttb_{\ttV^{*} \ot \ttV^{**}},0)(\id_{\ttV^*}\ot\vartheta_{\ttV})c_{\ttV,\ttV^*}. $$
From here, and using the definition of $\pi$ and $\ttd^{\vee}$ we obtain
\begin{align*}
\pi\widehat{\omega}(\ttb_{\ttV \ot \ttV^*},0) &= -\pi(\ttb_{\ttV^{*} \ot \ttV^{**}},0)(\id_{\ttV^*}\ot\vartheta_{\ttV})c_{\ttV,\ttV^*}
= -\pi(0,\ttd^{\vee})(\id_{\ttV^*}\ot\vartheta_{\ttV})c_{\ttV,\ttV^*}
= \pi\widehat{\omega}(0,\ttd);
\end{align*}
thus $\pi\widehat{\omega}(\ttb_{\ttV \ot \ttV^*},-\ttd)=0$, as desired, which implies the existence of $\widetilde{\omega}$.

Similarly, the inverse $\varpi$ for $\widehat{\omega}$ introduced above descends to the quotient thus producing an inverse for $\widetilde{\omega}$.
\end{proof}

Next we develop some structural results for $\tttg(\rho, \ttd)$. Namely, we want to obtain a triangular decomposition, prove that the subalgebras $\tttn_+$ and $\tttn_-$ from \Cref{rem:contragredient-gtilde} are the free Lie algebras on $\ttV$ and $\ttV^*$, respectively, and finally verify that there exists a maximal ideal trivially intersecting $\ttX$. Essentially, we follow the strategy from \cite{Kac-book}*{Theorem 1.2}.  In particular, our proofs of existence and several structural properties of contragredient Lie algebras will rely heavily on a family of modules.

\begin{lemma}\label{lem:gtilde-representation-on-tensor-algebra}
Let $(\rho, \ttd)\in \opD(\ttX)$ and $(\ttW,\lambda) \in \Mod_{\cC}(\ttX)$. Consider the following maps:
\begin{enumerate}[leftmargin=*,label=\rm{(\arabic*)}]
\item\label{item:Kac-module-0} $\eta_{\ttX}\colon \ttX\ot\op{T}(\ttV)\ot \ttW \to \op{T}(\ttV)\ot \ttW$ is the tensor product of the $\ttX$-modules $(\op{T}(\ttV), \rho)$ (obtained from $(\ttV,\rho)$ as in \Cref{rem:action-deriv-tensor-algebra}) and $(\ttW,\lambda)$.
\item\label{item:Kac-module-positive}  $\eta_{+}\colon \ttV\ot \op{T}(\ttV)\ot \ttW\to \op{T}(\ttV)\ot \ttW$ is the multiplication on $\op{T}(\ttV)$ tensored with $\id_{\ttW}$.
\item \label{item:Kac-module-negative} $\eta_{-}\colon \ttV^*\ot \op{T}(\ttV)\ot \ttW\to \op{T}(\ttV)\ot \ttW$ is defined recursively $\eta_{-}^{n}\colon \ttV^* \ot \ttV^{\ot n}\ot \ttW\to\ttV^{\ot (n-1)}\ot \ttW$ by
\begin{align*}
\eta_{-}^{0} &=0, & 
\eta_{-}^{n} &\coloneqq -\eta_{\ttX}(\ttd\circ c_{\ttV^*,\ttV}\ot \id_{\ttV^{\ot (n-1)}\ot \ttW}) +(\id_{\ttV}\ot\eta_{-}^{n-1}) (c_{\ttV^*,\ttV}\ot\id_{\ttV^{\ot (n-1)}\ot \ttW}) \quad n\ge 1.
\end{align*}
\end{enumerate}
Then these three maps extend uniquely to an action 
\[\eta_\rtimes \colon \FLie(\ttV\oplus \ttV^* )\rtimes_\rho \ttX \ot \op{T}(\ttV)\ot \ttW \to \op{T}(\ttV)\ot \ttW.\]
Moreover, $\eta_\rtimes$ descends to the quotient thus producing an action of $\tttg(\rho, \ttd)$ on $\op{T}(\ttV) \ot \ttW$.
\end{lemma}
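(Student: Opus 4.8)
The plan is to construct $\eta_\rtimes$ by combining the three given maps via \Cref{prop:module-over-semidirect-product}, with $\fg=\ttX$ and $\ff=\FLie(\ttV\oplus\ttV^*)$ acting by the derivation of \Cref{lem:extended-action}. First I would note that, by \Cref{rem:modules-over-free-algebras}, the single map $\eta_+\sqcup\eta_-\colon(\ttV\oplus\ttV^*)\ot(\op{T}(\ttV)\ot\ttW)\to\op{T}(\ttV)\ot\ttW$ extends, with no extra hypotheses, to a $\FLie(\ttV\oplus\ttV^*)$-module structure $\ov\eta$ on $\op{T}(\ttV)\ot\ttW$, while $\eta_\ttX$ is by construction an $\ttX$-module structure, being the tensor product of the $\ttX$-modules $(\op{T}(\ttV),\rho)$ and $(\ttW,\lambda)$. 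Then $\eta_\rtimes\coloneqq\ov\eta\sqcup\eta_\ttX$ extends $\eta_\ttX$, $\eta_+$ and $\eta_-$, and by \Cref{prop:module-over-semidirect-product} together with \Cref{lem:semidirect-compatibility-free-algebra}\ref{item:semidirect-compatibility-free-algebra-module} (applied with $W=\ttV\oplus\ttV^*$) it is an action of $\FLie(\ttV\oplus\ttV^*)\rtimes_\rho\ttX$ as soon as the compatibility \eqref{eq:module-over-semidirect-product-compatibility} holds on $\ttX\ot(\ttV\oplus\ttV^*)\ot(\op{T}(\ttV)\ot\ttW)$; unravelling \eqref{eq:module-over-semidirect-product-compatibility} on the two summands, this amounts precisely to the assertion that $\eta_+$ and $\eta_-$ are each morphisms of $\ttX$-modules, the source carrying the tensor-product action. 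Uniqueness of such an extension is automatic, since $\ttV$, $\ttV^*$ and $\ttX$ generate $\FLie(\ttV\oplus\ttV^*)\rtimes_\rho\ttX$ as a Lie algebra.

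For $\eta_+$ this is immediate: since $\ttX$ acts by derivations on $\op{T}(\ttV)$, the product of $\op{T}(\ttV)$ is $\ttX$-linear, hence so is its restriction $\ttV\ot\op{T}(\ttV)\to\op{T}(\ttV)$, and tensoring with $\id_\ttW$ and using that the category of $\ttX$-modules is monoidal (\Cref{lem:Ug-Hopf-algebra}) identifies this with $\eta_+$. The map $\eta_-$ is the heart of the matter, and I would treat it by induction on $n$, showing that each $\eta_-^{n}\colon\ttV^*\ot(\ttV^{\ot n}\ot\ttW)\to\ttV^{\ot(n-1)}\ot\ttW$ is $\ttX$-linear. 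In the recursive formula defining $\eta_-^{n}$, the summand $(\id_\ttV\ot\eta_-^{n-1})(c_{\ttV^*,\ttV}\ot\id)$ is $\ttX$-linear by the inductive hypothesis and naturality of the braiding; the summand $-\eta_\ttX(\ttd\circ c_{\ttV^*,\ttV}\ot\id)$ is $\ttX$-linear because $\ttd\circ c_{\ttV^*,\ttV}\colon\ttV^*\ot\ttV\to\ttX$ is a morphism of $\ttX$-modules --- this is exactly \eqref{eq:contragredient-compatibility} (see also \eqref{eq:contragradient-compatibility-module}) --- together with the elementary fact that for any morphism of $\ttX$-modules $\phi\colon M\to\ttX$, with $\ttX$ carrying the adjoint action, the composite $M\ot N\xrightarrow{\phi\ot\id}\ttX\ot N\xrightarrow{\eta_N}N$ is $\ttX$-linear, which is read off directly from the module axiom \eqref{eq:module-Lie-alg-defn}. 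This inductive argument, with its braiding bookkeeping, is the step I expect to be the main obstacle.

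Finally, to see that $\eta_\rtimes$ descends to $\tttg(\rho,\ttd)$, I would pass to enveloping algebras: by \Cref{rem:g-mod-Ug-mod} the action $\eta_\rtimes$ makes $\op{T}(\ttV)\ot\ttW$ a module over $\op{U}(\FLie(\ttV\oplus\ttV^*)\rtimes_\rho\ttX)$, and since $\op{U}(\tttg(\rho,\ttd))$ is the quotient of the latter by the two-sided ideal generated by the image of $(\ttb_{\ttV\ot\ttV^*},-\ttd)$, it suffices to check that this image acts as zero, i.e.\ $\eta_\rtimes\circ\bigl((\ttb_{\ttV\ot\ttV^*},-\ttd)\ot\id\bigr)=0$ on $\ttV\ot\ttV^*\ot(\op{T}(\ttV)\ot\ttW)$. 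On the $\FLie(\ttV\oplus\ttV^*)$-summand, $\ttb_{\ttV\ot\ttV^*}$ is the commutator in $\op{T}(\ttV\oplus\ttV^*)$, hence acts by $\eta_+(\id_\ttV\ot\eta_-)-\eta_-(\id_{\ttV^*}\ot\eta_+)(c_{\ttV,\ttV^*}\ot\id)$; evaluating this on the degree-$n$ part $\ttV^{\ot n}\ot\ttW$ by feeding $\eta_+(\id_\ttV\ot\,\cdot\,)$ into the recursion for $\eta_-^{n+1}$ and using $c_{\ttV^*,\ttV}\circ c_{\ttV,\ttV^*}=\id$, the two derivation-type terms cancel and one is left with exactly $\eta_\ttX(\ttd\ot\id)$. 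Thus $\ttb_{\ttV\ot\ttV^*}$ acts as $\eta_\ttX(\ttd\ot\id)$ while $-\ttd$ acts as $-\eta_\ttX(\ttd\ot\id)$, so their sum acts as zero, as required --- which is precisely why the second component of $(\ttb_{\ttV\ot\ttV^*},-\ttd)$ carries a minus sign.
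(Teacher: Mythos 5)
Your proposal is correct and follows essentially the same route as the paper's proof: extend $\eta_+\sqcup\eta_-$ to a $\FLie(\ttV\oplus\ttV^*)$-action via \Cref{rem:modules-over-free-algebras}, glue it to $\eta_{\ttX}$ using \Cref{prop:module-over-semidirect-product} together with \Cref{lem:semidirect-compatibility-free-algebra}, and descend to the quotient by the same cancellation $\eta_{\op{F}}(\ttb_{\ttV\ot\ttV^*}\ot\id)=\eta_{\ttX}(\ttd\ot\id)$. The only real difference is in bookkeeping: where the paper verifies the compatibility on $\ttX\ot\ttV^*\ot\ttV^{\ot n}\ot\ttW$ by an explicit diagrammatic induction, you repackage that same induction as the statement that each $\eta_-^{n}$ is a morphism of $\ttX$-modules, deduced from \eqref{eq:contragredient-compatibility}, the module axiom \eqref{eq:module-Lie-alg-defn}, and the monoidal structure of $\Mod_{\cC}(\ttX)$ --- an equivalent but tidier argument.
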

\begin{proof} 
Note first that, in diagrammatic terms, the map $\eta_{-}$ is defined recursively by 
\begin{equation*}
\eta_{-}^{n}  \, \,= \, \, - \, \, 
\xy(0,0)*{
\begin{tikzpicture}[scale=.3]
\draw [very thick, <-] (0,0) to  [out=-90, looseness=1, in=90] (4,-4);
\draw [very thick, ->] (4,0) to  [out=-90, looseness=1, in=90] (0,-4);
\draw [very thick] (0,-4) to [out=-90, looseness=1, in=180] (2,-5);
\draw [very thick] (4,-4) to [out=-90, looseness=1, in=0] (2,-5);
\draw [very thick, blue] (2,-5) to [out=-90, looseness=1, in=180] (6,-7);
\draw [very thick, ->] (6,0) to (6,-6.25);
\draw [very thick, ->] (6,-7.75) to (6,-9);
\draw [very thick, ->] (10,0) to (10,-6.25);
\draw [very thick, ->] (10,-7.75) to (10,-9);
\draw [very thick, red] (12,0) to (12,-9);
\draw [fill=white] (5.25,-6.25) rectangle (12.75,-7.75);
\node at (9,-7) {$\eta_{\ttX}$};
\node at (8,-1) {\tiny $n-1$};
\node at (8,-2) {$\dots$};
\node at (8,-8.25) {\tiny $n-1$};
\node at (8,-9) {$\dots$};
\end{tikzpicture}
}; 
\endxy
\, \,+ \, \, 
\xy(0,0)*{
\begin{tikzpicture}[scale=.3]
\draw [very thick, <-] (0,0) to  [out=-90, looseness=1, in=90] (4,-4);
\draw [very thick] (4,0) to  [out=-90, looseness=1, in=90] (0,-4);
\draw [very thick, ->] (0,-4) to (0,-9);
\draw [very thick] (4,-4) to [out=-90, looseness=1, in=180] (6,-6);
\draw [very thick, ->] (6,0) to (6,-5.25);
\draw [very thick, ->] (10,0) to (10,-5.25);
\draw [very thick, red] (12,0) to (12,-9);
\draw [fill=white] (5.25,-5.25) rectangle (12.75,-6.85);
\draw [very thick, ->] (6.5,-6.85) to (6.5,-9);
\draw [very thick, ->] (9.5,-6.85) to (9.5,-9);
\node at (9,-6.2) {$\eta_{-}^{n-1}$};
\node at (8,-1) {\tiny $n-1$};
\node at (8,-2) {$\dots$};
\node at (8,-8.25) {\tiny $n-2$};
\node at (8,-9) {$\dots$};
\end{tikzpicture}
}; 
\endxy \, \, .
\end{equation*}
\begin{claim}\label{claim:Kac-module-FLie}
There is a unique action $\eta_{\op F}$ of $\FLie(\ttV\oplus \ttV^* )$ on $\op{T}(\ttV) \ot \ttW$ extending $\eta_{+}$ and $\eta_{-}$.
\end{claim}
Indeed, by \Cref{rem:modules-over-free-algebras}, the map $(\ttV\oplus \ttV^* )\ot \op{T}(\ttV)\ot \ttW\to \op{T}(\ttV)\ot \ttW$ defined as $\eta_+$ on $\ttV$ and as $\eta_{-}$ on $\ttV^*$ extends uniquely to an action $\eta_{\op F} \colon \FLie(\ttV\oplus \ttV^* )\ot \op{T}(\ttV)\ot \ttW\to \op{T}(\ttV)\ot \ttW$.

\begin{claim}\label{claim:Kac-module-semidirect}
The actions $\eta_{\ttX}$ from \eqref{item:Kac-module-0} and $\eta_{\op F}$ from \Cref{claim:Kac-module-FLie} extend uniquely to an action $\eta_\rtimes$ of $\FLie(\ttV\oplus \ttV^* )\rtimes_\rho \ttX$ on $\op{T}(\ttV)\ot \ttW$.
\end{claim}

By \Cref{prop:module-over-semidirect-product}, we just need to check that $\eta_{\op F}$ and $\eta_{\ttX}$ satisfy the compatibility  \eqref{eq:module-over-semidirect-product-compatibility}. That is, we must verify the following identity on $\Hom_{\cC}\big(\ttX\ot \FLie(\ttV\oplus \ttV^* )\ot \op{T}(\ttV)\ot \ttW,\op{T}(\ttV)\ot \ttW\big)$:
\begin{align}\label{eq:Kac-module-compatibility}
\eta_{\op F}(\rho\ot \id_{\op{T}(\ttV)\ot \ttW}) = \eta_{\ttX}(\id_{\ttX}\ot \eta_{\op F})-\eta_{\op F}(\id_{\FLie(\ttV\oplus \ttV^* )}\ot \eta_{\ttX})(c_{\ttX,\FLie(\ttV\oplus \ttV^* )}\ot \id_{\op{T}(\ttV)\ot \ttW}).
\end{align}
By \Cref{lem:semidirect-compatibility-free-algebra} \ref{item:semidirect-compatibility-free-algebra-module}, it suffices to check this equality on $\ttX\ot\ttV\ot \op{T}(\ttV)\ot \ttW$ and $\ttX\ot\ttV^*\ot \op{T}(\ttV)\ot \ttW$. 

To verify the identity on the first term, we restrict to $\ttX\ot\ttV\ot \ttV^{\ot n}\ot \ttW$, where it becomes
\begin{equation*}
\xy
(0,0)*{
\begin{tikzpicture}[scale=.3]
\draw [very thick, blue] (0,0) to [out=-90, looseness=1, in=180] (2,-4);
\draw [very thick, ->] (2,0) to (2,-6);
\draw [blue, fill=blue] (2,-4) circle (0.3cm and 0.3cm);
\draw [very thick, ->] (4,0) to (4,-6);
\draw [very thick, ->] (8,0) to (8,-6);
\node at (6,-1) {\tiny $n$};
\node at (6,-2) {$\dots$};
\draw [very thick, red] (10,0) to (10,-6);
\end{tikzpicture} 
};
\endxy 
\, \, = \, \, 
\xy
(0,0)*{
\begin{tikzpicture}[scale=.3]
\draw [very thick, blue] (0,0) to [out=-90, looseness=1, in=180] (1.25,-4);
\draw [very thick, ->] (2,0) to (2,-3.25);
\draw [very thick, ->] (2,-3.25) to (2,-6);
\draw [very thick, ->] (4,0) to (4,-3.25);
\draw [very thick, ->] (4,-4.75) to (4,-6);
\draw [very thick, ->] (8,0) to (8, -3.25);
\draw [very thick, ->] (8,-3.25) to (8,-6);
\node at (6,-1) {\tiny $n$};
\node at (6,-2) {$\dots$};
\draw [very thick, red] (10,0) to (10,-6);
\draw [fill=white] (1.25,-3.25) rectangle (10.75,-4.75);
\node at (6, -4) {\tiny $\eta_{\ttX}\vert_{\ttV^{n+1}}$};
\end{tikzpicture} 
};
\endxy 
\, \, - \, \, 
\xy
(0,0)*{
\begin{tikzpicture}[scale=.3]
\draw [very thick, blue] (0,0) to [out=-90, looseness=1, in=180] (4,-4);
\draw [very thick, ->] (2,0) to [out=-90, looseness=1, in=90] (0,-6);
\draw [very thick, ->] (4,0) to (4,-3.25);
\draw [very thick, ->] (4,-4.75) to (4,-6);
\draw [very thick, ->] (8,0) to (8, -3.25);
\draw [very thick, ->] (8,-3.25) to (8,-6);
\node at (6,-1) {\tiny $n$};
\node at (6,-2) {$\dots$};
\draw [very thick, red] (10,0) to (10,-6);
\draw [fill=white] (3.25,-3.25) rectangle (10.75,-4.75);
\node at (7, -4) {\tiny $\eta_{\ttX}\vert_{\ttV^{n}}$};
\end{tikzpicture} 
};
\endxy \, ,
\end{equation*}
which follows directly since $\eta_{\ttX}$ is defined by extending $\rho \colon \ttX \ot \ttV \to \ttV$ by derivations.

Now we verify recursively on $n\geq 0$ that \eqref{eq:Kac-module-compatibility} holds in $\ttX\ot\ttV^*\ot \ttV^{\ot n}\ot \ttW$.
\begin{itemize}[leftmargin=*]
\item For $n=0$, the three terms of \eqref{eq:Kac-module-compatibility} vanish. Indeed, $\eta_{-}(\rho\ot \id_{\ttV^{\ot 0}\ot \ttW})=\eta_{\ttX}(\id_{\ttX}\ot \eta_{-})=0$ by definition of $\eta_{-}$. 
Similarly, the remaining term is $0$ since $\eta_{\ttX}(\ttX\ot\ttV^{\ot 0}\ot \ttW)\subseteq \ttV^{\ot 0}\ot \ttW$.
\item We also verify the case $n=1$ to illustrate the diagrammatic computations involved in the proof of the inductive step. Here \eqref{eq:Kac-module-compatibility} becomes
\begin{equation*}
- \, \, 
\xy(0,0)*{
\begin{tikzpicture}[scale=.25]
\draw [very thick, blue] (-2,2) to [out=-90, looseness=1, in=180] (0,0);
\draw [very thick, <-] (0,2) to (0,0);
\draw [very thick] (0,0) to  [out=-90, looseness=1, in=90] (4,-4);
\draw [very thick] (4,2) to (4,0);
\draw [very thick, ->] (4,0) to  [out=-90, looseness=1, in=90] (0,-4);
\draw [very thick] (0,-4) to [out=-90, looseness=1, in=180] (2,-5);
\draw [very thick] (4,-4) to  [out=-90, looseness=1, in=0] (2,-5);
\draw [very thick, blue] (2,-5) to [out=-90, looseness=1, in=180] (6,-7);
\draw [very thick, red] (6,2) to (6,-9);
\draw [fill=white] (5.25,-6.5) rectangle (6.75,-7.5);
\node at (6,-7) {\tiny $\lambda$};
\draw [blue, fill=blue] (0,0) circle (0.3cm and 0.3cm);
\end{tikzpicture}
}; 
\endxy
\, \,= \, \, - \, \, 
\xy(0,0)*{
\begin{tikzpicture}[scale=.25]
\draw [very thick, blue] (-2,2) to [out=-90, looseness=1.5, in=180] (6,-8);
\draw [very thick, <-] (0,2) to (0,0);
\draw [very thick] (0,0) to  [out=-90, looseness=1, in=90] (4,-4);
\draw [very thick] (4,2) to (4,0);
\draw [very thick, ->] (4,0) to  [out=-90, looseness=1, in=90] (0,-4);
\draw [very thick] (0,-4) to [out=-90, looseness=1, in=180](2,-5);
\draw [very thick] (4,-4) to [out=-90, looseness=1, in=0] (2,-5);
\draw [very thick, blue] (2,-5) to [out=-90, looseness=1, in=180] (6,-6.5);
\draw [very thick, red] (6,2) to (6,-9);
\draw [fill=white] (5.25,-6) rectangle (6.75,-7);
\node at (6,-6.5) {\tiny $\lambda$};
\draw [fill=white] (5.25,-7.5) rectangle (6.75,-8.5);
\node at (6,-8) {\tiny $\lambda$};
\end{tikzpicture}
}; 
\endxy
\, \, +\, \, 
\xy(0,0)*{
\begin{tikzpicture}[scale=.25]
\draw [very thick, blue] (-2,2) to [out=-90, looseness=1, in=180] (4,0);
\draw [very thick, <-] (0,2) to (0,0);
\draw [very thick] (0,0) to  [out=-90, looseness=1, in=90] (4,-4);
\draw [very thick] (4,2) to (4,0);
\draw [very thick, ->] (4,0) to  [out=-90, looseness=1, in=90] (0,-4);
\draw [very thick] (0,-4) to  [out=-90, looseness=1, in=180] (2,-5);
\draw [very thick] (4,-4) to  [out=-90, looseness=1, in=0] (2,-5);
\draw [very thick, blue] (2,-5) to [out=-90, looseness=1, in=180] (6,-7);
\draw [very thick, red] (6,2) to (6,-9);
\draw [fill=white] (5.25,-6.5) rectangle (6.75,-7.5);
\node at (6,-7) {\tiny $\lambda$};
\draw [blue, fill=blue] (4,0) circle (0.3cm and 0.3cm);
\end{tikzpicture}
}; 
\endxy
\, \, +\, \, 
\xy(0,0)*{
\begin{tikzpicture}[scale=.25]
\draw [very thick, blue] (-2,2) to [out=-90, looseness=.75, in=180] (6,0);
\draw [very thick, <-] (0,2) to (0,0);
\draw [very thick] (0,0) to  [out=-90, looseness=1, in=90] (4,-4);
\draw [very thick] (4,2) to (4,0);
\draw [very thick, ->] (4,0) to  [out=-90, looseness=1, in=90] (0,-4);
\draw [very thick] (0,-4) to  [out=-90, looseness=1, in=180] (2,-5);
\draw [very thick] (4,-4) to  [out=-90, looseness=1, in=0] (2,-5);
\draw [very thick, blue] (2,-5) to [out=-90, looseness=1, in=180] (6,-7);
\draw [very thick, red] (6,2) to (6,-9);
\draw [fill=white] (5.25,-.5) rectangle (6.75,.5);
\node at (6,0) {\tiny $\lambda$};
\draw [fill=white] (5.25,-6.5) rectangle (6.75,-7.5);
\node at (6,-7) {\tiny $\lambda$};
\end{tikzpicture}
}; 
\endxy,
\end{equation*}
which follows from \eqref{eq:contragradient-compatibility-module} by rearranging the terms and  pre-composing them with $c_{\ttV^*, \ttV}$.

\item For $n>1$ we express diagrammatically all the terms of \eqref{eq:Kac-module-compatibility} restricted to $\ttX\ot\ttV^*\ot \ttV^{\ot n}\ot \ttW$:
\begin{align*}
\eta^n_{-}(\rho \ot \id) &=-
\xy(0,0)*{
\begin{tikzpicture}[scale=.25]
\draw [very thick, blue] (-2,0) to [out=-90, looseness=1, in=180] (0,-3);
\draw [very thick, <-] (0,0) to (0,-3);
\draw [very thick] (0,-3) to  [out=-90, looseness=1, in=90] (4,-6);
\draw [very thick] (4,0) to (4,-3);
\draw [very thick, ->] (4,-3) to  [out=-90, looseness=1, in=90] (0,-6);
\draw [very thick] (0,-6) to [out=-90, looseness=1, in=180] (2,-7);
\draw [very thick] (4,-6) to [out=-90, looseness=1, in=0] (2,-7);
\draw [very thick, blue] (2,-7) to [out=-90, looseness=1, in=180] (5.25,-8.75);
\draw [very thick, ->] (6,0) to (6,-8);
\draw [very thick, ->] (6,-9.5) to (6,-10.5);
\draw [very thick, ->] (10,0) to (10,-8);
\draw [very thick, ->] (10,-9.5) to (10,-10.5);
\draw [very thick, red] (12,0) to (12,-10.5);
\node at (8,-.5) {\tiny $n-1$};
\node at (8,-1.5) {$\dots$};
\draw [fill=white] (5.25,-8) rectangle (12.75,-9.5);
\node at (9,-8.75) {$\eta_{\ttX}$};
\draw [blue, fill=blue] (0,-3) circle (0.3cm and 0.3cm);
\end{tikzpicture}
};
\endxy 
+ \xy(0,0)*{
\begin{tikzpicture}[scale=.25]
\draw [very thick, blue] (-2,0) to [out=-90, looseness=1, in=180] (0,-3);
\draw [very thick, <-] (0,0) to (0,-3);
\draw [very thick] (0,-3) to  [out=-90, looseness=1, in=90] (4,-6);
\draw [very thick] (4,-6) to  [out=-90, looseness=1, in=180] (5.25,-8);
\draw [very thick] (4,0) to (4,-3);
\draw [very thick] (4,-3) to  [out=-90, looseness=1, in=90] (0,-6);
\draw [very thick, ->] (0,-6) to (0,-10.5);
\draw [very thick, ->] (6,0) to (6,-7.25);
\draw [very thick, ->] (6.5,-8.75) to (6.5,-10.5);
\draw [very thick, ->] (10,0) to (10,-7.25);
\draw [very thick, ->] (9.5,-8.75) to (9.5,-10.5);
\draw [very thick, red] (12,0) to (12,-10.5);
\node at (8,-.5) {\tiny $n-1$};
\node at (8,-1.5) {$\dots$};
\node at (8,-9.25) {\tiny $n-2$};
\draw [fill=white] (5.25,-7.25) rectangle (12.75,-8.75);
\node at (9,-8) {\tiny$\eta_{-}^{n-1}$};
\draw [blue, fill=blue] (0,-3) circle (0.3cm and 0.3cm);
\end{tikzpicture}
}; 
\endxy; \\ \, \\
\eta_{\ttX}(\id\ot \eta^n_{-})&=- 
\xy(0,0)*{
\begin{tikzpicture}[scale=.25]
\draw [very thick, blue] (-2,0) to [out=-90, looseness=1.5, in=180]  (5.25,-9.75);
\draw [very thick, <-] (0,0) to (0,-1);
\draw [very thick] (0,-1) to  [out=-90, looseness=1, in=90] (4,-4);
\draw [very thick] (4,0) to (4,-1);
\draw [very thick, ->] (4,-1) to  [out=-90, looseness=1, in=90] (0,-4);
\draw [very thick] (0,-4) to [out=-90, looseness=1, in=180] (2,-5);
\draw [very thick] (4,-4) to [out=-90, looseness=1, in=0] (2,-5);
\draw [very thick, blue] (2,-5) to [out=-90, looseness=1, in=180] (6,-7);
\draw [very thick, ->] (6,0) to (6,-6.25);
\draw [very thick, ->] (6,-6 .25) to (6,-11.5);
\draw [very thick, ->] (10,0) to (10,-6.25);
\draw [very thick, ->] (10,-6.25) to (10,-11.5);
\draw [very thick, red] (12,0) to (12,-11.5);
\draw [fill=white] (5.25,-6.25) rectangle (12.75,-7.75);
\node at (9,-7) {$\eta_{\ttX}$};
\node at (8,-.5) {\tiny $n-1$};
\node at (8,-1.5) {$\dots$};
\draw [fill=white] (5.25,-9) rectangle (12.75,-10.5);
\node at (9,-9.75) {$\eta_{\ttX}$};
\end{tikzpicture}
}; 
\endxy
+ 
\xy(0,0)*{
\begin{tikzpicture}[scale=.25]
\draw [very thick, blue] (-2,0) to [out=-90, looseness=.75, in=180]  (0,-7);
\draw [very thick, <-] (0,0) to  [out=-90, looseness=1, in=90] (4,-4);
\draw [very thick] (4,0) to  [out=-90, looseness=1, in=90] (0,-4);
\draw [very thick, ->] (0,-4) to (0,-11.5);
\draw [blue, fill=blue] (0,-7) circle (0.3cm and 0.3cm);
\draw [very thick] (4,-4) to [out=-90, looseness=1, in=180] (5.25,-7);
\draw [very thick, ->] (6,0) to (6,-6.25);
\draw [very thick, ->] (10,0) to (10,-6.25);
\draw [very thick, red] (12,0) to (12,-11.5);
\draw [fill=white] (5.25,-6.25) rectangle (12.75,-7.75);
\draw [very thick, ->] (6.5,-7.75) to (6.5,-11.5);
\draw [very thick, ->] (9.5,-7.75) to (9.5,-11.5);
\node at (9,-7) {\tiny$\eta_{-}^{n-1}$};
\node at (8,-.5) {\tiny $n-1$};
\node at (8,-1.5) {$\dots$};
\node at (8,-8.25) {\tiny $n-2$};
\end{tikzpicture}
}; 
\endxy 
+ 
\xy(0,0)*{
\begin{tikzpicture}[scale=.25]
\draw [very thick, blue] (-2,0) to [out=-90, looseness=1.5, in=180]  (5.25,-9.75);
\draw [very thick, <-] (0,0) to  [out=-90, looseness=1, in=90] (4,-4);
\draw [very thick] (4,0) to  [out=-90, looseness=1, in=90] (0,-4);
\draw [very thick, ->] (0,-4) to (0,-11.5);
\draw [very thick] (4,-4) to [out=-90, looseness=1, in=180] (5.25,-7);
\draw [very thick, ->] (6,0) to (6,-6.25);
\draw [very thick, ->] (10,0) to (10,-6.25);
\draw [very thick, red] (12,0) to (12,-11.5);
\draw [fill=white] (5.25,-6.25) rectangle (12.75,-7.75);
\draw [very thick, ->] (6.5,-7.75) to (6.5,-11.5);
\draw [very thick, ->] (9.5,-7.75) to (9.5,-11.5);
\node at (9,-7) {\tiny$\eta_{-}^{n-1}$};
\node at (8,-.5) {\tiny $n-1$};
\node at (8,-1.5) {$\dots$};
\node at (8,-8.25) {\tiny $n-2$};
\draw [fill=white] (5.25,-9) rectangle (12.75,-10.5);
\node at (9,-9.75) {$\eta_{\ttX}$};
\end{tikzpicture}
}; 
\endxy;
\end{align*}
\begin{align*}
-\eta_{-}^n(\id\ot \eta_{\ttX})(c\ot \id)=& 
\xy(0,0)*{
\begin{tikzpicture}[scale=.25]
\draw [very thick, blue] (-2,0) to [out=-90, looseness=1, in=180] (4,-3);
\draw [very thick, <-] (0,0) to (0,-3);
\draw [very thick] (0,-3) to  [out=-90, looseness=1, in=90] (4,-6);
\draw [very thick] (4,0) to (4,-3);
\draw [very thick, ->] (4,-3) to  [out=-90, looseness=1, in=90] (0,-6);
\draw [very thick] (0,-6) to [out=-90, looseness=1, in=180] (2,-7);
\draw [very thick] (4,-6) to [out=-90, looseness=1, in=0] (2,-7);
\draw [very thick, blue] (2,-7) to [out=-90, looseness=1, in=180] (5.25,-8.75);
\draw [very thick, ->] (6,0) to (6,-8);
\draw [very thick, ->] (6,-9.5) to (6,-10.5);
\draw [very thick, ->] (10,0) to (10,-8);
\draw [very thick, ->] (10,-9.5) to (10,-10.5);
\draw [very thick, red] (12,0) to (12,-10.5);
\node at (8,-.5) {\tiny $n-1$};
\node at (8,-1.5) {$\dots$};
\draw [fill=white] (5.25,-8) rectangle (12.75,-9.5);
\node at (9,-8.75) {$\eta_{\ttX}$};
\draw [blue, fill=blue] (4,-3) circle (0.3cm and 0.3cm);
\end{tikzpicture}
}; 
\endxy
- \xy(0,0)*{
\begin{tikzpicture}[scale=.25]
\draw [very thick, blue] (-2,0) to [out=-90, looseness=1, in=180] (4,-3);
\draw [very thick, <-] (0,0) to (0,-3);
\draw [very thick] (0,-3) to  [out=-90, looseness=1, in=90] (4,-6);
\draw [very thick] (4,-6) to  [out=-90, looseness=1, in=180] (5.25,-8);
\draw [very thick] (4,0) to (4,-3);
\draw [very thick] (4,-3) to  [out=-90, looseness=1, in=90] (0,-6);
\draw [very thick, ->] (0,-6) to (0,-10.5);
\draw [very thick, ->] (6,0) to (6,-7.25);
\draw [very thick, ->] (6.5,-8.75) to (6.5,-10.5);
\draw [very thick, ->] (10,0) to (10,-7.25);
\draw [very thick, ->] (9.5,-8.75) to (9.5,-10.5);
\draw [very thick, red] (12,0) to (12,-10.5);
\node at (8,-.5) {\tiny $n-1$};
\node at (8,-1.5) {$\dots$};
\node at (8,-9.25) {\tiny $n-2$};
\draw [fill=white] (5.25,-7.25) rectangle (12.75,-8.75);
\node at (9,-8) {\tiny$\eta_{-}^{n-1}$};
\draw [blue, fill=blue] (4,-3) circle (0.3cm and 0.3cm);
\end{tikzpicture}
}; 
\endxy
\\ \, \\
&\qquad \qquad \qquad \quad+
\xy(0,0)*{
\begin{tikzpicture}[scale=.25]
\draw [very thick, blue] (-2,0) to [out=-90, looseness=1, in=180] (5.25,-3);
\draw [very thick, <-] (0,0) to (0,-3);
\draw [very thick] (0,-3) to  [out=-90, looseness=1, in=90] (4,-6);
\draw [very thick] (4,0) to (4,-3);
\draw [very thick, ->] (4,-3) to  [out=-90, looseness=1, in=90] (0,-6);
\draw [very thick] (0,-6) to [out=-90, looseness=1, in=180] (2,-7);
\draw [very thick] (4,-6) to [out=-90, looseness=1, in=0] (2,-7);
\draw [very thick, blue] (2,-7) to [out=-90, looseness=1, in=180] (6,-9);
\draw [very thick, ->] (6,0) to (6,-2.25);
\draw [very thick, ->] (6,0) to (6,-8);
\draw [very thick, ->] (6,0) to (6,-10.5);
\draw [very thick, ->] (10,0) to (10,-2.25);
\draw [very thick, ->] (10,0) to (10,-8);
\draw [very thick, ->] (10,0) to (10,-10.5);
\draw [very thick, red] (12,0) to (12,-10.5);
\draw [fill=white] (5.25,-2.25) rectangle (12.75,-3.75);
\node at (9,-3) {$\eta_{\ttX}$};
\node at (8,-.5) {\tiny $n-1$};
\node at (8,-1.5) {$\dots$};
\draw [fill=white] (5.25,-8) rectangle (12.75,-9.5);
\node at (9,-8.75) {$\eta_{\ttX}$};
\end{tikzpicture}
}; 
\endxy
- \xy(0,0)*{
\begin{tikzpicture}[scale=.25]
\draw [very thick, blue] (-2,0) to [out=-90, looseness=1, in=180] (5.25,-3);
\draw [very thick, <-] (0,0) to (0,-3);
\draw [very thick] (0,-3) to  [out=-90, looseness=1, in=90] (4,-6);
\draw [very thick] (4,-6) to  [out=-90, looseness=1, in=180] (5.25,-8);
\draw [very thick] (4,0) to (4,-3);
\draw [very thick] (4,-3) to  [out=-90, looseness=1, in=90] (0,-6);
\draw [very thick, ->] (0,-6) to (0,-10.5);
\draw [very thick, ->] (6,0) to (6,-2.25);
\draw [very thick, ->] (6,0) to (6,-7.25);
\draw [very thick, ->] (10,0) to (10,-2.25);
\draw [very thick, ->] (10,0) to (10,-7.25);
\draw [very thick, ->] (6.5,-8.75) to (6.5,-10.5);
\draw [very thick, ->] (9.5,-8.75) to (9.5,-10.5);
\draw [very thick, red] (12,0) to (12,-10.5);
\draw [fill=white] (5.25,-2.25) rectangle (12.75,-3.75);
\node at (9,-3) {$\eta_{\ttX}$};
\node at (8,-.5) {\tiny $n-1$};
\node at (8,-1.5) {$\dots$};
\node at (8,-9.25) {\tiny $n-2$};
\draw [fill=white] (5.25,-7.25) rectangle (12.75,-8.75);
\node at (9,-8) {\tiny$\eta_{-}^{n-1}$};
\end{tikzpicture}
}; 
\endxy.
\end{align*}
Now we compute the sum $\eta_{\ttX}(\id\ot \eta^n_{-})-\eta_{-}^n(\id\ot \eta_{\ttX})(c\ot \id)$. Notice that the second summands in each of these two terms cancel out mutually. Also, as illustrated in the case $n=1$, applying \eqref{eq:contragradient-compatibility-module} to the module $\eta_{\ttX}$, the first summand of $\eta_{\ttX}(\id\ot \eta^n_{-})$ together with the first and third summands of $ -\eta_{-}^n(\id\ot \eta_{\ttX})(c\ot \id)$ add up exactly to the first summand of $\eta_{-}^{n}$. Lastly, by induction, the third summand of $\eta_{\ttX}(\id\ot \eta^n_{-})$ together with the fourth summand of $-\eta_{-}^n(\id\ot \eta_{\ttX})(c\ot \id)$ give the second summand of $\eta_{-}^{n}$. Thus \eqref{eq:Kac-module-compatibility} holds also over $\ttX\ot\ttV^*\ot \ttV^{\ot n}\ot \ttW$.
\end{itemize}
This finishes the proof of \Cref{claim:Kac-module-semidirect}.

\begin{claim}\label{claim:Kac-module-gtilde} 
The map $\eta_\rtimes$ from  \Cref{claim:Kac-module-semidirect} factors through the quotient, thus defining an action of $\tttg(\rho, \ttd)$ on  $\op{T}(\ttV)\ot \ttW$.
\end{claim}

Indeed, by definition of $\tttg(\ttX, \ttV)$, we just need to verify that
$$ \eta_\rtimes \big(\ima(\ttb_{\ttV \ot \ttV^*},-\ttd)\ot \op{T}(\ttV)\ot \ttW \big)=0 .$$
To do so, it is enough to verify that, for each $n\ge0$, the following diagram commutes 
\[\begin{tikzcd}
\ttV \ot \ttV^*\ot \ttV^{\ot n}\ot \ttW
\arrow{rrr}{\ttb_{\ttV \ot \ttV^*}\ot \ttV^{\ot n}\ot\ttW} 
\arrow{d}[swap]{\ttd\ot \ttV^{\ot n}\ot \ttW} 
&&&
\FLie(\ttV\oplus\ttV^*) \ot  \ttV^{\ot n}\ot \ttW
\arrow{d}{\eta_{\op F}}
\\
\ttX \ot \ttV^{\ot n}\ot \ttW
\arrow{rrr}{\eta_{\ttX}}
&&& \ttV^{\ot n} \ot \ttW.
\end{tikzcd}\] 
The proof uses that $\eta_{\op F}$ gives a module structure together with the recursive definition of $\eta_{-}^n$:
\begin{align*}
\eta_{\op F} & \big(\ttb_{\ttV \ot \ttV^*}\ot\id_{\ttV^{\ot n}\ot\ttW}\big) 
= \eta_{+}(\id_{\ttV}\ot \eta_{-}^n)
- \eta_{-}^{n+1}(\id_{\ttV^*}\ot \eta_{+})(c_{\ttV,\ttV^*}\ot\id_{\ttW}) 
\\
& = (\id_{\ttV}\ot \eta_{-}^n)  -\big( -\eta_{\ttX}(\ttd\circ c_{\ttV^*,\ttV}\ot\id_{\ttV^{\ot n}\ot\ttW})
+ (\id_{\ttV}\ot\eta_{-}^n)(c_{\ttV^*,\ttV}\ot\id_{\ttV^{\ot n}\ot\ttW})\big) 
(c_{\ttV,\ttV^*}\ot\id_{\ttV^{\ot n}\ot\ttW})
\\
& = \eta_{\ttX}\big( \ttd\ot\id_{\ttV^{\ot n}\ot\ttW} \big),
\end{align*}
and the statement follows.
\end{proof}

In order to obtain a triangular decomposition as in the classical case, we need to impose a restriction on the Lie algebra $\ttX$. As will become clear later, this algebra plays a role similar to that of a Cartan subalgebra. Thus, for the examples of interest at the moment, it will make no harm to assume that it belong to some well-behaved class of Lie algebras.

\begin{definition}\label{def:enough-modules}
We say that $\ttX\in \Lie(\cC)$ has \emph{enough modules} if for every simple subobject $S\subseteq \ttX$ there exists an $\ttX$-module $\lambda\colon\ttX\ot \ttW\to\ttW$ such that $\restr{\lambda}{S\ot\ttW}\ne 0$.
\end{definition}

\begin{example}\label{exa:enough-modules-classical}
In the classical case over $\cC=\sVec$, see \Cref{exa:classical-semidirect}, one takes $\ttX=\fh$, a purely even abelian Lie algebra, which clearly has enough modules since each element of $\fh^*$ defines one.
\end{example}

\begin{example}\label{exa:enough-modules-simple}
If $\ttX$ is a simple Lie algebra (i.e., non-abelian and contains no proper ideals), then it has enough modules. In fact, for any simple object $S\subseteq \ttX$ the adjoint action $\ad \colon \ttX\ot\ttX\to\ttX$ satisfies $\restr{\ad}{\ttX\ot S}\ne 0$.
\end{example}

\begin{example}\label{exa:enough-modules-gl-verp}
More interesting examples can be constructed over $\cC=\Verp$ for $p\ge 5$. Namely, take a simple $\ttL_i$ in $\Verp$ with  $i\ne 1, p-1$, and consider the general Lie algebra $\fgl(\ttL_i)$, see \Cref{rem:module-representation}. By \cite{Ven-GLVerp}*{Proposition 3.8, Theorem 4.4}, we have  a Lie algebra decomposition $\fgl(\ttL_i)=\one \oplus \fsl(\ttL_i)$ where $\one$ is central and $\fsl(\ttL_i)$ is a simple Lie algebra. Thus, by the previous examples, $\fgl(\ttL_i)$ has enough modules. 
\end{example}

\begin{example}\label{exa:enough-modules-gl-semsimplification}
Since $\Verp$ contains $\sVec$, we can merge \Cref{exa:enough-modules-classical,exa:enough-modules-gl-verp} and take the direct sum Lie algebra $\ttX=\fh\oplus \fg(\ttL_i) \in \Lie(\Verp)$, which also has enough modules.
\end{example}

Now we obtain some fundamental structural results for the auxiliary algebra $\tttg$. Recall the subalgebras $\tttn_{\pm}\subset \tttg$ and the grading $\tttg= \mathop{\oplus}\limits_{k\in \bZ}\tttg_k$ from \Cref{rem:contragredient-gtilde}.

\begin{theorem}\label{thm:gtilde-verp}
Let $(\ttX, \rho, \ttd)$ as in \Cref{def:contragredient-data}. Assume that $\ttX$ has enough modules in the sense of \Cref{def:enough-modules}. Then, for the operadic Lie algebra $\tttg\coloneq\tttg(\rho, \ttd)$, the following hold:
\begin{enumerate}[leftmargin=*,label=\rm{(\alph*)}]
\item \label{item:gtilde-cartan-inclusion} The natural map $\iota_0\colon \ttX\to \tttg$ is an inclusion.
\item \label{item:gtilde-verp-free} The Lie subalgebras $\tttn_+$ and $\tttn_-$ are free on $\ttV$ and $\ttV^*$, respectively. Moreover, the natural maps $\ttV, \ttV^* \to \tttg$ induce Lie algebras isomorphisms $\FLie(\ttV)\cong \ttn_+$ and $\FLie(\ttV^*)\cong\ttn_-$.\footnote{For this part we do not need to assume that $\ttX$ has enough modules.}
\item \label{item:gtilde-verp-triangular} We have $\tttg=\tttn_-\oplus \ttX \oplus\tttn_+$ as objects of $\indcat{\cC}$. Moreover,
\begin{align}\label{eq:gtilde-verp-triangular}
\tttn_- &= \bigoplus_{k<0} \tttg_k, & \ttX &= \tttg_0, & \tttn_+ &= \bigoplus_{k>0} \tttg_k. 
\end{align} 
\item \label{item:gtilde-verp-maximal} Among all $\bZ$-homogeneous ideals of $\tttg$ trivially intersecting $\ttX$, there exists a unique maximal one $\ttm$. Moreover, $\ttm=\ttm_+\oplus \ttm_-$, where $\ttm_{\pm}=\ttm\cap\tttn_{\pm}$.
\end{enumerate}
\end{theorem}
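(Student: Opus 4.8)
The plan is to adapt \cite{Kac-book}*{Theorem~1.2} to the categorical setting, using as the main engine the family of $\tttg$-modules $\op{T}(\ttV)\ot\ttW$ produced in \Cref{lem:gtilde-representation-on-tensor-algebra}, one for each $\ttX$-module $\ttW$. Two features will be used throughout: first, the $\bZ$-degree-$0$ component of $\op{T}(\ttV)\ot\ttW$ is $\one\ot\ttW\cong\ttW$, and $\ttX$ acts on it, through its image in $\tttg$, by its given module structure $\lambda$; second, for $\ttW=\one$ the subalgebra $\tttn_+$ acts on $\op{T}(\ttV)$ by left multiplication through the embedding $\FLie(\ttV)\hookrightarrow\op{T}(\ttV)$ of \Cref{prop:FLie-subalg-TV}.

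For part \ref{item:gtilde-cartan-inclusion}: if $\ker\iota_0\ne0$ then, $\ttX$ having finite length, $\ker\iota_0$ contains a simple subobject $S$; since $\ttX$ has enough modules, pick $\lambda\colon\ttX\ot\ttW\to\ttW$ with $\restr{\lambda}{S\ot\ttW}\ne0$. But $\iota_0(S)=0$ forces $S$ to act by zero on the $\tttg$-module $\op{T}(\ttV)\ot\ttW$, hence on its degree-$0$ part $\ttW$, where the action equals $\restr{\lambda}{S\ot\ttW}\ne0$, a contradiction; so $\iota_0$ is a monomorphism. For part \ref{item:gtilde-verp-free}: the maps $\iota_{\pm}$ are surjective onto $\tttn_{\pm}$ by construction, so only injectivity is at issue. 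Specializing \Cref{lem:gtilde-representation-on-tensor-algebra} to $\ttW=\one$, the embedding $\FLie(\ttV)\hookrightarrow\op{T}(\ttV)$ of \Cref{prop:FLie-subalg-TV} factors as $\FLie(\ttV)\xrightarrow{\iota_{+}}\tttg\to\op{T}(\ttV)$ (act on the degree-$0$ line), so $\iota_{+}$ is mono and $\FLie(\ttV)\cong\tttn_+$. For $\iota_{-}$, transport this along the Cartan-type isomorphism $\widetilde\omega\colon\tttg(\rho,\ttd)\isomorph\tttg(\rho^\vee,\ttd^\vee)$ of \Cref{lem:Cartan-involution}: it restricts to the identity on $\ttV^*$, hence carries $\tttn_-$ isomorphically onto the subalgebra of $\tttg(\rho^\vee,\ttd^\vee)$ generated by $\ttV^*$, which is the positive subalgebra for the datum $(\rho^\vee,\ttd^\vee)$ and so is free on $\ttV^*$ by the case just treated.

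For part \ref{item:gtilde-verp-triangular}: since $\ttV$ and $\ttV^*$ sit in degrees $+1$ and $-1$, the previous parts give $\iota_0(\ttX)\subseteq\tttg_0$, $\tttn_+\subseteq\bigoplus_{k>0}\tttg_k$ and $\tttn_-\subseteq\bigoplus_{k<0}\tttg_k$, so these three subobjects pairwise intersect trivially. It then suffices to show that $\tttn_-+\ttX+\tttn_+$ is a Lie subalgebra: it contains the generators $\ttV,\ttV^*,\ttX$ of $\tttg$, hence equals $\tttg$, and the refined statements \eqref{eq:gtilde-verp-triangular} follow by extracting homogeneous components. Closure under the bracket is immediate on every pair of summands except $\tttn_+\ot\tttn_-$; for this one argues by induction on the total degree of a bracket $[\tttg_a,\tttg_{-b}]$ with $a,b\ge1$, peeling a degree-$1$ generator off the factor of larger degree (reducing via $\widetilde\omega$ to the case that this is the positive one), applying the Jacobi identity, and invoking the base relation $\restr{\ttb}{\ttV\ot\ttV^*}=\ttd$ (valued in $\ttX$), the fact that $\FLie(\ttV)$ and $\FLie(\ttV^*)$ are generated in degree one (\Cref{rem:subalg-generated}), and that $\ttX$ acts by derivations preserving $\tttn_{\pm}$ with $\restr{\ttb}{\ttX\ot\ttV}=\rho$ and $\restr{\ttb}{\ttX\ot\ttV^*}=\rho^\vee$.

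Finally, part \ref{item:gtilde-verp-maximal}: let $\ttm$ be the sum in $\indcat{\cC}$ of all $\bZ$-homogeneous ideals $\mathfrak{a}$ of $\tttg$ with $\mathfrak{a}\cap\ttX=0$; this is again a homogeneous ideal. By part \ref{item:gtilde-verp-triangular} we have $\ttX=\tttg_0$, so for a homogeneous ideal the condition $\mathfrak{a}\cap\ttX=0$ is the same as $\mathfrak{a}_0=0$; since sums of homogeneous subobjects are formed degreewise, $\ttm_0=0$ too, whence $\ttm\cap\ttX=0$ and $\ttm$ is the unique maximal such ideal. Using once more $\tttn_{\pm}=\bigoplus_{k\gtrless0}\tttg_k$ together with $\ttm_0=0$ yields $\ttm=\bigoplus_{k<0}\ttm_k\oplus\bigoplus_{k>0}\ttm_k=(\ttm\cap\tttn_-)\oplus(\ttm\cap\tttn_+)=\ttm_-\oplus\ttm_+$. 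The main obstacle is the closure argument in part \ref{item:gtilde-verp-triangular}: although each step is forced, the induction demands a careful case analysis according to the sign of the degrees of the intermediate brackets thrown up by Jacobi, to be carried out with the diagrammatic calculus and the compatibility \eqref{eq:contragredient-compatibility}; once \Cref{lem:gtilde-representation-on-tensor-algebra} is available, everything else is bookkeeping with the grading.
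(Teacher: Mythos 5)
Your proposal is correct and follows essentially the same route as the paper: part \ref{item:gtilde-cartan-inclusion} and the freeness in \ref{item:gtilde-verp-free} via the modules $\op{T}(\ttV)\ot\ttW$ of \Cref{lem:gtilde-representation-on-tensor-algebra} and the involution of \Cref{lem:Cartan-involution}, the surjectivity in \ref{item:gtilde-verp-triangular} via the Jacobi-identity induction on brackets from $\tttn_-\ot\tttn_+$ peeling off degree-one generators, and \ref{item:gtilde-verp-maximal} by summing the homogeneous ideals degreewise. The only (harmless) deviation is that you deduce the directness of $\tttn_-+\ttX+\tttn_+$ from the $\bZ$-grading together with the injectivity of the three inclusions, whereas the paper reruns the module argument a second time; both are valid.
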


\begin{proof}
We begin with \ref{item:gtilde-cartan-inclusion}. Assume for a contradiction that $\ker \iota_0 \ne 0$, and take a simple subobject $S$ in this kernel. Since $\ttX$ has enough modules, there is some module $\lambda\colon \ttX\ot\ttW \to \ttW$ such that $\restr{\lambda}{S\ot\ttW}\ne 0$. By  \Cref{lem:gtilde-representation-on-tensor-algebra} this $\lambda$ induces an action $\eta\colon \tttg \ot \op{T}(\ttV)\ot\ttW \to  \op{T}(\ttV)\ot\ttW$. By construction of $\eta$, if $u\colon \one \to \op{T}(\ttV)$ denotes the unit map, we get a commutative diagram
\[\begin{tikzcd}
\ttX \ot \ttW
\arrow{rrr}{\iota_0\ot u \ot \ttW} 
&&&
 \tttg\ot \op{T}(\ttV)\ot\ttW
\arrow{d}{\eta}
\\
S\ot\ttW
\arrow[hook]{u}
\arrow{rrr}{u \ot \lambda_{\vert S\ot\ttW}}
&&& \op{T}(\ttV)\ot\ttW.
\end{tikzcd}\] 
The up-right-down path of this diagram is zero, but the bottom arrow is not. Thus $\iota_0$ is injective.
\medbreak
For \ref{item:gtilde-verp-free}, we show that the Lie algebra map $\iota_+\colon\FLie(\ttV)\to\tttg$ extending  $\ttV\to\tttg$ (see \Cref{rem:contragredient-gtilde}) is injective. Denote by $\nu_\one$ the $\tttg$-action on $\op{T}(\ttV)$ obtained by  taking the trivial $\ttX$-module $\ttW=\one$ in Lemma \ref{lem:gtilde-representation-on-tensor-algebra}. The pull back of $\nu_\one$ along $\iota_+$ produces an $\FLie(\ttV)$-action on $\op{T}(\ttV)$, which happens to coincide with the adjoint action (because they agree on $\ttV$, which generates). Thus the composition
\begin{equation*}
\begin{tikzcd}
\FLie(\ttV) \arrow{r}{\iota_+} & \tttg \simeq \tttg\otimes \one \arrow[hookrightarrow]{r} &  \tttg\otimes \op{T}(\ttV) \arrow{r}{\nu_\one} & \op{T}(\ttV)
\end{tikzcd}
\end{equation*}
is just the inclusion $\FLie(\ttV) \hookrightarrow\op{T}(\ttV)$. Thence $\iota_+$ is injective. Since the image of $\iota_+$ is clearly $\tttn_+$, it induces a canonical isomorphism $\FLie(\ttV)\cong \tttn_+$.

The same argument applied to $\tttg(\rho^{\vee}, \ttd^{\vee})$ shows that the subalgebra generated by $\ttV^*$ (the \emph{positive part} of $\tttg(\rho^{\vee}, \ttd^{\vee})$) is free on $\ttV^*$. The isomorphism $\widetilde{\omega}\colon\tttg(\rho, \ttd)\to \tttg(\rho^{\vee}, \ttd^{\vee})$ from \Cref{lem:Cartan-involution} maps $\tttn_-$ to the subalgebra generated by $\ttV^*$ in $\tttg(\rho^{\vee}, \ttd^{\vee})$, so $\tttn_-$ is free on $\ttV^*$. Moreover, by naturality, the isomorphism $\FLie(\ttV^*)\cong \tttn_-$ is provided by the natural map $\iota_-\colon \FLie(\ttV^*)\to\tttg$ from \Cref{rem:contragredient-gtilde}.

\medbreak

Next we deal with \ref{item:gtilde-verp-triangular}. We have to prove that the map 
\begin{align}\label{eq:gtilde-triangular-map}
\iota\coloneq\iota_-\sqcup\iota_0\sqcup\iota_+\colon\FLie(\ttV^*)\oplus\ttX\oplus\FLie(\ttV)\to \tttg
\end{align}
is an isomorphism of ind-objects. We show that it is injective and surjective. 
\begin{claim}\label{claim:gtilde-triangular-injective}
The map $\iota$ from \eqref{eq:gtilde-triangular-map} is injective. 
\end{claim}

Indeed, take $N_-\oplus N_0\oplus N_+\subseteq \ker\iota$, where $N_-\subseteq \FLie(\ttV^*)$, $N_0\subseteq \ttX$ and $N_+\subseteq \FLie(\ttV)$. As before, consider the action $\nu_\one\colon \tttg \ot \op{T}(\ttV) \to \op{T}(\ttV)$, and let $\mathtt{F}$ denote the composition  
\begin{equation*}
\begin{tikzcd}
\mathtt{F}\colon\FLie(\ttV^*)\oplus\ttX\oplus\FLie(\ttV) \arrow{r}{\iota} & \tttg \simeq \tttg\otimes \one \arrow[hookrightarrow]{r} &  \tttg\otimes \op{T}(\ttV) \arrow{r}{\nu_\one} & \op{T}(\ttV).
\end{tikzcd}
\end{equation*}
By construction of $\nu_\one$ (see \Cref{lem:gtilde-representation-on-tensor-algebra}), we know that, when restricted to $\one\subset\op{T}(\ttV)$, the action of $\ttV^*$ is trivial while $\ttV$ acts by multiplication; thus
\[0=\mathtt{F}(N_-\oplus N_0\oplus N_+)=\nu_\one(\iota_0(N_0)\ot\one)\oplus \iota_+(N_+).\]
So we get $N_+=0$. Consider now an arbitrary $\ttX$-module $(\ttW,\lambda)$, and let $\eta$ denote the associated $\tttg$-action on $\op{T}(\ttV)\ot\ttW$ provided by \Cref{lem:gtilde-representation-on-tensor-algebra}. Then, by construction, 
\begin{align*}
\eta\big(\iota(N_- \oplus N_0) \ot\ttV^0\ot\ttW \big)=\eta_{\ttX}(\iota_0(N_0)\ot\one \ot\ttW)=\one \ot \lambda(N_0\otimes \ttW).
\end{align*}
As $\iota(N_-\oplus N_0)=0$, we get $\lambda(N_0\otimes \ttW)=0$ for an arbitrary $\ttX$-module $(\lambda,\ttW)$. But we are assuming that $\ttX$ has enough modules, so $N_0=0$. Finally $0=\iota(N_-)=\iota_-(N_-)$, so $N_-=0$. This finishes the proof of \Cref{claim:gtilde-triangular-injective}.

\begin{claim}\label{claim:gtilde-triangular-surjective}
The image of the map $\iota$ from \eqref{eq:gtilde-triangular-map} is a Lie subalgebra. Thus $\iota$ is surjective.
\end{claim}
Let $\ttb\colon \tttg \ot \tttg \to \tttg$ denote the Lie bracket. We show that, when restricted to a tensor product of any two terms among $\ima \iota_{-}$, $\ima \iota_{0}$, and $\ima \iota_{+}$, the image of $\ttb$ lies on $\ima \iota$.
\begin{itemize}[leftmargin=*]\renewcommand{\labelitemi}{$\diamond$}
\item For any $\star\in\{-,0,+\}$, since $\iota_{\star}$ is a Lie algebra map, we have $\ttb(\ima \iota_{\star}\ot \ima \iota_{\star})\subseteq \ima \iota_{\star}$.
\item Also, $\ttb(\ima \iota_{0}\ot \ima \iota_{\pm})\subseteq \ima \iota_{\pm}$, since $\tttg$ is a quotient of $\FLie(\ttV\oplus \ttV^* )\rtimes_\rho \ttX$.
\item The only delicate case is $\ttb(\ima\iota_{-}\otimes\ima\iota_{+}) \subseteq\ima\iota$. We know from part \ref{item:gtilde-verp-free} that $\iota_\pm$ canonically identify $\tttn_\pm$ with the corresponding free Lie algebras, thus we get decompositions $\ima\iota_{\pm}=\oplus_{n\ge 1}\ima\iota_{\pm,n}$. More precisely, we get such decomposition where 
\begin{align*}
\ima\iota_{-,1}&=\ttV^*, & \ima\iota_{+,1}&=\ttV, & \ima\iota_{-,n+1}&=\ttb(\ttV^*\otimes\ima\iota_{+,n}), & \ima\iota_{+,n+1}&=\ttb(\ttV\otimes\ima\iota_{+,n}).
\end{align*}
We check recursively on $m,n\ge 1$ that $\ttb(\ima\iota_{-,m}\otimes\ima\iota_{+,n})\subseteq \ima \iota$. For $m=n=1$,
\begin{align*}
\ttb(\ima\iota_{-,1}\otimes\ima\iota_{+,1}) &=\ttb(\ttV^*\otimes\ttV)=\ttd(\ttV^*\otimes\ttV)\subseteq\ttX\subset \ima\iota.
\end{align*}
Then, using the Jacobi identity we check that
\begin{align}\label{eq:induction-bracket-contragredient}
\ttb(\ima\iota_{-,n+1}\otimes\ima\iota_{+,1}) &\subseteq\ima\iota_{-,n}, &
\ttb(\ima\iota_{-,1}\otimes\ima\iota_{+,n+1}) &\subseteq\ima\iota_{+,n}, & &\text{for all }n\ge 0.
\end{align}
A similar argument shows the general case. Indeed, assume that $\ttb(\ima\iota_{-,r}\otimes\ima\iota_{+,s}) \subseteq\ima\iota$ when $r+s\le k$ and fix $m,n\ge 1$ such that $m+n=k$. Using the Jacobi identity,
\begin{align*}
\ttb (\iota_{-,m+1} &\otimes\iota_{+,n}) = \ttb (\ttb \otimes \id_{\ima\iota_{+,n}})\big(\id_{\ttV^*}\otimes\iota_{-,m}\otimes\iota_{+,n}\big)
\\
& = \ttb ( \id_{\ttV^*}\otimes\ttb)\big(\id_{\ttV^*}\otimes\iota_{-,m}\otimes\iota_{+,n}\big)
+\ttb ( \id_{\ttV^*}\otimes\ttb)(c\otimes\id_{\ima\iota_{+,n}})\big(\id_{\ttV^*}\otimes\iota_{-,m}\otimes\iota_{+,n}\big).
\end{align*}
From \eqref{eq:induction-bracket-contragredient} and the inductive hypothesis we conclude that $\ttb(\ima\iota_{-,m+1}\otimes\ima\iota_{+,n}) \subseteq\ima\iota$. Analogously, $\ttb(\ima\iota_{-,m}\otimes\ima\iota_{+,n+1}) \subseteq\ima\iota$.
\end{itemize}
Thus, $\ttb(\ima \iota\ot \ima \iota)\subseteq \ima \iota$, i.e. $\ima \iota$ is a subalgebra. Since $\ttV$, $\ttV^*$ and $\ttX$ are contained in $\ima\iota$ and they generate $\tttg$ as Lie algebra, we conclude that $\ima\iota=\tttg$, which proves \Cref{claim:gtilde-triangular-surjective}.

\smallbreak
To verify \eqref{eq:gtilde-verp-triangular}, note that the inclusions $\tttn_- \subseteq \oplus_{k<0} \tttg_k$ and $\tttn_+ \subseteq \oplus_{k>0} \tttg_k$ follow since $\ttV^*\subseteq \tttg_{-1}$, $\ttV\subseteq \tttg_{1}$ and both $\oplus_{k<0} \tttg_k$ and $ \oplus_{k>0} \tttg_k$ are Lie subalgebras of $\tttg$, while $\ttX \subseteq \tttg_0$ by definition.
Thus \eqref{eq:gtilde-verp-triangular} holds since we have proved that  $\tttg=\tttn_-\oplus \ttX \oplus\tttn_+$.

\medbreak

Finally we prove \ref{item:gtilde-verp-maximal}. Let $\ttI$ be a $\bZ$-graded ideal such that $\ttI\cap\ttX=0$. Thus $\ttI=\oplus_{k\ne 0}\ttI_k$, where $\ttI_k=\ttI\cap\tttg_k$.
Take $\ttm$ as the sum of all $\bZ$-graded ideals such that $\ttI\cap\ttX=0$. Then $\ttm=\oplus_{k\ne 0}\ttm_k$, where $\ttm_k=\ttm\cap\tttg_k$ is the sum of the degree $k$ components of these $\bZ$-graded ideals, thus $\ttm$ is a $\bZ$-graded ideal such that $\ttm\cap\ttX=0$, and $\ttm=\ttm_-\oplus\ttm_+$.
\end{proof}

\begin{remark}\label{rem:gtilde-inclusions}
By parts \ref{item:gtilde-cartan-inclusion} and \ref{item:gtilde-verp-free}, if $\ttX$ has enough modules, then the canonical maps $ \ttX, \ttV, \ttV^* \to \tttg$ are inclusions. As a consequence of \Cref{lem:Cartan-involution}, the Cartan involution 
\begin{align}\label{eq:gtilde-Cartan-involution}
\widetilde{\omega} \colon \tttg(\rho,\ttd)\to\tttg(\rho^\vee,\ttd^\vee)& &\text{satisfies} 
&&\restr{\widetilde{\omega}}{\ttX}=\id_{\ttX}, &&\restr{\widetilde{\omega}}{\ttV}=\vartheta_{\ttV}, &&\restr{\widetilde{\omega}}{\ttV^*}=\id_{\ttV^*}.
\end{align}
Similarly, by \Cref{lem:gtilde-representation-on-tensor-algebra}, each $\ttX$-module $\lambda\colon \ttX\ot\ttW\to\ttW$ induces a $\tttg$-action $\eta$ on $\op{T}(\ttV)\ot\ttW$ whose restrictions $\eta_\ttX$, $\eta_+$ and $\eta_-$ to $\ttX$, $\ttV$ and $\ttV^*$ are given by \ref{item:Kac-module-0}, \ref{item:Kac-module-positive} and \ref{item:Kac-module-negative}, respectively.
\end{remark}

\begin{example}\label{exa:classical-revisited}
For $\cC=\sVec$, take a classical contragredient datum $(A,\bp)$ as in \ref{item:contragredient-1}--\ref{item:contragredient-4}. As explained in \Cref{exa:classical-semidirect}, from this we extract a contragredient datum $(\rho,d)\in \opD(\fh)$, and we have $\tfg(A,\bp)=\tttg(\rho,d)$ by definition. 
\end{example}
\subsection{Definition and first properties of the contragredient Lie algebra \texorpdfstring{$\ttg(\rho, \ttd)$}{g}}\label{subsec:contragredient-def}
Let us start with a somewhat pathological case.

\begin{example}\label{exa:contragredient-d-trivial}
Assume $\ttX\in\Lie(\cC)$ has enough modules, and take any module $\rho\colon \ttX\ot\ttV\to \ttV$. Then the zero map $0\colon \ttV\ot\ttV^* \to \ttX$ obviously satisfies \eqref{eq:contragredient-compatibility} thus $(\rho,0)$ constitutes a contragredient datum. In this case, $\ttV$ and $\ttV^*$ commute in the auxiliary algebra $\tttg(\rho,0)$, and we have a decomposition $\tttg(\rho,0)=\tttn_- \oplus \ttX \oplus \tttn_-$ where $\tttn_+$ and $\tttn_-$ are actually ideals. Hence the quotient $\tttg(\rho,0)/\ttm$ is just $\ttX$.
\end{example}

The previous example shows that, even though one can run through the construction of $\tttg(\rho,\ttd)/\ttm$ with no assumption on the map $\ttd$ apart from the compatibility \eqref{eq:contragredient-compatibility}, it is not reasonable to expect nice properties on the algebra $\tttg(\rho,\ttd)/\ttm$ at this level of generality. To avoid such undesired behaviors, we impose a first restriction on the map $\ttd$. 

\begin{definition}\label{def:reduced-data}
Fix a Lie algebra $\ttX$, and take $(\rho\colon\ttX\ot\ttV\to \ttV, \ttd\colon \ttV\ot\ttV^*\to\ttX)\in \opD(\ttX)$ as in \Cref{def:contragredient-data}. We say that the datum $(\rho,\ttd)$ is \emph{reduced} if for any simple subobjects $S\subseteq\ttV$ and $T\subseteq\ttV^*$, we have $\restr{\ttd}{S\ot\ttV^*}\ne 0$ and $\restr{\ttd}{\ttV\ot T}\ne 0$. 

We denote the set of reduced data over $\ttX$ by $\opRD(\ttX)$.
\end{definition}

\begin{example}\label{exa:reduced-datum-classical}
In the classical case over $\cC=\sVec$, see \Cref{exa:classical-revisited,exa:classical-semidirect}, we extract $d\colon V\ot V^*\to \fh$ given by $e_i\ot f_j \mapsto \delta_{ij} h_i$, which is clearly reduced. 
\end{example}
Our next result shows how reducibility  of $\ttd$ replicates on $\tttg(\rho,\ttd)$.
\begin{lemma}\label{lem:ideals-reduced-datum}
Let $(\ttX,\rho,\ttd)$ as in \Cref{def:reduced-data} and assume $\ttX$ has enough modules. If $\ttI$ is an ideal of $\tttg(\rho,\ttd)$ with $\ttI\cap\ttX=0$, then also $\ttI\cap\ttV=0$ and $\ttI\cap\ttV^*=0$.
\end{lemma}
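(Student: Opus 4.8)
The plan is to leverage the structural results for $\tttg\coloneqq\tttg(\rho,\ttd)$ obtained in \Cref{thm:gtilde-verp}. Since $\ttX$ is assumed to have enough modules, that theorem tells us the canonical maps $\iota_0\colon\ttX\to\tttg$, $\ttV\to\tttg$ and $\ttV^*\to\tttg$ are inclusions, so throughout I would treat $\ttX$, $\ttV$, $\ttV^*$ as subobjects of $\tttg$. The only other input needed is the defining relation of $\tttg$ (\Cref{def:gtilde}): the restriction of the bracket $\ttb$ of $\tttg$ to $\ttV\ot\ttV^*$ equals $\iota_0\circ\ttd$, and hence, by antisymmetry of $\ttb$, the restriction to $\ttV^*\ot\ttV$ equals $-\iota_0\circ\ttd\circ c_{\ttV^*,\ttV}$ (this is the element-free analogue of the classical computation in \Cref{exa:classical-semidirect}).

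For the statement $\ttI\cap\ttV=0$, I would argue by contradiction. Assuming $\ttI\cap\ttV\ne0$, pick a nonzero simple subobject $S\subseteq\ttI\cap\ttV$ with inclusion $\iota_S$. Because $\ttI$ is an ideal and $S\subseteq\ttI$, the morphism $\ttb\circ(\iota_S\ot\id_{\ttV^*})\colon S\ot\ttV^*\to\tttg$ factors through $\ttI$, so its image is a subobject of $\ttI$. On the other hand, by the identity recalled above this morphism is exactly $\iota_0\circ\restr{\ttd}{S\ot\ttV^*}$, whose image is contained in $\iota_0(\ttX)$; and since $(\rho,\ttd)$ is reduced we have $\restr{\ttd}{S\ot\ttV^*}\ne0$, so (using that $\iota_0$ is a monomorphism) its image is a nonzero subobject of $\ttX$. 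Thus $\ttI\cap\ttX\ne0$, contradicting the hypothesis, and therefore $\ttI\cap\ttV=0$.

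The case $\ttI\cap\ttV^*=0$ I would handle the same way. Given a nonzero simple $T\subseteq\ttI\cap\ttV^*$, the morphism $\ttb\circ(\iota_T\ot\id_{\ttV})\colon T\ot\ttV\to\tttg$ again factors through $\ttI$; using $\restr{\ttb}{\ttV^*\ot\ttV}=-\iota_0\ttd\,c_{\ttV^*,\ttV}$ together with naturality of the braiding, $c_{\ttV^*,\ttV}\circ(\iota_T\ot\id_\ttV)=(\id_\ttV\ot\iota_T)\circ c_{T,\ttV}$ with $c_{T,\ttV}$ invertible, so the image of this morphism coincides with the image of $\iota_0\circ\restr{\ttd}{\ttV\ot T}$, which is nonzero by reducedness — again contradicting $\ttI\cap\ttX=0$. (Alternatively one could deduce the $\ttV^*$-statement from the $\ttV$-statement applied to $\tttg(\rho^\vee,\ttd^\vee)$ through the Cartan involution $\widetilde\omega$ of \Cref{lem:Cartan-involution}, after checking that $(\rho^\vee,\ttd^\vee)$ is again reduced.) I do not expect a substantive obstacle here: the only point requiring care is the element-free bookkeeping, namely verifying that the image of the relevant bracket morphism is literally $\iota_0$ applied to the image of the appropriate restriction of $\ttd$, and correctly tracking the braiding in the $\ttV^*$ case; everything else rests solely on $\restr{\ttb}{\ttV\ot\ttV^*}=\iota_0\circ\ttd$, injectivity of $\iota_0$, and the reducedness hypothesis.
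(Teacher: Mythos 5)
Your argument is correct and is essentially the paper's own proof: pick a simple $S\subseteq\ttI\cap\ttV$, note $\ttb(S\ot\ttV^*)\subseteq\ttI$ since $\ttI$ is an ideal, identify this with $\ttd(S\ot\ttV^*)\subseteq\ttX$, and invoke reducedness to contradict $\ttI\cap\ttX=0$; the $\ttV^*$ case is handled "similarly" in the paper, exactly as in your braiding computation. The extra bookkeeping you flag (injectivity of $\iota_0$, naturality of the braiding) is sound and needs no further justification.
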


\begin{proof}
Let $\ttb$ denote the bracket of $\tttg$. Assume for a contradiction that $\ttI\cap\ttV\ne0$, and take a simple $S\subseteq \ttI\cap\ttV$. Since $\ttI$ is an ideal, we have $\ttb(S\ot \ttV^*)\subseteq \ttI$, and by definition $\ttd$ takes values on $\ttX$. Thus using that $\restr{\ttb}{\ttV\ot\ttV^*}$ coincides with $\ttd$, we get
\[0\ne \ttd(S\ot \ttV^*)= \ttb(S\ot \ttV^*) \subseteq \ttI\cap \ttX,\]
a contradiction. Thus $\ttI\cap\ttV=0$, and similarly $\ttI\cap\ttV^*=0$. 
\end{proof}

\begin{remark}\label{rem:gtilde-bracket-grading}
Consider $(\ttX, \rho, \ttd)$ as above. Then the Lie bracket of $\op{F}\coloneq\FLie(\ttV\oplus \ttV^* )\rtimes_\rho \ttX$ clearly satisfies $\ttb_{\op{F}} (\op{F}_k \ot \ttV^*)= \op{F}_{k-1}$ for each integer $k$. Thus, if $\ttq$ is a quotient of $\tttg(\rho,\ttd)$ by a $\bZ$-homogeneous ideal which  intersects $\ttX$ trivially, then we also have $\ttb_{\ttq} (\ttq_k \ot \ttV^*)= \ttq_{k-1}$.
\end{remark}

Now we are ready to introduce the main object of interest.

\begin{definition}\label{def:contragredient}
Fix $\ttX\in\Lie(\cC)$ with enough modules and take a reduced datum $(\rho,\ttd)$ over $\ttX$ in the sense of \Cref{def:reduced-data}. The \emph{contragredient Lie algebra} associated to the triple $(\ttX, \rho, \ttd)$ is the operadic ind-Lie algebra $\ttg(\rho, \ttd)\coloneqq \tttg(\rho, \ttd)/\ttm$, where $\ttm$ is the maximal ideal from \Cref{thm:gtilde-verp} \ref{item:gtilde-verp-maximal}.
\end{definition}

All the observations from \Cref{rem:contragredient-gtilde} factor through:

\begin{remark}\label{rem:contragredient-inclusions}
\begin{itemize}[leftmargin=*]
\item If $\cC$ is a Frobenius exact pre-Tannakian category of moderate growth, then $\ttg(\rho, \ttd)$ is a Lie algebra in $\indcat{\cC}$, rather than an operadic one.
\item By definition of $\ttm$, and thanks to \Cref{lem:ideals-reduced-datum}, the inclusions $\ttX, \ttV, \ttV^*\hookrightarrow \tttg(\rho,\ttd)$ from \Cref{thm:gtilde-verp} \ref{item:gtilde-cartan-inclusion}, \ref{item:gtilde-verp-free} induce inclusions $\ttX, \ttV, \ttV^*\hookrightarrow \ttg(\rho,\ttd)$.
\item 
By \Cref{thm:gtilde-verp} \ref{item:gtilde-verp-maximal}, the defining ideal of $\ttg(\rho, \ttd)$ is homogeneous, which gives a grading
\begin{align}\label{eq:contragredient-Z-graded}
\ttg(\rho, \ttd)&=\bigoplus_{k\in\bZ} \ttg_k(\rho, \ttd),
\end{align}
where $\ttg_k(\rho, \ttd)$ is the image of $\tttg_k(\rho, \ttd)$. 
\item We denote by $\ttn_+$ and $\ttn_-$ the subalgebras of $\ttg(\rho, \ttd)$ generated by $\ttV$ and $\ttV^*$, respectively. 
\item The subset $\opRD(\ttX)\subseteq \opD(\ttX)$ is closed under the duality introduced in \Cref{rem:contragredient-dual-pair}.
\end{itemize}
\end{remark}

From \Cref{thm:gtilde-verp} we obtain the first structural results for contragredient Lie algebras.

\begin{theorem}\label{thm:g-contragredient-verp}
Let $(\ttX, \rho, \ttd)$ as in \Cref{def:contragredient}, and assume that $\ttX$ has enough modules. Then, for the operadic Lie algebra $\ttg\coloneq\ttg(\rho, \ttd)$, the following hold:
\begin{enumerate}[leftmargin=*,label=\rm{(\alph*)}]
\item \label{item:g-contragredient-involution}  There exists a unique operadic Lie algebra isomorphism 
\begin{align}\label{eq:Cartan-involution-contragredient}
\omega& \colon \ttg(\rho, \ttd)\to \ttg(\rho^{\vee}, \ttd^{\vee}) & &\text{such that}&&\restr{\omega}{\ttX}=\id_{\ttX},  
&&\restr{\omega}{\ttV}=\vartheta_{\ttV}, &&\restr{\omega}{\ttV^*}=\id_{\ttV^*}.
\end{align}


\item \label{item:g-contragredient-verp-triangular} 
We have $\ttg=\ttn_-\oplus \ttX \oplus\ttn_+$ as objects in $\indcat{\cC}$. Moreover,
\begin{align}\label{eq:g-contragredient-verp-triangular}
\ttn_- &= \bigoplus_{k<0} \ttg_k, & \ttX &= \ttg_0, & \ttn_+ &= \bigoplus_{k>0} \ttg_k. 
\end{align}
\item\label{item:g-contragredient-verp-trivial-bracket} If $Y$ is a $\bZ$-homogeneous subobject of $\ttn_+$ (respectively, $\ttn_-$) such that $\ttb(Y\otimes\ttV^*)=0$ (respectively, $\ttb(Y\otimes\ttV)=0$), then $Y=0$.
\end{enumerate}
\end{theorem}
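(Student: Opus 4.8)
All three parts rest on one observation drawn from the maximality of $\ttm$ in \Cref{thm:gtilde-verp} \ref{item:gtilde-verp-maximal}: the operadic Lie algebra $\ttg=\tttg/\ttm$ possesses no nonzero $\bZ$-homogeneous ideal $\ttI$ with $\ttI\cap\ttX=0$. Indeed, the preimage $\widetilde\ttI\subseteq\tttg$ of such an $\ttI$ under $\pi\colon\tttg\twoheadrightarrow\ttg$ is a $\bZ$-homogeneous ideal containing $\ttm$, and $\widetilde\ttI\cap\ttX=0$ because $\ttX$ embeds into $\ttg$ (\Cref{rem:contragredient-inclusions}) and $\ttm\cap\ttX=0$; so $\widetilde\ttI=\ttm$ by maximality, forcing $\ttI=0$. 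I will invoke this for \ref{item:g-contragredient-involution} and \ref{item:g-contragredient-verp-trivial-bracket}, while \ref{item:g-contragredient-verp-triangular} is obtained by transporting the structure of $\tttg$ through $\pi$.

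For \ref{item:g-contragredient-verp-triangular}, apply $\pi$ to the decomposition $\tttg=\tttn_-\oplus\ttX\oplus\tttn_+$ of \Cref{thm:gtilde-verp} \ref{item:gtilde-verp-triangular}: writing $\ttm=\ttm_+\oplus\ttm_-$ with $\ttm_\pm=\ttm\cap\tttn_\pm$ homogeneous and $\ttm\cap\ttX=0$ (\Cref{thm:gtilde-verp} \ref{item:gtilde-verp-maximal}), the image $\tttn_\pm/\ttm_\pm$ of $\tttn_\pm$ is the subalgebra of $\ttg$ generated by $\ttV$ (resp.\ $\ttV^*$), hence equals $\ttn_\pm$, and $\ttg_0=\tttg_0/(\ttm\cap\tttg_0)=\ttX$; intersecting with the homogeneous components then gives \eqref{eq:g-contragredient-verp-triangular}. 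For \ref{item:g-contragredient-involution}, take $\widetilde\omega\colon\tttg(\rho,\ttd)\to\tttg(\rho^\vee,\ttd^\vee)$ from \Cref{lem:Cartan-involution} and note that it \emph{reverses} the $\bZ$-grading: it sends $\ttV\subseteq\tttg_1(\rho,\ttd)$ to $\vartheta_\ttV(\ttV)=\ttV^{**}$, which lies in degree $-1$ of $\tttg(\rho^\vee,\ttd^\vee)$ since there $\ttV^*$ (not $\ttV^{**}$) is the positive generator, it fixes $\ttX$ in degree $0$, and it sends $\ttV^*\subseteq\tttg_{-1}(\rho,\ttd)$ to $\ttV^*\subseteq\tttg_1(\rho^\vee,\ttd^\vee)$; hence $\widetilde\omega(\tttg_k(\rho,\ttd))=\tttg_{-k}(\rho^\vee,\ttd^\vee)$. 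As a grading-reversing isomorphism fixing $\ttX$, $\widetilde\omega$ and $\widetilde\omega^{-1}$ carry $\bZ$-homogeneous ideals meeting $\ttX$ trivially to ideals of the same kind, so by maximality $\widetilde\omega$ interchanges the two distinguished maximal ideals and therefore descends to the desired $\omega$; uniqueness is clear since $\ttX,\ttV,\ttV^*$ generate $\ttg(\rho,\ttd)$.

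Part \ref{item:g-contragredient-verp-trivial-bracket} is the substantive one, and the $\ttn_-$ statement follows from the $\ttn_+$ one by applying $\omega$, so assume $Y\subseteq\ttn_+$ with $\ttb(Y\otimes\ttV^*)=0$. Decomposing $Y$ into its $\bZ$-homogeneous components, each of which still satisfies the hypothesis, I may assume $Y\subseteq\ttg_k$; since $0\ne Y\subseteq\ttn_+$, necessarily $k\ge1$. The plan is to enlarge $Y$ to the subobject $\frr=\sum_{m\ge0}\frr_m\subseteq\ttg$ with $\frr_0=Y$ and $\frr_{m+1}=\frr_m+\ttb(\ttV\otimes\frr_m)+\ttb(\ttX\otimes\frr_m)$, i.e.\ the closure of $Y$ under brackets with $\ttV$ and with $\ttX$ only. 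Since $\ttb(\ttV\otimes\ttn_+)\subseteq\ttn_+$ and $\ttb(\ttX\otimes\ttg_j)\subseteq\ttg_j$, this $\frr$ is a $\bZ$-homogeneous subobject of $\ttn_+$, closed under bracketing with $\ttV$ and with $\ttX$ by construction; the crucial point is that it is also closed under bracketing with $\ttV^*$.

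To establish this I would prove $\ttb(\frr_m\otimes\ttV^*)\subseteq\frr$ by induction on $m$, the base case $m=0$ being exactly $\ttb(Y\otimes\ttV^*)=0$. For the inductive step, the braided Jacobi identity expresses $\ttb\bigl(\ttb(\ttV\otimes\frr_m)\otimes\ttV^*\bigr)$ and $\ttb\bigl(\ttb(\ttX\otimes\frr_m)\otimes\ttV^*\bigr)$ as sums (with intervening braidings) of terms of the four shapes $\ttb\bigl(\ttV\otimes\ttb(\frr_m\otimes\ttV^*)\bigr)$, $\ttb\bigl(\ttX\otimes\ttb(\frr_m\otimes\ttV^*)\bigr)$, $\ttb\bigl(\frr_m\otimes\ttb(\ttV\otimes\ttV^*)\bigr)$ and $\ttb\bigl(\frr_m\otimes\ttb(\ttX\otimes\ttV^*)\bigr)$; using that in $\ttg$ one has $\restr{\ttb}{\ttV\otimes\ttV^*}=\ttd$ with values in $\ttX$ and $\restr{\ttb}{\ttX\otimes\ttV^*}=\rho^\vee$ with values in $\ttV^*$, the inductive hypothesis together with the closure of $\frr$ under $\ttV$- and $\ttX$-brackets shows each of these terms lies in $\frr$. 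Once $\frr$ is stable under bracketing with each of $\ttV,\ttX,\ttV^*$, it is stable under bracketing with the subalgebra they generate, namely all of $\ttg$, so $\frr$ is a $\bZ$-homogeneous ideal with $\frr\cap\ttX\subseteq\ttn_+\cap\ttX=0$ by \ref{item:g-contragredient-verp-triangular}; by the opening observation $\frr=0$, hence $Y=0$. I expect the only genuinely delicate point to be keeping the braidings straight throughout this last induction; all the remaining steps are formal transport through the quotient.
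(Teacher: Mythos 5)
Your proposal is correct and follows essentially the same route as the paper: part \ref{item:g-contragredient-involution} by showing the grading-reversing isomorphism $\widetilde{\omega}$ exchanges the two maximal ideals, part \ref{item:g-contragredient-verp-triangular} by pushing the triangular decomposition of $\tttg$ through the projection, and part \ref{item:g-contragredient-verp-trivial-bracket} by enlarging $Y$ to a $\bZ$-homogeneous ideal contained in positive degrees and invoking maximality of $\ttm$. The only (cosmetic) difference is in \ref{item:g-contragredient-verp-trivial-bracket}, where you build the ideal as the iterated closure of $Y$ under $\ttV$- and $\ttX$-brackets while the paper takes $Y+\sum_{k\ge0}\ttb(\ttg_k\ot Y)$ in one step; both verifications rest on the same Jacobi-identity bookkeeping together with $\ttb(\ttV\ot\ttV^*)\subseteq\ttX$ and $\ttb(\ttX\ot\ttV^*)\subseteq\ttV^*$.
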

\begin{proof}
For \ref{item:g-contragredient-involution} we check that the isomorphism $\widetilde{\omega}$ from \Cref{lem:Cartan-involution} satisfies $\widetilde{\omega}(\ttm)=\ttm^{\vee}$, where $\ttm^{\vee}$ is the maximal $\bZ$-graded ideal of $\tttg(\rho^{\vee}, \ttd^{\vee})$ among those that meet $\ttX$ trivially. Notice that $\widetilde{\omega}(\tttg_k)=\tttg(\rho^{\vee}, \ttd^{\vee})_{-k}$ for all $k\in\bZ$, so $\widetilde{\omega}(\ttm)$ is a $\bZ$-graded ideal, and $\widetilde{\omega}(\ttm)\cap\ttX=0$ since $\ttX=\widetilde{\omega}(\ttX)$. Thus $\widetilde{\omega}(\ttm)\subseteq \ttm^{\vee}$ by maximality of $\ttm^{\vee}$. Analogously, $\widetilde{\omega}^{-1}(\ttm^{\vee})\subseteq\ttm^{\vee}$, so $\widetilde{\omega}(\ttm)=\ttm^{\vee}$.
\smallbreak

Now \ref{item:g-contragredient-verp-triangular} follows from the definition of the grading \eqref{eq:contragredient-Z-graded}, just by applying the canonical projection $\pi\colon\tttg\twoheadrightarrow\ttg$ to the correspondent decompositions \eqref{eq:gtilde-verp-triangular} for $\tttg$:
\begin{align*}
\ttn_{\pm} &=\pi(\tttn_{\pm })=\pi\left( \oplus_{\pm k>0} \tttg_k \right)=\oplus_{\pm k>0} \ttg_k, &
\ttX &=\pi(\ttX)=\pi\left(\tttg_0 \right)=\ttg_0.
\end{align*}

\smallbreak

Finally we prove \ref{item:g-contragredient-verp-trivial-bracket}, for which suffices to deal with the case $Y \subseteq \ttn_+$ since the other one follows by applying $\omega$. Set $n\in\bN$ as the degree of $Y$, and 
\begin{align*}
\ttI&=Y+\left(\sum_{k \ge 0} \ttb(\ttg_k\ot Y)\right).
\end{align*}
We claim that $\ttI$ is an ideal. As $\ttg$ is generated by $\ttV$, $\ttX$ and $\ttV^*$, it suffices to check that 
\begin{align*}
\ttb(\ttV\ot\ttI) & \subseteq \ttI, & \ttb(\ttX\ot\ttI) & \subseteq \ttI, & \ttb(\ttV^*\ot\ttI) & \subseteq \ttI.
\end{align*}
The first inclusion follows since $\ttb(\ttV\ot Y)=\ttb(\ttg_1\ot Y)$ is one of the summands defining $\ttI$, and because for each $k\ge0$ we get $\ttb(\ttV \ot \ttb(\ttg_k\ot Y)) \subseteq \ttb(\ttg_{k+1}\ot Y)$ by using the Jacobi identity and that $\ttg$ is $\bZ$-graded. Similarly, $\ttb(\ttX\ot\ttI)\subseteq \ttI$ since $\ttb(\ttX\ot Y)=\ttb(\ttg_0\ot Y)$ is one of the summands defining $\ttI$, and $\ttb(\ttX \ot \ttb(\ttg_k\ot Y)) \subseteq \ttb(\ttg_{k}\ot Y)$ for all $k\ge 0$, using the Jacobi identity and that each $\ttg_k$ is an $\ttX$-submodule. For the last inclusion we use that $\ttb(\ttV\ot Y)=0$ by hypothesis and the Jacobi identity again to check that $\ttb(\ttV^* \ot \ttb(\ttX\ot Y))=0$ and
\begin{align*}
\ttb(\ttV^* \ot \ttb(\ttg_k\ot Y)) & =\ttb(\ttb(\ttV^* \ot \ttg_k)\ot Y) \subseteq\ttb(\ttg_{k-1}\ot Y) &&\text{for all }k\ge 1.
\end{align*}
Hence $\ttI$ is an ideal. As $\ttg$ is a $\bZ$-graded operadic Lie algebra, we have that
\begin{align*}
Y+\ttb(\ttg_0\ot Y) &= Y+\ttb(\ttX\ot Y)\subseteq \ttg_{n},  & \ttb(\ttg_k\ot Y) &\subseteq \ttg_{n+k}, & &k>0.
\end{align*}
Thus $\ttI$ is a $\bZ$-graded ideal intersecting $\ttg_0=\ttX$ trivially. By maximality of $\ttm$ we get $\ttI=0$ and then also $Y=0$, as desired.
\end{proof}

\begin{remark}\label{rem:derived-subalgebra}
Consider $(\ttX, \rho, \ttd)$ as above and assume that $\rho$ is surjective. Let $\ttX'_\ttd\subseteq \ttX$ denote the subalgebra of $\ttX$ generated by the derived subalgebra $\ttX'$ and the image of $\ttd$. Then the derived subalgebra $\ttg'\subseteq\ttg$ coincides with the subalgebra generated by $\ttV^*$, $\ttX'_\ttd$ and $\ttV$, and it satisfies $\ttg'=\ttn_-\oplus\ttX'_\ttd\oplus\ttn_+$.
\end{remark}

\begin{remark}\label{rem:contr-data-semi-direct}
Consider $(\ttX, \rho, \ttd)$ as above. Assume that $\ttX=\ttX_1\oplus\ttX_2$ as Lie algebras, with $\ima\ttd\subseteq \ttX_1$, and set $\rho_i=\restr{\rho}{\ttX_i}$ for $i=1,2$. 
We view $\ttd$ as a map $\ttd\colon\ttV\ot \ttV^*\to\ttX_1$, so $(\ttX_1,\rho_1,\ttd)$ is a reduced contragredient datum.

The $\ttX_2$-actions on $\ttV$ and $\ttV^*$ extend to an action $\widehat{\rho_2}$ on $\FLie(\ttV\oplus \ttV^* )\rtimes_{\rho_1} \ttX_1$ (with $\ttX_2$ acting trivially on $\ttX_1$), which descends to $\ttg(\ttX_1,\rho_1,\ttd)$. Moreover, it is easy to see that
\begin{align*}
\ttg(\ttX,\rho,\ttd) \cong \ttg(\ttX_1,\rho_1,\ttd)\rtimes_{\widehat{\rho_2}} \ttX_2.
\end{align*}
Such decomposition will be used in \Cref{sec:examples}.
\end{remark}

One may wonder when a contragredient Lie algebra belongs to $\cC$ rather than  $\indcat{\cC}$. There is a general, easy case where this happens: when the action of $\ttX$ is trivial.

\begin{corollary}\label{cor:trivial-action}
Let $(\ttX,\rho,\ttd)$ as in \Cref{thm:g-contragredient-verp}. If $\rho=0$, then $\ttg=\ttV^*\oplus\ttX\oplus\ttV$ as $\bZ$-graded objects in $\cC$ and as $\ttX$-modules in $\cC$. The bracket $\ttb$ of $\ttg$ is given by 
\begin{align*}
\restr{\ttb}{\ttV \ot \ttV^*}&=\ttd, & \restr{\ttb}{\ttX \ot \ttV}=\restr{\ttb}{\ttX \ot \ttV^*}= \restr{\ttb}{\ttV \ot \ttV}&=\restr{\ttb}{\ttV^* \ot \ttV^*}=0. 
\end{align*}
\end{corollary}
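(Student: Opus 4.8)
The plan is to combine the $\bZ$-grading \eqref{eq:contragredient-Z-graded} with the triangular decomposition $\ttg=\ttn_-\oplus\ttX\oplus\ttn_+$ of \Cref{thm:g-contragredient-verp}\ref{item:g-contragredient-verp-triangular} and show that, once $\rho=0$, every graded component $\ttg_k$ with $\lvert k\rvert\ge 2$ vanishes. First I would record the elementary consequences of $\rho=0$. From the defining composition \eqref{eq:rho-vee} one gets $\rho^\vee=0$ as well (the middle factor $\ttV^*\ot(-\rho)\ot\ttV^*$ is zero), and the compatibility \eqref{eq:contragredient-compatibility} collapses to $\ttb_\ttX(\id_\ttX\ot\ttd)=0$, i.e.\ $\ttX$ centralizes $\ima\ttd$. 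Since in $\ttg$ we have $\restr{\ttb}{\ttX\ot\ttV}=\rho=0$ and $\restr{\ttb}{\ttX\ot\ttV^*}=\rho^\vee=0$, antisymmetry of the bracket gives $\restr{\ttb}{\ttV\ot\ttX}=\restr{\ttb}{\ttV^*\ot\ttX}=0$ too; together with $\restr{\ttb}{\ttV\ot\ttV^*}=\ttd$ (which holds by construction of $\tttg$, see \Cref{def:gtilde}, hence in the quotient $\ttg$), this yields $\ttb\bigl(\ttV\ot\ttb(\ttV\ot\ttV^*)\bigr)=0$ and $\ttb\bigl(\ttb(\ttV\ot\ttV^*)\ot\ttV\bigr)=0$.

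Next I would pin down $\ttg_1$ and $\ttg_2$. Applying the description of a generated subalgebra from \Cref{rem:subalg-generated} to $\ttV\subseteq\ttn_+$ gives $\ttn_+=\sum_{n\ge 1}\ima\varphi_n$, with $\varphi_1\colon\ttV\hookrightarrow\ttn_+$ and $\varphi_{n+1}=\restr{\ttb}{\ima\varphi_n\ot\ttV}$. Since $\ttg$ is $\bZ$-graded with $\ttV$ in degree $1$, we have $\ima\varphi_n\subseteq\ttg_n$, and comparing with $\ttn_+=\bigoplus_{k>0}\ttg_k$ forces $\ttg_n=\ima\varphi_n$ for every $n\ge1$. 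In particular $\ttg_1=\ttV$ and $\ttg_2=\ttb(\ttV\ot\ttV)$.

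The heart of the argument is that $\ttg_2=0$. For this I would compute $\ttb(\ttg_2\ot\ttV^*)=\ttb\bigl(\ttb(\ttV\ot\ttV)\ot\ttV^*\bigr)$ using the Jacobi identity in the form expressing that $\ad$ acts by derivations: up to braidings this equals a combination of terms $\ttb\bigl(\ttV\ot\ttb(\ttV\ot\ttV^*)\bigr)$, all of which vanish by the first paragraph. Hence $\ttb(\ttg_2\ot\ttV^*)=0$, so \Cref{thm:g-contragredient-verp}\ref{item:g-contragredient-verp-trivial-bracket} forces $\ttg_2=0$; then inductively $\ttg_k=\ttb(\ttg_{k-1}\ot\ttV)=0$ for all $k\ge2$, whence $\ttn_+=\ttg_1=\ttV$. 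Applying the Cartan involution $\omega$ of \Cref{thm:g-contragredient-verp}\ref{item:g-contragredient-involution} to $\ttg(\rho^\vee,\ttd^\vee)$ (which is again of this type since $\rho^\vee=0$), or simply repeating the argument with the roles of $\ttV,\rho,\ttd$ and $\ttV^*,\rho^\vee,\ttd^\vee$ interchanged, gives symmetrically $\ttn_-=\ttg_{-1}=\ttV^*$.

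Finally I would assemble the statement: $\ttg=\ttn_-\oplus\ttX\oplus\ttn_+=\ttV^*\oplus\ttX\oplus\ttV$ as $\bZ$-graded objects of $\cC$ (in particular $\ttg\in\cC$), with $\ttg_0=\ttX$ the adjoint module and $\ttg_{\pm1}=\ttV^{\pm}$ carrying the $\ttX$-actions $\rho^\vee,\rho$, both zero; and the listed brackets follow at once: $\restr{\ttb}{\ttV\ot\ttV^*}=\ttd$ by construction, $\restr{\ttb}{\ttX\ot\ttV}=\restr{\ttb}{\ttX\ot\ttV^*}=0$ by the first paragraph, and $\restr{\ttb}{\ttV\ot\ttV}$, $\restr{\ttb}{\ttV^*\ot\ttV^*}$ vanish since they take values in $\ttg_{\pm2}=0$. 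The only step that is not routine bookkeeping is the vanishing $\ttg_2=0$, and this is precisely where the maximality of $\ttm$, repackaged as \Cref{thm:g-contragredient-verp}\ref{item:g-contragredient-verp-trivial-bracket}, is indispensable.
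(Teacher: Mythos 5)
Your proof is correct and follows essentially the same route as the paper: both arguments reduce everything to showing $\ttb(\ttg_2\ot\ttV^*)=0$ via the Jacobi identity and $\rho=0$, invoke \Cref{thm:g-contragredient-verp}\ref{item:g-contragredient-verp-trivial-bracket} to kill $\ttg_2$, propagate the vanishing to all degrees $\ge 2$ using $\ttg_{k+1}=\ttb(\ttg_k\ot\ttV)$, and handle the negative side by symmetry. The only cosmetic difference is that the paper phrases the conclusion as identifying $\ttm_\pm$ inside $\tttg$, whereas you work directly with the graded pieces of the quotient $\ttg$.
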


\begin{proof}
In $\tttg=\tttg(\rho,\ttd)$, consider $Y=\ttb(\ttV\ot\ttV)=\tttg_{2}$ and denote by $\ov{Y}$ its image in $\ttg$. Using the Jacobi identity and that $\rho=0$ we check that $\ttb(\ov{Y}\ot\ttV^*)=0$ in $\ttg$. Hence $\ov{Y}=0$ by \Cref{thm:g-contragredient-verp} \ref{item:g-contragredient-verp-trivial-bracket}. In other words $Y\subset\ttm_+$, and as $\ttm$ is an ideal we have that $\ttm_+=\oplus_{n\ge 2}\tttg_{n}$ since $\tttg_{n+1}=\ttb(\tttg_{n}\otimes\ttV)$. Analogously, $\ttm_-=\oplus_{n\le 2}\tttg_{n}$, and the statement follows.
\end{proof}

Next we show that, as in the classical case, if a quotient of $\tttg(\rho,\ttd)$  admits a symmetric bilinear form with certain non-degeneracy, then it is the minimal quotient $\ttg(\rho,\ttd)$.

\begin{proposition}\label{prop:contragredient-bilinear-form}
Fix $\ttX\in\Lie(\cC)$ with enough modules and take a reduced datum $(\rho,\ttd)\in\opRD(\ttX)$ in the sense of \Cref{def:reduced-data}.

Let $\ttq=\oplus_{k\in\bZ} \ttq_k$ be a quotient of $\tttg(\rho,\ttd)$ by a $\bZ$-homogeneous ideal $\ttI$ such that 
$ \ttI\cap\ttX=0$. 
If $\ttq$ admits a symmetric invariant bilinear form $\ttB\colon\ttq\ot\ttq\to\one$ such that $\restr{\ttB}{\ttq_k\ot\ttq_{-k}}$ is non-degenerate for all $k\in\bZ$,
then $\ttI$ is the maximal ideal $\ttm$ from \Cref{thm:gtilde-verp} \ref{item:gtilde-verp-maximal}, i.e. $\ttq=\ttg(\rho,\ttd)$.
\end{proposition}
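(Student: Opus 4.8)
The plan is to show that the ideal $\ttI$ is already maximal among $\bZ$-homogeneous ideals trivially meeting $\ttX$, which forces $\ttI = \ttm$. Equivalently, letting $\pi\colon\tttg(\rho,\ttd)\twoheadrightarrow\ttq$ be the projection, I must prove that $\pi(\ttm)=0$, i.e.\ that the image $\overline{\ttm}$ of $\ttm$ in $\ttq$ vanishes. Since $\ttm$ is $\bZ$-graded and trivially intersects $\ttX$, and since $\ttI\subseteq\ttm$ (because $\ttI$ is a $\bZ$-graded ideal with $\ttI\cap\ttX=0$, so it is contained in the \emph{maximal} such ideal by \Cref{thm:gtilde-verp}\,\ref{item:gtilde-verp-maximal}), the object $\overline{\ttm}$ is a $\bZ$-graded ideal of $\ttq$ with $\overline{\ttm}\cap\ttX=0$ and $\overline{\ttm}=\overline{\ttm}_+\oplus\overline{\ttm}_-$ where $\overline{\ttm}_{\pm}=\overline{\ttm}\cap\ttn_{\pm}$. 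So the whole statement reduces to: \emph{any $\bZ$-graded ideal $\ttJ$ of $\ttq$ with $\ttJ\cap\ttX=0$ is zero}.

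First I would exploit the bilinear form to control the negative part. Suppose $\ttJ$ is such an ideal; write $\ttJ = \oplus_{k\neq 0}\ttJ_k$ with $\ttJ_k = \ttJ\cap\ttq_k$. The key point is that the radical of $\ttB$, restricted to each graded piece, behaves well: by invariance, for any ideal $\ttJ$, the orthogonal complement $\ttJ^{\perp}$ (with respect to $\ttB$) is again an ideal. I would argue as follows. Fix $k>0$ and consider $\ttJ_k\subseteq\ttq_k$. By invariance of $\ttB$ and the property that $\restr{\ttB}{\ttq_k\ot\ttq_{-k}}$ is non-degenerate, the pairing identifies $\ttq_{-k}$ with $\ttq_k^*$ and vice versa. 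I claim $\ttB(\ttJ_k\ot\ttq_{-k})=0$: indeed using invariance repeatedly and $\ttb(\ttg_k\ot\ttg_{-k})\subseteq\ttg_0=\ttX$ together with $\ttJ\cap\ttX=0$, one pushes any would-be nonzero pairing down into $\ttX$ and derives a contradiction with $\ttJ$ being an ideal — more precisely, if $S\subseteq\ttJ_k$ is a simple subobject, then $\ttb(S\ot\ttq_{-k})\subseteq\ttJ\cap\ttX=0$, and invariance gives $\ttB(S\ot\ttq_{-k})=\pm\ttB(\one\ot\ttb(S\ot\ttq_{-k}))$-type relations forcing $\ttB(S\ot\ttq_{-k})=0$, contradicting non-degeneracy of $\restr{\ttB}{\ttq_k\ot\ttq_{-k}}$ unless $S=0$. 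Hence $\ttJ_k=0$ for all $k>0$, i.e.\ $\ttJ\cap\ttn_+=0$; by the symmetric argument (or by applying the Cartan-type involution), $\ttJ\cap\ttn_-=0$ as well. Combined with $\ttJ\cap\ttX=0$ and the triangular decomposition $\ttq=\ttn_-\oplus\ttX\oplus\ttn_+$ inherited from \Cref{thm:gtilde-verp}\,\ref{item:gtilde-verp-triangular} (via \Cref{rem:gtilde-bracket-grading}), we get that $\ttJ$, being $\bZ$-graded, is $0$.

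The step I expect to be the main obstacle is making the pairing argument precise in the element-free categorical setting: namely showing cleanly that a simple subobject $S\subseteq\ttJ_k$ with $\ttb(S\ot\ttq_{-k})=0$ must satisfy $\restr{\ttB}{S\ot\ttq_{-k}}=0$. Classically this is "$B([x,y],z)=B(x,[y,z])$ and $[\ttJ,\ttg]\subseteq\ttJ$, $\ttB(\ttJ_k,\ttq_{-k})$ pairs into $\ttB$ of something in $\ttX\cap\ttJ=0$" — but here one must instead observe that invariance of $\ttB$ means $\ttB\circ(\ttb\ot\id)=\ttB\circ(\id\ot\ttb)$ as morphisms $\ttq^{\ot3}\to\one$, and then, for $s\in S$ (categorically: precomposing with $S\hookrightarrow\ttq_k$), the pairing $\restr{\ttB}{S\ot\ttq_{-k}}\colon S\ot\ttq_{-k}\to\one$ must be shown to factor through $\ttb(S\ot\ttq_{-k})$, hence is zero. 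This requires choosing an auxiliary element or, better, using that $\ttq_{-k}=\ttb(\ttq_{-k+1}\ot\ttV^*)$ or $\ttq_{-k}$ is generated by brackets coming down from $\ttX$; running an induction on $k$ using \Cref{rem:gtilde-bracket-grading} ($\ttb(\ttq_k\ot\ttV^*)=\ttq_{k-1}$) reduces the vanishing of $\restr{\ttB}{\ttJ_k\ot\ttq_{-k}}$ to the vanishing at a lower degree together with the fact that brackets of $\ttJ$ with $\ttV,\ttV^*$ stay in $\ttJ$, and the base case $k$ minimal lands the relevant bracket in $\ttX\cap\ttJ=0$. Once the form-theoretic vanishing is established, the rest is a direct assembly using the already-proven triangular decomposition and maximality of $\ttm$, so the remaining work is routine.
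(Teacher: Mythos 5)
Your proposal is essentially correct and, in its final form, follows the same route as the paper: reduce to showing that the image of $\ttm$ in $\ttq$ (equivalently, any $\bZ$-graded ideal $\ttJ$ of $\ttq$ with $\ttJ\cap\ttX=0$) vanishes, then kill each graded piece by combining invariance of $\ttB$, the surjectivity $\ttb_{\ttq}(\ttq_{1-k}\ot\ttV^*)=\ttq_{-k}$ from \Cref{rem:gtilde-bracket-grading}, and the non-degeneracy of $\restr{\ttB}{\ttq_k\ot\ttq_{-k}}$. The paper organizes this as a minimal-counterexample argument on the degree $k$ (taking $k$ minimal with $\ttI_k\subsetneq\ttm_k$, so that $Y=\ttm_k/\ttI_k$ satisfies $\ttb_{\ttq}(Y\ot\ttV^*)=0$, whence $\ttB(Y\ot\ttq_{-k})=\ttB(\ttb_{\ttq}(Y\ot\ttV^*)\ot\ttq_{1-k})=0$), which is the induction you describe in your last paragraph.

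One caveat: the first version of your key step is not a valid deduction. There is no identity of the form $\ttB(S\ot\ttq_{-k})=\pm\ttB(\one\ot\ttb(S\ot\ttq_{-k}))$; invariance only relates $\ttB$ evaluated on a bracket paired against a \emph{third} factor, so $\ttb(S\ot\ttq_{-k})=0$ does not by itself force $\restr{\ttB}{S\ot\ttq_{-k}}=0$. You correctly identify this as the obstacle, and the repair you propose --- writing $\ttq_{-k}$ as $\ttb_{\ttq}(\ttq_{1-k}\ot\ttV^*)$ (or bracketing $S$ against $\ttV^*$ first) and descending in degree, with the base case supplied by $\ttJ_{\pm1}=0$ coming from reducedness of the datum (\Cref{lem:ideals-reduced-datum}) --- is exactly what makes the argument go through and is what the paper does. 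So the proposal is sound once that last paragraph is taken as the actual proof of the key step rather than as an afterthought.
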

\begin{proof}
By \Cref{lem:ideals-reduced-datum}, we have $\ttI=\oplus_{k\ne 0, \pm 1} \ttI_k$, where $\ttI_k=\ttI\cap\tttg_k$. Suppose that there exists $k>1$ such that $\ttI_k\subsetneq \ttm_k$, and take $k$ minimal with this condition. Now $Y\coloneq\ttm_k/\ttI_k$ is a non-zero subobject of $\ttq_k$ such that $\ttb_{\ttq}(Y\ot \ttV^*)=0$; indeed $\ttb_{\tttg}(\ttm_k\ot \ttV^*)\subseteq \ttm_{k-1}$, and $\ttm_{k-1}=\ttI_{k-1}$ by minimality of $k$. Using this fact and that $\ttB$ is invariant, we get
\begin{align}\label{eq:gtilde-quotient-bilinear}
0 &= \ttB\left( \ttb_{\ttq}(Y\ot \ttV^*)\ot \ttq_{1-k} \right) =  \ttB\left( Y\ot \ttb_\ttq(\ttV^*\ot \ttq_{1-k}) \right).
\end{align}
On the other hand, by \Cref{rem:gtilde-bracket-grading}, we have $\ttb_{\ttq}(\ttV^*\ot \ttq_{1-k})=\ttq_{-k}$. Thus \eqref{eq:gtilde-quotient-bilinear} implies that $0= \ttB\left( Y\ot \ttq_{-k} \right)$, and by assumption on $\ttB$ we get $Y=0$, a contradiction. Hence $\ttI_k=\ttm_k$ for all $k>1$. Analogously, one shows that $\ttI_k=\ttm_k$ for any $k<-1$. So $\ttI=\ttm$, as desired.
\end{proof}

\subsection{Symmetrizable data}
In this Section we study contragredient data over a Lie algebra $\ttX$ which is assumed to come equipped with a symmetric non-degenerate invariant form. It turns out that for any module $\rho\colon \ttX\ot\ttV \to \ttV$ one can produce a map $\ttd\colon \ttV \ot \ttV^*\to\ttX$ satisfying \eqref{eq:contragredient-compatibility}, which happens to be reduced under a further assumption.

\begin{definition}\label{def:symmetrizable-data}
Let $\ttX \in \Lie(\cC)$.  
A \emph{symmetrizable contragredient datum} over $\ttX$ is a pair $(\rho, \ttK)$ where
\begin{enumerate}[leftmargin=*,label=\rm{(SD\arabic*)}]
\item\label{item:symmetrizable-def-1} $\rho\colon \ttX \ot \ttV \to \ttV$ is an $\ttX$-module in $\cC$ such that $\rho(\ttX\ot S)\ne 0$ for all simple $S\subseteq \ttV$ and $\pi\rho\ne 0$ for every projection $\pi\colon \ttV\to T$ onto a simple object $T\subseteq \ttV$.
\item\label{item:symmetrizable-def-2} $\ttK\colon \ttX \ot \ttX \to \one$ is a symmetric non-degenerate invariant form.
\end{enumerate}
We denote the set of symmetrizable contragredient data over $\ttX$ by $\opSD(\ttX)$.

Given $(\rho,\ttK)\in \opSD(\ttX)$, we denote by $\ttd_{\rho,\ttK}$ the image of $\rho$ under the isomorphism
\begin{equation}\label{eq:symmetrizable-dmap}
\Hom_{\cC}(\ttX\ot\ttV, \ttV) \xrightarrow{*} \Hom_{\cC}(\ttV^*, \ttV^*\ot \ttX^*) \cong \Hom_{\cC}(\ttV\ot\ttV^*, \ttX^*) \cong\Hom_{\cC}(\ttV\ot\ttV^*, \ttX),  
\end{equation}
where the first map is the dualization functor, the second one is the dualization adjunction and the third one denotes composition with the isomorphism $\ttX^*\cong\ttX$ induced by $\ttK$.
\end{definition}

\begin{example}\label{exa:classical-revisited-symmetrizable}
For $\cC=\sVec$, take a classical contragredient datum $(A,\bp)$ as in \ref{item:contragredient-1}--\ref{item:contragredient-4}, and assume $A$ is symmetrizable, i.e. $A=DB$ for some invertible $D=\text{diag}(\varepsilon_1,\dots,\varepsilon_r)$ and symmetric $B$. In \Cref{exa:classical-semidirect}, we extracted from $(A,\bp)$ a contragredient datum $(\rho,d)\in \opD(\fh)$, where $d\colon V\ot V^* \to \fh$ is defined \emph{ad-hoc} by $d(e_i\ot f_j)=\delta_{ij}h_i$ and the matrix $A$ encodes the action of the $h_i$'s. For an alternative, more organic approach, consider the symmetric non-degenerate form $K$ on $\fh$ such that $K(h_i, h_j)=\varepsilon_i \varepsilon_j b_{ij}$, as defined in \cite{Kac-book}*{\S 2.1}. Now it is easy to see that the image of $\rho$ under \eqref{eq:symmetrizable-dmap} is given by $\ttd_{\rho,K}(e_i\ot f_j)=\delta_{ij}\varepsilon_i^{-1}h_i=\varepsilon_i^{-1}d(e_i\ot f_j)$, thus $\ttg(\rho,\ttd_{\rho,K})\simeq \fg(A,\bp)$.
\end{example}

We record for later use a compatibility fulfilled by any symmetrizable datum.

\begin{lemma}\label{lem:symmetrizable-datum-compatibility-K-d}
Given $\ttX\in \Lie(\cC)$ and $(\rho,\ttK)\in \opSD(\ttX)$, the following equality holds:
\begin{align}\label{eq:symmetrizable-datum-compatibility-K-d}
\ttK (\id_{\ttX}\otimes \ttd_{\rho,\ttK}) & = \ev_{\ttV}(\rho\otimes\id_{\ttV^*}) \in \Hom_{\cC}(\ttX\otimes\ttV\otimes\ttV^*,\one).
\end{align}
Moreover, $\ttd_{\rho,\ttK}$ is the unique element in $\Hom_{\cC}(\ttV\ot\ttV^*, \ttX)$ satisfying \eqref{eq:symmetrizable-datum-compatibility-K-d}.
\end{lemma}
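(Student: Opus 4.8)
The statement is essentially a reformulation of the definition of $\ttd_{\rho,\ttK}$, so the plan is to unwind the three isomorphisms in \eqref{eq:symmetrizable-dmap} and then deduce uniqueness from non-degeneracy of $\ttK$. Write $\widehat{\ttK}\colon\ttX\isomorph\ttX^*$ for the isomorphism determined by $\ttK$, i.e. the unique morphism with $\ttK=\ev_{\ttX}\circ(\widehat{\ttK}\ot\id_{\ttX})$; non-degeneracy of $\ttK$ is precisely invertibility of $\widehat{\ttK}$, and set $\kappa\coloneqq\widehat{\ttK}^{-1}\colon\ttX^*\to\ttX$. By construction the last arrow in \eqref{eq:symmetrizable-dmap} is post-composition with $\kappa$; hence, tracing $\rho$ through \eqref{eq:symmetrizable-dmap}, one gets $\ttd_{\rho,\ttK}=\kappa\circ e$, where $e\in\Hom_{\cC}(\ttV\ot\ttV^*,\ttX^*)$ is the image of $\rho$ under the first two arrows (dualization, then the rigidity adjunction).

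First I would reduce the claimed identity to a statement about $e$. Using symmetry of $\ttK$ and naturality of the braiding one checks the elementary identity $\ttK\circ(\id_{\ttX}\ot\kappa)=\ev_{\ttX}\circ c_{\ttX,\ttX^*}$; substituting $\ttd_{\rho,\ttK}=\kappa\circ e$ turns $\ttK\circ(\id_{\ttX}\ot\ttd_{\rho,\ttK})$ into $(\ev_{\ttX}\circ c_{\ttX,\ttX^*})\circ(\id_{\ttX}\ot e)$, which is exactly the image of $e$ under the rigidity adjunction $\Hom_{\cC}(\ttV\ot\ttV^*,\ttX^*)\cong\Hom_{\cC}(\ttX\ot\ttV\ot\ttV^*,\one)$. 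Therefore \eqref{eq:symmetrizable-datum-compatibility-K-d} is equivalent to the assertion that, under this adjunction, $e$ corresponds to $\ev_{\ttV}\circ(\rho\ot\id_{\ttV^*})$; equivalently, that the composite of the three canonical isomorphisms $\Hom_{\cC}(\ttX\ot\ttV,\ttV)\to\Hom_{\cC}(\ttX\ot\ttV\ot\ttV^*,\one)$ built in this way coincides with the standard currying isomorphism $h\mapsto\ev_{\ttV}\circ(h\ot\id_{\ttV^*})$.

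The latter I would verify directly in the graphical calculus of the rigid symmetric category $\cC$: write out $\rho^*$ via the $\ev$'s and $\coev$'s for $\ttX$ and $\ttV$, bend the $\ttV^*$-leg as in the second arrow of \eqref{eq:symmetrizable-dmap} using $\coev_{\ttV}$, $\ev_{\ttV}$ together with the canonical identification $\ttV^{**}\cong\ttV$, and finally apply the $\ttX$-adjunction; the $\ttX^*$-loop created by dualization collapses by one zig-zag identity, the loop created by the $\ttV^{**}\cong\ttV$ identification collapses by another, and naturality of the symmetry reorders the three tensor factors into $\ttX\ot\ttV\ot\ttV^*$, leaving precisely $\ev_{\ttV}\circ(\rho\ot\id_{\ttV^*})$. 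This step is routine but is the place where one must be careful with the left/right evaluations and with the placement of braidings; it is the only genuine (and minor) obstacle in the proof.

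It remains to prove the ``moreover'' clause. The assignment $h\mapsto\ttK\circ(\id_{\ttX}\ot h)$ defines a map $\Hom_{\cC}(\ttV\ot\ttV^*,\ttX)\to\Hom_{\cC}(\ttX\ot\ttV\ot\ttV^*,\one)$ which, as observed in the reduction above, factors as post-composition with the isomorphism $\widehat{\ttK}\colon\ttX\isomorph\ttX^*$ followed by the rigidity adjunction $\Hom_{\cC}(\ttV\ot\ttV^*,\ttX^*)\cong\Hom_{\cC}(\ttX\ot\ttV\ot\ttV^*,\one)$; hence it is a bijection (this is exactly where non-degeneracy of $\ttK$ is used). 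Consequently at most one morphism in $\Hom_{\cC}(\ttV\ot\ttV^*,\ttX)$ satisfies \eqref{eq:symmetrizable-datum-compatibility-K-d}, and by the first part $\ttd_{\rho,\ttK}$ is one such, which gives uniqueness.
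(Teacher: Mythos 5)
Your proposal is correct and follows essentially the same route as the paper: unwind the definition of $\ttd_{\rho,\ttK}$ through the three isomorphisms in \eqref{eq:symmetrizable-dmap}, verify the resulting identity by zig-zag/naturality manipulations in the graphical calculus, and deduce uniqueness from non-degeneracy of $\ttK$ (the paper phrases this last step by subtracting two solutions, yours by observing that $h\mapsto\ttK(\id_{\ttX}\ot h)$ is a bijection, which is the same point). The only difference is organizational — you first reduce to a statement about the currying isomorphism via $\ttK\circ(\id_{\ttX}\ot\kappa)=\ev_{\ttX}\circ c_{\ttX,\ttX^*}$, whereas the paper computes the composite directly in diagrams — and both are sound.
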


\begin{proof}
Let $\psi\colon \ttX\to\ttX^*$ denote the isomorphism induced by $\ttK$. Let us proceed diagrammatically; we stick to \Cref{notation:contragredient-setup}, but since now $\ttd_{\rho, \ttK}$ is obtained from $\ttK$ and $\rho$, we also need finer vocabulary. Instead of introducing a symbol for $\ttK\colon \ttX\ot\ttX \to \one$, we chose to do so for the morphism $\psi\colon \ttX\to\ttX^*$ it induces. Consider
\begin{equation*}
\id_{\ttX}=\xy
(0,0)*{
\begin{tikzpicture}[scale=.3]
\draw [very thick,  blue, ->] (0,0) to (0,-4);
\end{tikzpicture} 
};
\endxy, 
\qquad
\id_{\ttX^*}=\xy
(0,0)*{
\begin{tikzpicture}[scale=.3]
\draw [very thick, blue, ->] (0,-4) to (0,0);
\end{tikzpicture} 
};
\endxy, 
\qquad 
\psi=\xy
(0,0)*{
\begin{tikzpicture}[scale=.3]
\draw [very thick,blue, ->] (0,0) to (0,-1.5);
\draw [very thick,blue, <-] (0,-2.5) to (0,-4);
\draw [fill=white] (1,-1.5) rectangle (-1,-2.5);
\node at (0,-2) {\tiny $\psi$};
\end{tikzpicture} 
};
\endxy,
\qquad 
\psi^{-1}=\xy
(0,0)*{
\begin{tikzpicture}[scale=.3]
\draw [very thick,blue, <-] (0,0) to (0,-1.5);
\draw [very thick,blue, ->] (0,-2.5) to (0,-4);
\draw [fill=white] (1,-1.5) rectangle (-1,-2.5);
\node at (0,-2) {\tiny $\psi^{-1}$};
\end{tikzpicture} 
};
\endxy.
\end{equation*}
So now $\ttK$ and $\ttd$ become 
\begin{equation*}
\ttK=\xy
(0,0)*{
\begin{tikzpicture}[scale=.3]
\draw [very thick,blue, ->] (0,0) to (0,-1.5);
\draw [very thick,blue, <-] (0,-2.5) to (0,-4);
\draw [fill=white] (1,-1.5) rectangle (-1,-2.5);
\node at (0,-2) {\tiny $\psi$};
\draw [very thick, blue] (0,-4) to [out=-90, looseness=2, in=-90] (2,-4);
\draw [very thick, blue, ->] (2,0) to (2, -4);
\end{tikzpicture} 
};
\endxy, \qquad
\ttd=\xy
(0,0)*{
\begin{tikzpicture}[scale=.3]
\draw [very thick, ->] (0,0) to (0,-5);
\draw [very thick] (0,-5) to [out=-90, looseness=2, in=-90] (2,-5);
\draw [very thick, ->] (2,-5) to (2,-2.5);
\draw [very thick, ->] (3,-1.5) to (3, 0);
\draw [fill=white] (1,-1.5) rectangle (5,-2.5);
\node at (3,-2) {\tiny {$\rho^*$}};
\draw [very thick,blue, <-] (4,-2.5) to (4,-4);
\draw [very thick,blue, ->] (4,-5) to (4,-6.5);
\draw [fill=white] (5,-4) rectangle (3,-5);
\node at (4,-4.5) {\tiny $\psi^{-1}$};
\end{tikzpicture} 
};
\endxy,
\qquad
\rho^*=\xy
(0,0)*{
\begin{tikzpicture}[scale=.3]
\draw [very thick, <-] (0,0) to (0,-5);
\draw [very thick] (0,-5) to [out=-90, looseness=1, in=-90] (4,-5);
\draw [very thick, blue] (2,-2) to [out=90, looseness=1, in=90] (8,-2);
\draw [very thick, ->] (4,-2) to (4, -5);
\draw [very thick] (4,-2) to [out=90, looseness=1, in=90] (6,-2);
\draw [very thick, <-] (6,-2) to (6,-6);
\draw [very thick,blue,  <-] (8,-2) to (8,-6);
\draw [very thick, blue, bend right=40] (2,-2) to (4,-4);
\draw [blue, fill=blue] (4,-4) circle (0.3cm and 0.3cm);
\end{tikzpicture} 
};
\endxy,
\end{equation*}
where $\rho^*\in \Hom_\cC(\ttV^*, \ttV^*\ot\ttX^*)$ denotes the left dual of $\rho\in\Hom(\ttX\ot\ttV, \ttV)$. Now we compute
\begin{align*}
\ttK(\id_{\ttX}\ot\ttd)= 
\xy
(0,0)*{
\begin{tikzpicture}[scale=.3]
\draw [very thick, ->] (0,0) to (0,-5);
\draw [very thick] (0,-5) to [out=-90, looseness=2, in=-90] (2,-5);
\draw [very thick, ->] (2,-5) to (2,-2.5);
\draw [very thick, ->] (3,-1.5) to (3, 0);
\draw [fill=white] (1,-1.5) rectangle (5,-2.5);
\node at (3,-2) {\tiny {$\rho^*$}};
\draw [very thick,blue, <-] (4,-2.5) to (4,-4);
\draw [very thick,blue, ->] (4,-5) to (4,-6.5);
\draw [fill=white] (5,-4) rectangle (3,-5);
\node at (4,-4.5) {\tiny $\psi^{-1}$};
\draw [very thick,blue, ->] (-2,0) to (-2,-1.5);
\draw [very thick,blue, <-] (-2,-2.5) to (-2,-6);
\draw [fill=white] (-1,-1.5) rectangle (-3,-2.5);
\node at (-2,-2) {\tiny $\psi$};
\draw [very thick, blue] (-2,-6) to [out=-90, looseness=1, in=-90] (4,-6.5);
\end{tikzpicture} 
};
\endxy=
\xy
(0,0)*{
\begin{tikzpicture}[scale=.3]
\draw [very thick, ->] (-6,0) to (-6,-5);
\draw [very thick] (-6,-5) to [out=-90, looseness=1, in=-90] (2,-5);
\draw [very thick, ->] (2,-5) to (2,-2);
\draw [very thick,blue, <-] (4,-2) to (4,-4);
\draw [very thick,blue, ->] (4,-5) to (4,-6.5);
\draw [fill=white] (5,-4) rectangle (3,-5);
\node at (4,-4.5) {\tiny $\psi^{-1}$};
\draw [very thick,blue, ->] (-8,0) to (-8,-1.5);
\draw [very thick,blue, <-] (-8,-2.5) to (-8,-6);
\draw [fill=white] (-7,-1.5) rectangle (-9,-2.5);
\node at (-8,-2) {\tiny $\psi$};
\draw [very thick, blue] (-8,-6) to [out=-90, looseness=.7, in=-90] (4,-6.5);
\draw [very thick, <-] (-4,0) to (-4,-5);
\draw [very thick] (-4,-5) to [out=-90, looseness=1, in=-90] (0,-5);
\draw [very thick, blue] (-2,-2) to [out=90, looseness=1, in=90] (4,-2);
\draw [very thick, ->] (0,-2) to (0, -5);
\draw [very thick] (0,-2) to [out=90, looseness=1, in=90] (2,-2);
\draw [very thick, blue, bend right=40] (-2,-2) to (0,-4);
\draw [blue, fill=blue] (0,-4) circle (0.3cm and 0.3cm);
\end{tikzpicture} 
};
\endxy
=
\xy
(0,0)*{
\begin{tikzpicture}[scale=.3]
\draw [very thick, blue, ->] (-2,2) to (-2,0);
\draw [very thick, ->] (0,2) to (0,0);
\draw [very thick, blue, bend right=40] (-2,0) to (0,-2);
\draw [very thick, ->] (0,2) to (0,-4);
\draw [blue, fill=blue] (0,-2) circle (0.3cm and 0.3cm);
\draw [very thick, <-] (2,2) to (2,-4);
\draw [very thick] (0,-4) to [out=-90, looseness=2, in=-90] (2,-4);
\end{tikzpicture} 
};
\endxy
=
\ev_{\ttV}(\rho \ot \id_{\ttV^*}), 
\end{align*}
as claimed. If $\widetilde{\ttd} \in \Hom_{\cC}(\ttV\ot\ttV^*, \ttX)$ also satisfies \eqref{eq:symmetrizable-datum-compatibility-K-d}, then
$\ttK \left(\id_{\ttX}\otimes (\ttd - \widetilde{\ttd}) \right)=0$, thus the last statement follows directly by non-degeneracy of $\ttK$.
\end{proof}

Now we verify that symmetrizable data provide contragredient data.

\begin{lemma}\label{lem:symmetrizable-data-gives-data}
Let $\ttX\in \Lie(\cC)$. If $(\rho,\ttK)$ is a symmetrizable datum (\Cref{def:symmetrizable-data}), then $(\rho,\ttd_{\rho,\ttK})$ is a reduced contragredient datum (\Cref{def:contragredient-data,def:reduced-data}).
\end{lemma}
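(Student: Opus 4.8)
The plan is to check the two requirements on $(\rho,\ttd_{\rho,\ttK})$ separately, both by exploiting the characterization of $\ttd_{\rho,\ttK}$ from \Cref{lem:symmetrizable-datum-compatibility-K-d}, namely the identity $\ttK(\id_\ttX\ot\ttd_{\rho,\ttK})=\ev_\ttV(\rho\ot\id_{\ttV^*})$, together with the non-degeneracy of $\ttK$ and the rigidity of $\cC$.

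\textbf{Compatibility.} To see that $(\rho,\ttd_{\rho,\ttK})\in\opD(\ttX)$ we must verify \eqref{eq:contragredient-compatibility}, an equality of maps $\ttX\ot\ttV\ot\ttV^*\to\ttX$. Since $\ttK$ is non-degenerate, the operation $F\mapsto\ttK(\id_\ttX\ot F)$ is injective on $\Hom_\cC(\ttX\ot\ttV\ot\ttV^*,\ttX)$ — its adjunct over the target $\ttX$ is $\psi\circ F$, where $\psi\colon\ttX\isomorph\ttX^*$ is the isomorphism attached to $\ttK$ — so it suffices to prove \eqref{eq:contragredient-compatibility} after applying this operation. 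On the right-hand side, \eqref{eq:symmetrizable-datum-compatibility-K-d} turns $\ttK\big(\id_\ttX\ot\ttd_{\rho,\ttK}(\rho\ot\id_{\ttV^*})\big)$ into $\ev_\ttV\big(\rho(\id_\ttX\ot\rho)\ot\id_{\ttV^*}\big)$, while the second summand, after unwinding the definition \eqref{eq:rho-vee} of $\rho^\vee$, becomes $-\ev_\ttV\big(\rho(\id_\ttX\ot\rho)(c_{\ttX,\ttX}\ot\id_\ttV)\ot\id_{\ttV^*}\big)$. On the left-hand side, invariance of $\ttK$ gives $\ttK(\id_\ttX\ot\ttb_\ttX)=\ttK(\ttb_\ttX\ot\id_\ttX)$, and then \eqref{eq:symmetrizable-datum-compatibility-K-d} rewrites $\ttK\big(\id_\ttX\ot\ttb_\ttX(\id_\ttX\ot\ttd_{\rho,\ttK})\big)$ as $\ev_\ttV\big(\rho(\ttb_\ttX\ot\id_\ttV)\ot\id_{\ttV^*}\big)$. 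Thus, after stripping off the operation $\ev_\ttV(-\ot\id_{\ttV^*})$ (which is injective by rigidity), the desired identity reduces exactly to
\begin{align*}
\rho(\ttb_\ttX\ot\id_\ttV)=\rho(\id_\ttX\ot\rho)-\rho(\id_\ttX\ot\rho)(c_{\ttX,\ttX}\ot\id_\ttV),
\end{align*}
which is the module axiom \eqref{eq:module-Lie-alg-defn} for $\rho$. This is all diagrammatic bookkeeping; the only point requiring attention is correctly tracking the braidings produced by $\rho^\vee$ and by the invariance of $\ttK$.

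\textbf{Reducedness.} Let $S\subseteq\ttV$ be a simple subobject with inclusion $j\colon S\hookrightarrow\ttV$, and suppose for contradiction that $\restr{\ttd_{\rho,\ttK}}{S\ot\ttV^*}=\ttd_{\rho,\ttK}(j\ot\id_{\ttV^*})=0$. Composing with $\ttK(\id_\ttX\ot-)$ and using \eqref{eq:symmetrizable-datum-compatibility-K-d} yields $\ev_\ttV\big(\rho(\id_\ttX\ot j)\ot\id_{\ttV^*}\big)=0$; since $\ev_\ttV$ is a perfect pairing, a map $g\colon\ttX\ot S\to\ttV$ with $\ev_\ttV(g\ot\id_{\ttV^*})=0$ must vanish, so $\rho(\id_\ttX\ot j)=0$, i.e. $\rho(\ttX\ot S)=0$, contradicting \ref{item:symmetrizable-def-1}. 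For a simple subobject $U\subseteq\ttV^*$ with inclusion $k\colon U\hookrightarrow\ttV^*$, assume $\restr{\ttd_{\rho,\ttK}}{\ttV\ot U}=0$; the same manipulation gives $\ev_\ttV(\rho\ot k)=0$. Dualizing $k$ produces a surjection $\pi\colon\ttV\twoheadrightarrow U^*$ onto the simple object $U^*$ for which $\ev_\ttV(\id_\ttV\ot k)\colon\ttV\ot U\to\one$ is the adjunct of $\pi$; hence $\ev_\ttV(\rho\ot k)=0$ says precisely $\pi\circ\rho=0$, again contradicting \ref{item:symmetrizable-def-1}. Therefore $(\rho,\ttd_{\rho,\ttK})$ is reduced, and the lemma follows. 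The only mildly delicate step is the passage, via rigidity, from a simple subobject of $\ttV^*$ to a simple quotient of $\ttV$, which is what allows the two halves of \ref{item:symmetrizable-def-1} to be invoked symmetrically.
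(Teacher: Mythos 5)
Your proof is correct, but the compatibility half follows a genuinely different route from the paper's. The paper argues functorially: $\ttd_{\rho,\ttK}$ is obtained from $\rho$ by the chain \eqref{eq:symmetrizable-dmap} (dualization, the dualization adjunction, and composition with the isomorphism $\ttX^*\cong\ttX$ induced by $\ttK$), and each of these steps sends $\ttX$-module morphisms to $\ttX$-module morphisms --- the first two because $\ev$ and $\coev$ are morphisms in $\Mod_{\cC}(\ttX)$, the last because invariance of $\ttK$ makes $\psi$ a module map; hence $\ttd_{\rho,\ttK}$ is a module morphism, which is precisely \eqref{eq:contragredient-compatibility}. You instead pair everything against $\ttX$ via $\ttK$, invoke the characterization \eqref{eq:symmetrizable-datum-compatibility-K-d} together with invariance of $\ttK$ and the defining property of $\rho^\vee$ (namely that $\ev_\ttV$ intertwines $\rho$ and $\rho^\vee$ up to sign and braiding), and reduce the identity to the module axiom \eqref{eq:module-Lie-alg-defn} for $\rho$; non-degeneracy of $\ttK$ and rigidity then let you strip off the pairings. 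I checked the sign and braiding bookkeeping in your two right-hand summands and it is consistent. Your approach is more computational but entirely self-contained given \Cref{lem:symmetrizable-datum-compatibility-K-d} (which precedes this lemma and is proved independently, so there is no circularity); the paper's is shorter and more conceptual. For reducedness the two arguments are essentially the same: both identify a simple subobject of $\ttV^*$ with a simple quotient of $\ttV$ via rigidity so as to invoke the second half of \ref{item:symmetrizable-def-1}, and your passage through \eqref{eq:symmetrizable-datum-compatibility-K-d} is just an explicit unwinding of the paper's ``dual map is nonzero, hence its adjoint is nonzero'' step.
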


\begin{proof}
Write $\ttd=\ttd_{\rho,\ttK}$ and consider $\rho^\vee$ as in \eqref{eq:rho-vee}. To show that $(\rho,\ttd)$ is a contragredient datum, we have to verify the equality \eqref{eq:contragredient-compatibility}, or equivalently, that $\ttd$ is a morphism of $\ttX$-modules. Since $\rho$ is a morphism of $\ttX$-modules, it is enough to verify that all the maps in \eqref{eq:symmetrizable-dmap} preserve such morphisms. Both the dualization functor and the dualization adjunction preserve such morphisms because evaluation and coevaluation of $\ttX$-modules are morphisms in $\Mod_{\cC}(\ttX)$. Thus, the image of $\rho$ in $\Hom_{\cC}(\ttV\ot\ttV^*, \ttX^*)$ is a morphism of $\ttX$-modules. In addition, since the non-degenerate form $\ttK$ is invariant, the isomorphism $\ttX^*\cong\ttX$ it induces lies actually in $\Mod_{\cC}(\ttX)$. Thus $\ttd$ is a composition of two morphisms of $\ttX$-modules.

Next we check that $(\rho,\ttd_{\rho,\ttK})$ is reduced. For each simple subobject $S\subseteq \ttV$, the dual map $(\restr{\rho}{\ttX\otimes S})^*\colon \ttV^*\to S^*\otimes\ttX^*$ is not zero by \ref{item:symmetrizable-def-1}, so its adjoint $S\otimes\ttV^*\to\ttX$ is nonzero and then $\restr{\ttd}{S\otimes\ttV^*}\ne 0$. Similarly, the quotient condition from \ref{item:symmetrizable-def-1} implies that $\restr{\ttd}{\ttV\otimes T}\ne 0$ for any simple subobject $T\subseteq\ttV^*$.
\end{proof}

\subsection{Contragredient Lie algebras arising from symmetrizable data}
Here we study contragredient Lie algebras associated to symmetrizable data. More precisely, we show that for each decomposition of $\ttV$ as a direct sum $\ttX$-submodules, one obtains a lattice grading for the contragredient Lie algebra, and an invariant form with certain refined non-degeneracy.
First, we establish the set-up for this Section.

\begin{notation}\label{not:symmetrizable-datum}
\begin{itemize}[leftmargin=*]
\item Take $\ttX \in \Lie(\cC)$ with enough modules and a symmetrizable datum $(\rho,\ttK)$.
\item Fix a decomposition of $\rho\colon \ttX \ot \ttV \to \ttV$ as a sum of $\ttX$-submodules, say $\ttV=\mathop{\oplus}\limits_{1\leq i \leq r} \ttV_i$.
\item Consider the reduced datum $(\rho, \ttd)$ from \Cref{lem:symmetrizable-data-gives-data}.  Since each $\ttV_i$ is $\ttX$-stable, we have
\begin{align}\label{eq:symmetrizable-datum-ortogonal-summands}
\ttd(\ttV_i\otimes \ttV_j^*)&=0 \text{ for all }i\ne j, & \text{so }\ttd(\ttV_i\otimes \ttV_i^*) &\ne 0 \text{  for all }1\le i\le r.
\end{align}
\item Take a lattice  $Q\coloneq\mathop{\oplus}\limits_{1\leq i \leq r}\alpha_i\bZ$, and put $Q^\pm\coloneq\pm\mathop{\oplus}\limits_{1\leq i \leq r}\alpha_i\bN_0$. Consider the \emph{height function}
\begin{align*}
\he &\colon Q\to \bZ, &  \sum\limits_{1\leq i \leq r} b_i \alpha_i &\mapsto \sum\limits_{1\leq i \leq r} b_i.
\end{align*}
\end{itemize}
\end{notation}

\begin{theorem}\label{thm:contragredient-symmetrizable-grading}
Consider the set-up from \Cref{not:symmetrizable-datum}. Then $\ttg\coloneq \ttg(\rho,\ttd)$ admits a $Q$-grading such that
\begin{align}\label{eq:contragradient-symmetrizable-grading-def} 
\ttg&=\bigoplus_{\alpha \in Q} \ttg_\alpha, &\deg \ttX&=0, &\deg \ttV_i&=\alpha_i, &\deg \ttV_i^*&=-\alpha_i, & 1&\leq i\leq r.
\end{align}
Moreover, under this grading we have
\begin{align}\label{eq:contragradient-symmetrizable-grading-parts} 
\ttg_0&=\ttX, &\ttn_+&=\bigoplus_{\alpha \in Q^+, \alpha \neq 0} \ttg_\alpha,  &\ttn_-&=\bigoplus_{\alpha \in Q^-, \alpha \neq 0} \ttg_\alpha,
\end{align}
and the Cartan involution satisfies $\omega(\ttg(\rho,\ttd)_\alpha)=\ttg(\rho^\vee,\ttd^\vee)_{-\alpha}$ for all $\alpha \in Q$.
\end{theorem}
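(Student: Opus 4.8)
The plan is to construct the $Q$-grading by refining the existing $\bZ$-grading on the auxiliary algebra $\tttg(\rho,\ttd)$. First I would observe that $\FLie(\ttV\oplus\ttV^*)\rtimes_\rho\ttX$ carries a natural $Q$-grading: put $\deg\ttX=0$, $\deg\ttV_i=\alpha_i$, $\deg\ttV_i^*=-\alpha_i$, and extend to the free Lie algebra using that $\FLie$ of a graded object is graded (as in \Cref{rem:subalg-generated}, where $\FLie(V)=\oplus_{n\ge1}\ff_n$ with $\ff_n\subseteq V^{\ot n}$; here one decomposes $V^{\ot n}$ further according to which tensor factors lie in which $\ttV_i$ or $\ttV_j^*$). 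The action $\rho$ preserves this grading because each $\ttV_i$ is $\ttX$-stable and $\deg\ttX=0$, and the semidirect product bracket is built from $\rho$, $b_{\ttX}$, $b_{\FLie}$, all of which are then homogeneous of degree $0$. So $\op{F}\coloneq\FLie(\ttV\oplus\ttV^*)\rtimes_\rho\ttX$ is $Q$-graded, and the height function $\he$ recovers the $\bZ$-grading from \Cref{rem:contragredient-gtilde}.

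Next I would check that the defining ideal of $\tttg(\rho,\ttd)$ is $Q$-homogeneous: it is generated by the image of $(\ttb_{\ttV\ot\ttV^*},-\ttd)\colon\ttV\ot\ttV^*\to\op{F}$, and by \eqref{eq:symmetrizable-datum-ortogonal-summands} we have $\ttd(\ttV_i\ot\ttV_j^*)=0$ for $i\ne j$ while $\ttb_{\ttV\ot\ttV^*}$ and $\ttd$ both send $\ttV_i\ot\ttV_i^*$ into degree $\alpha_i-\alpha_i=0$ — so this map decomposes as a sum of $Q$-homogeneous pieces (of degree $0$), hence generates a $Q$-homogeneous ideal. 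Therefore $\tttg(\rho,\ttd)$ inherits a $Q$-grading refining its $\bZ$-grading. Then I would note that the maximal ideal $\ttm$ from \Cref{thm:gtilde-verp}\ref{item:gtilde-verp-maximal} is $Q$-homogeneous: it is the sum of all $\bZ$-homogeneous ideals meeting $\ttX$ trivially, and since the $Q$-grading refines the $\bZ$-grading, the $Q$-homogeneous components of such an ideal are again ideals (being $Q$-homogeneous with respect to a finer grading) that are $\bZ$-homogeneous and meet $\ttX$ trivially, hence contained in $\ttm$; so $\ttm$ is the sum of its $Q$-homogeneous parts. Consequently $\ttg=\tttg(\rho,\ttd)/\ttm$ is $Q$-graded, establishing \eqref{eq:contragradient-symmetrizable-grading-def}.

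For \eqref{eq:contragradient-symmetrizable-grading-parts}: since the $Q$-grading refines the $\bZ$-grading via $\he$, the decomposition $\ttg=\ttn_-\oplus\ttX\oplus\ttn_+$ from \Cref{thm:g-contragredient-verp}\ref{item:g-contragredient-verp-triangular} together with $\ttg_0=\ttX$ (the $\bZ$-degree-zero part) gives $\ttg_0=\ttg(\text{$Q$-degree }0)$ — one needs that the only element of $Q$ with $\he=0$ reachable is $0$ itself, which holds because $\ttn_+$ is generated by $\ttV=\oplus\ttV_i$ sitting in degrees $\alpha_i$ with positive height, so every nonzero $Q$-homogeneous component of $\ttn_+$ lies in $Q^+\setminus 0$, and dually for $\ttn_-$. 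Finally, the statement about the Cartan involution $\omega$ follows from \Cref{thm:g-contragredient-verp}\ref{item:g-contragredient-involution}: $\omega$ restricts to $\id_{\ttX}$ on $\ttV_i\to\vartheta_{\ttV}(\ttV_i)$ (and $\vartheta_\ttV$ respects the decomposition $\ttV=\oplus\ttV_i$, identifying $\ttV_i^{**}$ with the $i$-th summand of $\ttV^{**}$) and $\id_{\ttV_i^*}$ on $\ttV_i^*$; since these generate and have $Q$-degrees $0$, $\alpha_i$, $-\alpha_i$ respectively in $\ttg(\rho,\ttd)$ but the target $\ttg(\rho^\vee,\ttd^\vee)$ assigns $\ttV_i^*$ degree $+\alpha_i$ and $\ttV_i^{**}$ degree $-\alpha_i$ under its own grading (built from the decomposition $\ttV^*=\oplus\ttV_i^*$), the generator-wise degree reversal propagates to all of $\ttg$ by multiplicativity of the grading under the bracket. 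The main obstacle I anticipate is purely bookkeeping: carefully justifying that $\FLie$ of a $Q$-graded object is $Q$-graded compatibly with the action by derivations $\rho$ (this uses \Cref{lem:extended-action} and the inductive description in \Cref{rem:subalg-generated}), and then tracking the grading through the quotients — none of it is deep, but it requires being precise about how the finer grading interacts with "sum of all ideals with a given property."
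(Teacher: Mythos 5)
Your proposal is correct, and its first two steps (the $Q$-grading on $\FLie(\ttV\oplus \ttV^* )\rtimes_\rho \ttX$ and its descent to $\tttg(\rho,\ttd)$ via \eqref{eq:symmetrizable-datum-ortogonal-summands}) match the paper exactly. Where you genuinely diverge is the key step: showing that the maximal ideal $\ttm$ is $Q$-homogeneous. The paper proves $\ttm_k=\oplus_{\he\alpha=k}\ttm\cap\tttg_\alpha$ by induction on $k>1$ (reducing to positive degrees via the Cartan involution), using at each stage that a $Q$-homogeneous piece whose bracket with $\ttV^*$ vanishes in the quotient must itself vanish, by \Cref{thm:g-contragredient-verp} \ref{item:g-contragredient-verp-trivial-bracket}. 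You instead pass to the $Q$-homogeneous hull $\widehat{\ttm}=\sum_{\alpha\in Q}\pi_\alpha(\ttm)$ and invoke maximality; this is more elementary, avoids the trivial-bracket criterion entirely, and does work. Two points need tightening. First, the individual projections $\pi_\alpha(\ttI)$ of an ideal are not themselves ideals; what is an ideal is their sum $\widehat{\ttI}$, because the bracket is $Q$-homogeneous and hence $\ttb(\tttg_\beta\ot\pi_\alpha(\ttI))=\pi_{\alpha+\beta}(\ttb(\tttg_\beta\ot\ttI))\subseteq\pi_{\alpha+\beta}(\ttI)$; so your phrase about the components "being again ideals" should be recast as a statement about the hull. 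Second, the assertion that the hull still meets $\ttX$ trivially is true but deserves its one line: since the $Q$-degree-zero component of $\tttg$ sits inside the $\bZ$-degree-zero component $\ttX$ (as $\he$ sends $0$ to $0$, and by \eqref{eq:gtilde-symmetrizable-grading-parts} no other $\alpha$ of height zero supports $\tttg$), the projection $\pi_0$ onto $Q$-degree zero factors through the $\bZ$-degree-zero projection, which annihilates $\ttm$ because $\ttm$ is $\bZ$-homogeneous with $\ttm\cap\ttX=0$; hence $\widehat{\ttm}\cap\ttX=\pi_0(\ttm)=0$ and $\widehat{\ttm}=\ttm$ by maximality. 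With these repairs your route is a clean alternative to the paper's induction; the remaining parts — the derivation of \eqref{eq:contragradient-symmetrizable-grading-parts} from freeness of $\tttn_\pm$ and the triangular decomposition, and the degree-reversal of $\omega$ checked on generators — agree in substance with the paper's treatment.
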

\begin{proof}
First, it is clear that the operadic Lie algebra $\FLie(\ttV\oplus \ttV^* )\rtimes_\rho \ttX$ has a $Q$-grading as in \eqref{eq:contragradient-symmetrizable-grading-def}.
Now, by \eqref{eq:symmetrizable-datum-ortogonal-summands}, the defining relations of $\tttg\coloneq \tttg(\rho,\ttd)$ are $Q$-homogeneous, so the $Q$-grading on $\FLie(\ttV\oplus \ttV^* )\rtimes_\rho \ttX$ descends to $\tttg$. By \Cref{thm:gtilde-verp} \ref{item:gtilde-verp-free}, the subalgebras $\ttn^+$ and $\ttn_{-}$ are free on $\ttV$ and $\ttV^*$, respectively; thus the triangular decomposition \eqref{eq:gtilde-verp-triangular} implies that 
\begin{align}\label{eq:gtilde-symmetrizable-grading-parts} 
\tttg_0&=\ttX, &\tttn_{\pm}&=\bigoplus_{\alpha \in Q^{\pm}, \alpha \ne 0} \tttg_\alpha,  &\tttg_{k}&=\bigoplus_{\he \alpha=k} \tttg_\alpha \, \, \, (k \in \bZ).
\end{align}

Now we show that the $Q$-grading further descends to $\ttg=\tttg/\ttm$. As $\ttm=\oplus_{|k|>1} \ttm_k$, it suffices to check that $\ttm_k=\oplus_{\he \alpha=k}\ttm \cap \tttg_\alpha$ whenever $|k|>1$. Thanks to the Cartan involution from \eqref{eq:g-contragredient-verp-triangular}, we can restrict to values $k>1$, for which we argue inductively.

For $k=2$, using the $Q$-grading on $\tttg$, we write $\ttm_2=\bigoplus_{\he \alpha=2}\ttm_{\alpha}$ for some $\ttm_{\alpha} \subseteq \tttg_\alpha$. Since $\ttm\subset \tttg$ is a $\bZ$-graded ideal with $\ttm_1=0$, and recalling that $\ttV^*$ has $\bZ$-degree $-1$ in $\tttg$, we obtain
\begin{align*}
0 &= \ttb\left(\ttm_2\ot \ttV_j^*\right) = \ttb\left(\bigoplus_{\he \alpha=2}\ttm_{\alpha}\ot \ttV_j^*\right), &\text{for all } &1\le j\le r.
\end{align*}
But $\ttb(\ttm_{\alpha}\ot \ttV_j^*)\subseteq \tttg_{\alpha-\alpha_j}$, so $\ttb(\ttm_{\alpha}\ot \ttV_j^*)=0$ for all $j$. Thus $\ttb(\ttm_{\alpha}\ot \ttV^*)=0$ and, by \Cref{thm:g-contragredient-verp} \ref{item:g-contragredient-verp-trivial-bracket}, it follows that $\ttm_\alpha$ vanishes in the quotient $\ttg=\tttg/\ttm$. In other words, each $Q$-homogeneous component of $\ttm_2$ is contained in $\ttm_2$, as desired.

For the inductive step, assume that $\ttm_k=\oplus_{\he \alpha=k}\ttm \cap \tttg_\alpha$ and write $\ttm_{k+1}=\bigoplus_{\he \beta=k+1}\ttm_{\beta}$, with $\ttm_{\beta} \subseteq \tttg_{\beta}$. Then $\ttb\left(\ttm_{k+1}\ot \ttV_j^*\right)\subseteq \ttm_k$  also $\ttb(\ttm_{\beta}\ot \ttV_j^*)\subseteq \tttg_{\beta-\alpha_j}$ for all $1\le j\le r$ and $\beta$ of height $k+1$. In case $\beta-\alpha_j\notin\bN_0^r$, we have $\tttg_{\beta-\alpha_j}=0$ by \eqref{eq:gtilde-symmetrizable-grading-parts}, thus $\ttb(\ttm_{\beta}\ot \ttV_j^*)=0$. For the case $\beta-\alpha_j\in\bN_0^r$ we have $\ttb(\ttm_{\beta}\ot \ttV_j^*)\subseteq \ttm \cap \tttg_{\beta-\alpha_j}$ by inductive hypothesis, because the latter is the component in degree $\beta-\alpha_j$ of $\ttb\left(\ttm_{k+1}\ot \ttV_j^*\right)\subseteq \ttm_k$. In any case, for any $\beta$ of height $k+1$, we obtain $\ttb(\ttm_{\beta}\ot \ttV^*)\subseteq \ttm_k$, so $\ttm_{\beta}\subseteq \ttm_{k+1}$. Thus $\ttm_{k+1}=\oplus_{\he \beta=k+1}\ttm \cap \tttg_\beta$ and the inductive step follows.
\end{proof}

Over $\cC=\sVec$, take a classical contragredient datum $(A,\bp)$ as in \ref{item:contragredient-1}--\ref{item:contragredient-4}. It is well known, see e.g. \cite{Kac-book}*{Exercise 2.3} that $\fg(A,\bp)$ admits a non-degenerate invariant super-symmetric form if and only if $A$ is symmetrizable. Our next result clarifies the choice of the word \emph{symmetrizable} in symmetrizable datum.

\begin{theorem}\label{thm:contragredient-bilinear}
Under the set-up from \Cref{not:symmetrizable-datum}, consider on $\ttg\coloneq \ttg(\rho,\ttd)$ the $Q$-grading from \Cref{thm:contragredient-symmetrizable-grading}. Then $\ttg$ admits an invariant symmetric form $\ttB\colon\ttg\ot\ttg\to\one$ such that
\begin{enumerate}[leftmargin=*,label=\rm{(\alph*)}]
\item\label{item:contragradien-bilinear-X} $\restr{\ttB}{\ttX\ot\ttX}=\ttK$,
\item\label{item:contragradien-bilinear-ga-gb} $\restr{\ttB}{\ttg_\alpha\ot\ttg_\beta}=0$ if $\alpha+\beta\ne0$, and
\item\label{item:contragradien-bilinear-ga-ga} $\restr{\ttB}{\ttg_\alpha\ot\ttg_{-\alpha}}$ is non-degenerate for all $\alpha \in Q$.
\end{enumerate}
\end{theorem}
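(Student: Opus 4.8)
The plan is to build the form $\ttB$ on the auxiliary algebra $\tttg=\tttg(\rho,\ttd)$ first, degree by degree along the $\bZ$-grading, and then show that its radical is exactly the maximal ideal $\ttm$, so that it descends to a form on $\ttg=\tttg/\ttm$ with the required non-degeneracy. This mirrors the classical strategy in \cite{Kac-book}*{Theorem 2.2}. Concretely, I would construct invariant pairings $\ttB_k\colon\tttg_k\ot\tttg_{-k}\to\one$ for $k\ge 0$ by recursion on $k$: set $\ttB_0=\ttK$; the pairing $\ttB_1\colon\ttV\ot\ttV^*\to\one$ is $\ev_\ttV$ (equivalently $\ttK(\ttd\ot\id)$ composed appropriately, using \Cref{lem:symmetrizable-datum-compatibility-K-d}); and for $k\ge 2$, writing $\tttg_k=\ttb(\ttV\ot\tttg_{k-1})$ (which holds by \Cref{rem:gtilde-bracket-grading}, since $\tttn_+$ is free on $\ttV$), define $\ttB_k$ on $\ttb(\ttV\ot\tttg_{k-1})\ot\tttg_{-k}$ by the forced rule
\begin{align*}
\ttB_k(\ttb(v\ot x)\ot y)=\ttB_{k-1}(x\ot\ttb(v\ot y))
\end{align*}
(read diagrammatically, with the appropriate braidings), pairing $\tttg_{k-1}$ with $\ttb(\ttV^*\ot\tttg_{-k})=\tttg_{-(k-1)}$. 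One then extends to all of $\tttg=\oplus_k\tttg_k$ by declaring $\tttg_\alpha\perp\tttg_\beta$ whenever $\alpha+\beta\ne 0$ in the lattice $Q$ (using the finer $Q$-grading from \Cref{thm:contragredient-symmetrizable-grading}), and symmetrizes.

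The two technical hearts of the argument are: (i) showing that $\ttB_k$ is \emph{well-defined}, i.e. independent of the way an element of $\tttg_k$ is written as a bracket $\ttb(\ttV\ot\tttg_{k-1})$, and (ii) showing the resulting global form is \emph{invariant}: $\ttB(\ttb\ot\id)=\ttB(\id\ot\ttb)\circ(\text{braiding})$ on all of $\tttg^{\ot 3}$. Both are proved simultaneously by induction on $k$, exactly as in the classical proof: well-definedness in degree $k$ is equivalent to a compatibility that follows from invariance in degrees $<k$ together with the Jacobi identity and the defining relation $\restr{\ttb}{\ttV\ot\ttV^*}=\ttd$; and invariance involving degree-$k$ pieces is then a direct diagrammatic manipulation using the recursive formula and the Jacobi identity. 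The base cases $k=0,1$ use \ref{item:symmetrizable-def-2} (invariance and symmetry of $\ttK$) and \Cref{lem:symmetrizable-datum-compatibility-K-d}. I would carry out these diagrammatic induction steps using \Cref{notation:contragredient-setup}; they are bookkeeping with braidings but structurally identical to the $\sVec$ case.

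Once $\ttB$ is an invariant symmetric form on $\tttg$, I look at its radical $\ttr=\{z: \ttB(z\ot-)=0\}$. Invariance makes $\ttr$ an ideal, and it is $\bZ$- (indeed $Q$-)homogeneous since the grading is orthogonal. By construction $\ttB_0=\ttK$ is non-degenerate, so $\ttr\cap\ttX=0$; hence $\ttr\subseteq\ttm$ by maximality of $\ttm$ (\Cref{thm:gtilde-verp}\ref{item:gtilde-verp-maximal}). For the reverse inclusion $\ttm\subseteq\ttr$: suppose $\ttm\not\subseteq\ttr$, and pick $k>0$ minimal with $\ttm_k\not\subseteq\ttr_k$ (the negative side is symmetric via the Cartan involution $\widetilde\omega$ of \Cref{lem:Cartan-involution}, which one checks intertwines $\ttB$ with the analogous form on $\tttg(\rho^\vee,\ttd^\vee)$). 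Then $Y\coloneq(\ttm_k+\ttr_k)/\ttr_k$ is a nonzero $Q$-homogeneous subobject of $\ttg_k=\tttg_k/\ttr_k$, and since $\ttb(\ttm_k\ot\ttV^*)\subseteq\ttm_{k-1}\subseteq\ttr_{k-1}$ (minimality of $k$) we get $\ttb(Y\ot\ttV^*)=0$ in $\tttg/\ttr$; but this quotient is a quotient of $\tttg$ by a homogeneous ideal trivially meeting $\ttX$, so \Cref{thm:g-contragredient-verp}\ref{item:g-contragredient-verp-trivial-bracket} (applied to that quotient, or rather the argument in the proof of \Cref{prop:contragredient-bilinear-form}) forces $Y=0$, a contradiction. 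Therefore $\ttr=\ttm$, so $\ttB$ descends to a symmetric invariant form on $\ttg=\tttg/\ttm$ whose restriction to each $\ttg_\alpha\ot\ttg_{-\alpha}$ is non-degenerate; \ref{item:contragradien-bilinear-X} and \ref{item:contragradien-bilinear-ga-gb} are immediate from the construction. In fact, once non-degeneracy in each homogeneous degree is in hand, \Cref{prop:contragredient-bilinear-form} gives $\tttg/\ttr=\ttg$ for free, which streamlines the last step.

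The main obstacle I anticipate is (i)--(ii), the simultaneous well-definedness/invariance induction: in a general symmetric category one cannot argue on elements, so each step must be phrased as an equality of morphisms $\tttg^{\ot n}\to\one$, and the recursion $\tttg_k=\ttb(\ttV\ot\tttg_{k-1})$ means $\ttB_k$ is \emph{defined on a presentation} of $\tttg_k$ rather than on $\tttg_k$ itself; descending it requires knowing that the kernel of $\ttV\ot\tttg_{k-1}\twoheadrightarrow\tttg_k$ is annihilated, which is precisely where invariance in lower degrees and the Jacobi identity must be combined carefully. Everything else (orthogonality of the grading, the radical being an ideal, the minimal-degree contradiction) is routine given the results already in the excerpt.
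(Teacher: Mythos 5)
Your proposal is correct and follows essentially the same route as the paper: a recursive, degree-by-degree construction of an invariant symmetric form on the auxiliary algebra $\tttg$ in the style of \cite{Kac-book}*{Theorem 2.2} (with the well-definedness/invariance induction carried out as equalities of morphisms, handling the kernel of the presentation $\ttV\ot\tttg_{k-1}\twoheadrightarrow\tttg_k$ exactly as the paper does for general decompositions $\tttg_i\ot\tttg_j\twoheadrightarrow\tttg_n$), followed by identifying the radical with $\ttm$ via \Cref{prop:contragredient-bilinear-form}. The only cosmetic differences are that the paper fixes the degree-$n$ pairing through arbitrary splittings $i+j=n$ and proves independence of the choice, and that it invokes \Cref{prop:contragredient-bilinear-form} directly for the radical step rather than arguing both inclusions by hand.
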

\begin{proof}
We follow the arguments of \cite{Kac-book}*{Theorem 2.2} and define, in first place, an invariant symmetric form $\tttB\colon\tttg\ot\tttg\to\one$ satisfying \ref{item:contragradien-bilinear-X} and \ref{item:contragradien-bilinear-ga-gb}. Recall that $\tttB$ is invariant if
\begin{align}\label{eq:invariant-bilinear-form}
\tttB(\ttb\ot \id_{\tttg}) &= \tttB(\id_{\tttg}\ot\ttb) \in\Hom_{\cC}(\tttg\ot\tttg\ot\tttg,\one),
\end{align}
where $\ttb$ is the bracket of $\tttg$. Set $\tttg[n]=\oplus_{-n\le k\le n}\tttg_k$ for each non-negative integer $n$. We define recursively on $n$ a symmetric form $\tttB\colon\tttg[n]\otimes\tttg[n]\to\one$ satisfying \ref{item:contragradien-bilinear-X}, \ref{item:contragradien-bilinear-ga-gb}, and
\begin{align}\label{eq:invariant-bilinear-form-pieces}
\tttB(\ttb\ot \id_{\tttg}) &= \tttB(\id_{\tttg}\ot\ttb) \in\Hom_{\cC}(\tttg_i\ot\tttg_j\ot\tttg_k,\one), &\text{ for all } |i+j|, |j+k| &\le n.
\end{align}
For $n=1$ we define
\begin{align*}
\restr{\tttB}{\tttg_0\otimes\tttg_0} &=\ttK, & \restr{\tttB}{\tttg_{\pm1}\otimes\tttg_{\mp1}} &=\ev_{\tttg_{\pm1}}, & 
\restr{\tttB}{\tttg_i\otimes\tttg_j} &=0 \text{ otherwise}.
\end{align*}
By definition, $\tttB=\tttB\circ c$, that is $\tttB$ is symmetric, and \ref{item:contragradien-bilinear-X} holds. 
As $\restr{\ev}{\ttV_i\ot\ttV_j^*}=0$ if $i\ne j$, \ref{item:contragradien-bilinear-ga-gb} also holds. 
Hence it remains to check  \eqref{eq:invariant-bilinear-form-pieces}:
\begin{itemize}[leftmargin=*]
\item For $i=j=k=0$ the equality holds since the restriction to $\ttX\otimes\ttX$ is $\ttK$, which is invariant.
\item $\restr{\tttB(\ttb\ot \id)}{\tttg_{1}\ot\tttg_0\ot\tttg_{-1}}=-\ev_{\ttV}(\rho c_{\ttV,\ttX}\ot\id)=\ev_{\ttV}(\id\otimes\rho^{\vee})=\restr{\tttB(\id\ot \ttb)}{\tttg_{1}\ot\tttg_0\ot\tttg_{-1}}$, where the first and the third identities follow by definition and the second follows since $\ev_{\ttV}$ is a map of $\ttX$-modules. Similarly we have that
$\restr{\tttB(\ttb\ot \id)}{\tttg_{-1}\ot\tttg_0\ot\tttg_{1}}=\restr{\tttB(\id\ot \ttb)}{\tttg_{-1}\ot\tttg_0\ot\tttg_{1}}$.
\item $\restr{\tttB(\ttb\ot \id)}{\tttg_0\ot\tttg_{1}\ot\tttg_{-1}}=\ev_{\ttV}(\rho\ot\id) \overset{\eqref{eq:symmetrizable-datum-compatibility-K-d}}{=} \ttK(\id\ot\ttd) = \restr{\tttB(\id\ot \ttb)}{\tttg_0\ot\tttg_{1}\ot\tttg_{-1}}$.
\item Over $\tttg_0\ot\tttg_{-1}\ot\tttg_{1}$ we have
\begin{align*}
\restr{\tttB(\ttb\ot \id)}{\tttg_0\ot\tttg_{-1}\ot\tttg_{1}}& =\ev_{\ttV^*}(\rho^{\vee}\ot\id)= \ev_{\ttV}c_{\ttV^*,\ttV}(\rho^{\vee}\ot\id)
=\ev_{\ttV}(\id\otimes\rho^{\vee})c_{\ttX\otimes\ttV^*,\ttV}
\\
&=\ev_{\ttV}(\id\otimes\rho^{\vee})(c_{\ttX,\ttV}\otimes\id)(\id\otimes c_{\ttV^*,\ttV})
=-\ev_{\ttV}(\rho\otimes\id)(\id\otimes c_{\ttV^*,\ttV})
\\ 
&=-\ttK(\id\otimes\ttd c_{\ttV^*,\ttV})=\restr{\tttB(\id\ot \ttb)}{\tttg_0\ot\tttg_{-1}\ot\tttg_{1}}.
\end{align*}
\item The identities $\restr{\tttB(\ttb\ot \id)}{\tttg_{\pm1}\ot\tttg_{\mp1}\ot\tttg_0}=\restr{\tttB(\id\ot \ttb)}{\tttg_{\pm1}\ot\tttg_{\mp1}\ot\tttg_0}$ follow from the previous two cases, up to composing with the braiding on the right.
\item In all the other cases, $\restr{\tttB(\ttb\ot \id)}{\tttg_i\ot\tttg_j\ot\tttg_k}=0=\restr{\tttB(\id\ot \ttb)}{\tttg_i\ot\tttg_j\ot\tttg_k}$.
\end{itemize}

Assume now that we have a symmetric form $\tttB\colon\tttg[n-1]\otimes\tttg[n-1]\to\one$ satisfying \ref{item:contragradien-bilinear-X}, \ref{item:contragradien-bilinear-ga-gb}, and  \eqref{eq:invariant-bilinear-form-pieces}, and let us extend it to  $\tttg[n]$.  We define $\restr{\tttB}{\tttg_{\pm n}\otimes\tttg[n-1]}= \restr{\tttB}{\tttg[n-1]\otimes\tttg_{\pm n}}=0$ and $\restr{\tttB}{\tttg_{\pm n}\otimes\tttg_{\pm n}}=0$. By symmetry, it only remains to define $\restr{\tttB}{\tttg_{n}\otimes\tttg_{-n}}$. We proceed in steps.
\begin{claim}\label{claim:invariant-form-well-def}
Given integers $i,j> 0 >s,t$ with $i+j=n$ and $s+t=-n$, we have
\begin{align}\label{eq:invariant-form-well-def}
\tttB(\ttb(\ttb\ot \id) \ot \id)&= \tttB(\id\ot\ttb(\id\ot \ttb)) \in\Hom_{\cC}(\tttg_i\ot\tttg_j\ot\tttg_s\ot \tttg_t,\one).
\end{align}
\end{claim}
This follows as in the proof of \cite{Kac-book}*{Theorem 2.2} using that  $\tttB$ is invariant over $\tttg[n-1]$. 
\begin{claim}\label{claim:invariant-form-kernels}
For any integers $i,j,s,t$ as in \Cref{claim:invariant-form-well-def}, we have  
\begin{align*}
\ker\left( \restr{\ttb}{\tttg_i \ot \tttg_j}\ot \restr{\ttb}{\tttg_s \ot \tttg_t}\right) \subseteq \ker\tttB\left(\restr{\ttb}{\tttg_n \ot \tttg_s}\left(\restr{\ttb}{\tttg_i \ot \tttg_j}\ot \id_{\tttg_s} \right) \ot \id_{\tttg_t} \right).
\end{align*}
\end{claim}
In fact, 
\begin{align*}
\ker&\left( \restr{\ttb}{\tttg_i \ot \tttg_j}\ot \restr{\ttb}{\tttg_s \ot \tttg_t}\right)
 = \tttg_i\ot\tttg_j \ot  \ker\restr{\ttb}{\tttg_s \ot \tttg_t}\oplus  \ker \restr{\ttb}{\tttg_i \ot \tttg_j} \ot \tttg_s\ot \tttg_t \\
& \subseteq  \ker\tttB\left(\id_{\tttg_i} \ot \restr{\ttb}{\tttg_n \ot \tttg_s}\left( \id_{\tttg_j}\ot \restr{\ttb}{\tttg_s \ot \tttg_t} \right)\right)
\oplus \ker\tttB\left(\restr{\ttb}{\tttg_j \ot \tttg_{-n}}\left(\restr{\ttb}{\tttg_i \ot \tttg_j}\ot \id_{\tttg_s} \right) \ot \id_{\tttg_t} \right),
\end{align*}
and these last two kernels coincide by \Cref{claim:invariant-form-well-def}.

\begin{claim}\label{claim:invariant-form-diagrams}
Given integers $i,j,s,t$ as in \Cref{claim:invariant-form-well-def}, there is a unique map $\psi\colon \tttg_n\ot\tttg_{-n}\to \one$ such that the following diagram commutes
\begin{equation}\label{eq:invariant-form-diagrams}
\begin{tikzcd}
&&\tttg_n \ot \tttg_s \ot \tttg_t 
\arrow[twoheadrightarrow]{d}{\id \ot \ttb} 
\arrow{rr}{\ttb\ot\id} 
&& \tttg_{n+s} \ot \tttg_t \arrow{d}{\tttB}
\\
\tttg_i \ot \tttg_j \ot \tttg_s \ot \tttg_t 
\arrow[twoheadrightarrow]{rr}{\ttb\ot\ttb} \arrow[twoheadrightarrow]{urr}{\ttb\ot\id\ot \id}
\arrow[twoheadrightarrow, swap]{drr}{\id\ot \id\ot \ttb}
&& \tttg_n \ot \tttg_{-n} \arrow[dashed]{rr}{\psi} && \one 
\\
&&\tttg_i \ot \tttg_j \ot \tttg_{-n} 
\arrow[twoheadrightarrow, swap]{u}{\ttb\ot\id}
\arrow{rr}{\id\ot \ttb}
&& \tttg_{i} \ot \tttg_{j-n} \arrow{u}{\tttB}
\end{tikzcd}
\end{equation}
\end{claim}
Note that it is enough to show there is a map $\psi$ such that diagram obtained from \eqref{eq:invariant-form-diagrams} by disregarding the two double-headed vertical arrows is commutative. By  \Cref{claim:invariant-form-kernels}, there exist a unique map $\psi_1\colon  \tttg_n\ot \tttg_{-n} \to \one$ such that the top half diagram commutes. Using also  \Cref{claim:invariant-form-well-def}, there is a map $\psi_2\colon \tttg_n\ot \tttg_{-n} \to \one$ such that the bottom half diagram commutes. Finally, using \Cref{claim:invariant-form-well-def} again, one can see as in  \cite{Kac-book}*{Theorem 2.2} that actually $\psi_1=\psi_2$. 

\begin{claim}\label{claim:invariant-form-independence}
The map $\psi$ from \Cref{claim:invariant-form-diagrams} does not depend on $i,j,s,t$. 
\end{claim}

Again, this follows from \Cref{claim:invariant-form-well-def} as shown in  \cite{Kac-book}*{Theorem 2.2}. 

\medbreak
These Claims show the existence of an extended map $\tttB\colon\tttg[n]\otimes\tttg[n]\to\one$ which satisfies \eqref{eq:invariant-bilinear-form-pieces} thanks to \Cref{claim:invariant-form-diagrams}.
This map is symmetric and satisfies \ref{item:contragradien-bilinear-X} by definition. For a verification of \ref{item:contragradien-bilinear-ga-gb}, if $|\he\alpha|,|\he\beta|\le n-1$ then $\restr{\tttB}{\ttg_\alpha\ot\ttg_\beta}=0$ by inductive hypothesis while for 
$|\he\alpha|=n>|\he\beta|$ or $|\he\alpha|< n=|\he\beta|$ the equality follows by definition. It remains to check \ref{item:contragradien-bilinear-ga-gb} when $|\he\alpha|=|\he\beta|=n$ and $\alpha+\beta\ne0$, which follows by definition of $\restr{\tttB}{\tttg_{\pm n}\otimes\tttg_{\mp n}}$ and the inductive hypothesis. This conclude the inductive step.

\medbreak
Finally, we show that $\tttB$ factors through the quotient $\ttg=\tttg/\ttm$. Let $\ttI$ be the radical of $\tttB$. By \ref{item:contragradien-bilinear-X} and \ref{item:contragradien-bilinear-ga-gb} the ideal $\ttI$ is $Q$-graded, with trivial component in degree $0$ as $\ttK$ is non-degenerate. Accordingly, $\ttI$ is $\bZ$-graded and $\ttI\cap\ttX=0$. Thus $\tttg/\ttI$ projects onto $\ttg$ and admits an invariant symmetric form (the induced one) whose restrictions to $(\tttg/\ttI)_k\otimes(\tttg/\ttI)_{-k}$ is non-degenerate for all $k\in\bN$. By \Cref{prop:contragredient-bilinear-form}, it must be $\ttI=\ttm$, so $\tttB$ induces a non-degenerate invariant symmetric form $\ttB\colon\ttg\ot\ttg\to\one$ satisfying \ref{item:contragradien-bilinear-X} and \ref{item:contragradien-bilinear-ga-gb}. Now \ref{item:contragradien-bilinear-ga-ga} holds by non-degeneracy of $\ttB$ thanks to part \ref{item:contragradien-bilinear-ga-gb}.
\end{proof}

\begin{remark}\label{rem:tori}
One of the fundamental features of contragredient Lie superalgebras is that the zero part corresponds to an algebraic torus in $\Vec$. At the moment, we do not have a notion of algebraic tori in a general symmetric tensor category $\cC$, not even in $\Verp$. To the best of our knowledge, the only non-trivial examples of tori beyond $\Vec$ are the rank-one general linear groups $\op{GL}(L)$ where $L$ runs over the simple objects of $\cC$. Notice however that already in $\cC=\Verp$, the representation theory of these rank-one tori can be rather involved, see \cite{Ven-GLVerp}. In any case, from previous experience we desire to construct contragredient Lie algebras with a zero part that is non-abelian but still has a well-behaved action on the entire algebra. This explains why we do not impose such strong restrictions on the algebra $\ttX$, but on the $\ttX$-module $\ttV$ instead.
\end{remark}

\begin{remark}
The lattice grading from \Cref{thm:contragredient-symmetrizable-grading} should be considered as a first step toward a root system theory for contragredient Lie algebras in symmetric tensor categories. To achieve that goal, and following the observations in \Cref{rem:tori}, one should start with a finer decomposition $\ttV=\oplus \ttV_i$ of $\ttV$ into a direct sum of submodules. We expect to address this and related questions in future work.
\end{remark}

\section{Examples}\label{sec:examples}

We conclude this paper by showing examples of contragredient Lie algebras in symmetric categories, with an emphasis in $\Ver_p$.

\subsection{General and special Lie algebras}
Following \cite{Ven-GLVerp}, given an object $L$ in a symmetric tensor category $\cC$ one can construct the general Lie algebra $\fgl(L)\in \Lie(\cC)$, with underlying object $L \ot L^*$ and associative multiplication given by $\ev_L$, see  \Cref{rem:module-representation}.  Furthermore, $\fgl(L)$ contains a Lie subalgebra $\fsl(L)$ defined as the kernel of $\ev_{L^*}\colon \fgl(L)\to\one$, which happens to be a map of Lie algebras by \cite{Ven-GLVerp}*{Proposition 3.5}. As for Lie superalgebras, in case $\dim L\ne 0$ we obtain a decomposition $\fgl(L)= \one \oplus \fsl(L)$; here $\one$ is a central Lie subalgebra obtained as the image of $\coev_{L}$, see \cite{Ven-GLVerp}*{Proposition 3.8}. In this Section we show that, if $L$ is semisimple, then $\fgl(L)$ and $\fsl(L)$ are contragredient Lie algebras.

\medbreak

Assume that $L\in \cC$ is semisimple, and fix a decomposition $L=\oplus_{1\leq i \leq r} X_i$ as sum of simple objects. 
As described in \cite{Ven-GLVerp}*{\S 5}, for the general Lie algebra $\fgl(L)$, we take 
\begin{itemize}[leftmargin=*,label=$\circ$]
\item a maximal torus $\ttX=\oplus_{1\leq i \leq r} X_i\otimes X_i^*=\oplus_{1\leq i \leq r} \fgl(X_i)$, the \emph{diagonal matrices}, and
\item a generating subobject $\ttV=\oplus_{1\leq i <r} X_i\otimes X_{i+1}^*$ of the Borel subalgebra $\oplus_{1\leq i<j \leq r} X_i\otimes X_j^*$ of \emph{upper triangular matrices}.
\end{itemize}

Now we construct contragredient data over $\ttX$. Notice that $\ttV^*=\oplus_{1\leq i <r} X_{i+1}\otimes X_{i}^*$. 
Let $\rho\colon\ttX\otimes\ttV\to\ttV$ and $\ttd\colon \ttV\ot\ttV^*\to\ttX$ denote the restrictions of the bracket $\ttb$ of $\fgl(L)$; explicitly,
\begin{align*}
\restr{\rho}{\left(X_i\ot X_i^*\right) \otimes \left(X_j\ot X_{j+1}^*\right)} &= 
\begin{cases}
0 & i\ne j,j+1, \\
\id\ot \ev_{X_i}\ot\id, & i=j, \\
-(\id\ot \ev_{X_i}\ot\id)c_{\left(X_i\ot X_i^*\right),\left(X_{i-1}\ot X_i^*\right)}, & i=j+1;
\end{cases}
\\
\restr{\ttd}{\left(X_i\ot X_{i+1}^*\right) \otimes \left(X_{j+1}\ot X_j^*\right)} &= 
\begin{cases}
0 & i\ne j, \\
\id\ot \ev_{X_{j+1}}\ot\id -(\id\ot \ev_{X_{j}}\ot\id)c_{\left(X_{j}\ot X_{j+1}^*\right),\left(X_{j+1}\ot X_j^*\right)}, & i=j.
\end{cases}
\end{align*}
We also define $\ttK\colon \ttX \ot \ttX \to \one$ as the form such that
\begin{align*}
\restr{\ttK}{\left(X_i\ot X_i^*\right) \otimes \left(X_j\ot X_{j}^*\right)} &= 
\begin{cases}
0 & i\ne j, \\
\ev_{X_i^*}\left(\id \ot \ev_{X_i}\ot\id\right)=\ev_{X_i\ot X_i^*}, & i=j,
\end{cases}
\end{align*}
which is clearly non-degenerate.

\begin{lemma}\label{lemma:gl-contragredient-symmetric}
The pair $(\rho, \ttK)$ is a symmetrizable contragredient datum over $\ttX$, and $\ttd_{\rho, \ttK}=\ttd$.
\end{lemma}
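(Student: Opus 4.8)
The plan is to verify the two defining conditions \ref{item:symmetrizable-def-1}--\ref{item:symmetrizable-def-2} of \Cref{def:symmetrizable-data} for the pair $(\rho,\ttK)$, and then identify $\ttd_{\rho,\ttK}$ with $\ttd$ using the uniqueness clause in \Cref{lem:symmetrizable-datum-compatibility-K-d}. First I would check \ref{item:symmetrizable-def-2}: that $\ttK$ is symmetric, invariant, and non-degenerate. Non-degeneracy is already noted, since $\ttK$ is block-diagonal with each block $\ev_{X_i\ot X_i^*}$, which is a perfect pairing between $X_i\ot X_i^*$ and itself (as $X_i\ot X_i^*$ is self-dual via $\ev,\coev$). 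Symmetry and invariance should follow from the fact that $\ttK$ is the restriction to the Cartan $\ttX$ of the trace form on $\fgl(L)$, i.e. $\ttB_{\fgl(L)}(a\ot b)=\ev_{L^*}(m(a\ot b))$ where $m=\ev_L$ is the associative product; invariance of this form on any $\fgl(L)$ is standard (it is the categorical trace of the product), and restricts to the blocks as claimed. Alternatively one checks symmetry and invariance directly on blocks $X_i\ot X_i^*$ using naturality of the braiding and the zig-zag identities.

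Next I would verify \ref{item:symmetrizable-def-1} for $\rho$: that $\rho(\ttX\ot S)\ne0$ for every simple $S\subseteq\ttV$, and $\pi\rho\ne0$ for every projection $\pi$ onto a simple quotient $T$ of $\ttV$. Here I would use the explicit block description of $\rho$. A simple subobject $S$ of $\ttV$ lies in some summand $X_i\ot X_{i+1}^*$; restricting $\rho$ to $(X_i\ot X_i^*)\ot S$ gives the action $\id\ot\ev_{X_i}\ot\id$ composed with inclusions, and evaluated on the subobject $\one\ot S$ of $(X_i\ot X_i^*)\ot S$ (using $\one\hookrightarrow X_i\ot X_i^*$ via $\coev_{X_i}$) this is, by the zig-zag identity, just the inclusion $S\hookrightarrow\ttV$, which is nonzero. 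Hence $\restr{\rho}{\ttX\ot S}\ne0$. The quotient condition is dual and follows by the analogous computation composing with $\ev_{X_{i+1}}$, or simply by symmetry of the situation under $\ttV\leftrightarrow\ttV^*$; in fact one can observe $(\rho^\vee)$ plays the role of $\rho$ for $\ttV^*$ and invoke \Cref{rem:contragredient-dual-pair}.

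Finally, to show $\ttd_{\rho,\ttK}=\ttd$, I would appeal to the uniqueness statement in \Cref{lem:symmetrizable-datum-compatibility-K-d}: it suffices to prove that $\ttd$ satisfies the compatibility
\[
\ttK(\id_{\ttX}\ot\ttd)=\ev_{\ttV}(\rho\ot\id_{\ttV^*})\in\Hom_{\cC}(\ttX\ot\ttV\ot\ttV^*,\one).
\]
Both sides are block-diagonal, supported on $(X_i\ot X_i^*)\ot(X_j\ot X_{j+1}^*)\ot(X_k\ot X_k^*)$-type pieces; by the block formulas for $\rho$, $\ttd$ and $\ttK$ the only nonzero contributions occur when the indices match up appropriately (concretely $j=k$ and $i\in\{j,j+1\}$), and on each such piece the identity reduces to an equality of composites of $\ev$'s and braidings, provable by the zig-zag identities and naturality of $c$. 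This is essentially the statement that both expressions compute the same matrix coefficient of the trace form of $\fgl(L)$, namely $\ttB_{\fgl(L)}(x\cdot v\ot w)=\ttB_{\fgl(L)}(x\ot[v,w])$ for $x\in\ttX$, $v\in\ttV$, $w\in\ttV^*$, which is just invariance of the trace form of $\fgl(L)$ restricted to these subobjects. I expect the main obstacle to be bookkeeping: keeping track of which braidings appear in the two ``boundary'' cases $i=j$ and $i=j+1$ of $\rho$ and matching them against the corresponding terms in $\ttd$, since signs and crossings must line up exactly. A clean way to sidestep most of the diagram-chasing is to prove everything intrinsically in terms of the trace form of $\fgl(L)$: establish once that $\ttB_{\fgl(L)}$ is symmetric, invariant and non-degenerate on each $\fgl(X_i)$, note that $\ttK$, $\rho$, $\ttd$ are the restrictions of $\ttB_{\fgl(L)}$, $\ttb_{\fgl(L)}$, $\ttb_{\fgl(L)}$, and then \eqref{eq:symmetrizable-datum-compatibility-K-d} for $\ttd$ is immediate from invariance.
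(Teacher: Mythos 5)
Your proposal is correct and follows essentially the same route as the paper: verify \ref{item:symmetrizable-def-1} via the explicit block description of $\rho$, verify invariance of $\ttK$ block by block, and then identify $\ttd_{\rho,\ttK}$ with $\ttd$ through the uniqueness clause of \Cref{lem:symmetrizable-datum-compatibility-K-d} by checking \eqref{eq:symmetrizable-datum-compatibility-K-d} on the summands $\left(X_i\ot X_i^*\right)\ot\left(X_j\ot X_{j+1}^*\right)\ot\left(X_{k+1}\ot X_k^*\right)$. Your closing observation that $\ttK$, $\rho$, $\ttd$ are all restrictions of the trace form and bracket of $\fgl(L)$, so that \eqref{eq:symmetrizable-datum-compatibility-K-d} is just invariance of the trace form, is a slightly cleaner packaging of the same diagrammatic verification the paper carries out.
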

\begin{proof}
We have to check that $(\rho, \ttK)$ satisfies \ref{item:symmetrizable-def-1} and \ref{item:symmetrizable-def-2} in \Cref{def:symmetrizable-data}. 

For \ref{item:symmetrizable-def-1}, notice that 
$\restr{\rho}{\left(X_i\ot X_i^*\right) \otimes \left(X_i\ot X_{i+1}^*\right)}=\id\ot \ev_{X_i}\ot\id\ne 0$, so
$\rho(\ttX\ot S)\ne 0$ for all simple $S\subseteq X_i\ot X_{i+1}^*$ and $\pi\rho\ne 0$ for every projection $\pi\colon X_i\ot X_{i+1}^*\to T$ onto a simple object $T\subseteq X_i\ot X_{i+1}^*$. For \ref{item:symmetrizable-def-2}, we have to check that $\ttK$ is invariant. Consider
\begin{align*}
\mathtt{M}_{ijk} &\coloneq\left(X_i\ot X_i^*\right) \otimes\left(X_j\ot X_j^*\right) \otimes\left(X_k\ot X_k^*\right), && \text{for each }1\le i,j,k\le r,
\end{align*}
so $\ttX\otimes\ttX\otimes\ttX=\oplus_{i,j,k}\mathtt{M}_{ijk}$ and we can just verify the invariance condition $\ttK(\ttb\ot\id)=\ttK(\id\ot\ttb)$ by restricting to  each $\mathtt{M}_{ijk}$. Note that $\restr{\ttK(\ttb\ot\id)}{\mathtt{M}_{ijk}}=0=\restr{\ttK(\id\ot\ttb)}{\mathtt{M}_{ijk}}$ if $\#\{i,j,k\}\ge 2$, and
\begin{align*}
\restr{\ttK(\ttb\ot\id)}{\mathtt{M}_{iii}} &=\ev_{X_i\ot X_i^*}
\left(\id_{X_i} \ot \ev_{X_i}\ot\id_{X_i^{*}\ot X_i\ot X_i^{*}}\right)
\\ & -\ev_{X_i\ot X_i^*}\left(\id_{X_i} \ot \ev_{X_i}\ot\id_{X_i^{*}\ot X_i\ot X_i^{*}}\right)(c_{X_i\ot X_i^{*},X_i\ot X_i^{*}}\ot\id_{X_i\ot X_i^{*}})
\\
\restr{\ttK(\id\ot\ttb)}{\mathtt{M}_{iii}} &=\ev_{X_i\ot X_i^*}
\left(\id_{X_i\ot X_i^{*}\ot X_i} \ot \ev_{X_i}\ot\id_{X_i^{*}}\right)
\\ & - \ev_{X_i\ot X_i^*}\left(\id_{X_i\ot X_i^{*}\ot X_i} \ot \ev_{X_i}\ot\id_{X_i^{*}}\right) (\id_{X_i\ot X_i^{*}} \ot c_{X_i\ot X_i^{*},X_i\ot X_i^{*}}).
\end{align*}
Now, 
\begin{align}\label{eq:evaluations}
\ev_{X_i\ot X_i^*} & \left(\id_{X_i} \ot \ev_{X_i}\ot\id_{X_i^{*}\ot X_i\ot X_i^{*}}\right)
=\ev_{X_i\ot X_i^*} \left(\id_{X_i\ot X_i^{*}\ot X_i} \ot \ev_{X_i}\ot\id_{X_i^{*}}\right),
\end{align}
and using diagramatic calculus we check that
\begin{align}\label{eq:evaluations+c}
\begin{aligned}
\ev_{X_i\ot X_i^*} &
\left(\id_{X_i} \ot \ev_{X_i}\ot\id_{X_i^{*}\ot X_i\ot X_i^{*}}\right)\left(c_{X_i\ot X_i^{*},X_i\ot X_i^{*}}\ot\id_{X_i\ot X_i^{*}}\right)
\\
&=\ev_{X_i\ot X_i^*}\left(\id_{X_i\ot X_i^{*}\ot X_i} \ot \ev_{X_i}\ot\id_{X_i^{*}}\right)
\left(\id_{X_i\ot X_i^{*}}c_{X_i\ot X_i^{*},X_i\ot X_i^{*}}\right).
\end{aligned}
\end{align}
Hence, $\restr{\ttK(\ttb\ot\id)}{\mathtt{M}_{iii}} =\restr{\ttK(\id\ot\ttb)}{\mathtt{M}_{iii}}$. Therefore the form is invariant.

To verify the last statement, by \Cref{lem:symmetrizable-datum-compatibility-K-d}, we have to check that $\ttd$ satisfies \eqref{eq:symmetrizable-datum-compatibility-K-d}. Let
\begin{align*}
\mathtt{N}_{ijk} &:= \left(X_i\ot X_i^*\right) \otimes\left(X_j\ot X_{j+1}^*\right) \otimes\left(X_{k+1}\ot X_k^*\right), && 1\le i\le r, \ 1\le j,k<r,
\end{align*}
so $\ttX\otimes\ttV\otimes\ttV^*=\oplus_{i=1}^r\oplus_{j,k=1}^{r-1}\mathtt{N}_{ijk}$. Note that the equality $\restr{\ttK (\id_{\ttX}\otimes \ttd)}{\mathtt{N}_{iii}} = \restr{\ev_{\ttV}(\rho\otimes\id_{\ttV^*})}{\mathtt{N}_{iii}}$ follows by \eqref{eq:evaluations}, while \eqref{eq:evaluations+c} implies that $\restr{\ttK (\id_{\ttX}\otimes \ttd)}{\mathtt{N}_{i\, i+1\, i+1}} = \restr{\ev_{\ttV}(\rho\otimes\id_{\ttV^*})}{\mathtt{N}_{i\, i+1\, i+1}}$.
In all the other cases, it is clear that $\restr{\ttK (\id_{\ttX}\otimes \ttd)}{\mathtt{N}_{ijk}} =0= \restr{\ev_{\ttV}(\rho\otimes\id_{\ttV^*})}{\mathtt{N}_{ijk}}$.

Thus $\ttK (\id_{\ttX}\otimes \ttd)= \ev_{\ttV}(\rho\otimes\id_{\ttV^*})$ and the statement follows.
\end{proof}

To construct a contragredient datum for $\fsl(L)$, consider the  Lie subalgebra $\ttX'=\ttX\cap\fsl(L)$ and take the $\ttX'$-action $\rho'\colon\ttX'\otimes\ttV\to\ttV$ obtained by restriction of $\rho$.
As $\ev_{L^*}\circ\ttd=\ev_{L^*}\circ\ttb=0$, we have that $\ima\ttd \subseteq \ttX'$, so we regard $\ttd$ as a map $\ttd\colon\ttV\ot\ttV^*\to\ttX'$.

\begin{proposition}\label{prop:gl-sl-contragredient}
\begin{enumerate}[leftmargin=*, label=(\arabic*)]
\item\label{item:gl-sl-contragredient-1} The pairs $(\rho,\ttd)$ and $(\rho',\ttd)$ are contragredient data.
\item\label{item:gl-sl-contragredient-2} There exist Lie algebra isomorphisms $\fgl(L)\simeq \ttg(\rho,\ttd)$ and $\fsl(L)\simeq \ttg(\rho',\ttd)$.
\end{enumerate}
\end{proposition}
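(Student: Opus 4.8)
The plan is to deduce \ref{item:gl-sl-contragredient-2} from \ref{item:gl-sl-contragredient-1} together with \Cref{lemma:gl-contragredient-symmetric} and the results of \Cref{sec:contragredient-verp}. Part \ref{item:gl-sl-contragredient-1} for $(\rho,\ttd)$ is immediate from \Cref{lemma:gl-contragredient-symmetric} and \Cref{lem:symmetrizable-data-gives-data}: since $(\rho,\ttK)\in\opSD(\ttX)$ and $\ttd=\ttd_{\rho,\ttK}$, the pair $(\rho,\ttd)$ is a reduced contragredient datum; in particular it satisfies \eqref{eq:contragredient-compatibility}. For $(\rho',\ttd)$ I would check directly that $\ttd$ is still a map of $\ttX'$-modules: the compatibility \eqref{eq:contragredient-compatibility} for $(\rho,\ttd)$ restricts along $\ttX'\hookrightarrow\ttX$, using that $\ttb_{\ttX}(\ttX'\ot\ttX')\subseteq\ttX'$ and $\ima\ttd\subseteq\ttX'$, and that $\rho^{\vee}$ restricts to $(\rho')^{\vee}$. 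One also needs $\ttX$ (resp. $\ttX'$) to have enough modules: $\ttX=\oplus_i\fgl(X_i)$ is a sum of algebras of the form $\one\oplus\fsl(X_i)$ with $\fsl(X_i)$ simple (for $X_i\neq\one$) by \cite{Ven-GLVerp}, and $\one$ acts non-trivially on $X_i$, so \Cref{exa:enough-modules-gl-verp,exa:enough-modules-gl-semsimplification} apply; the same reasoning covers $\ttX'$.

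For \ref{item:gl-sl-contragredient-2}, the strategy is to produce a surjective Lie algebra map $\ttg(\rho,\ttd)\to\fgl(L)$ and to argue it is injective via the bilinear form criterion \Cref{prop:contragredient-bilinear-form}. First I would build a map $\tttg(\rho,\ttd)\to\fgl(L)$: the inclusions $\ttX\hookrightarrow\fgl(L)$ and $\ttV\oplus\ttV^*\hookrightarrow\fgl(L)$ extend, by \Cref{prop:FLie-subalg-TV}, to a Lie algebra map $\FLie(\ttV\oplus\ttV^*)\to\fgl(L)$, and this assembles with $\ttX\hookrightarrow\fgl(L)$ into a map from the semidirect product $\FLie(\ttV\oplus\ttV^*)\rtimes_\rho\ttX\to\fgl(L)$ by \Cref{prop:morphism-from-semidirect-product} (the compatibility \eqref{eq:morphism-from-semidirect-product-compatibility} holds because $\rho$ was \emph{defined} as the restriction of the bracket of $\fgl(L)$); it then descends to $\tttg(\rho,\ttd)$ since $\ttd$ is also the restriction of that bracket. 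The image is the subalgebra of $\fgl(L)$ generated by $\ttX$, $\ttV$ and $\ttV^*$; I would check surjectivity by showing that the $X_i\ot X_j^*$ with $i<j$ (resp. $i>j$) lie in the subalgebra generated by $\ttX$ and $\ttV$ (resp. $\ttV^*$) — a telescoping bracket computation $[X_i\ot X_{i+1}^*, X_{i+1}\ot X_{i+2}^*]$ producing $X_i\ot X_{i+2}^*$ up to a nonzero scalar (using $\dim X_j\neq 0$ in $\Verp$), exactly as in the classical $\fgl_n$ case. Then $\fgl(L)$ is a quotient of $\tttg(\rho,\ttd)$ by a $\bZ$-homogeneous ideal meeting $\ttX$ trivially (the grading is the one with $\deg(X_i\ot X_j^*)=j-i$).

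To identify this quotient with the \emph{minimal} one $\ttg(\rho,\ttd)$, I would invoke \Cref{prop:contragredient-bilinear-form}: it suffices to exhibit on $\fgl(L)$ an invariant symmetric form $\ttB$ whose restriction to the degree $k$ and $-k$ pieces is a perfect pairing for every $k$. The natural candidate is the trace form $\ttB=\ev_{L^*}\circ(\ev_L\ot\id)\colon\fgl(L)\ot\fgl(L)\to\one$, i.e. $(a,b)\mapsto\tr(ab)$; this is invariant and symmetric by standard diagrammatic manipulation, and on $(X_i\ot X_j^*)\ot(X_j\ot X_i^*)$ it restricts to $\ev_{X_i\ot X_j^*}$, which is the canonical perfect pairing between $X_i\ot X_j^*$ and its dual $X_j\ot X_i^*$; the degree $k$ part of $\fgl(L)$ is $\oplus_{j-i=k}X_i\ot X_j^*$ and the form is block-diagonal with respect to this, so non-degeneracy is clear. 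Hence $\fgl(L)\cong\ttg(\rho,\ttd)$. For $\fsl(L)$ I would repeat the argument with $\ttX'$ in place of $\ttX$: the map $\tttg(\rho',\ttd)\to\fsl(L)$ is built the same way (note $\ima\ttd\subseteq\ttX'$ so this makes sense), it is surjective by the same telescoping computation together with $\ttX'$ generating the diagonal part of $\fsl(L)$, and the restriction of the trace form to $\fsl(L)$ provides the non-degenerate invariant form needed for \Cref{prop:contragredient-bilinear-form}.

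The main obstacle I anticipate is the non-degeneracy input to \Cref{prop:contragredient-bilinear-form} in the $\fsl$ case: one must be sure that the trace form restricted to $\fsl(L)$ still pairs $\fsl(L)_k$ perfectly with $\fsl(L)_{-k}$, including in degree $0$ where $\ttX'=\ttX\cap\fsl(L)$; this amounts to checking that $\restr{\ttK}{\ttX'\ot\ttX'}$ is non-degenerate, i.e. that $\ttX=\ttX'\oplus\one\coev$ is an orthogonal decomposition with both summands non-degenerate — true because $\one$ (the image of $\coev_L$) pairs non-trivially with itself under $\ev_{L^*}\ev_L$ precisely when $\dim L\neq 0$, which holds in $\Verp$. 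A secondary subtlety is verifying \eqref{eq:morphism-from-semidirect-product-compatibility} and the well-definedness on the quotient purely diagrammatically rather than with elements, but this is routine given that $\rho$ and $\ttd$ were defined as restrictions of a genuine Lie bracket, so the required identities are restrictions of the Jacobi identity in $\fgl(L)$.
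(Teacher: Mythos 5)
Your proposal follows essentially the same route as the paper: part (1) via \Cref{lemma:gl-contragredient-symmetric} and \Cref{lem:symmetrizable-data-gives-data}, and part (2) by descending the canonical map from the semidirect product to a surjection $\tttg(\rho,\ttd)\twoheadrightarrow\fgl(L)$, equipping $\fgl(L)$ with the trace form (which is exactly the paper's $\ttB$), invoking \Cref{prop:contragredient-bilinear-form}, and then restricting to $\ttX'$ for $\fsl(L)$. One caveat: your claim that $\dim L\neq 0$ always holds in $\Verp$ is false (e.g.\ $L=\ttL_1\oplus\ttL_{p-1}$ has categorical dimension $0$), so non-degeneracy of the form on the degree-$0$ part $\ttX'$ can genuinely fail; fortunately the proof of \Cref{prop:contragredient-bilinear-form} only uses non-degeneracy of $\restr{\ttB}{\ttq_k\ot\ttq_{-k}}$ for $|k|\ge 2$, so this does not affect the argument.
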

\begin{proof}
\ref{item:gl-sl-contragredient-1} Notice that $(\rho,\ttd)$ is a contragredient datum by Lemmas \ref{lem:symmetrizable-data-gives-data} and \ref{lemma:gl-contragredient-symmetric}. As a consequence so is $(\rho',\ttd)$.

\smallbreak

\ref{item:gl-sl-contragredient-2} We apply \Cref{prop:contragredient-bilinear-form}. The canonical Lie algebra map $\FLie(\ttV\oplus\ttV^*)\rtimes_\rho \ttX\to\fgl(L)$ identifying $\ttV$, $\ttV^*$ and $\ttX$ inside $\fgl(L)$ descends to a Lie algebra map $\widetilde{\Psi}\colon \tttg(\rho,\ttd)\to\fgl(L)$, which is surjective. We decompose
\begin{align*}
\fgl(L) &=\bigoplus_{-r<k<r} \fgl(X)_k, & \text{ where } \fgl(L)_k &\coloneq \bigoplus_{j-i=k} X_i\ot X_j^*.
\end{align*}
Notice that this decomposition satisfies
\begin{align*}
\ttb\left(\fgl(L)_{\pm i}\otimes \fgl(L)_{\pm j}\right) &=\fgl(L)_{\pm(i+j)}, & i,j & \ge 0.
\end{align*}
Hence $\fgl(L)$ is a $\bZ$-graded quotient Lie algebra of $\tttg(\rho,\ttd)$ such that $\fgl(L)_1=\ttV$, $\fgl(L)_0=\ttX$ and $\fgl(L)_{-1}=\ttV^*$.

Now we define a form $\ttB\colon \fgl(L)\ot\fgl(L)\to\one$ by
\begin{align*}
\ttB\left( (X_i\otimes X_j^*)\otimes (X_r\otimes X_s^*) \right) &=\begin{cases}
0, & i\ne s \text{ or }j\ne r,
\\
\ev_{X_i^*}(\id\otimes\ev_{X_{j}}\otimes\id), & i=s, j=r.
\end{cases}
\end{align*}
This $\ttB$ is a symmetric invariant form such that $\restr{\ttB}{\fgl(L)_k\ot\fgl(L)_{-k}}$ is non-degenerate for all $1\le k<r$. Thus, by Proposition \ref{prop:contragredient-bilinear-form}, $\widetilde{\Psi}$ induces the desired isomorphism $\Psi\colon\ttg(\rho,\ttd)\to\fgl(L)$.

Now the subalgebra of $\ttg(\rho,\ttd)$ generated by $\ttX'$, $\ttV$ and $\ttV^*$ is (isomorphic to) $\ttg(\rho',\ttd)$, and $\Psi(\ttX'), \Psi(\ttV), \Psi(\ttV^*) \subseteq \fsl(L)$, so the image of $\restr{\Psi}{\ttg(\rho',\ttd)}$ is contained in $\fsl(L)$. Using again Proposition \ref{prop:contragredient-bilinear-form} we deduce that $\restr{\Psi}{\ttg(\rho',\ttd)}\colon\ttg(\rho',\ttd)\to\fsl(L)$ is a Lie algebras isomorphism.
\end{proof}

\subsection{Semisimplification of ordinary contragredient Lie algebras}\label{subsection:semisimplification-examples}

As in \Cref{subsec:semisimplification}, consider a finite dimensional contragredient Lie algebra $\fg(A)$ in $\Vec$, and fix an index $i$ with $a_{ii}=2$. 
We have
$(\ad e_i)^p=0$ so $\fg$ becomes  a Lie algebra in $\Rep(\balp_{p})$ by letting the generator $t$ of the coordinate algebra $\bk[t]/(t^p)$ act via $\ad e_i$. 

Let $\sfS (\fg, e_i)$ be the operadic Lie algebra in $\Ver_p$ obtained as the semisimplification of $\fg$ by the action of $\ad e_i$. Next we summarize the main result from \cite{APS-ssLieAlg}, which describes the structure of this algebra. We refer to \emph{loc. cit.} for unexplained terminology.

\begin{theorem}{\cite{APS-ssLieAlg}}\label{thm:semisimplification-contragredient-main-thm}
	The Lie algebra $\sfS(\fg, e_i)$ in $\Ver_p$ has a triangular decomposition 
	\begin{align*}
		\sfS(\fg, e_i)=\sfS(\fg, e_i)_+\oplus\sfS(\fg, e_i)_0\oplus\sfS(\fg, e_i)_-.
	\end{align*} 

It  also admits grading by $\bZ^{r-1}$
	whose non-trivial homogeneous components are simple.
	
	In addition, $\sfS(\fg, e_i)$ has an invariant non-degenerate symmetric form $\mathtt{B}$ such that
	\begin{align*}
		\restr{\mathtt{B}}{\sfS(\fg, e_i)_{\alpha} \ot \sfS(\fg, e_i)_{\beta}} &=0 && \text{ if }\alpha\ne -\beta\in \bZ^{r-1}.
	\end{align*}
	
Under a mild assumption on the root system of $A$ and the adjoint action of $e_i$, $\sfS(\fg, e_i)$ is generated by its components in $\bZ$-degrees $-1$, $0$ and $1$. \qed

\end{theorem}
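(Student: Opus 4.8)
The plan is to transport each structure on the finite-dimensional Lie algebra $\fg=\fg(A)$ through the semisimplification functor $\sfS\colon\Rep(\balp_p)\to\Verp$, carefully tracking the gradings that the derivation $\ad e_i$ respects. Here $\fg$ carries its root-lattice grading $\fg=\fh\oplus\bigoplus_{\gamma\in\Delta}\fg_\gamma$ by $Q=\bigoplus_{j=1}^r\bZ\alpha_j$; since $a_{ii}=2$ we have $\fsl_2^{(i)}:=\langle e_i,h_i,f_i\rangle\cong\fsl_2$, the operator $\ad e_i$ is homogeneous of $Q$-degree $\alpha_i$, and (for $p$ large) $(\ad e_i)^p=0$, so $\fg$ is indeed a Lie algebra object in $\Rep(\balp_p)$ with $t$ acting by $\ad e_i$. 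Everything below is then obtained by applying the additive symmetric monoidal functor $\sfS$, which sends brackets to brackets, duals to duals, and carries the $L_k$ to $\ttL_k$ for $k<p$ and to $0$ for $k=p$.

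First I would produce both gradings. For each coset $\ov\beta\in Q/\bZ\alpha_i\cong\bZ^{r-1}$ the column $\fg^{\ov\beta}:=\bigoplus_{n\in\bZ}\fg_{\beta+n\alpha_i}$ is $\ad e_i$-stable, hence a $\balp_p$-submodule, and $\fg=\bigoplus_{\ov\beta}\fg^{\ov\beta}$; applying $\sfS$ gives $\sfS(\fg,e_i)=\bigoplus_{\ov\beta\in\bZ^{r-1}}\sfS(\fg^{\ov\beta})$, and since the bracket of $\fg$ has $Q/\bZ\alpha_i$-degree $0$ this is a $\bZ^{r-1}$-grading of Lie algebras. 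Using that $\fg$ is of finite type (so every root lies in $Q^+\cup Q^-$) one checks that the cosets of $\Delta^+\setminus\{\alpha_i\}$ and of $\Delta^-\setminus\{-\alpha_i\}$ are disjoint from each other and from $\ov0$, and that sums of positive (resp. negative) cosets with nonzero column stay positive (resp. negative); setting $\sfS(\fg,e_i)_0:=\sfS(\fg^{\ov0})$ and $\sfS(\fg,e_i)_\pm:=\bigoplus\sfS(\fg^{\ov\beta})$ over the positive/negative cosets then yields the asserted triangular decomposition into $\sfS(\fg,e_i)_0$ and two subalgebras. A further $\bZ$-grading is obtained by composing with the height map $\he'\colon Q/\bZ\alpha_i\cong\bigoplus_{j\ne i}\bZ\alpha_j\to\bZ$, for which $\sfS(\fg,e_i)_0$ is exactly the $\he'=0$ part.

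Next, simplicity and the form. For $\ov\beta\ne\ov0$ the column $\fg^{\ov\beta}$ is an $\fsl_2^{(i)}$-submodule in which every $\ad h_i$-weight occurs with multiplicity at most one (root spaces of the finite-type $\fg$ are one-dimensional), hence it is irreducible; thus $\ad e_i$ acts on it as a single Jordan block and $\fg^{\ov\beta}\cong L_{d}$ as a $\balp_p$-module, with $d=\dim\fg^{\ov\beta}\le p$. Consequently $\sfS(\fg^{\ov\beta})=\sfS(L_d)$ is either $0$ (when $d=p$) or the simple object $\ttL_d$, which is the simplicity claim for the nonzero homogeneous components. For the bilinear form, take a non-degenerate invariant symmetric form $\kappa$ on $\fg$ (the Killing form when $\fg$ is semisimple; in general the one coming from a symmetrizer of $A$). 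Skew-symmetry of $\ad e_i$ with respect to $\kappa$ is precisely the statement that $\kappa\colon\fg\otimes\fg\to\one$ is a morphism of $\balp_p$-modules, so $\ttB:=\sfS(\kappa)$ is an invariant symmetric form on $\sfS(\fg,e_i)$; as $\kappa$ has $Q$-degree $0$ it pairs $\fg^{\ov\beta}$ only with $\fg^{-\ov\beta}$, giving $\restr{\ttB}{\sfS(\fg,e_i)_\alpha\otimes\sfS(\fg,e_i)_\beta}=0$ for $\alpha\ne-\beta$, and the restriction of $\kappa$ gives a $\balp_p$-module isomorphism $\fg^{\ov\beta}\cong(\fg^{-\ov\beta})^*$, so applying $\sfS$ shows $\sfS(\fg^{\ov\beta})\cong\bigl(\sfS(\fg^{-\ov\beta})\bigr)^*$ and hence $\ttB$ restricts to a perfect pairing in each pair of opposite degrees; in particular $\ttB$ is non-degenerate.

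Finally, the generation statement, which I expect to be the real obstacle. For the principal $\bZ$-grading by $\he'$, $\fg$ is generated as a Lie algebra by $\bigoplus_{|k|\le1}\fg_k$ (this part contains all $e_j,f_j$ and $\fh$), equivalently $\fg_{\pm(k+1)}=[\fg_{\pm1},\fg_{\pm k}]$ for all $k\ge1$. One must upgrade this to $\sfS(\fg,e_i)$, i.e. show that each bracket $\fg_1\otimes\fg_k\twoheadrightarrow\fg_{k+1}$ stays epic in $\Verp$ after applying $\sfS$. This is delicate precisely because $\sfS$ is not exact and does not preserve surjections in general (it sends the surjection $L_p\twoheadrightarrow L_1$ to the zero map $0\to\one$), so a graded piece could a priori be lost. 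The \emph{mild assumption} on the root system of $A$ and the adjoint action of $e_i$ is exactly what is needed to exclude this, and verifying it calls for a concrete analysis, via $\fsl_2^{(i)}$-string combinatorics, of the Jordan types of $\ad e_i$ on the columns $\fg^{\ov\beta}$ and of the behaviour modulo $p$ of the structure constants of $\fg$. Alternatively one could first identify $\sfS(\fg,e_i)$ with a symmetrizable contragredient Lie algebra $\ttg(\rho,\ttd)$ over a suitable torus in $\Verp$ and invoke \Cref{thm:contragredient-symmetrizable-grading} and \Cref{thm:contragredient-bilinear}; but producing that datum takes work too, and the generation statement would still require the same combinatorial input.
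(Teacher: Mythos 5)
This theorem is not proved in the present paper at all: it is imported from \cite{APS-ssLieAlg} (hence the \verb|\qed| placed directly after the statement), so there is no in-paper argument to compare yours against. Judged on its own terms, your strategy --- transporting the $Q/\bZ\alpha_i$-grading, the triangular decomposition, and the invariant form through the symmetric monoidal functor $\sfS$ --- is the natural one, and the first three assertions are handled essentially correctly: each column $\fg^{\ov{\beta}}$ is $\ad e_i$-stable and hence a $\balp_p$-submodule, invariance of $\kappa$ is precisely the condition that $\kappa\colon\fg\ot\fg\to\one$ be a morphism in $\Rep(\balp_p)$, and monoidality of $\sfS$ carries the perfect pairing $\fg^{\ov{\beta}}\cong(\fg^{-\ov{\beta}})^*$ to a perfect pairing between opposite graded pieces of $\sfS(\fg,e_i)$.

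Two genuine gaps remain. First, your simplicity argument is a characteristic-zero argument: ``all $\ad h_i$-weights on $\fg^{\ov{\beta}}$ have multiplicity one, hence the $\fsl_2^{(i)}$-module is irreducible and $\ad e_i$ is a single Jordan block'' does not hold automatically in characteristic $p$ --- a structure constant along the $\alpha_i$-string through $\beta$ could vanish modulo $p$ without violating the multiplicity-one condition, splitting the column into several Jordan blocks and hence several simple summands after semisimplification. What rescues the claim is that for a finite-dimensional $\fg(A)$ the $\alpha_i$-strings have bounded length and the relevant structure constants are small integers, hence units for $p\ge 5$; this verification must be made explicit rather than deduced from weight multiplicities. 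Second, the final generation statement is only described, not proved: you correctly identify that $\sfS$ does not preserve epimorphisms, so surjectivity of $[\fg_{\pm1},\fg_{\pm k}]\twoheadrightarrow\fg_{\pm(k+1)}$ can be destroyed, and that the ``mild assumption'' is exactly what rules this out --- but you stop at naming the required string-combinatorial check of the Jordan types and of the structure constants modulo $p$. As written, the last clause of the theorem is asserted rather than established.
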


Now it is easy to see that $\sfS(\fg,e_i)\simeq\ttg(\ttX,\rho,\ttd)$, where:
\begin{enumerate}[leftmargin=*]
\item $\ttX=\sfS(\fg,i)_0=\overline{S}\oplus\overline{\fh}$. Here  $\overline{S}\cong\mathfrak{sl}(\ttL_2)$ and $\overline{\fh}\in \Vec\subset\Verp$ is an abelian Lie algebra of dimension $r-1$.
\item $\ttV$ is the degree $1$ component of $\sfS(\fg,i)$ and $\ttV^*$ is (identified with) the degree $-1$ component:
\begin{align*}
\ttV&=\bigoplus_{j\in\overline{\bI}} \tte_j, & \ttV^*&=\bigoplus_{j\in\overline{\bI}} \ttf_j.
\end{align*}
\item The action $\rho\colon\ttX\otimes\ttV\to\ttV$ and the map $\ttd\colon \ttV\otimes\ttV^*\to\ttX$ are the restrictions of the bracket of $\sfS(\fg,i)$ to $\sfS(\fg,i)_0\otimes\sfS(\fg,i)_1$ and $\sfS(\fg,i)_1\otimes\sfS(\fg,i)_{-1}$, respectively. More explicitly, 
\begin{itemize}[leftmargin=*,label=$\circ$]
\item each $\tte_j$ and is a stable under the adjoint $\ttX$-action,
\item $\restr{\rho}{\overline{\fh}\ot\ttV}$ is given by the matrix $(2a_{jk}-a_{ji}a_{ik})_{j\ne i,k\in\overline{\bI}}$,
\item $\restr{\ttd}{\tte_j\otimes\ttf_k}=0$ if $j\ne k$,
\item the action of $\mathfrak{sl}(\ttL_2)$ and the diagonal map $\restr{\ttd}{\tte_j\ot\ttf_j}\colon\tte_j\ot\ttf_j \to \mathfrak{sl}(\ttL_2)\oplus \sfS(\Bbbk \widetilde{h}_j) \subseteq \ttX$ are given explicitly in \cite{APS-ssLieAlg}*{Lemmas 4.16 \& 4.18}.
\end{itemize}
\end{enumerate}
To show that this contragredient datum yields an isomorphism $\sfS(\fg, e_i)\simeq\ttg(\ttX,\rho,\ttd)$, we just apply \Cref{thm:semisimplification-contragredient-main-thm} together with \Cref{prop:contragredient-bilinear-form}.

\subsection{On contragredient Lie algebras over \texorpdfstring{$\fgl(\ttL_2)$}{}}\label{subsec:examples-over-gl(L2)}
In $\Verp$, the rank-one general Lie algebra over the simple object $\ttL_2$ decomposes as $\fgl(\ttL_2)=\one\oplus \fsl(\ttL_2)$ where $\one$ is central and $\fsl(\ttL_2) \simeq \ttL_3$ as an object. Recall from \Cref{exa:enough-modules-gl-verp,exa:enough-modules-simple} that $\fsl(\ttL_2)$ and $\fgl(\ttL_2)$ have enough modules. In this Section we study contragredient Lie algebras over the three non-zero subalgebras  $\ttX \subseteq\fgl(\ttL_2)$. We restrict to algebras of rank $1$, meaning that the positive part is generated by a simple object in $\Verp$.

So fix a simple object $\ttV=\ttL_k$ in $\Verp$. Recall for later use that actions of a Lie algebra $\ttX$ on $\ttL_k$ correspond to Lie algebra maps $\ttX\to \fgl(\ttL_k)$, and that the underlying object of $\fgl(\ttL_k)$ is $\ttL_k\ot \ttL_k^*=\ttL_k\ot \ttL_k$, which by the fusion rules \eqref{eq:verp-fusionrules} decomposes as $\ttL_k\ot \ttL_k = \bigoplus_{i=1}^{\min\{k, p-k\}} \ttL_{2i-1}$.

We look for $\fgl(\ttL_2)$-contragredient structures on a simple object $\ttV_k$. 

\begin{example}
In case  $\ttV = \ttL_k$ where $k =1$ or $k=p-1$, such structures are rather classical and we recover the rank one contragredient Lie superalgebras  with an enlarged zero part. Indeed, by the fusion rules, an action $\rho$ of $\fgl(\ttL_2)=\one \oplus \fsl(\ttL_2)$ on $\ttL_k$ is trivial on the $\fsl(\ttL_2)$ and is thus determined by a scalar $a$ encoding the action of $\one$. Also, a map $\ttd\colon \ttL_k\ot \ttL_k^* \to \fgl(\ttL_2)$ only lands in $\one$, and it is reduced precisely when it is nonzero. Any such map $\ttd$ is a morphism of $\fgl(\ttL_2)$-modules regardless of the action $\rho$, and the contragredient Lie algebra $\ttg(\fgl(\ttL_2), \rho, \ttd)$ decomposes as $\ttg(\fgl(\ttL_2), \rho, \ttd)=\ttf\oplus \fsl(\ttL_2)$ where $\ttf\in\Lie(\sVec)$. Furthermore,
\begin{itemize}[leftmargin=*,label=$\diamondsuit$]
\item for $k=1$, we get $\ttf \cong \fsl_2(\bk)$ when $a\ne0$, and $\ttf\cong \fH_3$ (the Heisenberg algebra) if $a=0$.
\item for $k=p-1$, we obtain $\ttf \cong \osp_(2\vert 1)$ in case $a\ne 0$, and $\ttf\cong\supersl{1}{1}$ when $a=0$.
\end{itemize}
\end{example}

More interesting examples are afforded by $\ttV=\ttL_k$ when $k\ne 1, p-1$.

\begin{remark}\label{rem:cont-data-Lk}
For $k\ne 1, p-1$, the set of reduced $\fgl(\ttL_2)$-contragredient structures on $\ttL_k$ is in bijection with 
$$ \left\{(a,\ati,b, \bti) | a,b,\bti \in\Bbbk, \ati\in\{0,1\}, \text{either }b\ne 0 \text{ or }\bti \ne 0\right\}.$$

Indeed, after fixing direct sum decompositions, a map $\rho\colon\fgl(\ttL_2)\to\fgl(\ttL_k)$ in $\Verp$ is just a $2\times2$ diagonal matrix $\rho=\op{diag}(a \id_\one, \ati\id_{\ttL_3})$. For this map to preserve Lie brackets, we need $\ati^2=\ati$, while there is no restriction on $a$ because $\one$ is abelian. 

Similarly, a map $\ttd\colon\ttL_k\otimes\ttL_k^*\to\fgl(\ttL_2)$ in $\Ver_p$ boils down to $\ttd=\op{diag}(b \id_\one, \bti\id_{\ttL_3})$, and since $\ttL_k$ is simple, $\ttd$ is reduced precisely when it is non-zero. 
Now one can check in $\Rep(\balp_{p})$ that, for any choices of $\rho$ and $\ttd$ as above, the compatibility \eqref{eq:contragredient-compatibility} always holds. 

Up to equivalence of the associated Lie algebras, we can normalize these scalars and thus consider fewer options. In the next table, we organize $\fgl(\ttL_2)$-contragredient structures $(a,\ati,b, \bti)$ on $\ttL_k$ according to the different possibilities for the image of $\ttd$ and for the annihilator of the $\fgl(\ttL_2)$-action, viewed as a map $\rho\colon\fgl(\ttL_2)\to\fgl(\ttL_k)$. Here $\rho=\op{diag}(a \id_\one, \ati\id_{\ttL_3})$ and  $\ttd=\op{diag}(b \id_\one, \bti\id_{\ttL_3})$.

\begin{center}
\begin{tabularx}{0.8 \textwidth}{>{\raggedright\arraybackslash}X | >{\centering\arraybackslash}X >{\centering\arraybackslash}X >{\centering\arraybackslash}X  }
& $\ttd$ lands in $\one$& $\ttd$ lands in $\fsl(\ttL_2)$ & Full rank $\ttd$\\
\hline
$\ker \rho=\fgl(\ttL_2)$&$(0,0,1,0)$ & $(0,0,0,1)$ & $(0,0,b,1)$ \\
$\ker \rho=\one$& $(0,1,1,0)$& $(0,1,0,1)$ & $(0,1,b,1)$ \\
$\ker \rho=\fsl(\ttL_2)$& $(1,0,1,0)$& $(1,0,0,1)$& $(1,0,b,1)$\\
$\ker \rho=0$ & $(1,1,1,0)$& $(1,1,0,1)$ &$(1,1,b,1)$
\end{tabularx}
\end{center}
\end{remark}

\begin{remark}\label{rem:contr-data-decomposition}
\begin{enumerate}[leftmargin=*]
\item The cases in the first row satisfy $\rho=0$ and were already considered in \Cref{cor:trivial-action}.
\item The cases in the first ($\bti=0$) and second ($b=0$) columns can be reduced to smaller tori. Indeed, these are covered by \Cref{rem:contr-data-semi-direct} applied to $\ttX=\fgl(\ttL_2)=\one\oplus \fsl(\ttL_2)$ as follows: 
\begin{itemize}[leftmargin=*]
\item For $\bti=0$, we take $\ttX_1=\one$, $\ttX_2=\fsl(\ttL_2)$, and get $\ttg(\ttX,\rho,\ttd) \simeq \ttg(\one,\rho_1,\ttd)\rtimes \fsl(\ttL_2)$.
We study the algebras $ \ttg(\one,\rho_1,\ttd)$ in \S \ref{subsubsec:one}.
Note moreover, that if also $\ati=0$, then $\ttg=\ttg'\oplus \fsl(\ttL_2)$ as Lie algebras.

\item For $b=0$, we choose $\ttX_1=\fsl(\ttL_2)$, $\ttX_2=\one$ and get $\ttg(\ttX,\rho,\ttd) \simeq \ttg(\fsl(\ttL_2),\rho_1,\ttd)\rtimes \one$.
The algebras $\ttg(\fsl(\ttL_2),\rho_1,\ttd)$ will be  considered in \S \ref{subsubsec:sl-L2}.
If in addition $a=0$, then $\ttg=\ttg'\oplus \one$ as Lie algebras.
\end{itemize}
\end{enumerate}
\end{remark}

\subsubsection{Lie algebras over \texorpdfstring{$\ttX=\one$}{}}\label{subsubsec:one}
Reduced $\one$-contragredient structures on $\ttL_k$ correspond to pairs of scalars $(a,b)$ with non-zero $b$. In fact, an action $\rho\colon\one\otimes\ttL_k\simeq \ttL_k\to\ttL_k$ is given by $a\id_{\ttL_k}$, and $\ttd\colon\ttL_k\ot\ttL_k^*\to\one$ is just $b\ev_{\ttL_k}$, with non-zero $b$ for reduced structures. By \Cref{cor:trivial-action} we may also assume $a\ne 0$. Thus up to normalization we may fix $a=1$ and $b=1$, and denote the corresponding contragredient  Lie algebra by $\ttg(\one,1,1)$

In what follows, the notation $\ttL_i^{(d)}$ means a copy of $\ttL_i$ siting in degree $d\in\bZ$ of $\ttg(\one,1,1)$. 

\begin{proposition}\label{prop:contragredient-Li-one}
The $\bZ$-grading of the Lie algebra $\ttg=\ttg(\one,1,1)$ is given by
\begin{align*}
\ttg&=\ttg_2\oplus\ttL_k^{(1)}\oplus\one\oplus\ttL_k^{(-1)}\oplus\ttg_{-2}, & 
\text{where }\ttg_{\pm2}&=\oplus_{\ell\ge0}\ttL_{2(k-\ell)-3}^{(\pm2)}.
\end{align*}
The bracket satisfies the following identities:
\begin{align*}
\restr{\ttb}{\ttL_{2i-1}^{(\pm 2)}\otimes \ttL_k^{(\pm1)}} &= 0, & 
\restr{\ttb}{\one\otimes\ttL_{2i-1}^{(\pm 2)}} &= \pm 2\id_{\ttL_{2i-1}^{(\pm 2)}}, \\ 
\restr{\ttb}{\ttL_{2i-1}^{(\pm 2)}\otimes \ttL_k^{(\mp1)}} &= \mp 2\iota_{\ttL_k^{(\pm1)}}, & 
\restr{\ttb}{\ttL_{2i-1}^{(2)}\otimes\ttL_{2j-1}^{(-2)}} &= \delta_{ij}\ev_{\ttL_{2i-1}^{(2)}}.
\end{align*}
\end{proposition}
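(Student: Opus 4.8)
The plan is to follow the strategy of \cite{Kac-book}, combining the structural results of \Cref{sec:contragredient-verp} with an explicit identification of the defining ideal. Write $\tttg=\tttg(\one,1,1)$ and $\ttg=\ttg(\one,1,1)$, and note that $\ttg(\one,1,1)=\ttg(\rho,\ttd)$ with $\rho\colon\one\ot\ttL_k\xrightarrow{\ \sim\ }\ttL_k$ and $\ttd=\ev_{\ttL_k}\colon\ttL_k\ot\ttL_k^*\to\one$; since $\ttd=\ttd_{\rho,\ttK}$ for the suitably normalized nonzero invariant form $\ttK$ on $\one$, this datum is symmetrizable, so \Cref{thm:gtilde-verp,thm:g-contragredient-verp,thm:contragredient-bilinear,prop:contragredient-bilinear-form} all apply. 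First I would record the $\bZ$-grading of $\tttg$: by \Cref{thm:gtilde-verp} we have $\tttg=\tttn_-\oplus\one\oplus\tttn_+$ with $\tttn_\pm\cong\FLie(\ttL_k)$ placed in degrees $\pm1,\pm2,\dots$, so $\tttg_0=\one$, $\tttg_{\pm1}=\ttL_k$, and $\tttg_{\pm2}=\FLie(\ttL_k)_2$; by \Cref{rem:FOLie-Li} (which rests on \Cref{lem:symmetry-Li-ot-Li} and the fusion rules \eqref{eq:verp-fusionrules}) this is the $(-1)$-eigenspace of the self-symmetry $c_{\ttL_k,\ttL_k}$, i.e.\ the stated $\bigoplus_{\ell\ge0}\ttL_{2(k-\ell)-3}$.

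The core of the argument is to show that the maximal ideal $\ttm$ from \Cref{thm:gtilde-verp} \ref{item:gtilde-verp-maximal} equals $\bigoplus_{|n|\ge3}\tttg_n$; granting this, \eqref{eq:g-contragredient-verp-triangular} gives the $\bZ$-grading of $\ttg$ with $\ttg_{\pm1}=\ttL_k$ and $\ttg_{\pm2}=\FLie(\ttL_k)_2$. By the Cartan involution \Cref{thm:g-contragredient-verp} \ref{item:g-contragredient-involution} it is enough to handle positive degrees, and I would split this in two. First, $\ttm\cap\tttg_2=0$: the ideal $\ttm$ lies in the radical of the invariant form $\tttB$ built in the proof of \Cref{thm:contragredient-bilinear}, and unwinding that construction one sees that $\restr{\tttB}{\tttg_2\ot\tttg_{-2}}$ is, up to a nonzero scalar, the canonical pairing between $\tttg_2$ and its dual $\tttg_{-2}$, which is perfect. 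Second, $\tttg_n\subseteq\ttm$ for $n\ge3$: since $\tttn_+$ is generated in degree $1$, it suffices to prove $[\ttV^*,\tttg_3]=0$ in $\tttg$, for then $\bigoplus_{n\ge3}\tttg_n$ is an ideal meeting $\one$ trivially, hence contained in $\ttm$, and combined with the first point $\ttm=\bigoplus_{|n|\ge3}\tttg_n$.

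The vanishing $[\ttV^*,\tttg_3]=0$ is the main obstacle. Expanding $[x^*,[v,y]]$ with $x^*\in\ttV^*$, $v\in\ttV$, $y\in\tttg_2=[\ttV,\ttV]$ by the Jacobi identity, and using $\restr{\ttb}{\ttV\ot\ttV^*}=\ev_{\ttL_k}$ together with the fact that $\one$ acts on each homogeneous component of $\tttg$ by multiplication by its degree (because $\rho$ is the identity), one identifies $[\ttV^*,\tttg_3]$ with a concrete morphism $\ttL_k^*\ot\ttL_k^{\ot3}\to\FLie(\ttL_k)_2$ in $\Verp$; I expect the cleanest way to see it is zero is to transport it to $\Rep(\balp_p)$ along the semisimplification functor $\sfS$, exactly as the compatibility \eqref{eq:contragredient-compatibility} is verified in \Cref{rem:cont-data-Lk}, and to check that over the $k$-dimensional indecomposable $\balp_p$-module the corresponding morphism factors through a negligible object. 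This is genuinely a $\Verp$-phenomenon: over $\Vec$ the triple bracket $[\ttV^*,[\ttV,[\ttV,\ttV]]]$ does not vanish as soon as $\dim\ttV\ge2$, which is why $\ttg$ is finite here while the analogous construction over $\Vec$ is not. Alternatively one may sidestep this by writing down the $5$-graded operadic Lie algebra $\ttq$ with underlying object $\FLie(\ttL_k)_2^{(-2)}\oplus\ttL_k^{(-1)}\oplus\one\oplus\ttL_k^{(1)}\oplus\FLie(\ttL_k)_2^{(2)}$ and the brackets in the statement, checking directly that $\ttq\in\OLie(\Verp)$ and that $\ttV,\ttV^*,\one$ satisfy the relations defining $\tttg$ (so there is a surjection $\tttg\twoheadrightarrow\ttq$ with $\bZ$-graded kernel meeting $\one$ trivially), and then applying \Cref{prop:contragredient-bilinear-form} to the evident invariant form on $\ttq$; both routes rest on the same computation in $\Verp$.

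Finally the bracket formulas follow from the structure obtained above. Once $\ttg_{\pm3}=0$ we get $\restr{\ttb}{\ttg_{\pm2}\ot\ttg_{\pm1}}=0$; the identity $\restr{\ttb}{\one\ot\ttL_{2i-1}^{(\pm2)}}=\pm2\,\id$ is the degree-$(\pm2)$ part of the $\one$-action; Jacobi applied to $\restr{\ttb}{\ttg_{\pm1}\ot\ttg_{\mp1}}=\ev_{\ttL_k}$ and the $\one$-action gives $\restr{\ttb}{\ttL_{2i-1}^{(\pm2)}\ot\ttL_k^{(\mp1)}}=\mp2\,\iota_{\ttL_k^{(\pm1)}}$; and $\restr{\ttb}{\ttL_{2i-1}^{(2)}\ot\ttL_{2j-1}^{(-2)}}=\delta_{ij}\,\ev_{\ttL_{2i-1}^{(2)}}$ because the summands $\ttL_{2i-1}$ of $\FLie(\ttL_k)_2$ are pairwise non-isomorphic self-dual simple objects, so the perfect pairing from the first step is diagonal in this decomposition.
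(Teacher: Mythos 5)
Your overall architecture agrees with the paper's: both arguments first read off $\tttg_{\pm1}=\ttL_k$ and $\tttg_{\pm2}=\FLie(\ttL_k)_2$ from \Cref{thm:gtilde-verp} and \Cref{rem:FOLie-Li}, and then reduce everything to identifying $\ttm$ with $\bigoplus_{|n|\ge3}\tttg_n$, i.e.\ to showing (i) that $\tttg_{\pm2}$ survives in the quotient and (ii) that $\ttb(\tttg_{3}\ot\ttV^*)=0$. Your route to (i) through the radical of the form $\tttB$ from \Cref{thm:contragredient-bilinear} is legitimate but buys nothing: to see that $\restr{\tttB}{\tttg_2\ot\tttg_{-2}}$ is perfect you must unwind $\tttB(\ttb(u\ot v)\ot z)=\tttB(u\ot\ttb(v\ot z))$, which is exactly the bracket $\restr{\ttb}{\tttg_{\pm2}\ot\ttV^{\mp}}$ that the paper computes directly via the Jacobi identity and the adjunction $\Hom(\ttL_k\ot\ttL_k\ot\ttL_k^*,\ttL_k)\cong\Hom(\ttL_k^{\ot2},\ttL_k^{\ot2})$, obtaining $-\id+c=-2\iota_{\tttg_2}$ (this also yields the formula $\restr{\ttb}{\ttL_{2i-1}^{(\pm2)}\ot\ttL_k^{(\mp1)}}=\mp2\iota$ in the statement, and the nonvanishing of $\tttg_2$ in $\ttg$ since $\ttm\cap\ttV=0$).

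The genuine gap is in (ii). You correctly isolate $\ttb(\tttg_3\ot\ttV^*)=0$ as the crux, and you correctly observe that it is not a formal consequence of the axioms (it fails over $\Vec$ once $\dim V\ge2$), but you never prove it: ``I expect the cleanest way \dots is to transport it to $\Rep(\balp_p)$ \dots and check that the corresponding morphism factors through a negligible object'' is a plan, not an argument, and your alternative route (writing down the $5$-graded algebra $\ttq$ and invoking \Cref{prop:contragredient-bilinear-form}) requires verifying the Jacobi identity of $\ttq$ on $\ttV\ot\tttg_2\ot\ttV^*$, which is the very same unproved identity. The paper's proof instead performs the computation: expanding $\ttb(\ttb\ot\id)$ on $\ttL_k^{(1)}\ot\tttg_2\ot\ttL_k^{(-1)}$ by the Jacobi identity produces exactly two terms, namely $\ttb\bigl(\id\ot\restr{\ttb}{\tttg_2\ot\ttL_k^{(-1)}}\bigr)$ (controlled by the scalar $-2$ from the degree-$2$ step) and $\rho(\ev\ot\id_{\tttg_2})(\id\ot c)$ (controlled by the scalar $+2$ by which $\one$ acts on $\tttg_2$), and the cancellation of these two is checked after transporting along the adjunction $\Hom(\ttL_k\ot\tttg_2\ot\ttL_k^*,\tttg_2)\cong\Hom(\ttL_k\ot\tttg_2,\tttg_2\ot\ttL_k)$; the conclusion $\ttg_{\pm3}=0$ then follows from \Cref{thm:g-contragredient-verp}. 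Since this cancellation is delicate --- the two adjoint maps are a priori different morphisms $\ttL_k\ot\tttg_2\to\tttg_2\ot\ttL_k$, and their agreement uses the specific decomposition of $\ttL_k^{\ot3}$ in $\Verp$ rather than generalities --- leaving it unverified is not a routine omission but the missing heart of the proof. Any complete write-up must either reproduce the paper's adjunction computation or actually carry out the semisimplification check you sketch.
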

\begin{proof}
By Theorem \ref{thm:gtilde-verp}, $\tttg=\tttn_-\oplus\one\oplus\tttn_+$, with $\tttn_{\pm}=\FLie(\ttL_k^{\pm 1})$. Thus, by Remark \ref{rem:FOLie-Li},
\begin{align*}
\tttg_{\pm2}&=\oplus_{\ell\ge0}\ttL_{2(k-\ell)-3}^{(\pm2)}=\ttb(\ttL_k^{(\pm 1)} \ot \ttL_k^{(\pm1)}). 
\end{align*}
The action of $\ttX=\one$ on $\op{T}(\ttL_k^{(\pm 1)})$ is by derivations, so $\restr{\ttb}{\one\otimes\ttL_{2i-1}^{(\pm 2)}}= \pm 2\id_{\ttL_{2i-1}^{(\pm 2)}}$.

We want to compute $\restr{\ttb}{\tttg_{2}\ot \ttL_k^{(-1)}}$.
As $\tttg_2=\ttb(\ttL_k^{(1)}\ot\ttL_k^{(1)})$ we look at 
$$\ttb(\ttb\ot\id)\colon\ttL_k^{(1)}\ot\ttL_k^{(1)}\ot\ttL_k^{(-1)}\to\ttL_k^{(1)}.$$
By the Jacobi identity and definition of $\ttb$, we obtain
\begin{align}\label{eq:bracket-Lk-Lk-Lkdual}
\restr{\ttb(\ttb\ot\id)}{\ttL_k^{(1)}\ot\ttL_k^{(1)}\ot\ttL_k^{(-1)}}=-\rho\, c_{\ttL_k^{(1)},\one}(\id\ot\ev)+\rho(\ev\ot\id)(\id\ot c_{\ttL_k^{(1)},\ttL_k^{(-1)}}).
\end{align}
Now, via de adjunction $\Hom(\ttL_k^{(1)}\ot\ttL_k^{(1)}\ot\ttL_k^{(-1)},\ttL_k^{(1)})\simeq \Hom(\ttL_k^{(1)}\ot\ttL_k^{(1)},\ttL_k^{(1)}\ot\ttL_k^{(1)})$, $f\mapsto (f\ot \id)(\id\ot\coev_{\ttL_k^{(1)}})$, the map in \eqref{eq:bracket-Lk-Lk-Lkdual} becomes $-\id+c=-2 \iota_{\tttg_2}$. Thus $\restr{\ttb}{\ttL_{2(k-\ell)-3}^{(2)} \ot \ttL_k^{(-1)}} \ne 0$, so $\ttL_{2(k-\ell)-3}^{(2)}\ne0$ in $\ttg$.

Next we compute $\restr{\ttb}{\tttg_{3}\ot \ttL_k^{(-1)}}$.
Here, $\tttg_3=\ttb(\ttL_k^{(1)}\ot\tttg_2)$. Using again the Jacobi identity,
\begin{align*}
\restr{\ttb(\ttb\ot\id)}{\ttL_k^{(1)}\ot\tttg_2\ot\ttL_k^{(-1)}} 
&=\ttb (\id_{\ttL_k^{(1)}} \ot \restr{\ttb}{\tttg_{2}\ot \ttL_k^{(-1)}}) + \rho (\ev\ot \id_{\tttg_2})(\id_{\ttL_k^{(1)}} \ot c_{\tttg_2,\ttL_k^{(-1)}}).
\end{align*}
Using the the expression of $\restr{\ttb}{\tttg_{2}\ot \ttL_k^{(-1)}}$ obtained above and that the action of $\one$ on $\tttg_2$ is multiplication by 2, we see that $\restr{\ttb}{\tttg_{2}\ot \ttL_k^{(-1)}}$ is mapped to zero by the adjunction isomorphism $\Hom(\ttL_k^{(1)}\ot\tttg_2 \ot\ttL_k^{(-1)},\tttg_2)\simeq \Hom(\ttL_k^{(1)}\ot\tttg_2,\tttg_2\ot\ttL_k^{(1)})$.
So $\restr{\ttb}{\tttg_{3}\ot \ttL_k^{(-1)}}=0$.
and by \Cref{thm:g-contragredient-verp}\ref{item:g-contragredient-verp-trivial-bracket}, we obtain $\ttg_{3}=0$. Analogously, $\ttg_{-3}=0$, and the claim follows.
\end{proof}

\begin{remark}\label{rem:sl2-osp21-generalization}
The Lie algebra $\ttg$ described in \Cref{prop:contragredient-Li-one} generalizes the rank-one Lie superalgebras with non-trivial action of the one-dimensional torus (which is necessarily even). Indeed, in the case $k=1$ of Lie algebras, $\ttg$ recovers $\fsl_2$, which is trivial in degree $2$ since there is no antisymmetric part in the rank-one free Lie algebra. For $k=p-1$ we obtain $\osp(2|1)$, which only vanishes in degrees $\geq3$, since the free Lie superalgebra contains an antisymmetric component of dimension $1$ (generated by $[e,e]\ne 0$) in degree $2$, but after that $[e,[e,e]]=0$.
\end{remark}

We conclude our work with open problems about two families of examples.

\subsubsection{Lie algebras over \texorpdfstring{$\ttX=\fsl(\ttL_2)$}{}}\label{subsubsec:sl-L2}
By the discussion above, if $k=1$ or $k=p-1$, then the unique $\fsl(\ttL_2)$-contragredient structure $(\rho,\ttd)$ on $\ttV_k$ is the trivial one $(\rho,\ttd)=0$. On the other hand, if $k\ne 1, p-1$, the set of $\fsl(\ttL_2)$-contragredient structures on $\ttL_k$ is in bijection with pairs of scalars $(a,b)$ with $a=0$ or $a=1$.


Now, up to isomorphism on the corresponding contragredient Lie algebras, we only need to consider four possibilities for $(a,b)$, namely $$(0,0), \qquad (1,0), \qquad (0,1), \qquad (1,1).$$ 
By \Cref{exa:contragredient-d-trivial}, the non-reduced data $(0,0)$ and $(1,0)$ give just $\ttg(0,0)=\ttg(1,0)=\fsl(\ttL_2)$.

Also, by \Cref{cor:trivial-action}, we have $\ttg(0,1)=\ttL_k^*\oplus\fsl(\ttL_2) \oplus \ttL_k$ where $\fsl(\ttL_2)$ commutes with $\ttL_k^*\oplus\ttL_k$ and the bracket between $\ttL_k^*$ and $\ttL_k$ is the canonical projection $\ttL_k^* \ot \ttL_k \to \ttL_3=\fsl(\ttL_2)$.

The only remaining case is $(1,1)$. A detailed study of the corresponding Lie algebra $\ttg(\fsl(\ttL_2), 1,1)$ is left for future work.

\subsubsection{Lie algebras over \texorpdfstring{$\ttX=\fgl(\ttL_2)$}{}}\label{subsubsec:gl-L2}

As explained in Remark \ref{rem:contr-data-decomposition}, it only remains to study contragredient data such that $\ttd$ has full rank, i.e. $\ima\ttd=\fgl(\ttL_2)$, and the actions of $\one$ and $\fsl(\ttL_2)$ are both non-trivial. 
In this case, the calculations become more involved, since the actions of the two subalgebras can combine to cancel each other out after different degrees, depending on the scalar $\bti$ (following the notation from \Cref{rem:cont-data-Lk}). As an example of this phenomenon, let us note that suitable semisimplifications of Lie algebras of type $A_2$, $B_2$ and $G_2$ as in \Cref{subsection:semisimplification-examples} fall in this context for $\ttV=\ttL_2$. However, different scalars $\bti$ yield contragradient Lie algebras that differ dramatically, as their highest $\bZ$-degree are $1$, $2$, and $3$ respectively. We leave the determination of the structure of these contragredient Lie algebras for future work.

\bibliography{biblio}

\end{document}